\newtheorem{thm}{Theorem}
\newtheorem{assumption}[thm]{Assumption}
\newtheorem{lem}[thm]{Lemma}
\newtheorem{defi}[thm]{Definition}
\newtheorem{prop}[thm]{Proposition}
\newtheorem{rk}[thm]{Remark}
\newcommand{\field}[1]{\mathbb{#1}}
\newcommand{\E}{\field{E}}
\newcommand{\PP}{\field{P}}
\newcommand{\R}{\field{R}}
\newcommand{\vip}{\vskip.2cm}
\begin{document}

\title[Birth and death models in a large population limit]{Nonparametric adaptive inference of birth and death models in a large population limit}

\author{Alexandre Boumezoued, Marc Hoffmann and Paulien Jeunesse}

\address{A. Boumezoued, Milliman R\&D, 14 Avenue de la Grande Arm\'ee, 75017 Paris, France.}

\email{alexandre.boumezoued@milliman.com}

\address{Marc Hoffmann, Universit\'e Paris-Dauphine \& PSL, CNRS, CEREMADE, 75016 Paris, France}

\email{hoffmann@ceremade.dauphine.fr}

\address{Paulien Jeunesse, Universit\'e Paris-Dauphine \& PSL, CNRS, CEREMADE, 75016 Paris, France}

\email{jeunesse@ceremade.dauphine.fr}

\begin{abstract} 
 Motivated by improving mortality tables from human demography databases, we investigate statistical
inference of a stochastic age-evolving density of a population alimented by time inhomogeneous mortality
and fertility. Asymptotics are taken as the size of the population grows within a limited time horizon: the
observation gets closer to the solution of the Von Foerster Mc Kendrick
equation, and the difficulty lies in controlling simultaneously the stochastic approximation to the limiting
PDE in a suitable sense together with an appropriate parametrisation of the anisotropic solution. In this
setting, we prove new concentration inequalities that enable us to implement the Goldenshluger-Lepski
algorithm and derive oracle inequalities. We obtain minimax optimality and adaptation over a wide range of anisotropic H\" older smoothness classes.
\end{abstract}

\maketitle

\textbf{Mathematics Subject Classification (2010)}: 
62G05, 62M05, 60J80, 60J20, 92D25.

\textbf{Keywords}: 
Age-structured models, large population limit, concentration inequalities, nonparametric adaptive estimation, anisotropic estimation, Goldenshluger-Lepski method. 

\tableofcontents

\section{Introduction}
\subsection{Setting} \label{sec: setting}
Suppose one wishes to recover a probability density $g$ over the nonnegative real line $\R_+=[0,\infty)$ from a $N$-sample $a_1,\ldots, a_N$, where the $a_i$ are not necessarily independent.
If $Z^N = N^{-1}\sum_{i = 1}^N\delta_{a_i}$ denotes the empirical distribution of the $N$-sample, designing a good statistical estimator of $g$ requires a fine quantitative control of the fluctuations in the convergence 
\begin{equation} \label{eq weak basic}
\int_{\R_+}\psi(a)Z^N(da) \rightarrow \int_0^\infty \psi(a)g(a)da
\end{equation}
(at least in probability) as $N$ grows, for a large enough class of test functions $\psi$.
Moreover, the performance of such a procedure depends on the smoothness properties of the function $g$, typically quantified by a smoothness parameter, like a (possibly fractional) number of derivatives in any reasonable sense and is usually unknown by the practitioner. For suitable $\psi$ (possibly data-dependent), optimal estimators can be found provided good concentration inequalities are available for \eqref{eq weak basic}, following the broad guiding principle of Lepski's method \cite{LEPSKIBASIC, GOLDENSHLUGERLEPSKI1, GOLDENSHLUGERLEPSKI2} or other adaptive methods like model selection or wavelets, see for instance the comprehensive textbooks of Gin\'e and Nickl \cite{GINENICKL} or H\" ardle {\it et al.} \cite{HKPT} or Tsybakov \cite{TSYBAKOV}. In this paper, we generalise the classical situation described above by adding a time variable. We investigate statistical inference of a time-evolving particle system governed by stochastic dynamics: for every $t \in [0,T]$, we observe the state of a population of (approximately) $N$ particles, encoded by its empirical measure $Z^N = \big(Z_t^N(da)\big)_{0 \leq t \leq T}$. Informally, $Z^N$ is solution to a certain stochastic differential equation (SDE)
$$\mathcal H_{b,\mu}^N\big(Z^N\big) = 0,$$
constructed in \eqref{eq micro} below; $\mathcal H_{b,\mu}^N$ is parametrised by two functions $b$ and $\mu$ and $Z_t^N(da)$ represents the state of a population structured in age $a \in \R_+$, alimented by a time-inhomogeneous fertility rate $b(t,a)$ and decimated by a mortality rate $\mu(t,a)$.  Moreover, we are given an initial empirical age distribution $Z_0^N$ at time $t=0$. Under appropriate regularity conditions
we have a convergence $\mathcal H_{b,\mu}^N \rightarrow \mathcal H_{b,\mu}$ in a large population limit $N \rightarrow \infty$, where
$\mathcal H_{b,\mu}(g)= 0$ is an inhomogeneous version of the classical McKendrick Von Foerster renewal equation \cite{MCKENDRICK, VONFOERSTER}, given by
\begin{align} \label{McKendrick}
\left\{
\begin{array}{ll}
\partial_t g(t,a)+\partial_ag(t,a) + \mu(t,a)g(t,a) = 0 \\ \\
\displaystyle g(0,a) = g_0(a),\;\; g(t,0) = \int_0^\infty b(t,a)g(t,a)da. \\ 
\end{array}
\right.
\end{align}
and 
that reveals the interplay between the limiting solution $g$ and the model parameter $b$ and $\mu$.  In particular, we have a convergence 
\begin{equation} \label{eq weak basic dynamic}
\int_{0}^T \int_{\R_+} \psi(t,a) Z_t^N(da)dt \rightarrow \int_0^T \int_0^\infty \psi(t,a)g(t,a)da\,dt
\end{equation}
(at least in probability) as $N$ grows for a rich enough class of functions $\psi$, and this situation generalises \eqref{eq weak basic} in a time-dependent framework.\\

Informally, our statistical problem takes the following form: estimate $g$ or the parameters of the model $b,\mu$ from data $Z^N$ in the limit $N \rightarrow \infty$. In this setting, it is crucial to understand: (i) the quantitative properties of the convergence \eqref{eq weak basic dynamic} and in particular, how concentration inequalities can be obtained (with a view towards an adaptive estimation scheme in the idea of Lepski's principle) and (ii) what is the structure of the equation $\mathcal H_{b,\mu}(g)= 0$  in terms of identification and interplay between the parameters $b,\mu, g$ and their smoothness properties. In particular, the anisotropic smoothness of $g$ viewed as a graph-manifold can benefit from the structure $\mathcal H_{b,\mu}(g)=0$ and lead to better approximation properties in certain directions along the characteristics of the transport.

\subsection{Motivation}

Of primary interest for us 
is human demography through the recent efforts and contributions for improving mortality estimates, see \cite{CAIRNS2016, BOUMEZOUED2016, BHJ2018APP} among others and the references therein. In particular, the recent development of large human datasets like the Human Mortality Database (HMD) and Human Fertility Database (HFD) \cite{HMD, HFD} -- in open access -- allows one to process fertility and mortality data simultaneously, and subsequently addresses demographical issues such as the anomalies of cohort effects that have long fascinated demographers and actuaries \cite{RICHARDS2008, CAIRNS1}. In this rejuvenated context, it becomes reasonable to study the estimation of population density or mortality rate in the enriched dynamical framework provided by birth-death particle systems that converge to the classical McKendrick Von Foerster equation in a large population limit, and revisit classical studies like {\it e.g.} \cite{KEIDING1990, NIELSEN1} for statistical estimation of the death rate; see the detailed literature review in next section. In this setting, we consider the idealised model where we can observe the (renormalised) evolution of the state of the population $Z^N_t$ continuously for $t\in [0,T]$, where $t=0$ is the starting date for the observation of the population and $t=T$ a terminal time horizon, fixed once for all. We are interested in identifying or estimating the parameters of the model. Of major importance is the inhomogeneous death rate $\mu(t,a)$. In our framework, we cannot recover the birth rate since we are not given any genealogical input: mathematically, this simply expresses the lack of injectivity of the mapping $b \mapsto g$. Still, our observation enables us to identify the functions $(t,a) \mapsto g(t,a)$ and $(t,a) \mapsto \mu(t,a)$ in the limit $N \rightarrow \infty$ and establish a thorough nonparametric estimation program, in the methodology of adaptive minimax estimation.


\subsection{Link with literature on death rate inference}

The main difficulty in establishing a consistent theory to estimate mortality rates comes from two key points:
(i) incorporate the fact that the death rate depends on both age and  time (non-homogeneous setting)
and (ii) use as observables the outcome of a stochastic population dynamics (birth-death process).
In the literature, we argue that each point is treated separately. The inference of a time-dependent death rate also related to a time-dependent covariate (possibly age), which relates to the first point has been addressed from a nonparametric perspective by e.g. \cite{BERAN1981, DABROWSKA1987, KEIDING1990, MCKEAGUE1990, NIELSEN1995, BRUNEL2008, COMTE2011} and the references therein. 
From \cite{KEIDING1990}, {\small \it 
"One way of understanding the difficulties in establishing an Aalen theory in the
Lexis diagram is that although the diagram is two-dimensional, all movements are
in the same direction (slope 1) and in the fully non-parametric model the diagram
disintegrates into a continuum of life lines of slope 1 with freely varying intensities
across lines. The cumulation trick from Aalen's estimator (generalizing ordinary
empirical distribution functions and Kaplan \& Meier's (1958) nonparametric
empirical distribution function from censored data) does not help us here." 
}
On the other side, the inference of an age-dependent death rate in an homogeneous birth-death model (or similar) - oiuyr second point - has been addressed in \cite{TRAN2008, DHKR1, HOFFMANN2016} among others. To the best of our knowledge, no statistical method deals with the usual problem faced by demographers related to the inference of a time and age-dependent death rate table based on the observation of population dynamics. Note that in this paper, the observation of the population is assumed to be continuous over time, whereas in practice the information on population exposure is extracted from census (point observation); these practical considerations are discussed in a companion paper, see \cite{BHJ2018APP}.

\subsection{Results and organisation of the paper}

In a first part of the paper, Section \ref{sec stability}, we construct the SDE that describes the state of the population $Z^N$ by means of a birth-death process characterised via a stochastic differential equation -- given in \eqref{eq micro} -- driven by a random Poisson measure. We recall its convergence in a large population limit to the solution of the McKendrick Von Foerster equation $g$ based on classical results of \cite{TRAN1, TRANMELEARD}. Our next step consists in quantifying the stability of the convergence $Z^N \rightarrow g$. To that end and anticipating the subsequent statistical analysis, we introduce two pseudo-distances:
$$\mathcal W_{w_2}^N(\mathcal F)_t = \sup_{f \in \mathcal F}\int_{\R_+} w_2(t-a) f(t,a)\big(Z^N_t(da)-g(t,a)da\big)$$
and its integrated version
$$\mathcal W_{w_1, w_2}^N(\mathcal F)_t = \sup_{f \in \mathcal F} \int_0^t w_1(s)\int_{\R_+} w_2(s-a) f(s,a)\big(Z^N_s(da)-g(s,a)da\big)ds,$$
where $w_1$ and $w_2$ are two bounded weight functions and $\mathcal F$ a rich enough class of function with complexity measured in terms of entropy conditions. Note that formally $\mathcal W_{w_2}^N(\mathcal F)_t$ is a degenerate version of $\mathcal W_{w_1, w_2}^N(\mathcal F)_t$. Taking $w_1=w_2=1$ is reminiscent of the 1-Wasserstein distance if $\mathcal F$ consists of $1$-Lipschitz functions. However, for the statistical analysis, we must be able to handle approximating kernels that do not have bounded Lipschitz norms, hence the presence of the weights $w_1$ and $w_2$ that can accomodate such kernels. 
The main result of this section, Theorem \ref{thm concentration optimal} states that under appropriate regularity conditions on $b$ and $\mu$, if $|w_2|_{1,\infty}^{-1}\mathcal W_{w_2}^N(\mathcal F)_0$ is of (small) order $r_N$, so are
$$|w_2|_{1,\infty}^{-1}\mathcal W_{w_2}^N(\mathcal F)_T\;\;\;\text{and}\;\;\;\big(|w_1|_{1,\infty}|w_2|_{1,\infty}\big)^{-1}\mathcal W_{\,w_1,w_2}^N(\mathcal F)_T.$$
The rate of decay $r_N$  possibly inflates by an order $N^{-1/2}$ and the result holds in terms of exponential decay of the fluctuation probabilities. The functional control $|\cdot|_{1,\infty} = (\|\cdot\|_{L^1}\|\cdot\|_{L^\infty})^{1/2}$ interpolates between $L^1$ and $L^\infty$-norms, and is sufficient to handle the behaviour of statistical kernels in an optimal way, since it can therefore be compared to the usual $L^2$-norm that appears in variance terms. The concentration of $\mathcal W_{w_2}^N(\mathcal F)_T$ expresses a kind of stability of the particle system from $t=0$ to $t=T$, while the more intricate control of $\mathcal W_{\,w_1,w_2}^N(\mathcal F)_T$ is crucial to control variance terms in bi-variate kernel estimators for the nonparametric estimation of $g(t,a)$ and $\mu(t,a)$. The proof relies on a combination of martingales techniques in the spirit of Tran \cite{TRAN1}, a central reference for the paper, combined with classical tools from concentration of processes indexed by functions under entropy controls, following for instance Ledoux-Talagrand \cite{LEDOUXTALAGRAND}.\\

In a second part, Section \ref{sec stat oracle}, we construct nonparametric estimators of $g(t,a)$ and $\mu(t,a)$ by means of kernel approximation: we consider estimators of the form
$$\widehat g_h^N(t,a) = K_h \star Z_t^N(a)$$
for $g(t,a)$, where $\star$ denotes convolution and $K_h = h^{-1}K(h^{-1}\cdot)$, with $|K|_1=1$, is a kernel normalised in $L^1$ with bandwidth $h>0$. It is noteworthy that for estimating the population density $g(t,a)$ at time $t$, the information $Z_t^N$ is sufficient and we do not need the data $(Z_s^N, s\neq t)$. The situation is very different for estimating $\mu(t,a)$ the main parameter of interest. We constuct a quotient estimator, inspired from a Nadaraya-Watson type procedure, and use
\begin{equation} \label{eq def info esti mu}
\widehat \mu_{h_1,h_2,h_3}^N(t,a) = \frac{(H_{h_1} \otimes K_{h_2}\circ \varphi )\star\Gamma^N(du,ds)}{\widehat g_{h_3}^N(t,a)}
\end{equation}
where $\Gamma^N(du,ds)$ is the point process of the death occurences in the population lifetime that can be extracted from $Z^N$ and that converges to $\pi = \mu g$, see \eqref{def death jumps} in Section \ref{sec constrution estimators} for the details. In \eqref{eq def info esti mu}, we consider a bivariate kernel $H\otimes K$ with bandwidth $(h_1,h_2)$ and $\varphi(t,a) = (t,t-a)$ is a certain change of coordinates that enables one to benefit from the smoothness along the characteristics of the transport. The choice of the bandwidths $h_1,h_2,h_3$ is chosen according to the data $Z^N$ itself, in the spirit of Lepski's principle \cite{GOLDENSHLUGERLEPSKI1, GOLDENSHLUGERLEPSKI2}. In Theorems \ref{thm oracle g} and \ref{thm oracle mu}, we derive oracle inequalities that control the pointwise risk of $\widehat g_h^N(t,a)$ and $\widehat \mu_{h_1,h_2,h_3}^N(t,a)$ in terms of optimal balance between the error propagation of Theorem \ref{thm concentration optimal} and the linear approximation kernels.\\ 

Section \ref{sec: minimax adaptive} is devoted the adaptive estimation of $g$ and $\mu$ for the pointwise risk under smoothness constraints. In a first part, we study the smoothness of $g$ when $b$ and $\mu$
belong to anisotropic H\"older spaces (and for simplicity, we assume that the initial condition $g_0$ is sufficiently smooth). Thanks to the relatively explicit form of the solution of the McKendrick Von Foester equation,  we establish in Proposition \ref{H smoothness basic} that when parametrised via $\varphi$, the function $\widetilde g$ in the representation $g = \widetilde g \circ \varphi$ has explicitly quantifiable improved smoothness over $g$, suggesting to consider the approximation kernel $H_{h_1} \otimes K_{h_2}\circ \varphi$ for estimating $\pi$ via the quotient estimator  \eqref{eq def info esti mu} that implicitly uses the representation of $\mu = \pi/g$. We establish in Theorem \ref{thm LB} minimax lower bounds for estimating $g(t,a)$ and $\mu(t,a)$ and prove in Theorems \ref{adapt esti g} and \ref{th adapt minimax mu} that these bounds are optimal in some cases, thanks to the oracle inequalities established Theorems \ref{thm oracle g} and \ref{thm oracle mu}. In particular, we achieve minimax adaptation over anisotropic H\"older smoothness constraints, up to poly-logarithmic terms.\\

The techniques developed in this paper have at least two possible lines of extensions for considering more general models than \eqref{McKendrick}: (i) first, when we replace the constant transport by an arbitrary aging function solution to $dX_t = v(X_t)dt$ if $X_t$ denotes the age evolution of an individual, and (ii) if we allow for interacting particle system in the following sense: we replace $\mu(t,a)$ by a population dependent mortality rate $\widetilde \mu(t,a) + \int_{\R_+} U(a,a')Z^N_s(da')$, as already studied for instance by Tran \cite{TRAN2008} for some baseline mortality rate $\widetilde \mu(t,a)$ affected in a mean-field sense by a kernel $U(a,a')$. Under appropriate regularity assumptions, the limiting model takes the form

\begin{align*} 
\left\{
\begin{array}{ll}
\partial_t g(t,a)+\partial_a\big(v(a)g(t,a)\big) + \big(\widetilde \mu(t,a)+\int_{\R_+}U(a,a')g(t,a')da'\big)g(t,a) = 0 \\ \\
\displaystyle g(0,a) = g_0(a),\;\; g(t,0) = \int_0^\infty b(t,a)g(t,a)da. \\ 
\end{array}
\right.
\end{align*}

We intend to describe the extension to this situation in a forthcoming work. Sections \ref{sec proofs concentration} is devoted to the proof of the main concentration result of Theorem \ref{thm concentration optimal} and auxiliary stability results of Section \ref{sec stability}. In Section \ref{sec proof stat}, we give the proofs of the statistical results of  \ref{sec stat oracle} and \ref{sec: minimax adaptive}. The Appendix Section \ref{sec appendix} contains some useful technical and auxiliary results. 

%


%
%
%

\section{The microscopic model and its large population limit} \label{sec stability}

\subsection{Notation}

\subsubsection*{The function spaces} We fix once for all a terminal time $T>0$ and $\mathcal D = [0,T] \times \R_+$.
We  work with the set of (measurable) functions 
$$
\mathcal L_{\mathcal D}^\infty   = \big\{f: \mathcal D \rightarrow \R,\; \sup_{t,a}|f(t,a)| < \infty\},
$$
implicitly continuated on $\R \times \R$ by setting 
$f(t,a)=0$ for $(t,a) \notin \mathcal D$ 
and also introduce
\begin{align*}
\mathcal L_{\mathcal D}^{{\small\mathrm{time}}}  = \big\{f:[0,T] \rightarrow \R,\; \sup_{t}|f(t)| < \infty\},\;\;
\mathcal L_{\mathcal D}^{{\small\mathrm{age}}}  = \big\{f:\R_+ \rightarrow \R,\; \sup_{a}|f(a)| < \infty\}, 
\end{align*}
with natural embeddings 
$\mathcal L_{\mathcal D}^{{\small\mathrm{time}}} \subset \mathcal L_{\mathcal D}^\infty $ and also $\mathcal L_{\mathcal D}^{{\small\mathrm{age}}} \subset \mathcal L_{\mathcal D}^\infty $ for appropriate arguments. For
$p=1,2$, we set 
\begin{equation} \label{eq: inter norme}
|f|_p = \big(\int_\mathcal D |f(t,a)|^pdtda\big)^{1/p},\;\;|f|_\infty = \sup_{(t,a) \in \mathcal D}|f(t,a)|,\;\;|f|_{1,\infty} = \big(|f|_1|f|_\infty\big)^{1/2}.
\end{equation}
We obviously have $|f|_2 \leq |f|_{1,\infty}$, but also the following interesting stability property under dilation: for every $\tau>0$, 
$$|f|_2|\tau^{1/2} f(\tau \cdot)|_{1,\infty} = |\tau^{1/2}f(\tau \cdot)|_2|f|_{1,\infty}.$$
For $0 \leq s \leq 1$, we denote by $\mathcal C^s_{\mathcal D}$ the set of $s$-H\"older continuous functions $f$ on $\mathcal D$ that satisfy
\begin{equation} \label{eq def holder global}
|f(t,a)-f(t',a')| \leq c(|t-t'|^s+|a-a'|^s)
\end{equation}
for every $(t,a),(t',a') \in \mathcal D$ and some $c>0$.

\subsubsection*{The random measures}

$\mathcal M_F$ denotes the set of finite point measures on $\R_+=[0,\infty)$ and ${\mathcal M_F}_+$ the set of positive finite measures on $\R_+$. Any $Z \in \mathcal M_F$ admits the representation 
$Z = \sum_{i = 1}^n \delta_{a_i}$ for some ordered set $\{a_1,\ldots, a_n\} \subset \R_+$.
For a real-valued function $f$ defined on 
$\R_+$, we write
$$\langle Z,f \rangle = \int_{\R_+} f(a)Z(da) = \sum_{i = 1}^{n}f(a_i).$$
In particular $n = \langle Z,{\bf 1}\rangle$. For $Z=\sum_{i = 1}^{n} \delta_{a_i}  \in \mathcal M_F$, abusing notation slightly, we define the evaluation maps $a_i(Z)=a_i$ and for $t \geq 0$, the shift $\tau_tZ=\sum_{i = 1}^{n}\delta_{a_i+t}$.



\subsection{Construction of the model} \label{sec construction model}
The basic assumptions on the model are the following:

\begin{assumption} \label{H basic}
We have 
\begin{enumerate}
\item[(i)] $b \in \mathcal L_{\mathcal D}^\infty $ and $\mu \in \mathcal L_{\mathcal D}^\infty $,
\item[(ii)] $NZ_0^N \in \mathcal M_F$ is random and satisfies\footnote{where $a_N \lesssim b_N$ means $\sup_{N \geq 1} a_N b_N^{-1} <\infty$.} $\sup_N\; \langle Z_0^N,{\bf 1}\rangle \lesssim 1$ almost-surely; moreover  
$Z_0^N \rightarrow \xi_0$
narrowly, for some deterministic $\xi_0 \in \mathcal M_+$,
\item[(iii)] $\xi_0(da)=g_0(a)da$ for some $g_0 \in \mathcal L_{\mathcal D}^{\mathrm{age}}$ such that $\int_0^\infty g_0(a)da < \infty$.
\end{enumerate}
\end{assumption}

For $t \in [0,T]$, consider the equation\\
\begin{align} 
Z_t^N =\; & \tau_t Z_0^N 
+N^{-1}\int_0^t  \int_{\mathbb N \times \mathbb R_+}  \delta_{t-s}(da){\bf 1}_{\big\{0 \leq \vartheta \leq b(s,a_i(Z_{s^-}^N)), i \leq \langle NZ_{s^-}^N, {\bf 1}\rangle \big\}} \mathcal Q_1(ds, di, d\vartheta) \nonumber \\
&- N^{-1}\int_0^t  \int_{\mathbb N \times \mathbb R_+} \delta_{a_i(Z_{s^-}^N)+t-s}(da){\bf 1}_{\big\{0 \leq \vartheta \leq \mu(s,a_i(Z_{s^-}^N)), i \leq \langle NZ_{s^-}^N, {\bf 1}\rangle \big\}}\mathcal Q_2(ds,di,d\vartheta), \label{eq micro}
 \\ \nonumber
 \end{align}
\noindent where $\mathcal Q_i$, $i=1,2$ are independent Poisson random measures on $\R_+\times \mathbb N \setminus \{0\} \times \R_+$ with intensity measure $ds \big(\sum_{k \geq 1}\delta_k(di)\big) d\vartheta$. In this setting, the distribution $Z^N_0$ describes the renormalised state of the population at time $t=0$ and $N\langle Z^N_0, {\bf 1} \rangle$ its size.\\


Under Assumption \ref{H basic} (i), we have existence and strong uniqueness of a solution to \eqref{eq micro} in $\mathbb D([0,T], \mathcal M_+)$, the Skorokhod space of c\`adl\`ag processes with values in $\mathcal M_+$.
Under Assumption \ref{H basic} (i) and (ii)\footnote{Actually, the condition of the almost-sure bound $\sup_N\; \langle Z_0^N,{\bf 1}\rangle \lesssim 1$ can be relaxed to the significant weaker moment condition $\sup_{N \geq 1}\E[\langle Z_0^N,{\bf 1}\rangle^{1+\epsilon}] <\infty$ for some $\epsilon >0$.}, we even have the narrow convergence of $Z^N$ in $\mathbb D([0,T], \mathcal M_+)$ to a deterministic limit  $\xi \in \mathcal C([0,T], \mathcal M_+)$, see {\it e.g.} \cite{TRAN1, FOURNIERMELEARD}.
  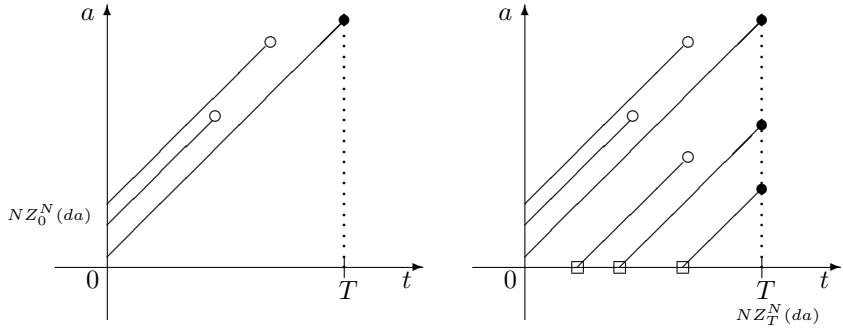
\begin{figure}[h]
\centering
\setlength{\unitlength}{0.7cm}
\begin{picture}(6,6)(-1,-1)
   \put(0,-1){\vector(0,1){6}}
   \multiput(4.5,-0.2)(1,0){1}{\line(0,1){0.4}} 
     \put(4.4,-0.6){$T$} 
     \put(5.6,-0.4){$t$}
       \put(-1.9,0.9){{\tiny $N Z_0^N(da)$}}
   \put(-1,0){\vector(1,0){7}}
   \put(-0.4,-0.4){$0$} 
   \put(-0.5,4.7){$a$}
   \put(4.5,4.7){\circle*{0.2}}
   \put(0,0.8){\line(1,1){2}}
   \put(0,1.2){\line(1,1){3}}
   \put(2.05,2.87){\circle{0.2}}
    \put(3.1,4.28){\circle{0.2}}
   \multiput(4.5,0)(1,0){1}{
      \multiput(0,0)(0,0.2){25}{\circle*{0.05}}}
   
   \put(0,0.2){\line(1,1){4.55}}

\end{picture}
\hspace{1cm}
\setlength{\unitlength}{0.7cm}
\begin{picture}(6,6)(-1,-1)
   \put(0,-1){\vector(0,1){6}}
   \multiput(4.5,-0.2)(1,0){1}{\line(0,1){0.4}} 
     \put(4.4,-0.6){$T$} 
     \put(5.6,-0.4){$t$}
     \put(4,-1){\shortstack{{\tiny $ NZ_T^N(da)$}}}
   \put(-1,0){\vector(1,0){7}}
   \put(-0.4,-0.4){0} 
   \put(-0.5,4.7){${\small a}$}
   \put(4.5,4.7){\circle*{0.2}}
   \put(0,0.8){\line(1,1){2}}
   \put(0,1.2){\line(1,1){3}}
   \put(2.05,2.87){\circle{0.2}}
    \put(3.1,2.1){\circle{0.2}}
   \put(3.1,4.28){\circle{0.2}}
    \put(0.9,-0.1){\framebox(0.2,0.2)}
     \put(1.7,-0.1){\framebox(0.2,0.2)}
      \put(2.9,-0.1){\framebox(0.2,0.2)}
   \multiput(4.5,0)(1,0){1}{
      \multiput(0,0)(0,0.2){25}{\circle*{0.05}}}
   
   \put(0,0.2){\line(1,1){4.55}}
  \put(1,0){\line(1,1){2}}
  \put(1.8,0){\line(1,1){2.65}}
  \put(4.5,2.7){\circle*{0.2}}
  \put(3,0){\line(1,1){1.5}}
  \put(4.5,1.5){\circle*{0.2}}
\end{picture}
\vspace{-1mm}
\caption{Sample path of $NZ_0^N(da)$ and its evolution without births (left), sample path of $(NZ_t^N(da))_{0 \leq t \leq T}$ (right).}
\end{figure}
Under Assumption \ref{H basic} (iii), the limit $\xi=(\xi_t(da))_{0 \leq t \leq T}$ is smooth in the following sense: we have that $\xi_t(da)=g(t,a)da$, where $g$ is a weak solution to 
the McKendrick Von Foerster equation \eqref{McKendrick} defined in Section \ref{sec: setting} above (see \cite{MCKENDRICK, VONFOERSTER} and the comprehensive textbook of Perthame \cite{PERTHAME}).
With the notation of Section \ref{sec: setting}, the equation $\mathcal H^N_{b,\mu}(Z^N)=0$ is given by \eqref{eq micro} while $\mathcal H_{b,\mu}(g)=0$ is given by \eqref{McKendrick}.

\subsection{Stability of the model} \label{sec: stability}

\subsubsection*{Preliminaries} The stability of $Z^N_t(da)$ relative to its limit $g(t,a)$ will be expressed in terms of weighted quantities of the form
$$\mathcal W_{w_2}^N(\mathcal F)_t = \sup_{f \in \mathcal F}\int_{\R_+} w_2(t-a) f_t(a)\big(Z^N_t(da)-g(t,a)da\big)$$
and also
$$\mathcal W_{w_1, w_2}^N(\mathcal F)_t = \sup_{f \in \mathcal F} \int_0^t w_1(s)\int_{\R_+} w_2(s-a) f_s(a)\big(Z^N_s(da)-g(s,a)da\big)ds,$$
where $w_i$, $1=1,2$ are two bounded weight functions (possibly taking negative values). For notational simplicity, we write $f_t(a)=f(t,a)$ for $f \in \mathcal L_{\mathcal D}^\infty $ when no confusion is possible. Implicitly, we assume that $\mathcal F$ is well-behaved in the sense that $\mathcal W_{w_2}^N(\mathcal F)_t$ and $\mathcal W_{w_1, w_2}^N(\mathcal F)_t$ are measurable, as random variables on the ambient probability space over which $Z^N$ is defined.

%

\subsubsection*{The structure of $\mathcal F$}


We describe the minimal structure we need to put on $\mathcal F$ so that the subsequent concentration properties hold for $\mathcal W_{w_2}^N(\mathcal F)_t$ and $\mathcal W_{w_1, w_2}^N(\mathcal F)_t$. In particular, we must be able to control the complexity of $\mathcal F$ measured in terms of entropy. Let $\mathsf s_t$, $\mathsf t_t$ and $\mathsf u_t$ be the operators on $\mathcal L_{\mathcal D}^\infty $ defined by
$$\mathsf s_t(f) = \big((s,a) \mapsto f(t,a+t)\big),\;\;\mathsf t_t(f) = \big((s,a) \mapsto f(t,t-s)\big),\;\;\mathsf u_t(f) = \big((s,a) \mapsto f(t,t+a-s)\big).$$

\begin{assumption} \label{minimal F}
We have $0, c_0, c_0 b, c_0 \mu\in \mathcal F$ for some constant $c_0 >0$. Moreover, for every $t \in [0,T]$, the class $\mathcal F$ is stable under the following operations: 
\begin{equation} \label{eq def stabilite operateurs}
f \mapsto -f,\; (f,g) \mapsto f g,\;
f  \mapsto \mathsf s_t(f),\; f \mapsto  \mathsf t_t(f),\; f \mapsto \mathsf u_t(f).
\end{equation}
\end{assumption}

Let $\mathrm{diam}_{|\cdot|_\infty}(\mathcal F) = \sup_{f,g\in\mathcal F}|f-g|_\infty$ and write $\mathcal N(\mathcal F, |\cdot|_\infty, \epsilon)$ for the minimal number of $\epsilon$-balls for the $|\cdot|_\infty$-metric that are necessary to cover $\mathcal F$.

\begin{prop} \label{prop entropie minimale}
Let $\mathcal F$ be the minimal set satisfying Assumption \ref{minimal F} for some $c_0 > 0$ such that $c_1 = c_0 \max(|b|_\infty,|\mu|_\infty) < 1$. If moreover $b,\mu \in \mathcal C^s_{\mathcal D}$ for some $s>0$ ($ \mathcal C^s$  is the set of H\"older continuous functions defined in \eqref{eq def holder global}), then 
\begin{equation} \label{eq def entropie}
\mathrm e(\mathcal F) = \int_0^1 \log\big(1+\mathcal N(\mathcal F, |\cdot|_\infty, \epsilon)\big)d\epsilon <\infty.
\end{equation}
\end{prop}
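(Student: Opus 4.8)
The plan is to build $\mathcal F$ explicitly as an increasing union of finite-dimensional "layers" obtained by applying the stable operations to the generating set $\{0, c_0, c_0 b, c_0 \mu\}$, and to track how the H\"older modulus and the sup-norm propagate through each operation, so that the entropy integral can be bounded layer by layer with a geometrically decreasing contribution. First I would observe that each of the operators $\mathsf s_t, \mathsf t_t, \mathsf u_t$ is an isometry for $|\cdot|_\infty$ and maps $\mathcal C^s_{\mathcal D}$ into itself without increasing the H\"older constant (they are compositions with affine, $1$-Lipschitz changes of the time-age coordinates, followed by freezing the first variable at $t$); negation is trivially an $|\cdot|_\infty$-isometry preserving the H\"older constant; and the only operation that is genuinely contractive in the right way is the product $(f,g)\mapsto fg$, since for $f,g$ bounded by some $M\le \max(|c_0 b|_\infty,|c_0\mu|_\infty,c_0,\dots)$ one has $|fg|_\infty \le M^2$ and $[fg]_s \le M[f]_s + M[g]_s$. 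The key quantitative point is that the generators other than the constants have sup-norm at most $c_1 = c_0\max(|b|_\infty,|\mu|_\infty) < 1$, so products of $k$ of them have sup-norm at most $c_1^{k}$ (times a harmless constant for the $c_0$ factor), which decays geometrically.

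Concretely I would define $\mathcal G_0 = \{0, c_0, c_0 b, c_0 \mu\}$ and let $\mathcal G_{k+1}$ be the (finite) set obtained from $\mathcal G_k$ by adjoining all $-f$, all $\mathsf s_t(f), \mathsf t_t(f), \mathsf u_t(f)$ for $f\in\mathcal G_k$ \emph{and} for the finitely many relevant values of $t$ — here one uses that the $t$-dependence enters only through evaluation at a single point, so up to $|\cdot|_\infty$ the result is just a bounded H\"older function on $\R_+$ (or on $\mathcal D$ after the trivial embedding) with controlled constant, and one can discretize the parameter $t\in[0,T]$ at scale $\epsilon^{1/s}$ at cost $\log(T\epsilon^{-1/s})$ in the entropy — and all products $fg$ with $f,g\in\mathcal G_k$. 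Then $\mathcal F = \bigcup_k \mathcal G_k$ is the minimal class satisfying Assumption \ref{minimal F}. The crucial bookkeeping: every $f\in\mathcal G_k\setminus\mathcal G_{k-1}$ is a product of at most $2^k$ generators with at least $k$ of them among $\{c_0 b, c_0\mu\}$ once any genuinely new element is produced — so $|f|_\infty \le C c_1^{\,k}$ for a fixed constant $C$, and simultaneously $[f]_s \le C' k\, c_1^{\,k-1}$ by the Leibniz-type bound above iterated, hence $\mathrm{diam}_{|\cdot|_\infty}(\mathcal G_k$-part$) \lesssim c_1^k$ and the H\"older constant on that part is $\lesssim k c_1^{k-1}$.

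Finally I would bound the entropy integral. On the event that $\mathcal F$ has been decomposed as $\bigcup_k \mathcal A_k$ with $\mathcal A_k = \mathcal G_k\setminus\mathcal G_{k-1}$, covering $\mathcal A_k$ in $|\cdot|_\infty$ at radius $\epsilon$ requires: (a) nothing if $\epsilon \gtrsim c_1^k$ apart from the finitely many cardinality of $\mathcal A_k$ itself, which is at most a tower-type but \emph{finite} number $M_k$ depending only on $k$ (each layer only squares-plus-constant the previous cardinality, times the $O(T\epsilon^{-1/s})$ discretization of $t$, which we fold into the radius); (b) for smaller $\epsilon$, since each $f\in\mathcal A_k$ is $s$-H\"older with constant $L_k \lesssim k c_1^{k-1}$ on the compact set $\mathcal D$ intersected with a bounded support (the weights and generators are supported, via the embedding, in $[0,T]\times[0,T]$ effectively, because $\mathsf s_t, \mathsf u_t$ cannot move mass beyond age $T$ for $t\le T$ — one checks this support bound), a standard covering number for $s$-H\"older balls gives $\log\mathcal N(\mathcal A_k,|\cdot|_\infty,\epsilon) \lesssim (L_k/\epsilon)^{2/s}$ truncated at $M_k$. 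Summing, $\mathrm e(\mathcal F) \le \sum_k \int_0^1 \log(1+\mathcal N(\mathcal A_k,|\cdot|_\infty,\epsilon))d\epsilon \lesssim \sum_k \big(\min(1,c_1^k)\log M_k + \int_0^{c c_1^k}(L_k/\epsilon)^{2/s}d\epsilon\big)$; the first term is summable since $\log M_k$ grows at most like a polynomial in $2^k$ while $c_1^k$ decays geometrically (so $c_1^k \log M_k \lesssim k\, c_1^k\log 2$, summable), and the second integral is $\lesssim L_k^{2/s} c_1^{k(1-2/s)}$ when $s<2$ — which is summable because $L_k \lesssim k c_1^{k-1}$ gives $L_k^{2/s}c_1^{k(1-2/s)} \lesssim k^{2/s} c_1^{k - 2/s}\cdot c_1^{-2/s+2/s}$... — here one must be slightly careful: if $s$ is small the exponent $2/s$ is large and one needs $c_1^{k(1-2/s)}$ times $L_k^{2/s}$ to still be summable; writing it out, the power of $c_1$ is $\tfrac{2}{s}(k-1) + k(1-2/s) = k - \tfrac{2}{s}$, so the summand is $\lesssim k^{2/s} c_1^{\,k-2/s}$, which is summable in $k$ since $c_1<1$ (the first finitely many terms where $k<2/s$ are finite anyway). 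Hence $\mathrm e(\mathcal F)<\infty$.

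\textbf{Main obstacle.} The delicate point I expect to fight with is not any single estimate but the combinatorial/geometric control of the layers simultaneously: one must verify that the three shift operators, when their parameter $t$ ranges over $[0,T]$, genuinely produce a class whose $t$-sections stay uniformly $s$-H\"older \emph{and} uniformly compactly supported (so that the H\"older covering bound applies with a fixed ambient set), and that no iteration of these operators can escape this regime or blow up the H\"older constant faster than the geometric gain $c_1^k$ from the products — i.e. that every newly generated function, before it becomes negligible in $|\cdot|_\infty$, has already absorbed enough $c_1$-factors. Getting the inductive invariant right ("$f\in\mathcal A_k \Rightarrow |f|_\infty\le C c_1^k,\ [f]_s \le C' k c_1^{k-1},\ \mathrm{supp}\, f \subset [0,T]^2$") and checking it is stable under all five operations in \eqref{eq def stabilite operateurs} is where the real work lies; everything after that is the routine H\"older-ball covering estimate and a geometric series.
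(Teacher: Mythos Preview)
Your proposal has two concrete gaps that make the argument fail as written.

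\textbf{First gap: the H\"older-ball covering is too crude.} You invoke the Kolmogorov--Tihomirov estimate $\log\mathcal N(\mathcal A_k,|\cdot|_\infty,\epsilon)\lesssim (L_k/\epsilon)^{2/s}$ for $s$-H\"older functions on a two-dimensional domain, and then compute $\int_0^{c\,c_1^k}(L_k/\epsilon)^{2/s}\,d\epsilon \lesssim L_k^{2/s}c_1^{k(1-2/s)}$. But for every $s\le 1$ (indeed for every $s<2$) one has $2/s>1$, so $\int_0^a \epsilon^{-2/s}\,d\epsilon=+\infty$: the integral you wrote down diverges at $\epsilon=0$, and the ``$\lesssim$'' line is simply a miscalculation of a divergent integral. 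No amount of summing in $k$ repairs this. The entropy of a full H\"older ball on $\mathcal D$ is never small enough for $\mathrm e(\mathcal F)<\infty$; one must exploit additional structure.

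\textbf{Second gap: the layer decay $|f|_\infty\lesssim c_1^{\,k}$ is false.} The operators $\mathsf s_t,\mathsf t_t,\mathsf u_t$ are $|\cdot|_\infty$-isometries, so iterating them produces, for each value of the continuous parameter $t$, a \emph{new} function with the \emph{same} sup-norm as the input. Thus $\mathcal G_k\setminus\mathcal G_{k-1}$ contains uncountably many functions of size $c_1$, not $c_1^k$; only products contract, and your inductive invariant conflates ``depth of the generation tree'' with ``number of product factors''.

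\textbf{What the paper does and why it works.} The paper separates the two kinds of operations. It first closes $\{0,c_0,c_0 b,c_0\mu\}$ under $f\mapsto -f$ and under $\mathsf s_t,\mathsf t_t,\mathsf u_t$ (but \emph{not} under products), and checks by direct composition that this closure $\mathcal F_0$ sits inside the \emph{finite-dimensional parametric} family $\{(s,a)\mapsto f(t_1,t_2+ka-ls):\ t_1,t_2\in[0,T],\ k,l\in\{0,1\},\ f\in\{c_0 b,c_0\mu\}\}$. Because the parameter-to-function map is $s$-H\"older, one gets $\mathcal N(\mathcal F_0,|\cdot|_\infty,\epsilon)\lesssim \epsilon^{-2/s}$ --- a polynomial covering number, so $\log\mathcal N\lesssim \log(1/\epsilon)$, which \emph{is} integrable. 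Only then are products handled: since every factor has $|\cdot|_\infty\le c_1<1$, any product of length $m>m(\epsilon)\sim\log(1/\epsilon)/|\log c_1|$ lies within $\epsilon$ of $0$, and products of length $\le m(\epsilon)$ are covered by $\mathcal N(\mathcal F_0,\epsilon/m(\epsilon))^{m(\epsilon)+1}$ centres. This yields $\log\mathcal N(\mathcal F,|\cdot|_\infty,\epsilon)\lesssim (\log(1/\epsilon))^2$, whose integral over $(0,1)$ is finite. The structural point you are missing is that the shift operators do not generate a large H\"older ball but only a two-parameter curve inside it.
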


\subsubsection*{Concentration properties}

\begin{defi}[mild concentration] A sequence of nonnegative random variables $(X^N)_{N \geq 1}$ has a mild concentration property of order $0 \leq r_N\rightarrow 0$ if for large enough $N$, we have
$$\PP\big(X^N \geq (1+u)r_N\big) \leq \frac{1}{e^u-1}\;\;\text{for every}\;\;u \geq 0.$$
\end{defi}

\begin{assumption} \label{concentration initiale}
The sequence 
$$|w_2|_{1,\infty}^{-1}\max_{h=1,w_2}\mathcal W_{h}^N(\mathcal F)_0$$ has a mild concentration property of order $r_N$
for some $0 \leq r_N \rightarrow 0$.
\end{assumption}


\begin{thm} \label{thm concentration optimal} Work under Assumptions \ref{H basic}, \ref{minimal F} and \ref{concentration initiale}.
Assume moreover $\mathrm{diam}_{|\cdot|_\infty}(\mathcal F) \leq 1$ and 
$$\mathrm e(\mathcal F) = \int_0^1 \log\big(1+\mathcal N(\mathcal F, |\cdot|_\infty, \epsilon)\big)d\epsilon <\infty.$$ 
If $w_2$ has compact support with length support bounded in $N$ by some $\mathfrak u >0$ and satisfies an estimate of the form 
\begin{equation} \label{estimate LinftyL1}
|w_2|_\infty \lesssim \max(N^{1/2},r_N^{-1})|w_2|_1,
\end{equation} then 
$$\big(|w_1|_{1,\infty}|w_2|_{1,\infty}\big)^{-1}\mathcal W_{\,w_1,w_2}^N(\mathcal F)_T\;\;\;\text{and}\;\;\;|w_2|_{1,\infty}^{-1}\mathcal W_{w_2}^N(\mathcal F)_T$$
share both a mild concentration property of order $C\max(r_N,N^{-1/2})$, for an explicitly computable  $C = C(\mathfrak u, \mathrm e(\mathcal F),T,|b|_\infty, |\mu|_\infty, g_0, |w_1|_1, |w_2|_1) >0$ continuous in its arguments. In particular, if $|w_i|_1$, $i=1,2$ is uniformly bounded in $N$, then $C$ can be chosen independently of $N$.
\end{thm}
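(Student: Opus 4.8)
The plan is to track the "error process" $t \mapsto \mathcal W_{w_2}^N(\mathcal F)_t$ through the stochastic dynamics \eqref{eq micro}, decompose it into a drift part (controlled by Gronwall-type arguments using the stability of $\mathcal F$ under the operators $\mathsf s_t, \mathsf t_t, \mathsf u_t$) and a martingale part (controlled by concentration of empirical processes under entropy conditions), and then close the loop from $t=0$ to $t=T$.

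First I would write, for a fixed $f \in \mathcal F$, the semimartingale decomposition of $\int_{\R_+} w_2(t-a) f_t(a) Z^N_t(da)$ obtained by applying the SDE \eqref{eq micro}: a transport term coming from $\tau_t Z_0^N$, a birth term, a death term, each split into its compensator (an integral against $b(s,\cdot) Z^N_s$ or $\mu(s,\cdot) Z^N_s$) and a martingale increment against the compensated Poisson measures $\widetilde{\mathcal Q}_i$. Subtracting the analogous identity for $g$ solving \eqref{McKendrick} (integrated weakly against the same test function), the transport and compensator terms reorganise into expressions of the form $\int_0^t (\text{bounded})\,\mathcal W_{w_2}^N(\mathcal F)_s\,ds$ plus $\int_0^t (\text{bounded})\,\mathcal W_{1,w_2}^N(\mathcal F)_s\,ds$: this is exactly where Assumption \ref{minimal F} is used, since the functions $a \mapsto w_2(s-a) b(s,a) f(\cdot)$ etc. appearing after the change of variables induced by the shift must again lie (up to the constant $c_0$ and the factor $|w_2|_\infty$) in the cone generated by $\mathcal F$, so that their contribution is dominated by the supremum defining $\mathcal W_{w_2}^N(\mathcal F)$ or $\mathcal W_{1,w_2}^N(\mathcal F)$. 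The condition $c_1 < 1$ from Proposition \ref{prop entropie minimale} guarantees the resulting Gronwall constant is finite. The martingale part is handled by a chaining/Talagrand-type bound: the predictable quadratic variation of the martingale, uniformly over $f \in \mathcal F$, is bounded by $C N^{-1} \mathfrak u |w_2|_\infty^2 \int_0^T \langle Z^N_s + g_s, {\bf 1}\rangle ds$, which using $|w_2|_\infty \leq (|w_2|_\infty |w_2|_1 / |w_2|_1)$ and the normalisation $|\cdot|_{1,\infty}$ produces a variance of order $N^{-1}$; combined with the finite entropy integral $\mathrm e(\mathcal F)$ one gets, via Ledoux--Talagrand \cite{LEDOUXTALAGRAND}, an exponential tail, i.e.\ a mild concentration of order $N^{-1/2}$ for the martingale contribution. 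The role of \eqref{estimate LinftyL1} is precisely to ensure that the $|w_2|_\infty$-driven martingale fluctuation does not exceed the target rate $\max(r_N, N^{-1/2})$ after renormalisation by $|w_2|_{1,\infty}$.

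Next, to pass from $\mathcal W_{w_2}^N(\mathcal F)_t$ to its integrated version $\mathcal W_{w_1,w_2}^N(\mathcal F)_T$, I would note that the weight $w_1$ only enters as an outer $L^1$-in-time integration, so a Fubini argument plus the pointwise-in-$t$ control already obtained yields $\mathcal W_{w_1,w_2}^N(\mathcal F)_T \lesssim |w_1|_1 \sup_{t \leq T} \mathcal W_{w_2}^N(\mathcal F)_t$ up to an additional martingale term whose quadratic variation carries the extra $|w_1|_1^2$ factor; dividing by $|w_1|_{1,\infty}|w_2|_{1,\infty}$ and using $|w_1|_1 / |w_1|_{1,\infty} = (|w_1|_1/|w_1|_\infty)^{1/2} \leq (\text{support length})^{1/2}$ keeps the constant controlled. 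Throughout, the a.s.\ bound $\sup_N \langle Z^N_0, {\bf 1}\rangle \lesssim 1$ together with a mass-control lemma (total mass stays bounded on $[0,T]$, a standard consequence of \eqref{eq micro} and $|b|_\infty < \infty$) is what makes all the quadratic-variation estimates uniform; the final constant $C$ is assembled by bookkeeping of the Gronwall factor $\exp(c\,T)$, the entropy integral, the mass bound (depending on $g_0$ through $\int g_0$), $\mathfrak u$, $|b|_\infty$, $|\mu|_\infty$ and $|w_i|_1$.

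The main obstacle I anticipate is the drift-term bookkeeping under the three shift operators: after the change of variables $a \mapsto a + t - s$ induced by the transport in \eqref{eq micro}, the test function acquires a time-dependent argument shift, and one must verify carefully that the resulting family still sits inside $\mathcal F$ (scaled by $c_0$) so that it is genuinely dominated by $\mathcal W_{w_2}^N(\mathcal F)$ rather than by some larger, uncontrolled quantity — this is the whole point of the stability axioms in Assumption \ref{minimal F}, and getting the constants $c_0, c_1$ to propagate correctly through the Gronwall iteration (so that the geometric series $\sum c_1^k$ converges) is the delicate quantitative core. A secondary difficulty is that the "mild concentration" notion is not closed under addition in an elementary way, so combining the $O(r_N)$ initial input (Assumption \ref{concentration initiale}), the $O(N^{-1/2})$ martingale term, and the Gronwall amplification requires a careful union-bound argument at the level of the fluctuation probabilities, which is where the specific form $1/(e^u-1)$ of the mild concentration bound is exploited.
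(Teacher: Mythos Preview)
Your overall architecture (semimartingale decomposition, drift control via the stability axioms and Gr\"onwall, martingale control via chaining under the entropy integral) is exactly the one the paper follows up to and including its Theorem~\ref{prop concentration final}. The genuine gap is in the sentence where you claim the predictable quadratic variation, bounded by $CN^{-1}|w_2|_\infty^2\int_0^T\langle Z^N_s+g_s,{\bf 1}\rangle ds$, becomes of order $N^{-1}$ after dividing by $|w_2|_{1,\infty}^2$. Since $|w_2|_{1,\infty}^2=|w_2|_1|w_2|_\infty$, the ratio is $N^{-1}|w_2|_\infty/|w_2|_1$, and assumption~\eqref{estimate LinftyL1} only tells you this is $\lesssim N^{-1/2}$, not $N^{-1}$. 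So the direct chaining argument gives a martingale fluctuation of order $N^{-1/4}$ after renormalisation by $|w_2|_{1,\infty}$, which is not sharp. The paper states this explicitly (Remark~\ref{remarque first concentration marting}(ii)): the crude compensator bound loses exactly one square root of the kernel dilation, and ``improving on this estimate is actually the key difficulty in the proof of Theorem~\ref{thm concentration optimal}''.

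The missing idea is a bootstrap. First run your whole argument with the crude bound: this is Theorem~\ref{prop concentration final}, which yields a mild concentration property for $(|w_1|_{1,\infty}|w_2|_\infty)^{-1}\mathcal W_{w_1,w_2}^N(\mathcal F)_T$ --- the wrong normalisation in $w_2$, but enough for the iteration. Now go back inside the exponential-moment computation (Step~4 of Proposition~\ref{prop concentration}) and, instead of bounding $\int_0^T\int_{\R_+}\big(w_2(s)+w_2(s-a)\big)Z_s^N(da)ds$ by $2T|w_2|_\infty\sup_t\langle Z_t^N,{\bf 1}\rangle$, add and subtract $g(s,a)da$: the deterministic part is $\lesssim |w_2|_1$ (via Lemma~\ref{estimate g}), and the random part is precisely $\mathcal W_{w_2,1}^N(\mathcal F)_T+\mathcal W_{1,w_2}^N(\mathcal F)_T$, which is now controlled by the preliminary (suboptimal) concentration from the first pass. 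Re-running the chaining with this sharper compensator bound replaces $c_1\propto |w_2|_\infty$ by $c_1'\propto |w_2|_1+\varepsilon_N|w_2|_\infty$, and it is only at \emph{this} point that assumption~\eqref{estimate LinftyL1} is used --- to ensure $\varepsilon_N|w_2|_\infty\lesssim |w_2|_1$, so that the whole package collapses to $|w_2|_{1,\infty}$. Without this two-pass argument the theorem as stated is out of reach.
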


Several remarks are in order: {\bf 1)} If the initial condition $Z_0^N$ is close to its limit $g_0$ in $\mathcal W_{w_2}(\mathcal F)_0$-norm of order $r_N$, Theorem \ref{thm concentration optimal} states that the error inflates in $\mathcal W_{w_2}(\mathcal F)_t$-norm by a factor no worse than $N^{-1/2}$ for $t \in [0,T]$. In particular, whenever $r_N \lesssim N^{-1/2}$, the error propagation is stable. {\bf 2)} The order of magnitude of the error propagation is $\max(N^{-1/2},r_N)$, as one could expect. As for the order in terms of $w_1$ or $w_2$, the ideal order would be the integrated squared-error norm $|w_i|_{2}$ as a variance term in a central limit theorem for instance. Here, we obtain the slightly worse interpolation quantity $|w_i|_{1,\infty}$ which is always bigger than $|w_i|_2$. However, for statistical purposes, when $w_i$ is replaced by a kernel $w_i = h_N^{-1}K(h_N^{-1}\cdot)$ for some kernel $K$ such that $|K|_1=1$, the order is sharp, since in that case 
$$|w_i|_{1,\infty} \approx h_N^{-1/2} \approx |w_i|_{2}$$
and moreover $|w_i|_1$ is uniformly bounded in $N$.  
The fact that we have here the correct order for dilating kernels is crucial for nonparametric estimation and is the main purpose (and difficulty) of Theorem \ref{thm concentration optimal}. This seems to be a standard situation for nonparametric estimation in structured populations, where such effects are also met, see \cite{DHKR1, HOFFMANN2016, BHO}.  {\bf 3)} If $w_2$ is not compactly supported or if \eqref{estimate LinftyL1} does not hold, we still have 
that 
$$\big(|w_1|_{1,\infty}|w_2|_{\infty}\big)^{-1}\mathcal W_{\,w_1,w_2}^N(\mathcal F)_T\;\;\;\text{and}\;\;\;|w_2|_{\infty}^{-1}\mathcal W_{w_2}^N(\mathcal F)_T$$
share both a mild concentration property of order $C\max(r_N,N^{-1/2})$,
as explicitly obtained in the proof. However, such a result is not sufficient for nonparametric estimation: picking $w_2 =  h_N^{-1}K(h_N^{-1}\cdot)$ yields $|w_2|_\infty \approx h_N^{-1} $ which is dramatically worse than the expected $h_N^{-1/2}$ in kernel estimation.
 {\bf 4)} The constant $C$ also depends on the length of the support of $w_2$, but that may be considered as fixed once for all for later statistical purposes. {\bf 5)} Assumption \ref{concentration initiale} implies the moment estimate 
\begin{equation} \label{eq: moment estimate}
\E\big[\max_{h=1,w_2}\mathcal W_{h}^N(\mathcal F)_0^p\big] \lesssim |w_2|_{1,\infty}^p r_N^{p}\;\;\text{for every}\;\; p>0. 
\end{equation}
{\bf 6)} We finally give a reasonable and sufficient condition for Assumption \ref{concentration initiale} to hold. 
\begin{prop} \label{prop: KleinRio}
If $\mathcal F$ is uniformly bounded (in particular if $\mathcal F$ is the minimal set of Proposition \ref{prop entropie minimale}) and if 
if $NZ_0^N(da)$ consists of a $N$-drawn of independent random variables with common distribution $g_0(a)da$ (with the normalisation assumption $\int_{\R_+}g_0(a)da=1$), we have Assumption  \ref{concentration initiale}.
\end{prop}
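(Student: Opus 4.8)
The plan is to recognise $\mathcal{W}_h^N(\mathcal{F})_0$, for $h\in\{1,w_2\}$, as the supremum of a centred i.i.d.\ empirical process, and then to combine a chaining bound on its expectation with a Talagrand-type concentration inequality around that expectation (in the sharp form of Klein and Rio; see also \cite{LEDOUXTALAGRAND}). Since $NZ_0^N=\sum_{i=1}^N\delta_{a_i}$ with $a_1,\dots,a_N$ i.i.d.\ of density $g_0=g(0,\cdot)$ and $\int_{\R_+}g_0=1$, one has $\int_{\R_+}\phi(a)\big(Z_0^N(da)-g_0(a)da\big)=N^{-1}\sum_{i=1}^N\big(\phi(a_i)-\E[\phi(a_1)]\big)$ for any bounded $\phi$. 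Taking $\phi=\phi_f:=w_2(-\,\cdot)f(0,\cdot)$ when $h=w_2$ and $\phi=f(0,\cdot)$ when $h=1$, and using the stability of $\mathcal{F}$ under $f\mapsto -f$,
$$\mathcal{W}_{w_2}^N(\mathcal{F})_0=\sup_{f\in\mathcal{F}}\Big|N^{-1}\textstyle\sum_{i=1}^N\big(\phi_f(a_i)-\E[\phi_f(a_1)]\big)\Big|,$$
and likewise for $\mathcal{W}_1^N(\mathcal{F})_0$ with $\Phi_1=\{f(0,\cdot):f\in\mathcal{F}\}$ in place of $\Phi=\{\phi_f:f\in\mathcal{F}\}$.

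Next I would record the three quantities that drive the concentration of such a supremum. Writing $M=\sup_{f\in\mathcal{F}}|f|_\infty<\infty$ (uniform boundedness of $\mathcal{F}$) and using $g_0\in\mathcal{L}_{\mathcal{D}}^{\mathrm{age}}$ together with $\int g_0=1$: the variance proxy is $\sigma^2:=\sup_f\mathrm{Var}(\phi_f(a_1))\le M^2|g_0|_\infty|w_2|_2^2\le M^2|g_0|_\infty|w_2|_{1,\infty}^2$ (using $|w_2|_2\le|w_2|_{1,\infty}$); the envelope of $\Phi$ is $\le M|w_2|_\infty$; and $\|\phi_f-\phi_g\|_{L^2(g_0\,da)}\le|g_0|_\infty^{1/2}|w_2|_{1,\infty}|f-g|_\infty$, so the $L^2(g_0\,da)$-bracketing numbers of $\Phi$ are bounded by the $|\cdot|_\infty$-covering numbers of $\mathcal{F}$ at a rescaled radius, whence the entropy integral of $\Phi$ is at most a constant times $|g_0|_\infty^{1/2}|w_2|_{1,\infty}\,\mathrm{e}(\mathcal{F})<\infty$ by Proposition \ref{prop entropie minimale}; the same holds for $\Phi_1$ with $|w_2|_{1,\infty}$ replaced by $1$. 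A standard maximal inequality under entropy (Dudley's chaining after symmetrisation, see \cite{GINENICKL}) then yields
$$\E\big[\mathcal{W}_{w_2}^N(\mathcal{F})_0\big]\le c_\ast|w_2|_{1,\infty}N^{-1/2},\qquad \E\big[\mathcal{W}_1^N(\mathcal{F})_0\big]\le c_\ast N^{-1/2},$$
with $c_\ast$ depending only on $M$, $|g_0|_\infty$, $\mathrm{e}(\mathcal{F})$.

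For the deviation step I would invoke Talagrand's inequality for the supremum of a bounded empirical process: with the $\sigma^2$ and envelope bounds above,
$$\PP\Big(\mathcal{W}_{w_2}^N(\mathcal{F})_0\ge\E[\mathcal{W}_{w_2}^N(\mathcal{F})_0]+t\Big)\le\exp\!\Big(-\frac{c\,Nt^2}{\sigma^2+|w_2|_\infty\,\E[\mathcal{W}_{w_2}^N(\mathcal{F})_0]+|w_2|_\infty t}\Big),\quad t\ge 0,$$
for a universal $c>0$, and analogously for $\mathcal{W}_1^N(\mathcal{F})_0$. I would then set $r_N=C_0\max\big(N^{-1/2},\,N^{-1}(|w_2|_\infty/|w_2|_1)^{1/2}\big)$, which tends to $0$ provided $|w_2|_\infty=o(N^2|w_2|_1)$; in particular $r_N$ is of exact order $N^{-1/2}$ for the kernels $w_2=h^{-1}K(h^{-1}\cdot)$ of Section \ref{sec stat oracle} as soon as $Nh\ge1$, for which moreover $|w_2|_{1,\infty}$ stays bounded below. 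Choosing $C_0$ large enough (depending on $c,c_\ast,M,|g_0|_\infty$) that $C_0\,r_N|w_2|_{1,\infty}\ge 2\,\E[\mathcal{W}_{w_2}^N(\mathcal{F})_0]$, I would plug $t=(1+u)r_N|w_2|_{1,\infty}-\E[\mathcal{W}_{w_2}^N(\mathcal{F})_0]\ge\tfrac12(1+u)r_N|w_2|_{1,\infty}$ into the bound and estimate the exponent from below in the two regimes: the sub-Gaussian one (the $\sigma^2$-term, via $Nr_N^2\ge C_0^2$) and the sub-exponential one (the $|w_2|_\infty t$-term, via $Nr_N|w_2|_{1,\infty}/|w_2|_\infty\ge C_0$, which is precisely why $r_N$ carries that second term). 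This gives $\PP\big(|w_2|_{1,\infty}^{-1}\mathcal{W}_{w_2}^N(\mathcal{F})_0\ge(1+u)r_N\big)\le\tfrac12 e^{-u}$ for large $N$, uniformly in $u\ge0$, and the same for $\mathcal{W}_1^N(\mathcal{F})_0$; summing the two probabilities and using $e^{-u}\le(e^u-1)^{-1}$ yields the claimed mild concentration of order $r_N$ for $|w_2|_{1,\infty}^{-1}\max_{h=1,w_2}\mathcal{W}_h^N(\mathcal{F})_0$.

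The bookkeeping of the second step and the chaining bound are routine. The one genuinely delicate point will be the calibration in the last step, i.e.\ choosing $r_N$ and $C_0$ so that the Talagrand bound falls in the regime $(e^u-1)^{-1}$ for \emph{every} $u\ge0$; the binding constraint is the sub-exponential (Bernstein) tail, where the size of $|w_2|_\infty$ relative to $|w_2|_1$ controls how small $r_N$ can be taken — hence, for $N$-dependent weights such as shrinking kernels, one cannot improve on the stated order unless this ratio is itself controlled.
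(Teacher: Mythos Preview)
Your proposal is correct and follows the same overall strategy as the paper: recognise $\mathcal W_h^N(\mathcal F)_0$ as a supremum of a centred i.i.d.\ empirical process, bound the three Talagrand inputs (envelope, variance, expected supremum), and then apply a Klein--Rio/Talagrand concentration inequality around the mean. Your explicit calibration of $r_N$ and the discussion of the sub-Gaussian versus sub-exponential regimes is in fact more detailed than what the paper writes out (the paper defers this last step to Lemma~6.1 of Comte \emph{et al.}\ \cite{COMTEDEDECKERTAUPIN}).

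The one noteworthy technical difference is in the bound on the expected supremum. You chain directly on the weighted class $\Phi=\{w_2(-\cdot)f(0,\cdot):f\in\mathcal F\}$ in $L^2(g_0\,da)$, which yields $\E[\mathcal W_{w_2}^N(\mathcal F)_0]\lesssim N^{-1/2}|w_2|_{1,\infty}$. The paper instead inserts $\int w_2(-a)g(A_i)g_0(a)\,da$ to split the sum as
\[
N^{-1}\sum_i\big(w_2(-A_i)-\E[w_2(-A_i)]\big)g(A_i)\;+\;\Big(\int w_2(-a)g_0(a)\,da\Big)\,\nu_{w_2}^N(g),
\]
where $\nu_{w_2}^N$ is an empirical process under a $w_2$-biased law. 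The first piece is handled by uniform boundedness of $\mathcal F$ and a single Cauchy--Schwarz on $\sum_i(w_2(-A_i)-\E[w_2(-A_i)])$, with no chaining; the second piece is chained on $\mathcal F$ alone (Hoeffding increments in $\psi_2$), and the $w_2$-dependence sits entirely in the scalar prefactor $\int w_2 g_0\lesssim|w_2|_2$. This buys a slightly sharper constant ($|w_2|_2$ in place of $|w_2|_{1,\infty}$) and decouples the weight from the function class, but since the final statement is in terms of $|w_2|_{1,\infty}\ge|w_2|_2$ your direct route is equally sufficient.
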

The proof is based on a concentration inequality of Klein and Rio \cite{KLEINRIO} and is developed in a statistical setting in Comte {\it et al.} \cite{COMTEDEDECKERTAUPIN} and
is delayed until Appendix \ref{proofKleinRio}\\

We end this section by giving a global stability result for the propagation of the error $Z^N_t(da)-g(t,a)da$, given a preliminary control on $Z^N_0(da)-g(0,a)da$, which relies on the techniques developed in Theorem \ref{thm concentration optimal}, but with a weaker moment condition for the initial control of the particle system.  

\begin{prop} \label{prop coherence 1}  Work under Assumptions \ref{H basic} and \ref{minimal F}. 
If 
\begin{equation} \label{eq: controle init propagation}
\E\big[\max_{k=1,w_2}\mathcal W_{k}^N(\mathcal F)_0^p\big] \leq |w_2|_{1,\infty}^p r_N^p
\end{equation}
for some $r_N \geq 0$ and $p \geq 1$, and if $w_2$ is compactly supported and satisfies an estimate of the form $|w_2|_\infty \lesssim \max(N^{1/2},r_N^{-1})|w_2|_1$, then 
\begin{equation} \label{error propagation}
\E\big[\mathcal W_{w_2}^N(\mathcal F)_T^p\big] \lesssim |w_2|_{1,\infty}^p \max(N^{-p/2},r_N^p)
\end{equation}
and
\begin{equation} \label{eq error W2}
\E\big[\mathcal W_{w_1,w_2}^N(\mathcal F)_T^p\big] \lesssim (|w_1|_{1,\infty}|w_2|_{1,\infty})^p\max(N^{-p/2},r_N^p).
\end{equation}
\end{prop}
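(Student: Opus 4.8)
The plan is to repeat, \emph{mutatis mutandis}, the proof of Theorem~\ref{thm concentration optimal}, but keeping track of $L^p$-moments instead of exponential tails at each step. Recall that this proof decomposes the weighted fluctuation $\big\langle Z^N_t-g(t,\cdot),\,w_2(t-\cdot)f_t\big\rangle$ into a \emph{transported initial error} (the mass of individuals present at $t=0$, pushed forward along the characteristics of slope one), a \emph{martingale part} obtained by compensating the two Poisson integrals in \eqref{eq micro}, and a \emph{renewal feedback} term carrying the individuals born during $(0,t)$, whose mass is governed by the boundary condition $g(s,0)=\int_0^\infty b(s,a)g(s,a)\,da$. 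Only two ingredients of that proof are genuinely probabilistic: the control of the initial term, which here is replaced directly by the hypothesis \eqref{eq: controle init propagation}, and the concentration of the compensated Poisson integrals, for which we substitute a Burkholder--Davis--Gundy (or Rosenthal-type) moment inequality; everything else is deterministic bookkeeping insensitive to the passage from tails to moments.

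Concretely, I would first handle the transported initial error: using the stability of $\mathcal F$ under $\mathsf s_t,\mathsf t_t,\mathsf u_t$ (Assumption~\ref{minimal F}), this term is rewritten as $\big\langle Z^N_0-g_0,\,\widetilde w_2\,\widetilde f\big\rangle$ with $\widetilde f\in\mathcal F$ and $\widetilde w_2(a)=w_2(-a)$ a weight sharing the $L^1$- and $L^\infty$-norms of $w_2$, so that its $p$-th moment is $\le\E\big[\max_{k=1,w_2}\mathcal W^N_k(\mathcal F)_0^p\big]\le|w_2|_{1,\infty}^p r_N^p$. For the martingale part, the predictable quadratic variation is of order $N^{-1}\int_0^t\big\langle Z^N_s,\,(|b|_\infty+|\mu|_\infty)\,w_2(s-\cdot)^2\big\rangle\,ds$, and combining the compact support of $w_2$ with \eqref{estimate LinftyL1} reduces this to order $N^{-1}|w_2|_{1,\infty}^2$; BDG together with a chaining bound over $\mathcal F$ (via $\mathrm{diam}_{|\cdot|_\infty}(\mathcal F)\le1$ and $\mathrm e(\mathcal F)<\infty$) then gives $\E\big[\sup_{f\in\mathcal F}|M^N_t(f)|^p\big]\lesssim\big(|w_2|_{1,\infty}N^{-1/2}\big)^p$ with the claimed constant. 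The renewal feedback term, after a further use of the stability of $\mathcal F$ (recall $c_0b\in\mathcal F$) and of $\mathsf t_t$, is dominated by a time integral of the same $\mathcal W^N$-functionals at earlier times plus another martingale remainder; taking $L^p$-norms and using Minkowski's integral inequality yields, for $u_p(t)=\E\big[\mathcal W^N_{w_2}(\mathcal F)_t^p\big]^{1/p}$, an inequality $u_p(t)\le a_p+\beta\int_0^t u_p(s)\,ds$ with $a_p\lesssim|w_2|_{1,\infty}\max(N^{-1/2},r_N)$ and $\beta\lesssim|b|_\infty$, and Gronwall's lemma gives \eqref{error propagation}. The integrated statement \eqref{eq error W2} follows by running the same three-step decomposition on the two-parameter object $\sup_{f\in\mathcal F}\int_0^T w_1(s)\langle\cdots\rangle\,ds$, the extra integration against $w_1$ being absorbed by an analogous $L^1$/$L^\infty$ interpolation in the $w_1$-variable.

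The main obstacle --- exactly as in Theorem~\ref{thm concentration optimal} --- is to obtain the interpolated norms $|w_2|_{1,\infty}$ (and $|w_1|_{1,\infty}$) rather than the crude $|w_2|_\infty$, which is what makes the estimate usable for dilating kernels $w_i=h_N^{-1}K(h_N^{-1}\cdot)$; this forces one to exploit simultaneously the compact support of $w_2$, the balance \eqref{estimate LinftyL1}, and a careful splitting of the quadratic-variation bound of the Poisson martingales, just as in the concentration proof. By contrast, the substitutions (BDG for Bernstein, Minkowski for union bounds, Gronwall in $L^p$ for the iteration in probability) are routine, which is precisely why the constants in \eqref{error propagation}--\eqref{eq error W2} keep the same structure; one only has to check in passing that the renewal/Picard iteration over the finite horizon $[0,T]$ converges at the $L^p$-level, which is guaranteed since the feedback constant is $\le|b|_\infty$ and $T<\infty$ (and geometrically so when $c_1<1$ as in Proposition~\ref{prop entropie minimale}).
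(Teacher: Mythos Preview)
Your approach is correct but takes a genuinely different and more laborious route than the paper's.

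The paper observes that the \emph{pathwise} inequality underlying your Gronwall step is already available as Proposition~\ref{global stability} (combined with the decomposition in Step~6 of the proof of Theorem~\ref{thm concentration optimal}), giving directly
\[
\mathcal W_{w_2}^N(\mathcal F)_T \;\lesssim\; \max_{k=1,w_2}\mathcal W_{k}^N(\mathcal F)_0 \;+\; \max_{h,k=1,w_2}\mathcal M_{h,k}^N(\mathcal F)_{T} \;+\; \mathcal M_{w_2}^N(\mathcal F)_T,
\]
so no new $L^p$-Gronwall argument is needed. For the initial term one uses the moment hypothesis \eqref{eq: controle init propagation} exactly as you do. For the martingale terms, rather than redoing the analysis with BDG and chaining, the paper simply quotes the \emph{mild concentration property} of $\bigl([w_2]_{1,\infty}^{\varepsilon_N}\bigr)^{-1}\mathcal M_{w_2}^N(\mathcal F)_T$ already established in Steps~2--5 of the proof of Theorem~\ref{thm concentration optimal}, and converts exponential tails into moments via $\E[Z^p]=p\int_0^\infty x^{p-1}\PP(Z\ge x)\,dx$. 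The crucial $|w_2|_{1,\infty}$-versus-$|w_2|_\infty$ bootstrap that you correctly identify as the main obstacle is thus not repeated: it is inherited wholesale from the concentration proof, and the sharp scaling comes for free once one integrates the tail.

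What your approach buys is self-containment: you never invoke exponential-tail machinery, and the BDG route makes transparent that only $L^p$ control is needed throughout. What the paper's approach buys is economy: three lines instead of a full re-derivation, at the cost of relying on the earlier (and harder) concentration result. Note also that both routes implicitly need $\mathrm{diam}_{|\cdot|_\infty}(\mathcal F)\le 1$ and $\mathrm e(\mathcal F)<\infty$ for the supremum over $\mathcal F$; these are not listed in the hypotheses of Proposition~\ref{prop coherence 1} but are required by the martingale bounds being cited (and you correctly include them).
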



\section{Nonparametric estimation of $g$ and $\mu$} \label{sec stat oracle}


\subsection{Kernel approximation}


\begin{defi}
A kernel $K$ of (integer) order $\ell_0 \geq 1$ is a bounded function with compact support in $\R_+$ such that
$$
\int_{0}^\infty\kappa^{\ell-1} K(\kappa)d\kappa = {\bf 1}_{\{\ell=1\}},\;\;\text{for}\;\;\ell=1,\ldots, \ell_0-1.
$$
\end{defi}
For a bandwidth $h >0$, we set $K_h(\kappa) = h^{-1}K(h^{-1}\kappa)$ so that $|K_h|_1 = |K|_1$.
In order to approximate functions of $\mathcal L_{\mathcal D}^\infty $, we use bivariate kernels defined by
$$H \otimes K(t,a) = H(t)K(a)\;\;\text{for}\;\;(t,a) \in \mathcal D,$$
 with 
$H \in \mathcal L_{\mathcal D}^{{\small\mathrm{\,time}}}$, $K \in \mathcal L_{\mathcal D}^{{\small\mathrm{\,age}}}$. 
For a bivariate bandwidth $\boldsymbol{h} = (h_1, h_2)$ with $h_i>0$, let
$$(H \otimes K)_{\boldsymbol h}(t,a)=  H_{h_1}(t)K_{h_2}(a)$$
and define the linear approximation
\begin{equation} \label{kernel approx}
(H \otimes K)_{\boldsymbol h} \star f(t,a)=\int_{0}^T\int_{0}^\infty f(s,u) (H \otimes K)_{\boldsymbol h}(t-s, a-u)dsdu.
\end{equation}

We may also approximate $f$ in another system of coordinates: if 
$\varphi: \mathcal D \rightarrow \mathcal D$ is invertible, reparametrise $f$ via
$$f(t,a) = \widetilde f \circ \varphi(t,a)$$
and define the $\varphi$-skewed linear approximation 
\begin{align*}
(H \otimes  K)_{\boldsymbol h} \circ \varphi) \star f (t,a) & =\int_{0}^T\int_{0}^\infty f(s,u)\big( (H \otimes K)_{\boldsymbol h}\circ \varphi \big)(s-t,u-a)dsdu 
\end{align*}
so that 
$
\big((H \otimes K)_{\boldsymbol h} \circ \varphi\big) \star f (t,a) = (H \otimes K)_{\boldsymbol h} \star \widetilde f \big(\varphi(t,a)\big).
$
The $\varphi$-skewed approximation potentially has better approximation properties for $\widetilde f$ in the viscinity of $\varphi(t,a)$ than $f$ in the viscinity of $(t,a)$, as will become transparent in Section \ref{sec: minimax adaptive} below.

\subsection{Construction of estimators of $g$ and $\mu$} \label{sec constrution estimators}

\subsubsection*{Construction of an estimator of $g$} Let $K \in \mathcal L_{\mathcal D}^{{\small\mathrm{\,age}}}$ be a kernel of order $\ell_0 \geq 0$.
For $(t,a) \in \mathcal D$, we consider the family of estimators 
\begin{equation} \label{def est g}
\widehat g_{h}^N(t,a) = K_h \star Z_t^N(a) = \int_{\R_+} K_h(u-a)Z^N_t(du),\;\;h>0.
\end{equation} 

\begin{rk} At first glance, it may seem  slightly suprising to build an estimator of the bivariate function $g(t,a)$ by means of \eqref{def est g} that uses data $Z_t^N$ only and discards the observation $(Z_s^N, s \neq t)$. For instance, one may consider estimators of the form 
$$\big((H \otimes K)_{\boldsymbol h} \circ \varphi \big) \star Z^N (t,a)=\int_{0}^T\int_{\R_+}\big( (H \otimes K)_{\boldsymbol h}\circ \varphi \big)(s-t,u-a)Z_s^N(du)$$
Formally $\widehat g_{h}^N(t,a) = (H_{h_1=0} \otimes K_h) \star Z^N (t,a)$ without any specific change of coordinates and we will see that such a simple procedure already achieves minimax optimality, see Section \ref{sec: minimax optimality} below.
\end{rk}

\subsubsection*{Construction of the process of death occurences}
We first extract from the data $(Z_t^N(da))_{0 \leq t \leq T}$ the random measure 
$$\Gamma^N(dt,da) = \sum_{i \geq 1}\delta_{(T_i, A_i)}(dt,da)\;\;\text{on}\;\;[0,T] \times \R_+$$ 
associated with the successive times  $T_i$ of the death occurences of the population during the observation period $[0,T]$, together with the corresponding ages $A_i$ of the individuals that die at time $T_i$. 

Remember that the evaluation mappings $a_i(Z_t^N)$ in the representation
$Z_t^N = N^{-1}\sum_{i \geq 1}
\delta_{a_i(Z_t^N)}$
are ordered: 
$$a_1(Z_t^N) < a_2(Z_t^N) < \ldots$$ and that $t \mapsto a_i(Z_t)$ is increasing with slope one unless a birth or a death occurs, in which case we have a non-negative or a negative jump.  It follows that
\begin{equation} \label{def death jumps}
\Gamma^N(dt, da) = \sum_{s >0} {\bf 1}_{\{i^\star = \inf\{i \geq 1, \Delta a_i(Z_s^N) >0\} < \infty\}} \delta_{(s, a_{i^\star}(Z_{s^-}^N))}(dt, da)
\end{equation}
on $[0,T] \times \R_+$,  
where we set $\Delta a_i(Z_s^N) =  a_i(Z_s^N)- a_i(Z_{s^-}^N)$ and with the usual convention $\inf \emptyset = \infty$. This second representation in terms of the jump measure of the processes $a_i(Z_t^N)$ gives an explicit construction of $\Gamma^N(dt, da)$ as a function of $(Z_t^N(da), t \in [0,T])$. 

\subsubsection*{Construction of an estimator of $\mu$}

Let $H \in \mathcal L_{\mathcal D}^{{\small\mathrm{time}}}$ and $K \in \mathcal L_{\mathcal D}^{{\small\mathrm{age}}}$ be two kernels. 
For $(t,a) \in \mathcal D$ and $\varphi(t,a) = (t, t-a)$, consider the family 
\begin{equation} \label{eq def est gamma}
\widehat \pi_{\boldsymbol h}^N(t,a) = \int_0^T\int_{\R_+} \big((H \otimes K)_{\boldsymbol h}\circ \varphi\big)(s-t,u-a) \Gamma^N(ds,du),\;\;\boldsymbol h = (h_1,h_2)\;\;\text{with}\;\;h_i >0,
\end{equation}
that estimate the function $\pi = \mu g$. An estimator of $\mu(t,a)$ is obtained by considering the ratio
\begin{equation} \label{eq def est mu}
\widehat \mu_{h, \boldsymbol h}^N(t,a)_{\varpi} = 
\frac{\widehat \pi_{\boldsymbol h}^N(t,a)}{\widehat g_{h}^N(t,a)\vee \varpi}
\end{equation}
for some threshold $\varpi >0$, 
and is thus specified by the bandwidths $h>0$, $\boldsymbol h = (h_1,h_2)$ with $h_i >0$ and $\varpi >0$.

\subsection{Oracle inequalities} \label{sec oracle inequalities}
\subsubsection*{Estimation of $g$, data-driven bandwidth}
Pick a lattice $\mathcal G_1^N$ included in $[N^{-1/2}, (\log N)^{-1} ]$ and such that $\mathrm{Card}(\mathcal G_1^N) \lesssim N$. The algorithm, based on the Lepski's principle as defined in the Goldenshluger-Lepski's method \cite{GOLDENSHLUGERLEPSKI1, GOLDENSHLUGERLEPSKI2} requires the family of linear estimators 
$$\Big(\widehat g_{h}^N(t,a), h\in \mathcal G_1^N\Big)$$
defined in \eqref{def est g} and selects an appropriate bandwidth $h = \widehat h^N(t,a)$ from the data $(Z_t^N(da))_{0 \leq t \leq T}$. For $(t,a) \in \mathcal D$, writing $\{x\}_+ = \max(x,0)$, define
$$\mathsf A_{h}^N(t,a) = \max_{h' \leq h, h'\in \mathcal G_1^N}\big\{\big(\widehat g_{h}^N(t,a)-\widehat g_{h'}^N(t,a)\big)^2-(\mathsf V_{h}^N+\mathsf V_{h'}^N)\big\}_+,$$
where
\begin{equation} \label{def upper variance}
\mathsf V_{h}^N = \big(4 (\log N)C^\star N^{-1/2}|K_h|_{1,\infty}\big)^2
\end{equation}
and $C^\star$ is a (known) upper bound of the constant $C$ of Theorem \ref{thm concentration optimal}. (Remember that the constant $C$ depends on the parameters of the model via $|b|_\infty, |\mu|_\infty$ and $g_0$.)
Let 

$$\widehat h^N(t,a) \in \text{argmin}_{h \in \mathcal G_1^N}\big(\mathsf A_{h}^N(t,a)+\mathsf V_{h}^N\big).$$
The data-driven Goldenshluger-Lepski estimator of $g(t,a)$ is defined as 
\begin{equation} \label{def GL est}
\widehat g_{\star}^N(t,a)=\widehat g_{\widehat h^N(t,a)}^N(t,a).
\end{equation}

\subsubsection*{Oracle estimate}
We need some notation. Given a kernel $K_h$, the bias at scale $h$ of $g$ at point $(t,a)$ is defined as
\begin{equation} \label{def bias}
\mathcal B_h^N(g)(t,a) =  \sup_{h' \leq h, h' \in \mathcal G_1^N}\Big|\int_0^\infty K_{h'}(u-a)g(t,u)du-g(t,a)\Big|.
\end{equation}

We are ready to give our first estimation result for  every $(t,a) \in \mathcal D_- =\mathcal D \setminus \{t=a\}$.

\begin{thm}  \label{thm oracle g} Work under Assumptions  \ref{H basic}, \ref{minimal F} and \ref{concentration initiale} with $r_N \leq N^{-1/2}$ and some $\mathcal F$ that satisfies $e(\mathcal F) < \infty$.
For $(t,a) \in \mathcal D_-$, specify $\widehat g_\star^N(t,a)$ with a bounded and compactly supported kernel $K$. The following oracle inequality holds true
$$\E\big[\big(\widehat g_{\star}^N(t,a)-g(t,a)\big)^2\big] \lesssim \inf_{h \in \mathcal G_1^N}\big(\mathcal B_h^N(g)(t,a)^2+\mathsf V_{h}^N\big) +\delta_N$$
for large enough $N$, with $\delta_N = N^{-1}$ and up to a constant that depends on $C^\star$ and $K$. 
\end{thm}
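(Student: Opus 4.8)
The plan is to run the standard Goldenshluger--Lepski bias/variance decomposition, the only nonstandard input being that the "stochastic error" term here is controlled by the concentration result of Theorem \ref{thm concentration optimal} rather than by an i.i.d. Bernstein inequality. First I would fix $(t,a) \in \mathcal D_-$ and, for each $h \in \mathcal G_1^N$, write $\widehat g_h^N(t,a) - g(t,a) = \big(\widehat g_h^N(t,a) - g_h(t,a)\big) + \big(g_h(t,a) - g(t,a)\big)$ where $g_h(t,a) = \int_0^\infty K_h(u-a)g(t,u)\,du$ is the smoothed version of $g$. The second term is a bias term, bounded by $\mathcal B_h^N(g)(t,a)$ by definition \eqref{def bias}. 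The first term is $\int_{\R_+} K_h(u-a)\big(Z_t^N(du) - g(t,u)\,du\big)$, which is exactly of the form $\mathcal W_{w_2}^N(\{f\})_t$ with $w_2 = 1$ and $f_t(u) = K_h(u-a)$ (or: absorb $K_h$ into $w_2$ and take $f=1$; one must check this fits the class $\mathcal F$ assumed in Theorem \ref{thm concentration optimal}, using that the minimal $\mathcal F$ contains the constant $c_0$ and the operators, and that the kernel is compactly supported so that \eqref{estimate LinftyL1} holds with $\mathfrak u$ the support length). By Theorem \ref{thm concentration optimal} with $r_N \le N^{-1/2}$, this term has a mild concentration property of order $C\,N^{-1/2}|K_h|_{1,\infty}$; since $|K_h|_{1,\infty} \approx h^{-1/2}$ and $h \ge N^{-1/2}$ on the lattice, the quantity $\mathsf V_h^N$ of \eqref{def upper variance} is, up to the $4\log N$ factor and the constant $C^\star \ge C$, a high-probability bound for the square of this stochastic error, uniformly over $h \in \mathcal G_1^N$.

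The core of the argument is then the usual GL pointwise oracle bound. I would show the deterministic inequality: on the event $\Omega_N$ where $\big(\widehat g_h^N(t,a) - g_h(t,a)\big)^2 \le \mathsf V_h^N$ for all $h \in \mathcal G_1^N$ simultaneously, one has for every $h \in \mathcal G_1^N$
$$\big(\widehat g_\star^N(t,a) - g(t,a)\big)^2 \lesssim \mathcal B_h^N(g)(t,a)^2 + \mathsf V_h^N,$$
by the standard chain: split according to whether $\widehat h^N \le h$ or $h \le \widehat h^N$, and in each case use the triangle inequality together with the defining minimality of $\widehat h^N$ and the fact that $\mathsf A_{h'}^N(t,a) \le \mathcal B_{h'}^N(g)(t,a)^2 + 2\max_{h''}\mathsf V_{h''}^N$ on $\Omega_N$ (the comparison $\widehat g_h - \widehat g_{h'}$ being controlled by $|g_h - g| + |g_{h'} - g| + $ stochastic errors, the stochastic errors being subtracted off inside $\{\cdot\}_+$). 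Taking the infimum over $h$ gives the oracle bound on $\Omega_N$.

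The remaining step is the contribution of $\Omega_N^c$. Here I would use the mild concentration property to get, for each fixed $h$, $\PP\big((\widehat g_h^N - g_h)^2 > \mathsf V_h^N\big) \le \PP\big(\mathcal W_1^N(\{\cdot\})_t > 4(\log N)C^\star N^{-1/2}|K_h|_{1,\infty}\big)$, which by the mild concentration inequality (choosing $1+u \approx 4\log N$, using $C^\star \ge C$) is bounded by something like $(e^{u}-1)^{-1} \lesssim N^{-c\log \cdots}$ — in any case a negative power of $N$ with exponent that can be made as large as we like by the $\log N$ inflation. Summing over the $\lesssim N$ bandwidths in $\mathcal G_1^N$ keeps $\PP(\Omega_N^c) \lesssim N^{-q}$ for large $q$. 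On $\Omega_N^c$ one controls $\E\big[(\widehat g_\star^N - g)^2 \mathbf 1_{\Omega_N^c}\big]$ by Cauchy--Schwarz: the crude bound $|\widehat g_\star^N(t,a)| \le |K|_\infty \cdot h^{-1}\cdot\langle Z_t^N,\mathbf 1\rangle \lesssim N^{1/2}$ (deterministic worst case over the lattice, $h \ge N^{-1/2}$) together with the uniform moment bound $\sup_N \E[\langle Z_t^N,\mathbf 1\rangle^p] < \infty$ coming from Assumption \ref{H basic}(ii) and \eqref{eq: moment estimate}, times $\PP(\Omega_N^c)^{1/2}$, yields a term $\lesssim N^{1/2}\cdot N^{-q/2} \le N^{-1} = \delta_N$ once $q$ is large enough. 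Collecting the two contributions gives the claimed inequality.

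The main obstacle I anticipate is the bookkeeping needed to make the stochastic error term genuinely fit the hypotheses of Theorem \ref{thm concentration optimal} uniformly over $h \in \mathcal G_1^N$: the constant $C$ there depends on the support length $\mathfrak u$ of $w_2$ and on $|w_2|_1$, so when one takes $w_2 = K_h$ these must be controlled uniformly in $h$ (support length $\approx h \cdot |\mathrm{supp}\,K| \le |\mathrm{supp}\,K|$ is fine; $|K_h|_1 = |K|_1$ is fine), and one must verify the $L^\infty/L^1$ condition \eqref{estimate LinftyL1}, namely $|K_h|_\infty \approx h^{-1}|K|_\infty \lesssim \max(N^{1/2},r_N^{-1})\,h^{-1}|K|_1$, which holds precisely because $h \ge N^{-1/2}$ so $h^{-1} \le N^{1/2}$. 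The second delicate point is ensuring that the singleton (or small finite) class $\{u \mapsto K_h(u-a)\}$ — or more honestly, that the kernel absorbed into $w_2$ — is compatible with the assumed structure of $\mathcal F$; one fixes this by taking $w_2 = K_h$ and $\mathcal F$ a fixed admissible class containing the constants, so that $\mathcal W_{w_2}^N(\mathcal F)_t$ dominates the quantity of interest and $\mathrm e(\mathcal F) < \infty$ is available by hypothesis. Everything else is the routine GL machinery.
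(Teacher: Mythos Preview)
Your plan has a real gap in the ``deterministic inequality on $\Omega_N$'' step. With $\Omega_N = \{(\widehat g_h^N - g_h)^2 \le \mathsf V_h^N\ \text{for all}\ h\}$, the squared-sum bound $(\widehat g_h^N - \widehat g_{h'}^N)^2 \le 4\big[(\widehat g_h^N-g_h)^2 + (g_h-g)^2 + (g-g_{h'})^2 + (g_{h'}-\widehat g_{h'}^N)^2\big]$ gives, after subtracting $\mathsf V_h^N+\mathsf V_{h'}^N$, a leftover $3\mathsf V_h^N + 3\mathsf V_{h'}^N$ (not zero). Taking the max over $h'\le h$ produces exactly the term you yourself wrote, $\max_{h''}\mathsf V_{h''}^N$, and this is $\mathsf V_{h_{\min}}^N \asymp (\log N)^2 N^{-1/2}$ for $h_{\min}=N^{-1/2}$, which swamps the oracle $\mathcal B_h^N(g)^2+\mathsf V_h^N$ for every $h$ not of order $N^{-1/2}$. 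So on your $\Omega_N$ you do \emph{not} get $(\widehat g_\star^N - g)^2 \lesssim \mathcal B_h^N(g)^2 + \mathsf V_h^N$. Tightening to $\Omega_N=\{4(\widehat g_h^N-g_h)^2\le \mathsf V_h^N\}$ fixes the deterministic step but then $\PP(\Omega_N^c)$ is only of order $N^{-1}$, and your crude bound $|\widehat g_\star^N|\lesssim N^{1/2}\langle Z_t^N,\mathbf 1\rangle$ makes the $\Omega_N^c$-contribution of order $1$, not $N^{-1}$. Relatedly, your claim that the exponent in $\PP(\Omega_N^c)\lesssim N^{-q}$ ``can be made as large as we like'' is incorrect: the factor $4\log N$ in $\mathsf V_h^N$ is fixed, so $u\approx c\log N$ with a fixed constant and $q$ is fixed (roughly $3$ for your $\Omega_N$, roughly $1$ for the tighter one).

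The paper avoids this entirely by never introducing a good event. It bounds $\E[\mathsf A_h^N]$ directly: from the same four-term decomposition one gets $\mathsf A_h^N \le 8\mathcal B_h^N(g)^2 + \{4(\widehat g_h^N-g_h)^2-\mathsf V_h^N\}_+ + \max_{h'\le h}\{4(\widehat g_{h'}^N-g_{h'})^2-\mathsf V_{h'}^N\}_+$, and each positive part is integrated in $\kappa$ via $\E[\{X-\mathsf V\}_+]=\int_0^\infty \PP(X>\mathsf V+\kappa)\,d\kappa$, using the mild concentration of Theorem~\ref{thm concentration optimal} with $w_2=K_{h'}(t-a-\cdot)$ at the growing threshold $\tfrac12(\mathsf V_{h'}^N+\kappa)^{1/2}$; this yields $\lesssim N^{-2}$ uniformly in $h'$, and the rough sum over $h'\in\mathcal G_1^N$ (cardinality $\lesssim N$) gives $N^{-1}=\delta_N$. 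The key point is that here the subtraction of $\mathsf V_{h'}^N$ acts term-by-term inside the expectation, so no $\max_{h'}\mathsf V_{h'}^N$ ever appears and no crude bound on $\widehat g_\star^N$ is needed. Your identification of how the stochastic term fits Theorem~\ref{thm concentration optimal} (kernel into $w_2$, constants in $\mathcal F$, support and $|K_h|_\infty\lesssim N^{1/2}|K_h|_1$) is correct and is exactly what the paper uses.
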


Some remarks: {\bf 1)} The fact that we measure the performance of $\widehat g_{\star}^N$ at point $(t,a)$ in pointwise squared-error loss is inessential here. Other integrated norms like $|\cdot|_p$ would work as well, following the general proof of Lepski's principle \cite{LEPSKI, GOLDENSHLUGERLEPSKI1, GOLDENSHLUGERLEPSKI2}. However, if we need a fine control of the bias in terms of smoothness space, this is no longer true and is linked to the anisotropic and spatial inhomogeneous smoothness structure of the solution $g$. This will become transparent in Theorems \ref{adapt esti g} and \ref{eq def sdeath} below. {\bf 2)} In \eqref{def upper variance}, the choice of $C^\star$ has to be set in principle prior to the data analysis and is of course difficult to calibrate. It depends on upper bounds on many quantities like $e(\mathcal F)$ that appear in the constant of Theorem \ref{thm concentration optimal} or supremum of norms of the unknown parameters $b$ and $\mu$. Moreover, the explicit value $C^\star$ obtained by tracking the constants in the computations of Section \ref{sec proofs concentration} is certainly too large. In practice, we need to inject some further prior knowledge and calibrate the threshold by some other method, possibly using data. Such approaches in the context of Lepski's principle have been developed lately in \cite{LACOURMASSARTRIVOIRARD}.
{\bf 3)} The proof relies on Theorem \ref{thm concentration optimal} which requires $e(\mathcal F)$ to be finite. However, this requirement is not heavy, as soon as $b$ and $\mu$ have a minimal global H\"older smoothness, as stems from Proposition \ref{prop entropie minimale}.


\subsubsection*{Estimation of $\mu$, data-driven bandwidth} Analogously to the bandwidth-selection method for estimation of $g$ following Lepski's principle, we pick a discrete set $\mathcal G_2^N \subset [N^{-1/2}, (\log N)^{-1} ]^2$ with 
cardinality $\mathrm{Card} \mathcal G_2^N \lesssim N$. The construction is similar to that of $\widehat g_{\star}^N(t,a)$, given in addition the family of estimators
$$\big(\widehat \pi_{\boldsymbol h}^N(t,a), \boldsymbol h\in \mathcal G_2^N\big)$$
defined in \eqref{eq def est gamma}. For $(t,a) \in \mathcal D$, let
$$\mathsf A_{\boldsymbol h}^N(t,a) = \max_{h'\in \mathcal G_2^N}\big\{\big(\widehat \pi_{\boldsymbol h}^N(t,a)-\widehat \pi_{\boldsymbol h'}^N(t,a)\big)^2-(\mathsf V_{\boldsymbol h}^N+\mathsf V_{\boldsymbol h'}^N)\big\}_+,$$
where
\begin{equation} \label{def upper bivariance}
\mathsf V_{\boldsymbol h}^N = \big(4 (\log N)C^\star N^{-1/2}|H_{h_1}|_{1,\infty} |K_{h_2}|_{1,\infty}\big)^2
\end{equation}
and $C^\star$ is a (known) upper bound of the constant $C$ of Theorem \ref{thm concentration optimal}.
Let
$$\widehat {\boldsymbol h}^N(t,a) \in \text{argmin}_{\boldsymbol h \in \mathcal G_2^N}\big(\mathsf A_{\boldsymbol h}^N(t,a)+\mathsf V_{\boldsymbol h}^N(t,a)\big).$$

The data-driven Goldenshluger-Lepski estimator of $\mu(t,a)$ is defined as 
\begin{equation} \label{def GL est mu}
\widehat \mu_{\star}^N(t,a)_{\varpi}=
  \mu_{\widehat {h}^N(t,a),\widehat {\boldsymbol h}^N(t,a)}^N(t,a)_{\varpi}.
\end{equation}
 
\subsubsection*{Oracle estimates} In order to estimate $\mu$ in squared-error loss consistently with the quotient estimator \eqref{def GL est mu}, we need a (local) lower bound assumption on $g(t,a)$. Let 
\begin{align*}
\mathcal D_U & = \{(t,a) \in \mathcal D, a >t\}, \\
\mathcal D_L & = \{(t,a) \in \mathcal D, a < t\},
\end{align*}
and $\mathcal D^- = \mathcal D \setminus \{t=a\}$
so that $\mathcal D^- = \mathcal D_L \cup \mathcal D_U$. 
A sufficient condition is given by the following

\begin{assumption} \label{assumption minoration g}
For every $(t,a) \in \mathcal D^-$ there exists an open set $\mathcal U_{(t,a)}$ such that
\begin{equation} \label{eq condit mino g L}
\inf_{u \in \mathcal U_{(t,a)}}b(t-a,t-a+u)g_0(u)\geq \delta \;\;\text{if}\;\; (t,a) \in \mathcal D_L
\end{equation}
and
\begin{equation} \label{eq condit mino g U}
g_0(t-a) \geq \delta \;\;\text{if}\;\; (t,a) \in \mathcal D_U,
\end{equation}
for some $\delta >  0$.
\end{assumption}
We need some notation. For $\boldsymbol h = (h_1,h_2)$ and $\boldsymbol h' = (h_1',h_2')$ in $\mathcal G_2^N$, we say that $\boldsymbol h \leq \boldsymbol h'$ if
$h_1 \leq h'_1$ and $h_2 \leq h'_2$ hold simultaneously. Given a bivariate kernel $H \otimes K$, the bias at scale $\boldsymbol h$ of $\pi = \mu g$ at point $(t,a)$ in the direction $\varphi$ is defined as
\begin{equation} \label{def bias}
\mathcal B_{\boldsymbol h}^N(\mu g)(t,a) =  \sup_{\boldsymbol h' \leq \boldsymbol h, \boldsymbol h' \in \mathcal G_2^N}\Big|
\int_{\mathcal D} \big((H \otimes K)_{\boldsymbol h'} \circ \varphi \big)(s-t,u-a)\pi(s,u)duds-\pi(t,a)\Big|.
\end{equation}


\begin{thm} \label{thm oracle mu}
Work under Assumptions  \ref{H basic}, \ref{minimal F}, \ref{concentration initiale} with $r_N \leq N^{-1/2}$ and some $\mathcal F$ that satisfies $e(\mathcal F) < \infty$ together with Assumption \ref{assumption minoration g}.  
For $(t,a) \in \mathcal D^-$ specify $\widehat \mu_{\star}^N(t,a)_{\varpi}$ with kernels $H, K$. The following oracle inequality holds true 
$$\E\big[\big(\mu_{\star}^N(t,a)_{\varpi}-\mu(t,a)\big)^2\big] \lesssim   \inf_{h \in \mathcal G_1^N}\big(\mathcal B_h^N(g)(t,a)^2+\mathsf V_{h}^N\big) + \inf_{\boldsymbol h \in \mathcal G_2^N}\big(\mathcal B_{\boldsymbol h}^N(\mu g)(t,a)^2+\mathsf V_{\boldsymbol h}^N\big) +\delta_N$$
for large enough $N$ and small enough $\varpi >0$, with $\delta_N = N^{-1}$ and up to a constant that depends on $C^\star$ and the kernels $H, K$.
\end{thm}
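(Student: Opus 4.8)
The plan is to reduce the squared-error risk of the quotient estimator $\widehat\mu^N_\star(t,a)_\varpi = \widehat\pi^N_{\widehat{\boldsymbol h}}(t,a)/(\widehat g^N_{\widehat h}(t,a)\vee\varpi)$ to a combination of the oracle control for $\widehat\pi^N_\star(t,a) := \widehat\pi^N_{\widehat{\boldsymbol h}^N(t,a)}(t,a)$ and the one for $\widehat g^N_\star(t,a)$ already provided by Theorem \ref{thm oracle g}. First I would use the elementary algebraic identity for ratios: on the event where $\widehat g^N_\star(t,a) \geq \varpi$ and $g(t,a)\geq 2\varpi$ (the latter guaranteed for small $\varpi$ by Assumption \ref{assumption minoration g} via the explicit lower bound on $g$ along characteristics — using the representation $g(t,a)=g_0(t-a)$ on $\mathcal D_U$ and $g(t,a)=b(\cdots)g_0(\cdots)\times(\text{survival factor})$ on $\mathcal D_L$), one has
$$\big|\widehat\mu^N_\star - \mu\big| \leq \frac{1}{\varpi}\big|\widehat\pi^N_\star - \pi\big| + \frac{|\pi|}{\varpi\, g}\big|\widehat g^N_\star - g\big|,$$
so that $\E[(\widehat\mu^N_\star-\mu)^2]\lesssim \varpi^{-2}\E[(\widehat\pi^N_\star-\pi)^2] + \varpi^{-2}\E[(\widehat g^N_\star-g)^2]$ up to the complementary event. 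The complementary event $\{\widehat g^N_\star(t,a)<\varpi\}$ has probability controlled by $\PP(|\widehat g^N_\star-g|>\varpi)$, which by the moment bound coming out of Theorem \ref{thm oracle g} (or by a direct Markov argument using the mild concentration of Theorem \ref{thm concentration optimal}) is smaller than any power of $N$, hence absorbed into $\delta_N$; the unboundedness of $\widehat\mu^N_\star$ there is harmless because $\widehat\pi^N_\star$ is itself bounded deterministically (finite sum of bounded kernel evaluations) divided by $\varpi$.

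Next I would establish the oracle inequality for $\widehat\pi^N_\star(t,a)$ via the Goldenshluger-Lepski machinery, exactly as for $g$ but in the bivariate, $\varphi$-skewed setting. The key structural inputs are: (i) the bias-variance decomposition $\E[(\widehat\pi^N_{\boldsymbol h}(t,a)-\pi(t,a))^2]\lesssim \mathcal B^N_{\boldsymbol h}(\mu g)(t,a)^2 + \mathsf V^N_{\boldsymbol h}$, where the variance-type term is exactly the quantity controlled by Theorem \ref{thm concentration optimal} applied with $w_1 = H_{h_1}$, $w_2 = K_{h_2}\circ(\text{age slice})$ — here one must check that the family $\mathcal F$ generated by the kernels $(H\otimes K)_{\boldsymbol h}\circ\varphi$ (together with $b,\mu$ and the stability operations of Assumption \ref{minimal F}) still has finite entropy integral $e(\mathcal F)$, and that the compact-support/$L^1$-$L^\infty$ estimate \eqref{estimate LinftyL1} is satisfied with $r_N\leq N^{-1/2}$ so that the mild concentration order is $N^{-1/2}|H_{h_1}|_{1,\infty}|K_{h_2}|_{1,\infty}$, matching $\sqrt{\mathsf V^N_{\boldsymbol h}}$ up to the $\log N$ factor; (ii) a deterministic comparison $\mathsf A^N_{\boldsymbol h}(t,a)\lesssim \mathcal B^N_{\boldsymbol h}(\mu g)(t,a)^2 + (\text{fluctuations})$ and conversely $(\widehat\pi^N_{\widehat{\boldsymbol h}}-\widehat\pi^N_{\boldsymbol h})^2 \lesssim \mathsf A^N_{\boldsymbol h}+\mathsf V^N_{\boldsymbol h}+\mathsf V^N_{\widehat{\boldsymbol h}}$, which is the standard GL algebra; (iii) summing the fluctuation excess over the lattice $\mathcal G_2^N$ of cardinality $\lesssim N$ using the mild-concentration tail $\PP(X^N\geq(1+u)r_N)\leq (e^u-1)^{-1}$, whose exponential decay beats the polynomial $\mathrm{Card}(\mathcal G_2^N)$ at the scale $u\asymp\log N$, producing the $\delta_N = N^{-1}$ remainder. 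Combining (i)–(iii) yields $\E[(\widehat\pi^N_\star-\pi)^2]\lesssim \inf_{\boldsymbol h\in\mathcal G_2^N}(\mathcal B^N_{\boldsymbol h}(\mu g)(t,a)^2+\mathsf V^N_{\boldsymbol h})+\delta_N$.

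Finally I would assemble: plugging the $\pi$-oracle bound and the $g$-oracle bound of Theorem \ref{thm oracle g} into the ratio inequality gives
$$\E\big[(\widehat\mu^N_\star(t,a)_\varpi-\mu(t,a))^2\big]\lesssim \inf_{h\in\mathcal G_1^N}\big(\mathcal B^N_h(g)(t,a)^2+\mathsf V^N_h\big)+\inf_{\boldsymbol h\in\mathcal G_2^N}\big(\mathcal B^N_{\boldsymbol h}(\mu g)(t,a)^2+\mathsf V^N_{\boldsymbol h}\big)+\delta_N,$$
with the implicit constant depending on $C^\star$, the kernels $H,K$, and on $\varpi^{-1}$ and $\delta^{-1}$ from Assumption \ref{assumption minoration g} — the threshold $\varpi$ being fixed small enough (depending on $\delta$ and $|\mu g|_\infty$) so that $g(t,a)\geq 2\varpi$ on $\mathcal D^-$. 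The main obstacle I anticipate is point (i) above in the skewed coordinates: one has to verify that applying Theorem \ref{thm concentration optimal} with $w_2$ taken to be (a slice of) the kernel $K_{h_2}$ composed with the change of variables $\varphi(t,a)=(t,t-a)$ is legitimate — i.e. that $\varphi$ being a measure-preserving affine shear does not spoil compact support, the $|w_2|_\infty\lesssim\max(N^{1/2},r_N^{-1})|w_2|_1$ estimate, or membership of the relevant test functions in $\mathcal F$ — and that the bias term genuinely takes the $\varphi$-skewed form \eqref{def bias}; handling the death point-process $\Gamma^N$ (rather than $Z^N$ itself) inside the concentration framework, through the convergence $\Gamma^N\to\pi\,dt\,da$ and the martingale decomposition of Section \ref{sec stability}, is the technically delicate part and will require re-running the argument of Theorem \ref{thm concentration optimal} with $\Gamma^N$ in place of $Z^N_t(da)\,dt$, which should go through because $\Gamma^N$ inherits the same Poisson-driven structure with intensity $N\mu(s,a_i(Z^N_{s^-}))$.
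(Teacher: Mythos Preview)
Your high-level architecture is right and matches the paper: reduce the quotient to separate control of $\widehat g^N_\star - g$ (via Theorem~\ref{thm oracle g}) and $\widehat\pi^N_\star - \pi$ (via a bivariate Goldenshluger--Lepski argument), then combine. Two points where the paper proceeds differently from what you sketch are worth flagging.

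First, your treatment of the denominator is more complicated than necessary. Because the estimator is $\widehat\pi^N_{\widehat{\boldsymbol h}}/(\widehat g^N_{\widehat h}\vee\varpi)$, the denominator is \emph{always} at least $\varpi$; there is no complementary event $\{\widehat g^N_\star<\varpi\}$ to control. The paper writes the exact identity $\widehat\mu^N_{h,\boldsymbol h}-\mu = I+II$ with $I = \pi(g-\widehat g\vee\varpi)/(g\,(\widehat g\vee\varpi))$ and $II = (\widehat\pi-\pi)/(\widehat g\vee\varpi)$, and bounds both deterministically by $(\epsilon\varpi)^{-1}$ times $|\widehat g-g|$ and $|\widehat\pi-\pi|$ respectively, using only $g(t,a)\geq\epsilon$ from Lemma~\ref{lem min g} and $\varpi\leq\epsilon$. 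No tail probability enters here.

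Second, and more substantively, the control of $\widehat\pi^N_{\boldsymbol h}-\pi_{\boldsymbol h}$ is \emph{not} obtained by ``re-running Theorem~\ref{thm concentration optimal} with $\Gamma^N$ in place of $Z^N$''. The paper instead writes (Lemma~\ref{lem bracket})
\[
\int w_1 w_2\,(\Gamma^N-\pi\,ds\,du)
= \int w_1 w_2\,\mu(s,u)\bigl(Z^N_s(du)-g(s,u)\,du\bigr)ds \;+\; (\Delta^N_{w_1,w_2})_T,
\]
so that the first piece is bounded by $\mathcal W^N_{w_1,w_2}(\mathcal F)_T$ and handled directly by Theorem~\ref{thm concentration optimal}, while the second is a genuine martingale in $t$ with explicit bracket $\langle\Delta^N\rangle_T = N^{-1}\int w_1^2 w_2^2\,\mu\,Z^N_{s^-}(du)\,ds$. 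The martingale is then controlled by a Bernstein-type exponential inequality (Lemma~\ref{lem dev mg saut}), and --- this is the step you do not anticipate --- the random bracket is itself bounded by a deterministic term plus $N^{-1}|\mu|_\infty\,\mathcal W^N_{w_1^2,w_2^2}(\mathcal F)_T$, to which Theorem~\ref{thm concentration optimal} is applied a \emph{second} time with the squared weights. This two-layer use of the concentration theorem (once for $\mathcal W_{w_1,w_2}$, once for $\mathcal W_{w_1^2,w_2^2}$ inside the bracket) is what makes the argument close; a direct re-derivation of Theorem~\ref{thm concentration optimal} for $\Gamma^N$ would be heavier and is not needed.

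A minor point: the kernels $H_{h_1},K_{h_2}$ enter as the \emph{weights} $w_1,w_2$ in Theorem~\ref{thm concentration optimal}, not as members of $\mathcal F$; there is no new entropy to check. The class $\mathcal F$ stays fixed throughout.
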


Some remarks: {\bf 1)} Similar to the case of Theorem \ref{thm oracle g}, other loss functions can be chosen. {\bf 2)} We see that the performance of $\widehat \mu_{\star}^N(t,a)_{\varpi}$ is similar to the worst performance of the estimation of the product $\pi = \mu g$ and the estimation of $g$, as is standard in the study of quotient estimator in the classical Nadaraya-Watson (NW) sense \cite{BIERENS, NADARAYA}. However, the situation is quite different here than what is customary in standard nonparametric regression with NW: the estimation of $g(t,a)$ is actually equivalent to the estimation of a univariate function, while $\pi(t,a)$ is related to a genuinely bi-variate estimation problem that suffers from a dimensional effect. Therefore, there is good hope to obtain here an optimal procedure, as will become transparent under H\" older anisotropic smoothness scales in the subsequent minimax theorems \ref{thm LB} and \ref{th adapt minimax mu} below. {\bf 3)} The same remark about the choice of $C^\star$ (and also the threshold $\varpi$) as in Theorem \ref{thm oracle g} above are valid in the context of the estimation of $\mu(t,a)$.  
\section{Adaptive estimation under anisotropic H\" older smoothness} \label{sec: minimax adaptive}

\subsection{The smoothness of the McKendrick Von Foerster equation} 
\begin{defi}
Let $\alpha >0$, $x_0\in \R$ and $\mathcal U_{x_0}$ be a neighbourhood of $x_0$.   
We say that $f:\mathcal U_{x_0} \rightarrow \R$ belongs to $\mathcal H^{\alpha}(x_0)$
if\footnote{The definition depends on $\mathcal U_{x_0}$, further omitted in the notation.} for every $x,y \in \mathcal U_{x_0}$
\begin{equation} \label{def holder}
|f^{(n)}(y)-f^{(n)}(x)| \leq C|y-x|^{\{\alpha\}}
\end{equation}
having $\alpha=n+\{\alpha\}$ for a non-negative integer $n$ and $0 < \{\alpha\} \leq 1$. 
\end{defi}
We obtain a semi-norm by setting 
$|f|_{\mathcal H^\alpha(x_0)} = 
\sup_{x \in \mathcal U_{x_0}}|f(x)|+C_{\mathcal U_{x_0}}(f),$
where $C_{\mathcal U_{x_0}}(f)$ is the smallest constant $C$ for which \eqref{def holder} holds. The extension to multivariate functions is straightforward: 
\begin{defi} \label{def anisotropic space}
The bivariate function $f$ belongs to the anisotropic H\"older class $\mathcal H^{\alpha_1,\alpha_2}(x_0,y_0)$ if
$$|f|_{\mathcal H^{\alpha_1,\alpha_2}(x_0,y_0)} = |f(\cdot,y_0)|_{\mathcal H^{\alpha_1}(x_0)}+ |f(x_0,\cdot)|_{\mathcal H^{\alpha_2}(y_0)} < \infty.$$
\end{defi}
We write $f \in \mathcal H$ if for every $(t,a) \in {\mathcal D}$, we have $f \in \mathcal H^{\sigma,\tau}(t,a)$.
\begin{assumption} \label{H smoothness basic}
For some $\alpha, \beta, \gamma, \delta >0, \nu \geq \max(\gamma,\delta)+1$ and for every $(t,a) \in \mathcal D$,
we have 
$$b \in \mathcal H^{\alpha,\beta}(t,a),
\;\;
\mu \in \mathcal H^{\gamma, \delta}(t,a),
\;\;
g_0 \in \mathcal H^\nu(a).$$
\end{assumption}

We give two results about the pointwise smoothness of  the solution of the McKendrick Von Foester equation on $\mathcal D^- = \mathcal D \setminus \{t=a\}$, depending on the choice of coordinates.
The smoothness of $g$ differs on $\mathcal D_U$ where only mortality affects the population and $\mathcal D_L$, where both mortality and birth come into play. Introduce also the change of coordinates
$\varphi(t,a) = (t,t-a)$
that  maps 
\begin{align*}
& \mathcal D_U 
\rightarrow \varphi(\mathcal D_U) = \widetilde{\mathcal D}_U = \{(t,a') \in \mathcal D, 0 \leq t \leq T, a' <0\} \\
&\mathcal D_L 
\rightarrow \widetilde{\mathcal D}_L = \varphi(\mathcal D_L) = \{(t,a') \in \mathcal D, 0 \leq t \leq T, 0 < a' < t\}
\end{align*} 
onto smoothly. This defines in turn 
$$\widetilde g  : \widetilde{\mathcal D}_U \cup \widetilde{\mathcal D}_L \rightarrow \R_+\;\;\text{via}\;\; 
g(t,a) = \widetilde g \circ \varphi(t,a).$$


\begin{prop} \label{P first smoothness}
Work under Assumptions \ref{H basic}, and \ref{H smoothness basic}. 
\begin{enumerate}
\item[(i)] We have
$$g \in \mathcal H^{\min(\alpha,\beta,\gamma+1,\delta),\min(\alpha,\beta,\gamma+1,\delta)}
\;\;\text{on}\;\;\mathcal D_L
\;\;\text{and}\;\;
g \in \mathcal H^{\min(\gamma+1, \delta), \max(\gamma\wedge(\delta + 1),\delta)}
\;\;\text{on}\;\;\mathcal D_U.$$
\item[(ii)] We have the following improvement of the anisotropic smoothness  when the parametrisation is given by $\widetilde g$:
$$\widetilde g \in \mathcal H^{\min(\gamma+1,\delta + 1), \min(\alpha,\beta,\gamma+1,\delta)}
\;\;\text{on}\;\;\widetilde{\mathcal D}_L
\;\;\text{and}\;\;
\widetilde g \in \mathcal H^{\min(\gamma+1,\delta + 1), \max(\gamma\wedge(\delta + 1),\delta)}
\;\;\text{on}\;\;\widetilde{\mathcal D}_U.$$
\end{enumerate}
\end{prop}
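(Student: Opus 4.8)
\textbf{Proof strategy for Proposition \ref{P first smoothness}.}

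The plan is to exploit the explicit representation of the solution of \eqref{McKendrick} along characteristics. On $\mathcal D_U$ (where $a>t$), the characteristic through $(t,a)$ meets the axis $\{t=0\}$ at age $a-t>0$, so no birth boundary term is involved and one has the closed formula
$$g(t,a) = g_0(a-t)\exp\Big(-\int_0^t \mu(s,a-t+s)\,ds\Big).$$
On $\mathcal D_L$ (where $a<t$), the characteristic hits the boundary $\{a=0\}$ at time $t-a$, so
$$g(t,a) = g(t-a,0)\exp\Big(-\int_0^a \mu(t-a+s,s)\,ds\Big),\qquad g(r,0)=\int_0^\infty b(r,\alpha)g(r,\alpha)\,d\alpha,$$
and $g(r,0)$ must itself be analysed by feeding back the $\mathcal D_U$-part and, recursively, the $\mathcal D_L$-part. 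First I would record these representations rigorously (the weak solution coincides with this mild formula under Assumption \ref{H basic}), then differentiate.

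For part (i) on $\mathcal D_U$: the exponential factor $E(t,a)=\exp(-\int_0^t\mu(s,a-t+s)ds)$ is the key object. Differentiating in $a$ costs one derivative of $\mu$, giving smoothness $\delta$ in the $a$-direction after accounting for $g_0 \in \mathcal H^\nu$ with $\nu$ large; differentiating in $t$ is cheaper because $\partial_t\int_0^t\mu(s,a-t+s)ds = \mu(t,a) - \int_0^t(\partial_1-\partial_2)\mu(s,a-t+s)ds$ — the antiderivative structure means the $t$-regularity of $E$ is governed by $\min(\gamma+1,\delta)$ (one free integration against the time-variable, but still limited by the age-variable differentiation inside). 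Composing with $g_0(a-t)$ (smoothness $\nu\ge\max(\gamma,\delta)+1$, hence not the bottleneck) and using the product/chain rules for anisotropic Hölder classes (stated for the univariate classes in Definition \ref{def anisotropic space}; the product rule $|fg|_{\mathcal H^{\sigma,\tau}} \lesssim |f|_{\mathcal H^{\sigma,\tau}}|g|_{\mathcal H^{\sigma,\tau}}$ is routine) yields $g \in \mathcal H^{\min(\gamma+1,\delta),\max(\gamma\wedge(\delta+1),\delta)}$ on $\mathcal D_U$; the slightly awkward exponent $\max(\gamma\wedge(\delta+1),\delta)$ in the $a$-direction comes from the fact that inside $E$ the integrand $\mu(s,a-t+s)$ is differentiated in $a$ directly (cost $\delta$) but the $t$-antiderivative can sometimes recover a gain up to $\gamma\wedge(\delta+1)$. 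On $\mathcal D_L$ one repeats the argument for the exponential factor (now in the rotated coordinates $(t-a+s,s)$) and must additionally control $r\mapsto g(r,0)$: since $g(r,0)=\int b(r,\alpha)g(r,\alpha)d\alpha$ integrates out the age variable, it inherits regularity $\min(\alpha,\beta,\gamma+1,\delta)$ in $r$ from the worst of $b$'s time-regularity $\alpha$, $b$'s age-regularity $\beta$ (lost into the integral), and the $\mathcal D_U$-regularity of $g$ already established — this is where all four quantities $\alpha,\beta,\gamma+1,\delta$ enter symmetrically, and since both coordinates on $\mathcal D_L$ are linear combinations of $(r,a)$ one gets the same exponent $\min(\alpha,\beta,\gamma+1,\delta)$ in both directions.

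For part (ii), the point of the change of variables $\varphi(t,a)=(t,t-a)$ is that in the new age coordinate $a'=t-a$ the characteristic direction becomes a coordinate axis: on $\widetilde{\mathcal D}_U$ one has $\widetilde g(t,a')=g_0(-a')\exp(-\int_0^t\mu(s,s-a')ds)$, and now \emph{both} differentiations inside the exponential are against variables ($s$ ranging, with $a'$ fixed) that appear in antiderivative form — differentiating in $t$ gives $\mu(t,t-a')$ plus a lower-order integral, and differentiating in $a'$ gives $-\int_0^t\partial_2\mu(s,s-a')ds$, which costs one derivative of $\mu$ but \emph{also} sits under an integral in $s$, so the time-direction smoothness improves to $\min(\gamma+1,\delta+1)$. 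The $a'$-direction smoothness is unchanged because $g_0(-a')$ still caps it (via the already-derived $\mathcal D_U$ exponent) and the worst term is the same. On $\widetilde{\mathcal D}_L$ the analogous computation in the rotated frame gives the time-direction improvement to $\min(\gamma+1,\delta+1)$ while the $a'$-direction keeps $\min(\alpha,\beta,\gamma+1,\delta)$ inherited from $g(\cdot,0)$ and the exponential.

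\textbf{Main obstacle.} The bookkeeping of the recursion defining $g(r,0)$ on $\mathcal D_L$ is the delicate point: one must verify that a single pass (not infinitely many) suffices — i.e. that $g(r,0)$ depends on $g$ restricted to $\{a<r\}$ which is handled by the $\mathcal D_U$-formula plus one level of the $\mathcal D_L$-formula, and that iterating does not degrade the exponent. Equally delicate is proving the anisotropic Hölder product and composition rules with the \emph{right} exponents — in particular that composing with the affine map $\varphi$ genuinely converts "integrated-against-this-variable" into a gain of one order of smoothness, rather than merely permuting exponents — and that differentiating $\exp(-\int_0^t \mu(\cdot)ds)$ in the direction transverse to the characteristic costs exactly one derivative of $\mu$ and no more. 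I would isolate these as lemmas (an anisotropic Faà di Bruno / product estimate, and a "smoothing along the antiderivative" estimate) before assembling the proof.
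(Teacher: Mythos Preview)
Your approach is essentially the paper's: both use the characteristic representation $g(t,a)=g_0(a-t)\exp(-\int_0^t\mu(s,a-t+s)ds)$ on $\mathcal D_U$ and $g(t,a)=B(t-a)\exp(-\int_{t-a}^t\mu(s,a+s-t)ds)$ on $\mathcal D_L$, where $B(r)=g(r,0)$, and both reduce the problem to tracking H\"older exponents through products and through integrals of the form $u\mapsto\int_0^u f(s,u)ds$ and $u\mapsto\int_0^{t}f(s,u+s)ds$. The paper packages your ``smoothing along the antiderivative'' lemma precisely as: if $f\in\mathcal H^{\sigma,\tau}$ then $u\mapsto\int_0^u f(s,u)ds\in\mathcal H^{\min(\sigma+1,\tau)}$ and $u\mapsto\int_0^{t} f(s,u+s)ds\in\mathcal H^{\max(\sigma\wedge(\tau+1),\tau)}$, which is exactly the device giving the exponents $\min(\gamma+1,\delta)$ and $\max(\gamma\wedge(\delta+1),\delta)$ you derive.

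On your ``main obstacle'': the paper resolves the recursion for $g(\cdot,0)$ not by iterating the boundary condition but by writing the renewal integral equation $B(t)=M(t)+\int_0^t B(a)L(t,t-a)da$, with $M(t)=\int_0^\infty b(t,t+u)g_0(u)e^{-\int_0^t\mu(s,u+s)ds}du$ collecting the $\mathcal D_U$-contribution and $L(t,a)=b(t,a)e^{-\int_{t-a}^t\mu(s,s-t+a)ds}$ the kernel. One then reads off $M\in\mathcal H^{\min(\alpha,\beta,\gamma+1,\delta+1)}$ and $L\in\mathcal H^{\min(\alpha,\gamma+1,\delta),\min(\beta,\gamma+1,\delta)}$ directly, and the smoothing lemma applied to the Volterra term gives $B\in\mathcal H^{\min(\alpha,\beta,\gamma+1,\delta)}$ (a finite bootstrap if the target exponent exceeds $1$). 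This is cleaner than your ``one pass'' worry suggests: the renewal structure makes the fixed-point argument a single line rather than an explicit unwinding of characteristics.
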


The proof of Proposition \ref{P first smoothness} is relatively straightforward, given explicit representations of the solution $g$ in terms of $b$, $\mu$ and $g_0$, and is given in Appendix \ref{proof of prop P first smoothness}. 

\begin{figure}[h]
   \centering
   \setlength{\unitlength}{0.8cm}
\begin{picture}(6,6)(-1,-1)
   \put(0,-1){\vector(0,1){6}}
   \multiput(4.5,-0.2)(1,0){1}{\line(0,1){0.4}} 
     \put(4.4,-0.6){$T$} 
     \put(5.6,-0.4){$t$}
     \put(1,3){\shortstack{$\mathcal D_U$}}
      \put(1,-1){\shortstack{$\widetilde{\mathcal D}_U$}}
        \put(2,1){\shortstack{$\widetilde{\mathcal D}_L = \mathcal D_L$}}
     \put(4.9,2.2){\shortstack{{\small $\xi_T(da)=g(T,a)da$}}}
     \put(-3.8,2.2){\shortstack{$\xi_0(da)=g_0(a)da$}}
   \put(-1,0){\vector(1,0){7}}
   \put(-0.4,-0.4){0} 
   \put(-0.6,4.7){$a$}
   \multiput(4.5,0)(1,0){1}{
      \multiput(0,0)(0,0.2){25}{\circle*{0.05}}}
    \put(-0.3,2.2){\shortstack{$\left\lbrace \begin{array}{l} \\ \\ \\ \\ \\ \\ \\ \\ \end{array}\right.$}}
   \put(0,0){\line(1,1){4.8}}
   \color{red}{
   \multiput(0,0)(0.25,0.25){18}{\circle*{0.1}}}
   
   \put(0,0){\line(1,0){4.5}}
   \color{red}{
   \multiput(0,0)(0.25,0){18}{\circle*{0.1}}}

\end{picture}
   \caption{{\small $\widetilde{g}\in \mathcal H^{\min(\gamma+1,\delta + 1), \min(\alpha,\beta,\gamma+1,\delta)}$ on $\widetilde{\mathcal D}_L$ and 
  $\widetilde g \in \mathcal H^{\min(\gamma+1,\delta + 1),\min(\gamma,\delta+1)}$ on $\widetilde{\mathcal D}_U$}.}
\end{figure}
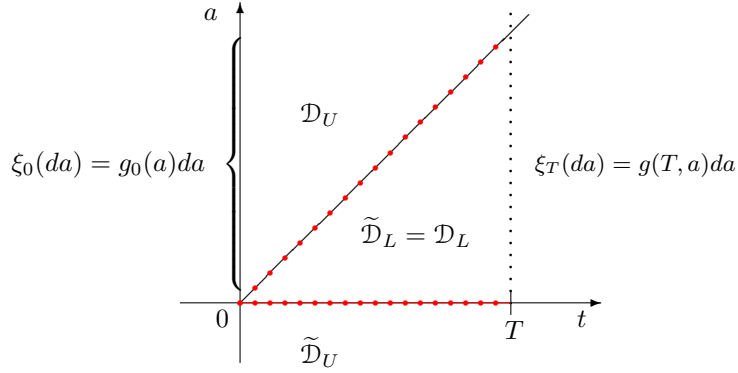

%

\subsection{Minimax lower bounds}
For $\alpha,\beta>0$ and $L>0$, we set
$$\mathcal H^{\alpha, \beta}_L(t,a) = \big\{f \in \mathcal L_{\mathcal D}^\infty ,\;|f|_\infty+|f|_{\mathcal H^{\alpha,\beta}(t,a)} \leq L\big\},$$
where the semi-norm $|\cdot|_{\mathcal H^{\alpha,\beta}(t,a)}$ is defined after Definition \ref{def anisotropic space}.
We also set, for $\epsilon >0$,
$$\mathcal L_{\mathcal D,\epsilon}^\infty = \big\{f \in \mathcal L_{\mathcal D}^\infty , \inf_{(t,a) \in \mathcal D}f(t,a) \geq \epsilon\big\}$$ 

Remember that under Assumption \ref{H basic}, any point $(b,\mu,g_0)$ with $b,\mu,g_0 \in \mathcal L_{\mathcal D}^\infty $ 
defines a unique solution $g$ to the McKendrick Von Foester equation \eqref{McKendrick}. Let
$$s_{\mathrm{dens}}^- = \max(\gamma,\delta)
\;\;\text{and}\;\;  
s_{\mathrm{death}}^-= \big(\gamma^{-1} + \delta^{-1}\big)^{-1}.$$
Under a non-degeneracy condition of the form $\mu \in \mathcal L_{\mathcal D,\epsilon}^\infty$, we obtain the following minimax lower bound:
\begin{thm} \label{thm LB}
Work under Assumptions \ref{H basic} and \ref{assumption minoration g}. Let $\alpha, \beta, \gamma, \delta >0, \nu \geq \max(\gamma,\delta)+1$ and $L>0$. 
For every $(t,a) \in \mathcal D^-$, we have
\begin{equation} \label{eq LB g}
\inf_{F}\sup_{b,\mu,g_0}\E\big[|F-g(t,a)|\big] \gtrsim N^{-s_{\mathrm{dens}}^-/(2s_{\mathrm{dens}}^-+1)}
\end{equation} 
and
\begin{equation} \label{eq LB mu}
\inf_{F}\sup_{b,\mu,g_0}\E\big[|F-\mu(t,a)|\big]  \gtrsim N^{-s_{\mathrm{death}}^-/(2s_{\mathrm{death}}^-+1)},
\end{equation}
where the infimum is taken over all estimators and the supremum over 
$$b \in \mathcal H^{\alpha, \beta}_{L}(t,a),\;\;
\mu \in \mathcal H^{\gamma, \delta}_{L}(t,a) \cap \mathcal L_{\mathcal D, \epsilon}^\infty\;\; 
\text{and}\;\;g_0 \in \mathcal H^\nu_{L}(t,a).$$
\end{thm}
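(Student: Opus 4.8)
The plan is to establish both lower bounds via the standard two-hypotheses (Le Cam) method, constructing, for each of $g$ and $\mu$, a pair of parameter configurations $(b_j,\mu_j,g_{0,j})$, $j=0,1$, that (a) lie in the prescribed Hölder balls, (b) are separated at the point $(t,a)$ by an amount of the announced order in $N$, and (c) induce laws of the observation $Z^N$ that are statistically indistinguishable, in the sense that the total variation (or, more conveniently, a chi-square or Hellinger) distance between the laws $\mathbb P^N_0$ and $\mathbb P^N_1$ on the Skorokhod path space stays bounded away from $1$. Since the observation $Z^N$ is a pure-jump Markov-type process driven by the Poisson measures $\mathcal Q_1,\mathcal Q_2$ with intensities governed by $b$ and $\mu$, the likelihood ratio between $\mathbb P^N_0$ and $\mathbb P^N_1$ is explicit (Girsanov for jump processes), and its relative entropy or Hellinger distance is controlled by $N$ times an $L^2$-type distance between the competing rate functions integrated against the limiting density $g$.

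First I would treat the density bound \eqref{eq LB g}. Using the explicit representation of $g$ in terms of $(b,\mu,g_0)$ recalled before Proposition \ref{P first smoothness}, I would fix $b$ and choose two mortality perturbations $\mu_1=\mu_0+\eta_N K_j$, where the bump $K_j$ is a rescaled smooth kernel of width $h_N$ supported near the characteristic through $(t,a)$, calibrated so that $\mu_1\in\mathcal H^{\gamma,\delta}_L(t,a)$ forces $\eta_N\asymp h_N^{\min(\gamma,\delta)}$; since $s_{\mathrm{dens}}^-=\max(\gamma,\delta)$, I would in fact use a one-dimensional perturbation acting only along the direction realizing the maximum (keeping the other direction flat, so the effective smoothness seen by $g$ at $(t,a)$ is $\max(\gamma,\delta)$, consistent with Proposition \ref{P first smoothness}). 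The induced separation in $g(t,a)$ is of order $\eta_N h_N\asymp h_N^{s_{\mathrm{dens}}^-+1}$ wait --- more carefully, the separation in $g$ inherits the same order as the perturbation in $\mu$ modulated by a smooth nonvanishing factor, hence of order $\eta_N\asymp h_N^{s_{\mathrm{dens}}^-}$, while the chi-square/Hellinger cost is of order $N\,\eta_N^2 h_N\asymp N h_N^{2s_{\mathrm{dens}}^-+1}$; balancing this to a constant gives $h_N\asymp N^{-1/(2s_{\mathrm{dens}}^-+1)}$ and the announced rate. For the death-rate bound \eqref{eq LB mu} the role of the anisotropy is genuine: here the estimator must resolve a truly bivariate function, so I would use a two-dimensional bump $K_j(t',a')=\psi(h_N^{-1}(t'-t))\psi(h_N^{-1}(a'-a))$ added to $\mu$; membership in $\mathcal H^{\gamma,\delta}_L(t,a)$ forces an amplitude $\eta_N\asymp h_N^{\gamma}+h_N^{\delta}$ along each axis, and using a genuinely anisotropic width $(h_{1,N},h_{2,N})$ with $h_{1,N}^\gamma\asymp h_{2,N}^\delta\asymp\eta_N$ the Hellinger cost is $N\eta_N^2 h_{1,N}h_{2,N}$, which balanced against separation $\eta_N$ yields exactly the exponent $s_{\mathrm{death}}^-=(\gamma^{-1}+\delta^{-1})^{-1}$.

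The key steps, in order, are: (1) write down the likelihood ratio $d\mathbb P^N_1/d\mathbb P^N_0$ for the jump SDE \eqref{eq micro} via the exponential martingale associated with the change of intensities, and bound its Hellinger affinity from below by $\exp(-c N \int_0^T\!\!\int (\sqrt{\lambda_1}-\sqrt{\lambda_0})^2 g\,da\,dt)$, where $\lambda_j$ are the relevant birth/death rates; (2) verify, using Assumption \ref{H basic} and the lower bound Assumption \ref{assumption minoration g} (which guarantees $g$ is bounded below on a neighbourhood of $(t,a)$, so the perturbations are genuinely "visible"), that the competing triples stay in the Hölder balls and in $\mathcal L^\infty_{\mathcal D,\epsilon}$; (3) compute the separation in $g(t,a)$, resp. $\mu(t,a)$, via the explicit solution formula; (4) optimize the bandwidths; (5) invoke the standard reduction $\inf_F\sup_j\mathbb E_j|F-\theta_j|\gtrsim |\theta_1-\theta_0|\cdot(1-\mathrm{TV}(\mathbb P_0^N,\mathbb P_1^N))$.

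**The main obstacle** I anticipate is step (3) for the death rate, namely transferring an anisotropic perturbation of $\mu$ into a perturbation of the \emph{observable} quantities with the correct anisotropic cost. Because the observation is the population process $Z^N$ (equivalently $g$ and the death point-process $\Gamma^N$) rather than $\mu$ directly, one must check that perturbing $\mu$ on an anisotropic box of size $h_{1,N}\times h_{2,N}$ around $(t,a)$ costs in Hellinger distance exactly $N\,\eta_N^2\,h_{1,N}h_{2,N}$ and not more --- in particular that the perturbation of $\mu$ does not propagate (through the transport equation \eqref{McKendrick} and the boundary term $g(t,0)=\int b g$) into a spatially delocalized perturbation of $g$ that would inflate the chi-square distance. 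Controlling this propagation --- essentially a perturbative stability estimate for the renewal equation, of the same flavour as Proposition \ref{P first smoothness} but tracking $L^2$-sizes rather than Hölder norms --- is the technical heart of the proof; one resolves it by choosing the support of the bump inside $\mathcal D_U$ (where only mortality is active and $g$ is given by the simple formula $g(t,a)=g_0(t-a)\exp(-\int_0^a\mu(\cdots))$ along characteristics), so that the perturbation of $g$ stays confined to the forward characteristic cone emanating from the box and the integrated squared error is controlled by the box area times $\eta_N^2$.
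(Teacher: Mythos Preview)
Your overall strategy --- Le Cam's two-point method, a Girsanov formula for the jump SDE to control the total variation, and perturbation of $\mu$ only --- is exactly what the paper does, and your treatment of \eqref{eq LB mu} via an anisotropic product bump matches the paper's Step~5 essentially verbatim.

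For the density bound \eqref{eq LB g}, however, your construction is muddled and, as written, does not deliver the rate. The paper perturbs $\mu$ by $cN^{-1/2}\tau_N^{1/2}\psi\bigl(\tau_N(s-u-(t-a))\bigr)$, a bump depending only on the \emph{characteristic-transverse} variable $s-u$. Three things happen simultaneously: (i) the H\"older constraints in \emph{both} coordinates see the same scale $\tau_N^{-1}$, so membership in $\mathcal H^{\gamma,\delta}_L$ forces the amplitude against $\max(\gamma,\delta)$ (not $\min(\gamma,\delta)$ as you first write); (ii) the bump lives on a strip of width $\tau_N^{-1}$ and hence has $L^2(\mathcal D)$ norm $\sim N^{-1/2}$, giving bounded total variation via Proposition~\ref{prop girsanov}; and, crucially, (iii) along the characteristic $\{s-u=t-a\}$ through $(t,a)$ the bump is \emph{constant} equal to its full amplitude, so $g_2(t,a)-g_1(t,a)$, which is governed by $\int_0^t(\text{perturbation})(s,a-t+s)\,ds$, is of order $N^{-1/2}\tau_N^{1/2}$ rather than amplitude times bandwidth. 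Your proposed fix, a ``one-dimensional perturbation acting only along the direction realizing the maximum'' (a bump in a single coordinate $t$ or $a$), fails precisely at (iii): such a bump integrates along the characteristic to amplitude times $h_N$, and the resulting separation in $g$ is too small by a factor $h_N$.

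Your ``main obstacle'' is also misplaced. The Girsanov bound of Proposition~\ref{prop girsanov} yields $\|\mathbb P_1^N-\mathbb P_2^N\|_{TV}\lesssim N^{1/2}|\mu_1^{-1}\mu_2-1|_2$, where the expectation is taken under $\mathbb P_1^N$ and involves only the \emph{unperturbed} population $Z_s^N$, whose mass is uniformly bounded. No propagation of the $\mu$-perturbation through the transport equation enters the statistical cost; it is purely an $L^2$ norm of the bump in $\mu$. Where propagation \emph{does} matter is in computing the separation $|g_2(t,a)-g_1(t,a)|$ on $\mathcal D_L$: there $g$ involves the renewal function $B_{b,\mu,g_0}$, and the paper (Step~3) shows via Gr\"onwall that $|B_{b,\mu_2^N,g_0}-B_{b,\mu_1,g_0}|\ll N^{-s_{\mathrm{dens}}^-/(2s_{\mathrm{dens}}^-+1)}$, exploiting that the characteristic-aligned bump has $L^1$ norm $\sim N^{-1/2}\tau_N^{-1/2}$. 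Restricting to $\mathcal D_U$ as you suggest handles only half of the claim.
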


Some remarks: {\bf 1)} As for the previous estimation results in Theorems \ref{thm oracle g} and \ref{thm oracle mu}, a glance at the proof shows that the lower bound actually holds for a wider class of loss functions, including loss in probability. We keep up to the statements \eqref{eq LB g} and \eqref{eq LB mu} in expected pointwise absolute value for simplicity. {\bf 2)} If we take $\gamma = \delta$ for simplicity, we see that $s_{\mathrm{dens}}^- = \gamma$ while $s_{\mathrm{death}}^- =\gamma/2$. Therefore, although we are estimating bi-variate functions, the estimation difficulty for $g(t,a)$ is really that of a 1-dimensional function while the estimation of $\mu(t,a)$ remains that of a genuinely bivariate function. Heuristically, there is no information about the population density $g(t,a)$ captured by $(Z^N_s, s \neq t)$ while the estimation of the death rate $\mu(t,a)$ requires dynamical knowledge from the process $\Gamma^N(ds,du)$ for which a truly 2-dimensional information domain around $(t,a)$ is required in order to identify $\mu(t,a)$. 


\subsection{Adaptive estimation under anisotropic H\"older smoothness} \label{sec: minimax optimality}

Our next result shows the performance of $g_{\star}^N(t,a)$ defined in \eqref{def GL est} and gives optimal up to inessential logarithmic factors in some cases.
Moreover, $g_{\star}^N(t,a)$ is nearly smoothness adaptive. More precisely, let
\begin{equation} \label{eq def dens}
s_{\mathrm{dens}}^+(t,a) =\max(\gamma\wedge(\delta + 1),\delta) {\bf 1}_{\mathcal D_U}(t,a)  + \min(\alpha,\beta,\gamma+1,\delta){\bf 1}_{\mathcal D_L}(t,a){\color{blue},}
\end{equation}
and note that $s_{\mathrm{dens}}^+(t,a) \leq s_{\mathrm{dens}}^-(t,a)$ always.

\begin{thm} \label{adapt esti g}
Work under Assumptions  \ref{H basic}, \ref{minimal F}, \ref{concentration initiale} with $r_N \leq N^{-1/2}$ and some $\mathcal F$ that satisfies $e(\mathcal F) < \infty$, and Assumption \ref{assumption minoration g}. 
Specify $\widehat g_{\star}^N(t,a)$ with a compactly supported kernel of order $\ell_0 \geq 0$ and pick 
$$\mathcal G_1^N = (x_1^N < x_2^N < \ldots < x_N^N)$$
a subdivision of $[N^{-1/2}, (\log N)^{-1}]$ with $\max_{1 \leq i \leq N-1} (x_{i+1}^N-x_{i}^N) \lesssim N^{-1}$ so that 
$\mathrm{Card} \,\mathcal G_1^N \lesssim N$.
For every $(t,a) \in \mathcal D_-$ and large enough $N$, we have
\begin{equation} \label{UB upper}
\sup_{b,\mu,g_0}\big(\E\big[\big(\widehat g_{\star}^N(t,a)-g(t,a)\big)^2\big]\big)^{1/2} \lesssim \Big(\frac{(\log N)^2}{N}\Big)^{s_{\mathrm{dens}}^+(t,a) \wedge \ell_0/(2s_{\mathrm{dens}}^+(t,a) \wedge \ell_0+1)},
\end{equation} 
where the supremum is taken over 
$b \in \mathcal H^{\alpha, \beta}_{L}(t,a)$, $\mu \in \mathcal H^{\gamma, \delta}_{L}(t,a)$, $g_0 \in \mathcal H^\nu_{L}(t,a)$
with $\alpha, \beta, \gamma, \delta >0, \nu  \geq \max(\gamma,\delta)+1$ and $L>0$.
\end{thm}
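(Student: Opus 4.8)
The plan is to combine the oracle inequality of Theorem \ref{thm oracle g} with the smoothness of $g$ coming from Proposition \ref{P first smoothness} and a standard bias estimate for kernels of order $\ell_0$. First I would fix $(t,a) \in \mathcal D_-$ and the parameters $(b,\mu,g_0)$ in the respective H\"older balls. By Proposition \ref{P first smoothness}(i), $g(t,\cdot)$ is, as a univariate function near $a$, of H\"older regularity at least $s_{\mathrm{dens}}^+(t,a)$ (i.e. $\max(\gamma\wedge(\delta+1),\delta)$ on $\mathcal D_U$ and $\min(\alpha,\beta,\gamma+1,\delta)$ on $\mathcal D_L$), with the semi-norm controlled uniformly over the parameter balls by a constant depending only on $L, T$ and the lower bound $\delta$ from Assumption \ref{assumption minoration g}; this uniform control is exactly what is needed to take the supremum over $(b,\mu,g_0)$ at the end.

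Next I would bound the bias term $\mathcal B_h^N(g)(t,a)$ appearing in Theorem \ref{thm oracle g}. Since $K$ is a kernel of order $\ell_0$ and $g(t,\cdot) \in \mathcal H^{s}$ locally with $s = s_{\mathrm{dens}}^+(t,a)$, the classical Taylor-expansion argument gives $|\int K_{h'}(u-a)g(t,u)du - g(t,a)| \lesssim (h')^{s \wedge \ell_0}$, uniformly in $h' \le h$, hence $\mathcal B_h^N(g)(t,a) \lesssim h^{s \wedge \ell_0}$, with constant controlled by the uniform H\"older semi-norm bound. On the variance side, $\mathsf V_h^N = (4(\log N)C^\star N^{-1/2}|K_h|_{1,\infty})^2 \asymp (\log N)^2 N^{-1} h^{-1}$ because $|K_h|_{1,\infty} \asymp h^{-1/2}$ for a dilated $L^1$-normalised kernel. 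Plugging these into the oracle bound of Theorem \ref{thm oracle g} yields
$$\E\big[(\widehat g_\star^N(t,a)-g(t,a))^2\big] \lesssim \inf_{h \in \mathcal G_1^N}\big(h^{2(s\wedge \ell_0)} + (\log N)^2 N^{-1}h^{-1}\big) + N^{-1}.$$

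Finally I would carry out the bias-variance trade-off: the continuous optimiser of $h \mapsto h^{2m} + (\log N)^2 N^{-1} h^{-1}$ with $m = s\wedge \ell_0$ is $h_* \asymp ((\log N)^2/N)^{1/(2m+1)}$, giving value $\asymp ((\log N)^2/N)^{2m/(2m+1)}$. One must check that $h_*$ (or the nearest lattice point) lies in $[N^{-1/2}, (\log N)^{-1}]$ for large $N$ — it does, since $2m/(2m+1) < 1$ forces $h_* \gg N^{-1/2}$ and $h_* \to 0$ — and that replacing $h_*$ by a point of $\mathcal G_1^N$ at distance $\lesssim N^{-1}$ changes the bound only negligibly. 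Taking square roots gives \eqref{UB upper}, and since every constant above is uniform over the parameter balls, the supremum is controlled. The main obstacle is really the first step: extracting from Proposition \ref{P first smoothness} a \emph{uniform} (over the H\"older balls) bound on the local H\"older semi-norm of the univariate slice $g(t,\cdot)$, which requires tracking through the explicit representation of $g$ in terms of $(b,\mu,g_0)$ that the dependence of the semi-norm on the data is through $L$, $T$ and $\delta$ only; the remaining trade-off is entirely routine.
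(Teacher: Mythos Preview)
Your proposal is correct and follows essentially the same route as the paper: invoke Proposition~\ref{P first smoothness}(i) to get the univariate H\"older smoothness of $g(t,\cdot)$ at $a$, translate this into a bias bound $\mathcal B_h^N(g)(t,a)\lesssim h^{s_{\mathrm{dens}}^+\wedge\ell_0}$ via standard kernel approximation, combine with $\mathsf V_h^N\lesssim(\log N)^2 N^{-1}h^{-1}$ in the oracle inequality of Theorem~\ref{thm oracle g}, and balance. The only minor remark is that the uniform control of the H\"older semi-norm $L'$ in Proposition~\ref{P first smoothness} actually depends on $L$ and the smoothness exponents only, not on the lower-bound parameter $\delta$ from Assumption~\ref{assumption minoration g}; that assumption is carried in the theorem hypotheses but is not used in this particular argument.
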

Some remarks: {\bf 1)} Comparing with the minimax lower bound of Theorem \ref{thm LB}, we see that both upper and lower bounds \eqref{eq LB g} and \eqref{UB upper} agree on $\mathcal D_U$ if $\delta \leq \gamma \leq \delta+1$ and on $\mathcal D_L$ if $\delta-1 \leq \gamma \leq \delta$ (and if $\alpha$ and $
\beta$ are sufficiently large too), provided the order $\ell_0$ of the kernel $K$ is sufficiently large. The rates are tight up to an inessential logarithmic factor. We do not know about the optimality in $g$ beyond this domain, but we see that the difficulty of the estimation of $g(t,a)$ is equivalent to the difficulty of the univariate function $a \mapsto g(t,a)$ for which the time variable $t$ is simply a parameter: it suffices to piece together the estimators $\widehat g_\star^N(t,a)$ for every $t$ in order to estimate the graph $(t,a) \mapsto g(t,a)$. {\bf 2)} While we already know that a logarithmic payment is unavoidable for a smoothness adaptive estimator (see the classical Lepski-Low phenomenon, \cite{LEPSKI, LOW}) we do not know whether the order we find in the log term is correct ({\it i.e.} $(\log N)^2$ versus the classical $\log N$ payment). This stems from Theorem \ref{thm concentration optimal} and the mild concentration property as we define it, where exponential tail are obtained versus subgaussian tails, but this order seems genuinely linked to the Poissonian behaviour of the noise and it is not clear that we can extend our statistical result in order to remove the extra $\log N$ error-term  in \eqref{UB upper}.\\

Similarly, $\mu_{\star}^N(t,a)$ defined in \eqref{def GL est mu} also 
shares near optimality in some cases.
 Define 
$$s_L(\alpha,\beta,\gamma,\delta) = \big(\min(\gamma,\delta)^{-1}+\min(\alpha,\beta,\gamma+1,\delta)^{-1}\big)^{-1},$$
$$s_U(\gamma,\delta) = \big(\min(\gamma,\delta)^{-1}+\delta^{-1}\big)^{-1},
$$
and
\begin{equation} \label{eq def sdeath}
s_{\mathrm{death}}^+(t,a) = s_U(\gamma, \delta) {\bf 1}_{\mathcal D_U}(t,a)+s_L(\alpha,\beta,\gamma, \delta) {\bf 1}_{\mathcal D_L}(t,a).
\end{equation}
Note that $s_{\mathrm{death}}^+(t,a) \leq s_{\mathrm{death}}^-$ always.

\begin{thm} \label{th adapt minimax mu}
Work under Assumptions  \ref{H basic}, \ref{minimal F}, \ref{concentration initiale} with $r_N \leq N^{-1/2}$ and some $\mathcal F$ that satisfies $e(\mathcal F) < \infty$, and Assumption \ref{assumption minoration g}. 
Specify $\mu_{\star}^N(t,a)$ with  kernels $H,K$ of order $\ell_0 \geq 0$ and pick $\mathcal G_2^N = \mathcal G_1^N \times \mathcal G_1^N$
so that 
$\mathrm{Card} \,\mathcal G_2^N \lesssim N^2$.
For every $(t,a) \in \mathcal D^-$
and large enough $N$, we have
\begin{equation} \label{UB upper mu}
\sup_{b,\mu,g_0}\big(\E\big[\big(\widehat \mu_{\star}^N(t,a)-\mu(t,a)\big)^2\big]\big)^{1/2} \lesssim \Big(\frac{(\log N)^2}{N}\Big)^{s_{\mathrm{death}}^+(t,a)\wedge \ell_0/(2s_{\mathrm{death}}^+(t,a)\wedge \ell_0+1)},
\end{equation} 
where the supremum is taken over 
$b \in \mathcal H^{\alpha, \beta}_{L}(t,a)$, $\mu \in \mathcal H^{\gamma, \delta}_{L}(t,a)$, $g_0 \in \mathcal H^\nu_{L}(t,a)$,
with $\alpha, \beta, \gamma, \delta >0, \nu \geq \max(\gamma,\delta)+1$ and $L>0$.
\end{thm}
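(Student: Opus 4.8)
\textbf{Plan of proof for Theorem \ref{th adapt minimax mu}.}
The strategy is to combine the oracle inequality of Theorem \ref{thm oracle mu} with the smoothness result Proposition \ref{P first smoothness}(ii), exploiting the $\varphi$-skewed parametrisation $g = \widetilde g \circ \varphi$. Start from the oracle inequality: for $(t,a) \in \mathcal D^-$,
\begin{equation*}
\E\big[\big(\widehat \mu_{\star}^N(t,a)_{\varpi}-\mu(t,a)\big)^2\big] \lesssim \inf_{h \in \mathcal G_1^N}\big(\mathcal B_h^N(g)(t,a)^2+\mathsf V_{h}^N\big) + \inf_{\boldsymbol h \in \mathcal G_2^N}\big(\mathcal B_{\boldsymbol h}^N(\mu g)(t,a)^2+\mathsf V_{\boldsymbol h}^N\big) +N^{-1}.
\end{equation*}
The first infimum has already been shown in the proof of Theorem \ref{adapt esti g} to be of order $((\log N)^2/N)^{2 s_{\mathrm{dens}}^+(t,a)\wedge\ell_0/(2 s_{\mathrm{dens}}^+(t,a)\wedge\ell_0+1)}$, which (since $s_{\mathrm{dens}}^+ \geq s_{\mathrm{death}}^+$ in all cases, as is easily checked from the formulas \eqref{eq def dens} and \eqref{eq def sdeath}) is dominated by the claimed rate. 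So the crux is to control the bivariate bias/variance tradeoff coming from the estimation of $\pi = \mu g$ in the direction $\varphi$.

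For the bivariate term, first rewrite the bias $\mathcal B_{\boldsymbol h}^N(\mu g)(t,a)$ in the skewed coordinates: since $\big((H\otimes K)_{\boldsymbol h}\circ\varphi\big)\star\pi(t,a) = (H\otimes K)_{\boldsymbol h}\star\widetilde\pi(\varphi(t,a))$ with $\widetilde\pi = \pi\circ\varphi^{-1} = \widetilde\mu\,\widetilde g$, the bias becomes the standard bivariate anisotropic kernel bias of $\widetilde\pi$ at the point $\varphi(t,a)$. Next I establish the smoothness of $\widetilde\pi$: from Assumption \ref{H smoothness basic}, $\mu \in \mathcal H^{\gamma,\delta}$, hence $\widetilde\mu = \mu\circ\varphi^{-1} \in \mathcal H^{\min(\gamma,\delta),\delta}$ on each piece (the first exponent drops to $\min(\gamma,\delta)$ because the first skewed variable mixes the original $t$ and $a$; the second stays $\delta$ since $a = t - a'$ is an affine function of $a'$ alone), and from Proposition \ref{P first smoothness}(ii), $\widetilde g$ has exponents $\min(\gamma+1,\delta+1)$ in the first variable and $\min(\alpha,\beta,\gamma+1,\delta)$ (on $\widetilde{\mathcal D}_L$) or $\max(\gamma\wedge(\delta+1),\delta)$ (on $\widetilde{\mathcal D}_U$) in the second. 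The product $\widetilde\pi = \widetilde\mu\,\widetilde g$ therefore belongs, on each region, to the anisotropic class whose exponents are the \emph{minima} of the respective exponents (using that a product of a bounded $\mathcal H^{a}$ function and a bounded $\mathcal H^{b}$ function lies in $\mathcal H^{\min(a,b)}$, plus the uniform boundedness of $\mu$, $g$ built into the classes and Assumption \ref{assumption minoration g}). On $\widetilde{\mathcal D}_L$ this gives first exponent $\min(\gamma,\delta)$ and second exponent $\min(\alpha,\beta,\gamma+1,\delta)$; on $\widetilde{\mathcal D}_U$ first exponent $\min(\gamma,\delta)$ and second exponent $\delta$ (noting $\min(\delta,\max(\gamma\wedge(\delta+1),\delta)) = \delta$). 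A standard bivariate anisotropic kernel bias estimate then gives $\mathcal B_{\boldsymbol h}^N(\mu g)(t,a) \lesssim h_1^{\sigma_1\wedge\ell_0} + h_2^{\sigma_2\wedge\ell_0}$ with $(\sigma_1,\sigma_2)$ the exponents just computed, while $\mathsf V_{\boldsymbol h}^N \approx (\log N)^2 N^{-1}(h_1 h_2)^{-1}$ since $|H_{h_1}|_{1,\infty}|K_{h_2}|_{1,\infty} \approx (h_1 h_2)^{-1/2}$.

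The final step is the optimisation over $\boldsymbol h \in \mathcal G_2^N$. Choosing $h_i \approx ((\log N)^2/N)^{1/(2(\sigma_1\wedge\ell_0)+2(\sigma_2\wedge\ell_0)+ \ldots)}$ — more precisely, the classical anisotropic bandwidth choice balancing $h_1^{2\sigma_1} \approx h_2^{2\sigma_2} \approx (\log N)^2 N^{-1}(h_1 h_2)^{-1}$, which yields $h_i^{2\sigma_i}$ of order $((\log N)^2/N)^{\bar s/(\bar s+1)}$ where $\bar s = (\sigma_1^{-1}+\sigma_2^{-1})^{-1}$ is the harmonic-type effective smoothness — produces a rate $((\log N)^2/N)^{\bar s/(2\bar s+1)}$. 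Plugging in the two cases: on $\mathcal D_U$, $\bar s = (\min(\gamma,\delta)^{-1}+\delta^{-1})^{-1} = s_U(\gamma,\delta)$; on $\mathcal D_L$, $\bar s = (\min(\gamma,\delta)^{-1}+\min(\alpha,\beta,\gamma+1,\delta)^{-1})^{-1} = s_L(\alpha,\beta,\gamma,\delta)$; these are exactly $s_{\mathrm{death}}^+(t,a)$ as defined in \eqref{eq def sdeath}, and truncation at $\ell_0$ is handled by replacing each $\sigma_i$ with $\sigma_i\wedge\ell_0$, giving the exponent $s_{\mathrm{death}}^+(t,a)\wedge\ell_0$ as stated. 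One must also check that the optimal $\boldsymbol h$ lies in $[N^{-1/2},(\log N)^{-1}]^2$ (true for large $N$ since the exponents are positive and $<1/2$ in the relevant regime, up to enlarging the grid) and that the discretisation error from $\mathcal G_2^N$ being a finite lattice of mesh $\lesssim N^{-1}$ is negligible, which it is because the rate function is Hölder-continuous in $\boldsymbol h$ with the mesh contributing only lower-order terms. The main obstacle is the careful bookkeeping in step 2 — tracking how each Hölder exponent transforms under $\varphi$ and under multiplication, and verifying that the $\min$ operations produce exactly the claimed $s_L, s_U$; the $\mathcal D_L$ region is the delicate one since the birth boundary condition $g(t,0)=\int b(t,a)g(t,a)da$ brings in the exponents $\alpha,\beta$ through Proposition \ref{P first smoothness}(ii), and one has to make sure no extra loss is incurred when multiplying $\widetilde\mu$ (smoothness only $\min(\gamma,\delta)$ in the first variable) against $\widetilde g$.
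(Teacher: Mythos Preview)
Your proposal is correct and follows essentially the same route as the paper: start from the oracle inequality of Theorem \ref{thm oracle mu}, dispatch the $g$-term via Theorem \ref{adapt esti g} using $s_{\mathrm{dens}}^+ \geq s_{\mathrm{death}}^+$, then compute the anisotropic H\"older smoothness of $\widetilde\pi = \widetilde\mu\,\widetilde g$ in the $\varphi$-coordinates (with $\widetilde\mu \in \mathcal H^{\min(\gamma,\delta),\delta}$ and $\widetilde g$ from Proposition \ref{P first smoothness}(ii)), and optimise the bivariate bandwidth to recover $s_U$ and $s_L$. Your additional remarks about grid membership and discretisation error are details the paper leaves implicit; note also that both you and the paper gloss over the fact that truncating each exponent $\sigma_i$ at $\ell_0$ does not literally give effective smoothness $s_{\mathrm{death}}^+\wedge\ell_0$ (the harmonic-type mean of $\sigma_i\wedge\ell_0$ can differ from $(\sigma_1^{-1}+\sigma_2^{-1})^{-1}\wedge\ell_0$), but this is harmless once $\ell_0$ is taken large enough.
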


Some remarks: {\bf 1)} The same remark as {\bf 2)} after the statement or Theorem \ref{adapt esti g} holds here. {\bf 2)} The minimax optimality situation is somewhat clearer for estimating $\mu$: we see that we have near optimality on $\mathcal D_U$ as soon as $\gamma \leq \delta$, while the upper and lower bounds only agree if $\gamma \leq \delta \leq \gamma+1$ on $\mathcal D_L$  (and if $\alpha$ and $
\beta$ are sufficiently large too), provided the order $\ell_0$ of the kernel $K$ is sufficiently large. Thus situation is somewhat similar to the estimation of $g$ on $\mathcal D_U$, see Theorem \ref{adapt esti g} above. {\bf 3)} The rate of estimation is triggered by the smoothness of $\pi = \mu g$ since the estimation of the quotient $g$ will always be better, for 
$$s_{\mathrm{death}}^+(t,a) \leq s_{\mathrm{dens}}^+(t,a)\;\;\text{for every}\;\;(t,a) \in \mathcal D_-$$
always. However, in order to achieve optimality, we need to optimise the approximation property of $\pi$ by looking at the smoothness of $\widetilde \pi = \widetilde \mu \widetilde g$, with $\mu = \widetilde \mu \circ \varphi$. This benefit is obtained thanks to Proposition \ref{P first smoothness} and is given in details in the proof. We would lose by a polynomial order in the rate of convergence given in \eqref{UB upper mu} if we used a kernel of the form $(H \otimes K)_{\boldsymbol h}$ instead of $(H \otimes K)_{\boldsymbol h}\circ \varphi$ for the estimation of the numerator $\pi$ in the representation $\mu = \pi/g$.

\section{Numerical illustration}
We briefly explore the performance of our estimators on simulated data. 
We use the following parameters:
\begin{enumerate}
\item[(i)] The initial condition $g_0$ is taken as the density of Gaussian random variable centred in 40 with variance of 152 ({\it i.e.} a standard deviation of approximately 12 years), conditioned on living between $0$ and $120$.
\item[(ii)] We pick $b(t, a) = {\bf 1}_{\{120 \leq a \leq 40\}}$. Although $b$ is not globally H\"older continuous, we still have (and can prove) similar results for such simple piecewise constant functions.
\item[(iii)] We pick $\mu(t, a) = 4\cdot 10^{-2} \exp(7.4 \cdot 10^{-3}a)\exp( - 5 \cdot 10^{-3}t)$. We pick a relatively high death rate in order to guarantee sufficiently many events of death for the estimation of $\mu(t,a)$ and avoid artefacts.
\end{enumerate}
We consider the domain $\mathcal D = [0, 20] \times [0, 120]$ which means  $T = 20$ and a maximal possible age of $120$.
We estimate $g$ on the grid $\mathcal G^g = \mathcal T^g \times \mathcal A^g$, with $\mathcal T^g = \{k \times 1.005, 0 \leq k < 20\}$ and $\mathcal A^g = \{k \times 0.2002, 0 \leq k < 600\}$. We estimate the functions $\mu$ and $\pi = \mu \cdot g$ on the grids $\mathcal G^\mu = \mathcal T^\mu \times \mathcal A^\mu$, with $\mathcal T^\mu =\mathcal T^g$ and $\mathcal A^\mu = \{k\times 1.0008, 0 \leq k<120\}$.\\

We first estimate $g$ and $\pi = \mu \cdot g$ and obtain consistent results in the regime $N = 4000$.

\begin{figure}[h]
\centering
\includegraphics[scale=0.43]{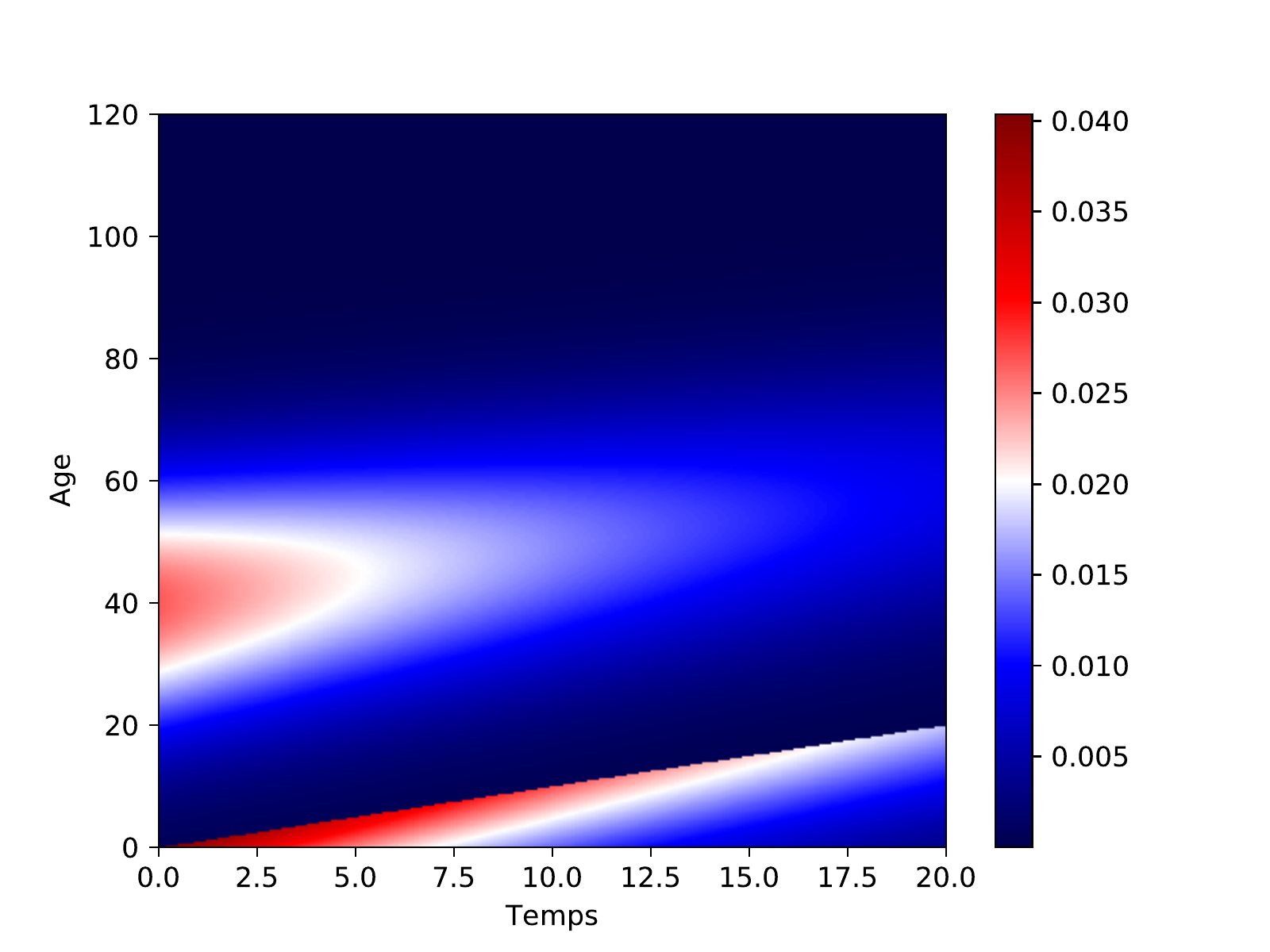}
\includegraphics[scale=0.43]{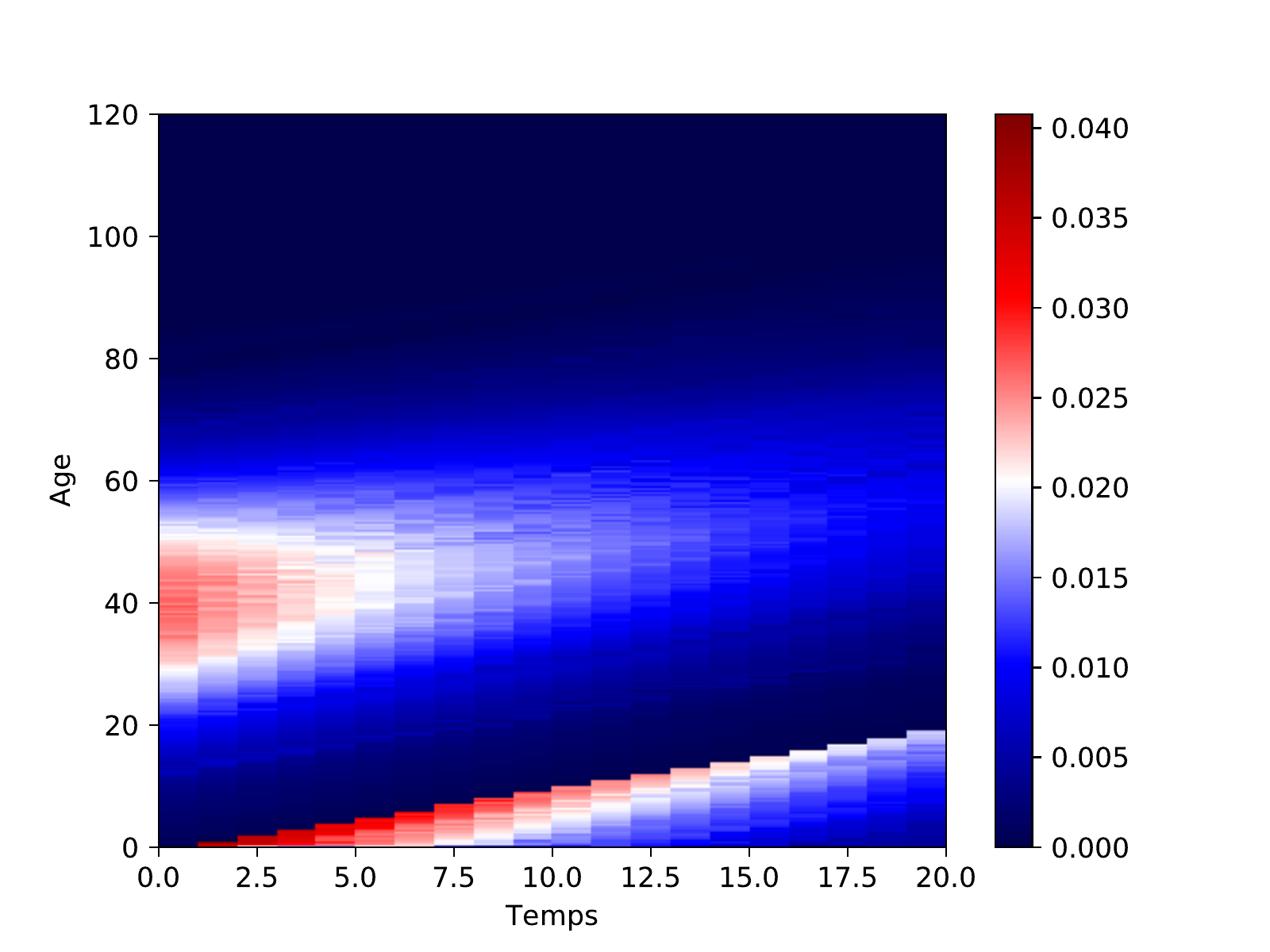}
\caption{\small{Left: true population density $g$. Right: ${\widehat g}^N_{{\widehat h}^N}$ with $N=4000$ over a single simulation of $Z^N$. $X$-axis, $Y$-axis: units in years.}}
\end{figure}

\begin{figure}[h]
\centering
\includegraphics[scale=0.43]{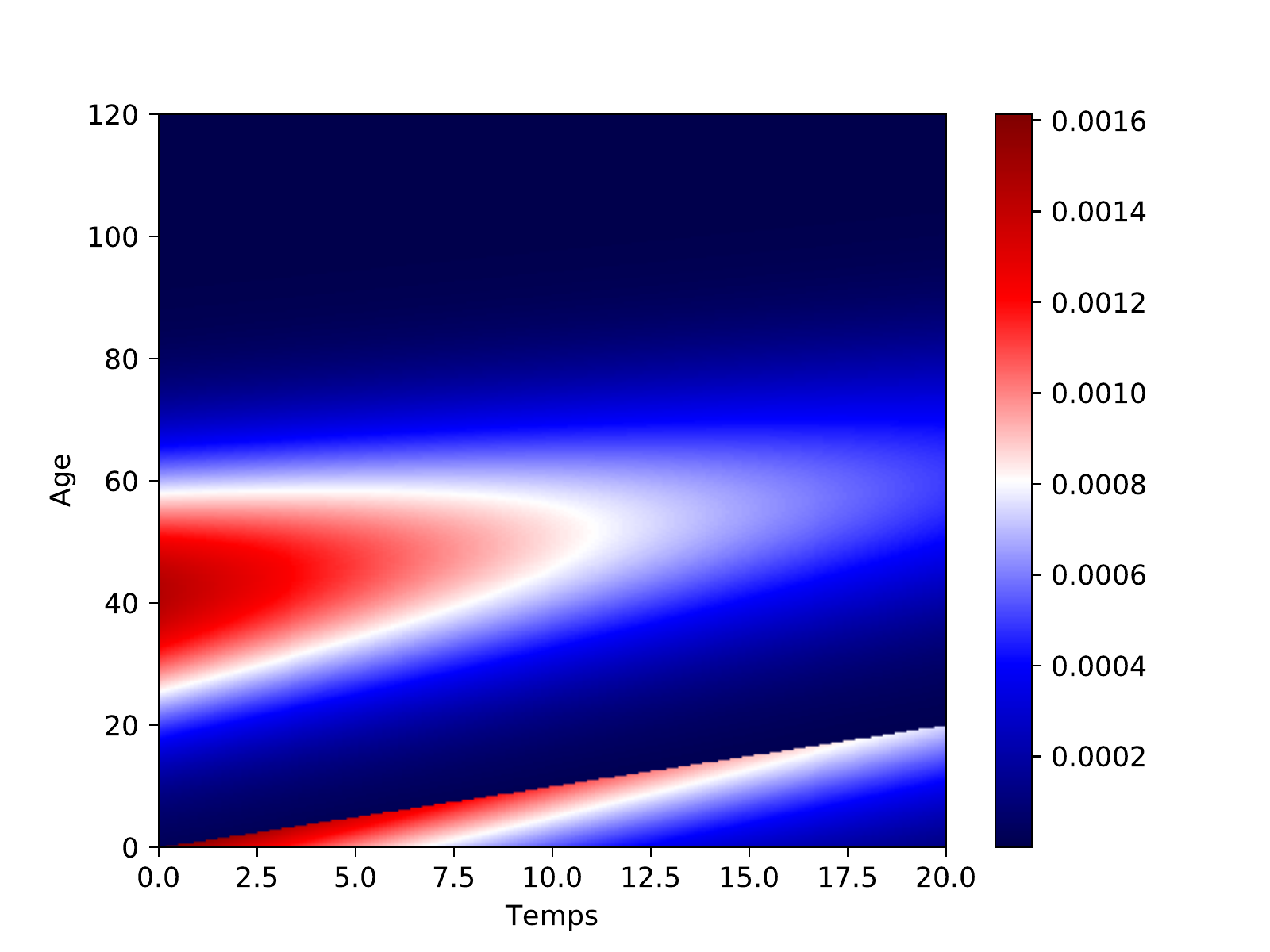}
\includegraphics[scale=0.43]{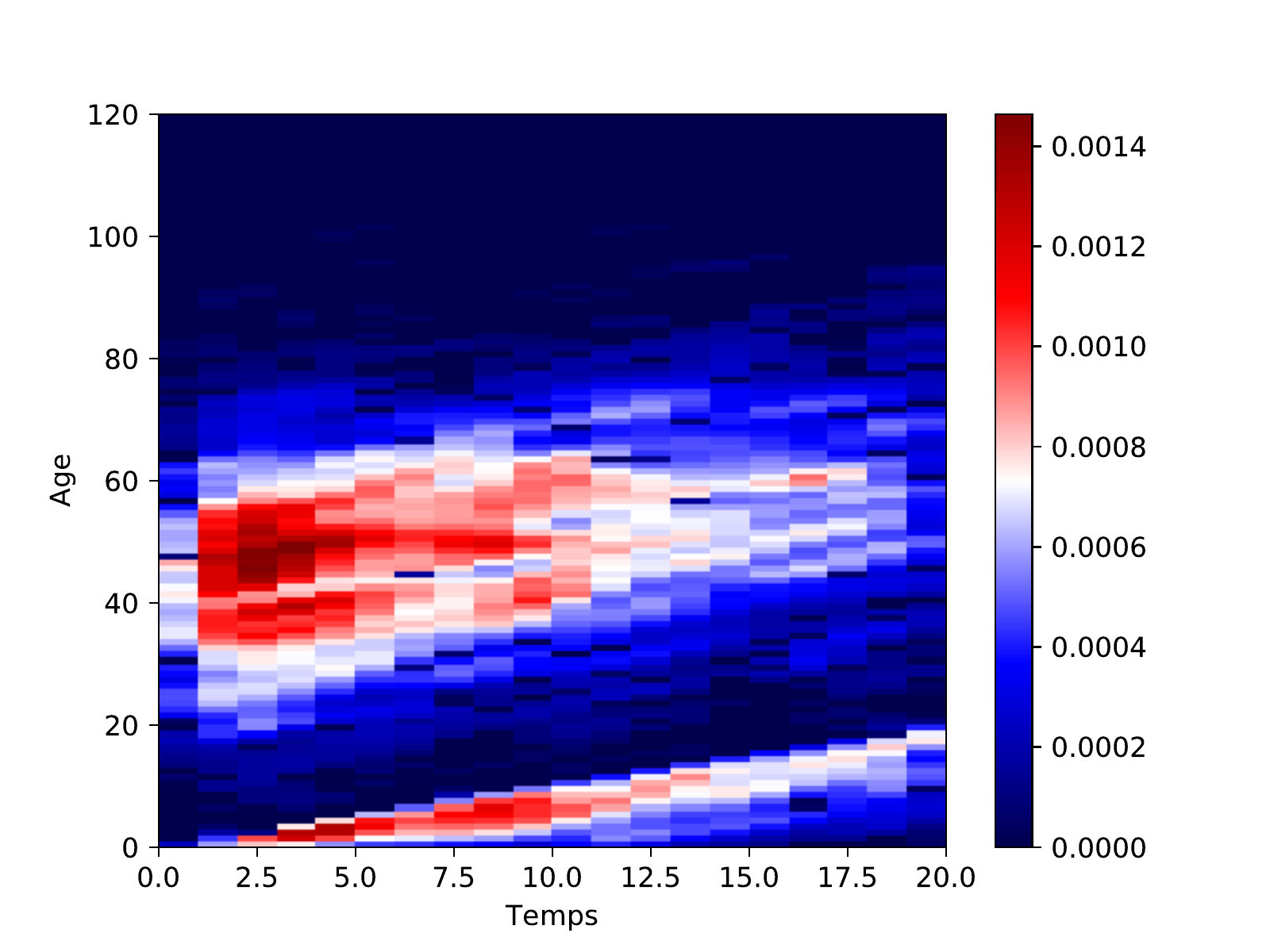}
\caption{\small{Left: true $\pi = \mu \cdot g$. Right: implementation of ${\widehat \pi}^N_{\widehat {\boldsymbol h}^N}$ with $N=4000$. $X$-axis, $Y$-axis: units in years.}}
\end{figure}


\begin{figure}[h]
\centering
\includegraphics[scale=0.42]{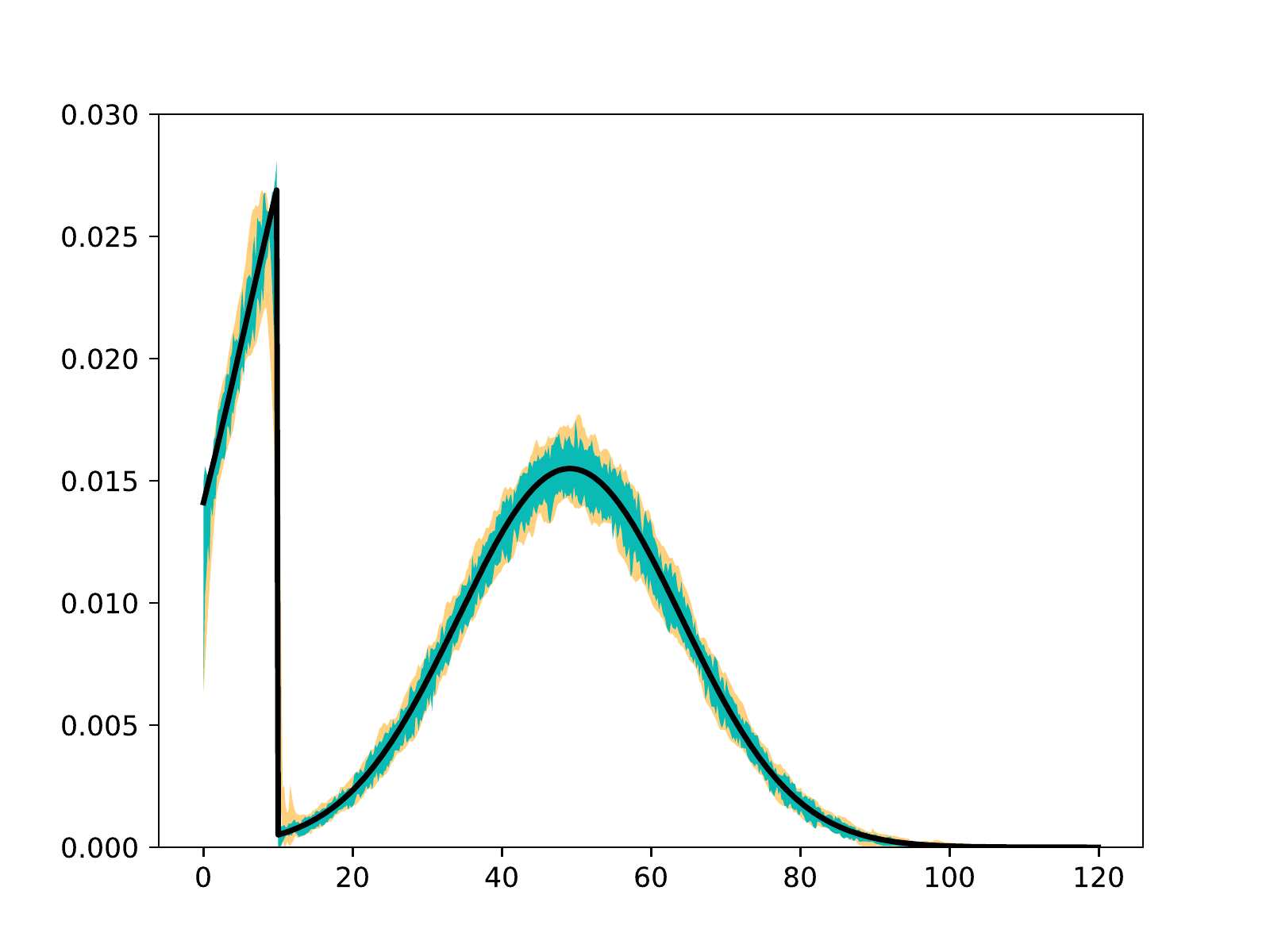}
\includegraphics[scale=0.42]{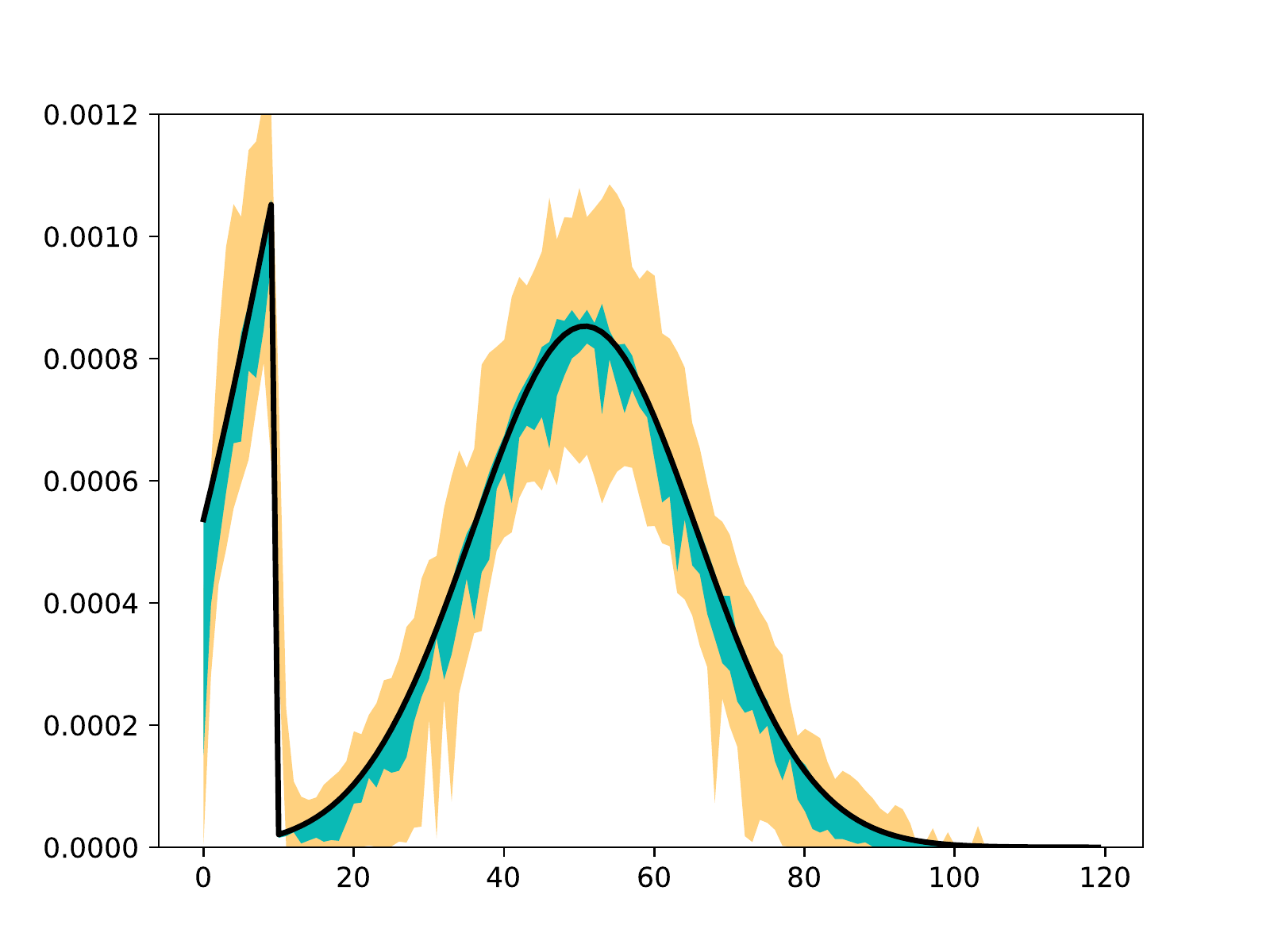}
\caption{\small{For fixed $t = 10$, and $N = 4000$, comparison between the true function (solid black) and a (pointwise) $95\%$ confidence interval based on 50 Monte-Carlo simulations. Oracle estimator in green and our adaptative estimator in yellow. Left: Estimation of $a \mapsto g(10,a)$. Right: Estimation of $a \mapsto \pi(10,a) = \mu(10,a)g(10,a)$. $X$-axis: units in years, Y-axis: rate per unit of time.}}
\end{figure}

\begin{figure}[h]
\centering
\includegraphics[scale=0.43]{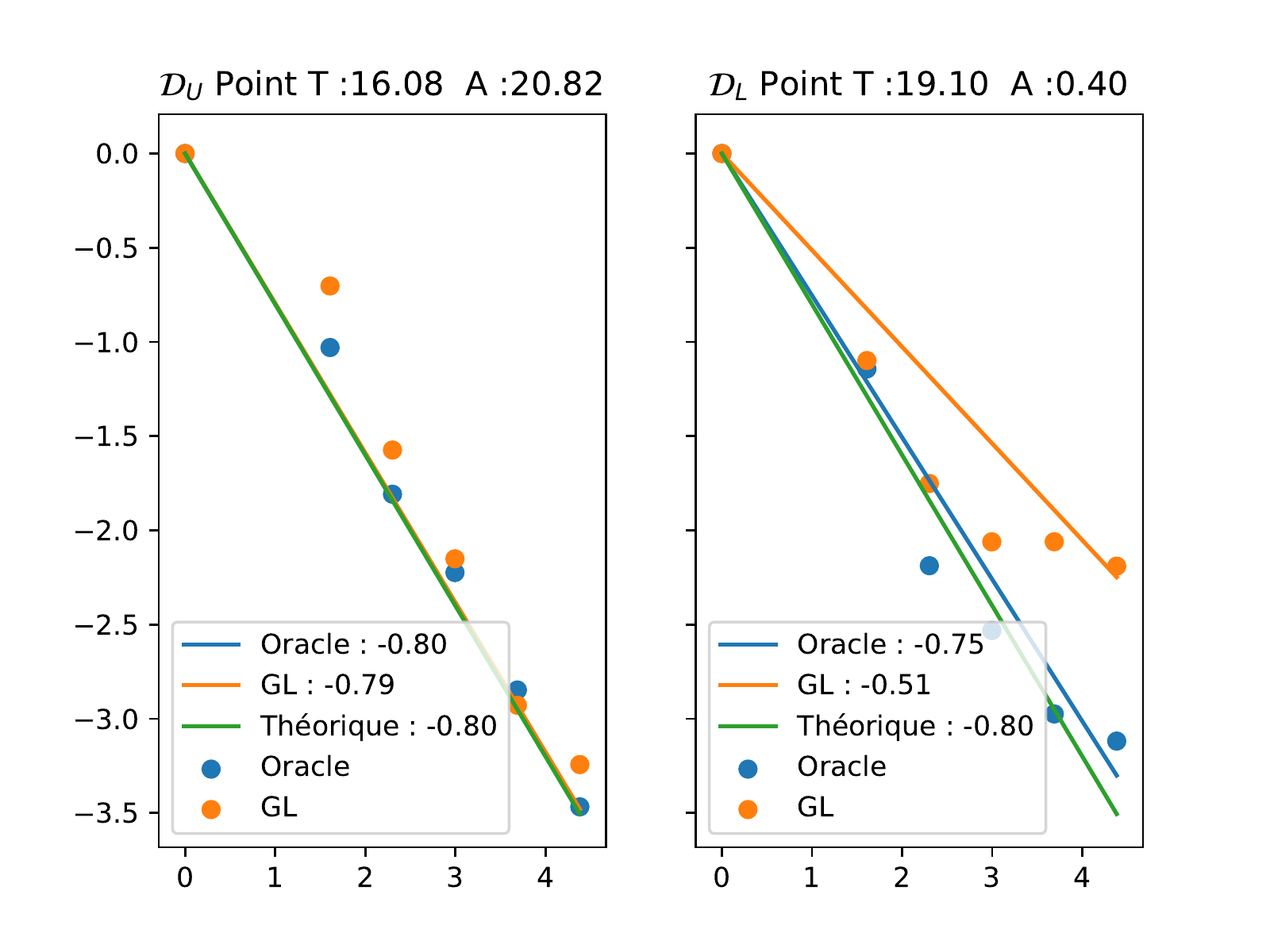}
\includegraphics[scale=0.43]{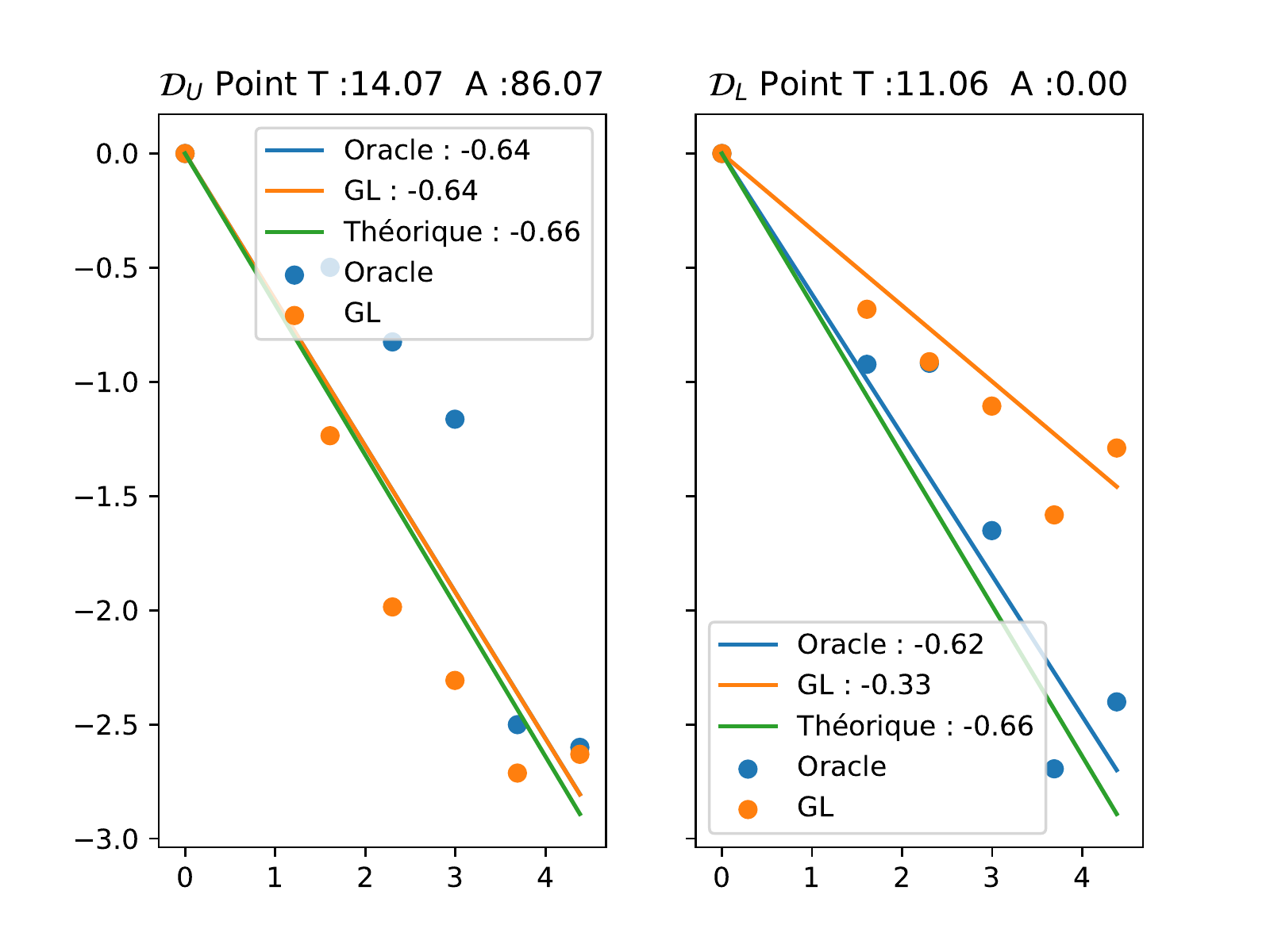}
\caption{\small{$\log_{10}$-integrated error of pointwise estimation against approximatively $\log_{10} N$ based on 50 Monte-carlo simulations. We compare the theoretical line given by the minimax theory (green) with a linear regression given by the oracle estimation (blue) and our estimation method (orange) for different values of $\log_{10} N$ at two given points in $\mathcal D_L$ and $\mathcal D_U$ respectively. Left: estimation of $g(t,a)$ at $(t,a)=(16.08, 20.82)$ and $(t,a) = (19.10, 0.40)$. Right: estimation for $\pi(t,a) = \mu(t,a)g(t,a)$ at $(t,a) = (14.07,86.07)$ and $(t,a)=(11.06, 0.00)$. X-axis: integers $0$ to $5$ correspond to $N=10^2,5\cdot 10^2,10^3,2\cdot 10^3,4\cdot 10^3,8\cdot 10^3$.}}
\end{figure}

We end this section by exploring the estimation of $\mu$ via our quotient estimator.


\begin{figure}[h] \label{fig seuil}
\centering
\includegraphics[scale=0.42]{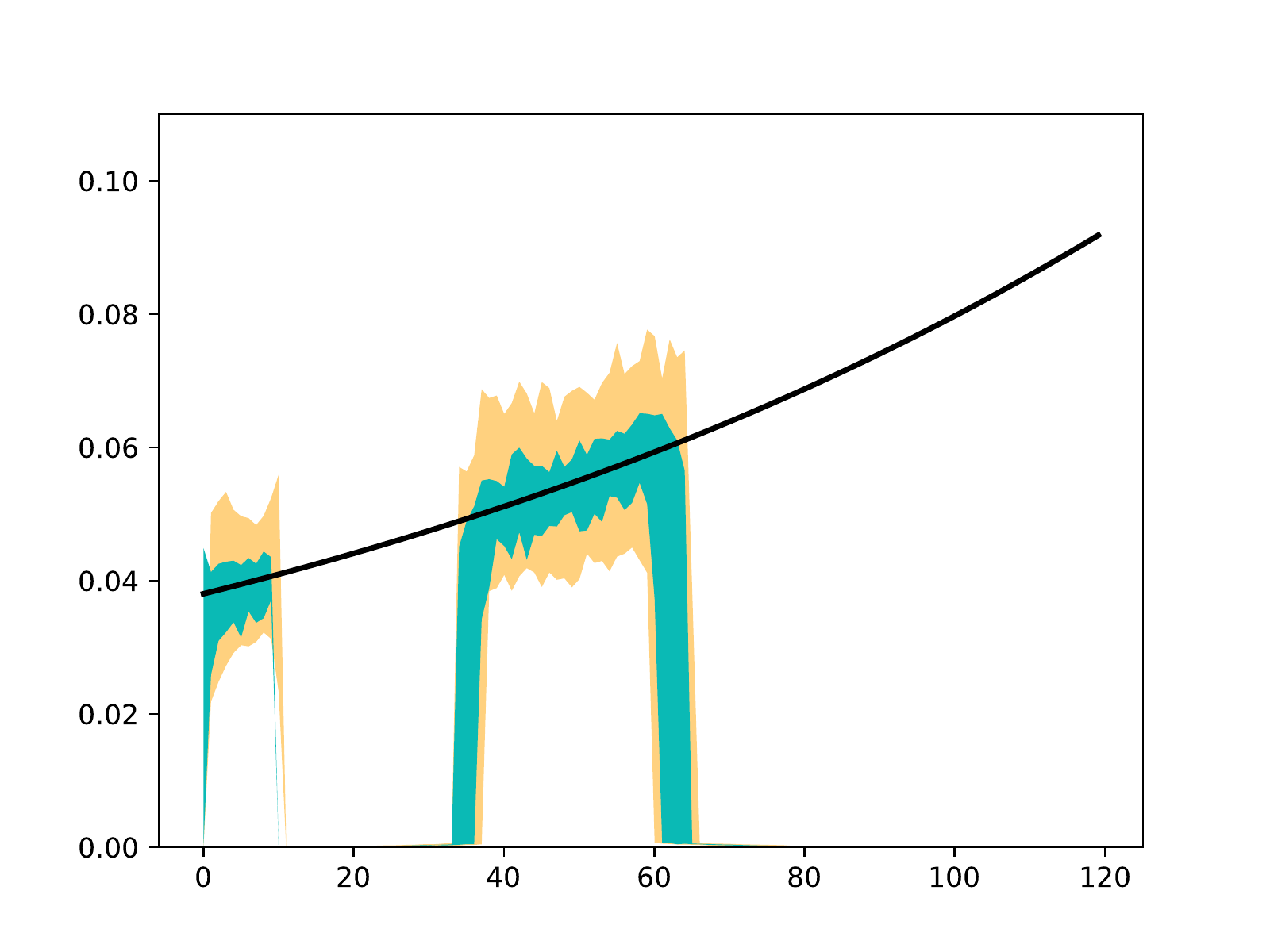}
\includegraphics[scale=0.42]{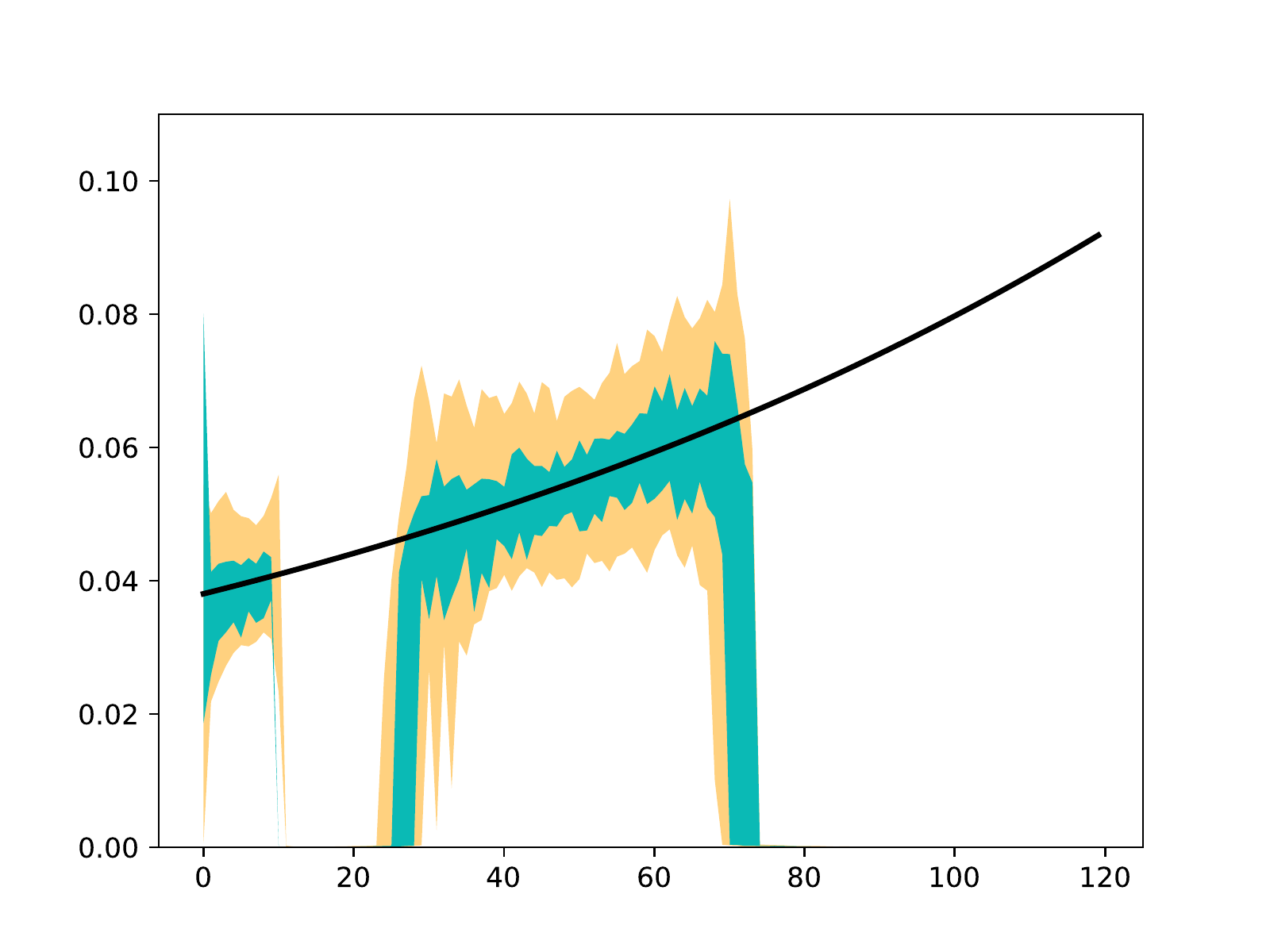}
\caption{\small{For fixed $t = 10$, and $N = 4000$, comparison between the true function (solid black) and a (pointwise) $95\%$ confidence interval based on 50 Monte-Carlo simulations. Oracle estimator in green and our adaptative estimator in yellow. Estimation of $a \mapsto \mu(10,a)$. Left: $\varpi = 10^{-2}$. Right: $\varpi = 5\cdot 10^{-3}$.
$X$-axis: units in years, Y-axis: rate per unit of time. In order to improve on these results, one must either lower down $\varpi$ or expect to be in a more favourable regime $N \gg 4000$.}}
\end{figure}

\section{Proof or Theorem \ref{thm concentration optimal}} \label{sec proofs concentration}

This section is devoted to the proof of the concentration properties of the model stated in Theorem \ref{thm concentration optimal}.
Recall that $w_1 \in \mathcal L_{\mathcal D}^{{\small\mathrm{\,time}}}$ and $w_2 \in \mathcal L_{\mathcal D}^{{\small\mathrm{\,age}}}$ are two continuous weight functions. We introduce two fundamental processes for which we will establish concentration properties:
$$\mathcal M_{\,w_1,w_2}^N(\mathcal F)_t = \sup_{f \in \mathcal F}\big|\int_0^t w_1(s)M_s^N\big(w_2(s-\cdot)f_s\big)ds\big|,$$
where $M_t^N(f)$ is defined in \eqref{def fausse mg} below
and
$$\mathcal M_{w_2}^N(\mathcal F)_t = \sup_{f \in \mathcal F} \big|M_t^N\big(w_2(t-\cdot)f_t\big)\big|.$$


\subsection{A first stability result}

\begin{prop} \label{prop stability}
Work under Assumptions \ref{H basic} and \ref{minimal F}. Then $\mathcal W_{w_1,w_2}^N(\mathcal F)_T$ is bounded above by
$$|w_1|_1
\mathcal W_{w_2}^N(\mathcal F)_0+c_0^{-1}\int_0^T|w_1(t)|\big(\mathcal W_{w_2,1}^N(\mathcal F)_t+\mathcal W_{1,w_2}^N(\mathcal F)_t\big)dt+\mathcal M_{\,w_1,w_2}^N(\mathcal F)_T,$$
where $c_0$ is defined in Assumption \ref{minimal F}.

\end{prop}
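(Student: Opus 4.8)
The plan is a \emph{mild} (Duhamel-type) martingale argument that never differentiates the test functions, since $b,\mu$ (hence the elements of $\mathcal F$) are at most H\"older. First, testing \eqref{eq micro} against an arbitrary bounded measurable function $\phi$ of the age variable and compensating the two Poisson integrals gives
$$\langle Z^N_t,\phi\rangle = \langle Z^N_0,\phi(\cdot+t)\rangle + \int_0^t \phi(t-s)\langle Z^N_s, b_s\rangle\,ds - \int_0^t\langle Z^N_s, \mu_s\,\phi(\cdot+t-s)\rangle\,ds + M^N_t(\phi),$$
with $M^N_t(\phi)$ the compensated-jump process of \eqref{def fausse mg}, a genuine martingale for each \emph{fixed} $\phi$. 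The weak solution $g$ of \eqref{McKendrick} satisfies the \emph{same} identity without the martingale term --- this follows from the method-of-characteristics representation of $g$ (cf. the proof of Proposition \ref{P first smoothness}; alternatively one checks it for smooth $\phi$ by differentiating in $t$ along \eqref{McKendrick}, then extends by density in $\phi$). Subtracting, and abbreviating $\langle\zeta^N_s,\phi\rangle := \int_{\R_+}\phi(a)\big(Z^N_s(da)-g(s,a)da\big)$, the same identity holds for $\zeta^N$.

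Now fix $f\in\mathcal F$ and insert the bounded test function $\phi^{(t)} := w_2(t-\cdot)\,f_t$. Reading off its shifted arguments, $\phi^{(t)}(a+t)=w_2(-a)f(t,a+t)$, $\phi^{(t)}(t-s)=w_2(s)f(t,t-s)$ and $\phi^{(t)}(a+t-s)=w_2(s-a)f(t,t+a-s)$, one recognises precisely the three operators of Assumption \ref{minimal F}: $f(t,t+\cdot)=\mathsf s_t(f)_0$ in the initial term, $f(t,t-s)=\mathsf t_t(f)_s$ (constant in age) in the birth term, and $f(t,t+\cdot-s)=\mathsf u_t(f)_s$ in the death term. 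Thus
$$\langle\zeta^N_t, w_2(t-\cdot)f_t\rangle = \langle\zeta^N_0, w_2(-\cdot)\,\mathsf s_t(f)_0\rangle + \int_0^t w_2(s)\,\langle\zeta^N_s, b_s\,\mathsf t_t(f)_s\rangle\,ds - \int_0^t\langle\zeta^N_s, w_2(s-\cdot)\,\mu_s\,\mathsf u_t(f)_s\rangle\,ds + M^N_t\big(w_2(t-\cdot)f_t\big).$$
Since $c_0 b,c_0\mu\in\mathcal F$ and $\mathcal F$ is stable under products and under $f\mapsto -f$ (and contains $0$), the functions $\mathsf s_t(f)$, $c_0 b\cdot\mathsf t_t(f)$, $c_0\mu\cdot\mathsf u_t(f)$ all lie in $\mathcal F$; comparing the outer and inner weights with those in the definitions of $\mathcal W^N_{w_2}$, $\mathcal W^N_{w_2,1}$, $\mathcal W^N_{1,w_2}$ then bounds the first three terms, \emph{uniformly in} $f\in\mathcal F$, in absolute value by $\mathcal W^N_{w_2}(\mathcal F)_0$, $c_0^{-1}\mathcal W^N_{w_2,1}(\mathcal F)_t$ and $c_0^{-1}\mathcal W^N_{1,w_2}(\mathcal F)_t$ respectively.

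To finish, multiply the last identity by $w_1(t)$, integrate over $t\in[0,T]$ (Fubini is legitimate: all integrands are bounded on the full-measure event where $\langle Z^N_\cdot,\mathbf 1\rangle$ stays bounded under Assumption \ref{H basic}), and split into four pieces by linearity \emph{before} estimating. The first three pieces are each bounded by $\int_0^T|w_1(t)|$ times the corresponding pointwise-in-$t$ estimate, yielding $|w_1|_1\,\mathcal W^N_{w_2}(\mathcal F)_0 + c_0^{-1}\int_0^T|w_1(t)|\big(\mathcal W^N_{w_2,1}(\mathcal F)_t+\mathcal W^N_{1,w_2}(\mathcal F)_t\big)dt$; the fourth piece, $\int_0^T w_1(t)\,M^N_t\big(w_2(t-\cdot)f_t\big)dt$, is bounded directly by $\mathcal M^N_{w_1,w_2}(\mathcal F)_T$. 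Taking the supremum over $f\in\mathcal F$ on the left gives the claim. The delicate points are: justifying the mild identity for $g$ against non-smooth $\phi$ (hence relying on the explicit characteristics formula rather than on an integration-by-parts/weak-formulation argument); the precise matching of the initial/birth/death terms with $\mathsf s_t,\mathsf t_t,\mathsf u_t$ and the sign bookkeeping, absorbed by the stability of $\mathcal F$ under negation; and splitting by linearity before estimating, so that the ``false martingale'' stays inside the $dt$-integral and is controlled only in the integrated form $\mathcal M^N_{w_1,w_2}$, not by any pointwise-in-$t$ quantity. There is no Gronwall step here; these estimates are fed back into one another only later, in the proof of Theorem \ref{thm concentration optimal}.
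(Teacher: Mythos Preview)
Your proof is correct and follows essentially the same route as the paper: test the dynamics \eqref{eq micro} against $\phi^{(t)}=w_2(t-\cdot)f_t$, subtract the corresponding identity for $g$, identify the shifted arguments with $\mathsf s_t,\mathsf t_t,\mathsf u_t$, use the stability of $\mathcal F$ under these operators together with the product rule and $c_0b,c_0\mu\in\mathcal F$ to bound each piece by the appropriate $\mathcal W$-quantity, then multiply by $w_1$ and integrate. Your discussion of the mild formulation for non-smooth $\phi$ and the explicit operator matching is more careful than the paper's presentation, but the argument is the same.
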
 
\begin{proof}
By \eqref{eq micro}, the action $\langle Z^N_t, f_t\rangle$ of $Z_t^N(da)$ for $f \in \mathcal L_{\mathcal D}^\infty $ can be written as
\begin{align}
\langle Z^N_t, f_t\rangle & = \int_0^\infty f_t(t+a)Z_0^N(da) \nonumber \\
 & +\int_0^t\int_0^\infty \big(b(s,a)f_t(t-s)-\mu(s,a)f_t(a+t-s)\big)Z^N_s(da)ds+M_t^N(f_t),  \label{action}
\end{align}
with
\begin{equation} \label{def fausse mg}
M_t^N(f_t) = N^{-1}\int_0^t \int_{\{i \leq n^N_{s^-}\} \times \R_+}\big(f_t(t-s){\bf 1}_{\{\theta \leq b\}}-f_t(a_i(Z^N_{s^-})+t-s){\bf 1}_{\{b \leq \theta \leq b+\mu\}}\big)\widetilde{\mathcal Q}(ds,di,d\theta).
\end{equation}
In the above formula, $n_t^N = N \langle Z_t^N,{\bf 1}\rangle$ is the size of the population at time $t$, the functions $b$ and $\mu$ in the indicators are evaluated at points $(s,a_i(Z^N_{s^-}))$ and
$\widetilde{\mathcal Q}(ds,di,d\theta) = {\mathcal Q}(ds,di,d\vartheta)-ds \big(\sum_{k \geq 1}\delta_k(di)\big) d\vartheta$ is the compensated measure of the Poisson measure
 ${\mathcal Q}$.\\

Apply now \eqref{action} to the test function $a \mapsto w_2(t-a)f_t(a)$ with $f \in \mathcal F$, substract $g(t,a)da$ in the equation above, noting that $g(t,a)$ solves \eqref{McKendrick},  set   $\eta^N_t(da)=Z^N_t(da)-g(t,a)da$ and obtain
\begin{align*}
& \int_{\R_+} w_2(t-a)f_t(a)\eta^N_t(da)  = \int_{\R_+} w_2(-a)f_t(t+a)\eta^N_0(da) \\
& + \int_0^t\int_{\R_+} \big(w_2(s)f_t(t-s)b(s,a)-w_2(s-a)f_t(a+t-s)\mu(s,a)\big)\eta_s^N(da)ds+M_t^N(w_2(t-\cdot)f_t).
\end{align*}
Multiplying each term by $\omega_1(t)$, integrating from $0$ to $T$ and taking absolute values, we also have
$$
\big|\int_0^Tw_1(t)\int_{\R_+} w_2(t-a)f_t(a)\eta^N_t(da) dt\big| \leq I + II + III+IV,
$$
with
\begin{align*}
I & = \big|\int_0^Tw_1(t)\int_{\R_+} w_2(-a)f_t(t+a)\eta^N_0(da)dt\big|, \\
II & = \big|\int_0^Tw_1(t) \int_0^t\int_{\R_+} w_2(s)f_t(t-s)b(s,a)\eta_s^N(da)ds dt\big|,\\
III & = \big|\int_0^Tw_1(t) \int_0^t\int_{\R_+} w_2(s-a)f_t(a+t-s)\mu(s,a)\eta_s^N(da)ds dt\big|,\\
IV & = \big|\int_0^Tw_1(t)M_t^N(w_2(t-\cdot)f_t)dt\big|.
\end{align*}
By Assumption \ref{minimal F}, we have $f_t(t+a) \in \mathcal F$ therefore $I \leq  \mathcal W_{w_2}^N(\mathcal F)_0$. Using that $c_0f_t(t-s)b(s,a) \in \mathcal F$, we also have
\begin{align*}
\big|\int_0^t\int_{\R_+} w_2(s)f_t(t-s)b(s,a)\eta_s^N(da)ds\big| & \leq c_0^{-1}\sup_{f\in \mathcal F}\big|\int_0^t\int_{\R_+} w_2(s){\bf 1}(s-a)f_t(a)\eta_s^N(da)ds\big| \\
& = c_0^{-1} \mathcal W_{w_2,1}(\mathcal F)_t,
\end{align*}
Therefore  $II \leq c_0^{-1}\int_0^T |w_1(t)|\mathcal W_{w_2,1}(\mathcal F)_t dt$. In the same way,
\begin{align*}
\big|\int_0^t\int_{\R_+} w_2(s-a)f_t(a+t-s)\mu(s,a)\eta_s^N(da)ds\big| & \leq c_0^{-1}\sup_{f \in \mathcal F}\big|\int_0^t\int_{\R_+} {\bf 1}(s)w_2(s-a)f_t(a)\eta_s^N(da)ds\big| \\
& = c_0^{-1} \mathcal W_{1,w_2}(\mathcal F)_t
\end{align*}
and $III \leq c_0^{-1}\int_0^T |w_1(t)| \mathcal W_{1,w_2}(\mathcal F)_t dt$ follows likewise. Finally,
$$|IV| \leq \sup_{f \in \mathcal F} \big|\int_0^Tw_1(t)M_t^N(w_2(t-\cdot)f_t)dt\big| = \mathcal M_{\,w_1,w_2}^N(\mathcal F)_t.$$
Summing up the estimates, we obtain the conclusion noting that 
$$\sup_{f \in \mathcal F}\big|\int_0^Tw_1(t)\int_{\R_+} w_2(t-a)f_t(a)\eta_s^N(da)ds\big| = \mathcal W_{w_1,w_2}^N(\mathcal F)_T$$ since $\mathcal F$ is stable under $f \mapsto -f$ by Assumption \ref{minimal F}.
\end{proof}
\begin{prop} \label{global stability}
Work under Assumptions \ref{H basic} and \ref{minimal F} . We have
$$
\mathcal W_{w_1,w_2}^N(\mathcal F)_T \lesssim |w_1|_1\max_{(k_1,k_2)}|k_1|_{L^1([0,T])}\mathcal W_{k_2}^N(\mathcal F)_0
+\max_{(l_1,\ldots, l_4)}|l_1|_{L^1([0,T])}|l_2|_{L^1([0,T])}\mathcal M_{l_3,l_4}^N(\mathcal F)_{T},
$$
where $(k_1,k_2)$ and $(l_1,\ldots, l_4)$ range over permutations of $(1,w_2)$ and $(1,1,w_1,w_2)$ respectively. The symbol $\lesssim $ means inequality up to an explicitly computable constant depending on $T$ and $c_0$ from Assumption \ref{minimal F}. 
\end{prop}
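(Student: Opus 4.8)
The plan is to iterate Proposition~\ref{prop stability} and then close the recursion it produces with Gronwall's lemma, where $1$ denotes throughout the constant weight equal to one. The point is that applying Proposition~\ref{prop stability} to a quantity $\mathcal W^N_{u_1,u_2}(\mathcal F)_\cdot$ produces: an initial term $|u_1|_{L^1}\mathcal W^N_{u_2}(\mathcal F)_0$; the two quantities $\mathcal W^N_{u_2,1}(\mathcal F)_\cdot$ (the age--weight promoted to a time--weight, from the estimate of term $II$ in that proof) and $\mathcal W^N_{1,u_2}(\mathcal F)_\cdot$ (the time--weight replaced by $1$, from the estimate of term $III$), each integrated against $|u_1|$; and a martingale term $\mathcal M^N_{u_1,u_2}(\mathcal F)_\cdot$. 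This operation stabilises the finite family formed by the pairs $(w_1,w_2)$, $(w_2,1)$, $(1,w_2)$, $(1,1)$: indeed $(w_2,1)$ and $(1,1)$ only generate $(1,1)$, while $(1,w_2)$ regenerates both $(w_2,1)$ and $(1,w_2)$. So it suffices to handle the system of four inequalities obtained by applying Proposition~\ref{prop stability}, with a running terminal time $t\le T$, to each of these four pairs.

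I would solve this system from the bottom up. First, $\mathcal W^N_{1,1}(\mathcal F)_t$ obeys the scalar inequality $\mathcal W^N_{1,1}(\mathcal F)_t\le t\,\mathcal W^N_{1}(\mathcal F)_0+2c_0^{-1}\int_0^t\mathcal W^N_{1,1}(\mathcal F)_s\,ds+\mathcal M^N_{1,1}(\mathcal F)_t$, and Gronwall's lemma bounds it by $c(T,c_0)\big(\mathcal W^N_{1}(\mathcal F)_0+\sup_{s\le t}\mathcal M^N_{1,1}(\mathcal F)_s\big)$. Inserting this into the inequality for $\mathcal W^N_{w_2,1}(\mathcal F)_t$, whose only recursive $\mathcal W$--contribution is $2c_0^{-1}\int_0^t|w_2(s)|\,\mathcal W^N_{1,1}(\mathcal F)_s\,ds$, bounds $\mathcal W^N_{w_2,1}(\mathcal F)_t$ by a $(T,c_0)$--multiple of $|w_2|_{L^1}\mathcal W^N_{1}(\mathcal F)_0+|w_2|_{L^1}\sup_{s\le T}\mathcal M^N_{1,1}(\mathcal F)_s+\sup_{s\le t}\mathcal M^N_{w_2,1}(\mathcal F)_s$. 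Feeding this bound into the inequality for $\mathcal W^N_{1,w_2}(\mathcal F)_t$ turns the latter into a Gronwall inequality in $\mathcal W^N_{1,w_2}(\mathcal F)_t$ alone, solved the same way; the net result is a bound, valid for all $t\le T$, of both $\mathcal W^N_{w_2,1}(\mathcal F)_t$ and $\mathcal W^N_{1,w_2}(\mathcal F)_t$ in terms of $\mathcal W^N_{w_2}(\mathcal F)_0$, $|w_2|_{L^1}\mathcal W^N_{1}(\mathcal F)_0$ and the suprema over $[0,T]$ of $\mathcal M^N_{1,1}(\mathcal F)_\cdot$, $\mathcal M^N_{w_2,1}(\mathcal F)_\cdot$, $\mathcal M^N_{1,w_2}(\mathcal F)_\cdot$, up to a constant depending on $T$ and $c_0$ only.

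It remains to substitute these into the top--level inequality $\mathcal W^N_{w_1,w_2}(\mathcal F)_T\le |w_1|_{L^1}\mathcal W^N_{w_2}(\mathcal F)_0+c_0^{-1}\int_0^T|w_1(t)|\big(\mathcal W^N_{w_2,1}(\mathcal F)_t+\mathcal W^N_{1,w_2}(\mathcal F)_t\big)dt+\mathcal M^N_{w_1,w_2}(\mathcal F)_T$ and to pull out the weight integrals by Fubini's theorem, bounding each $\mathcal W^N_{\cdot,\cdot}$ or $\mathcal M^N_{\cdot,\cdot}$ appearing under an integral by its supremum over $[0,T]$ and keeping only the factor $\int_0^T|\mathrm{weight}|=|\mathrm{weight}|_{L^1}$. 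The density contributions then reduce to $|w_1|_{L^1}\mathcal W^N_{w_2}(\mathcal F)_0$ and $|w_1|_{L^1}|w_2|_{L^1}\mathcal W^N_{1}(\mathcal F)_0$, which --- up to a $T$--dependent factor, since $|1|_{L^1([0,T])}=T$ --- are both dominated by $|w_1|_{L^1}\max_{(k_1,k_2)}|k_1|_{L^1}\mathcal W^N_{k_2}(\mathcal F)_0$ over the permutations $(k_1,k_2)$ of $(1,w_2)$; and the martingale contributions reduce, with their weight prefactors, exactly to the summands $|l_1|_{L^1}|l_2|_{L^1}\mathcal M^N_{l_3,l_4}(\mathcal F)_T$ in the maximum over permutations $(l_1,\dots,l_4)$ of $(1,1,w_1,w_2)$ --- the time argument of these $\mathcal M$'s being in fact a supremum over $[0,T]$, which I record as $T$; the discrepancy is immaterial here and is in any case covered by the uniform--in--$t$ controls used downstream. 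All the Gronwall exponentials and powers of $c_0^{-1}$ collapse into a single constant depending on $T$ and $c_0$ only, which is the asserted estimate.

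The only genuine difficulty is the bookkeeping: one has to run the iteration so that each weight enters solely through its $L^1$--norm --- which is exactly why the suprema over $t\in[0,T]$ must always be extracted from the $\mathcal W$/$\mathcal M$--factors before integrating a weight against them, never pairing a weight with itself nor allowing an $L^\infty$--norm to appear --- and to check that the constants thrown off by the three successive Gronwall steps stay controlled by $T$ and $c_0$ alone. Everything else is routine rearrangement.
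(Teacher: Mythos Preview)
Your proof is correct and follows essentially the same route as the paper: apply Proposition~\ref{prop stability} successively to the pairs $(1,1)$, $(w_2,1)$, $(1,w_2)$, $(w_1,w_2)$, close the first and third with Gr\"onwall, and substitute upward. The only cosmetic difference is that you carry $\sup_{s\le T}\mathcal M^N_{\cdot,\cdot}(\mathcal F)_s$ through the Gr\"onwall steps, whereas the paper writes $\mathcal M^N_{\cdot,\cdot}(\mathcal F)_T$ directly (implicitly treating the inhomogeneous term as nondecreasing); your handling is in fact the more careful one, and as you note the discrepancy is harmless for the downstream concentration estimates.
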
 
\begin{proof}
Apply first Proposition  \ref{prop stability} with $w_1=1$ and $w_2=1$ to obtain
\begin{align*}
\mathcal W_{1,1}^N(\mathcal F)_T & \leq T \mathcal W_{1}^N(\mathcal F)_0+2c_0^{-1}\int_0^T\mathcal W_{1,1}^N(\mathcal F)_tdt+\mathcal M_{1,1}^N(\mathcal F)_T \\
& \leq \big(T \mathcal W_{1}^N(\mathcal F)_0+\mathcal M_{1,1}^N(\mathcal F)_T\big)e^{2c_0^{-1}T} \\
& = \mathcal G^{(1), N}(\mathcal F)_T,
\end{align*}
say, by Gr\"onwall lemma.  Next, by Proposition  \ref{prop stability} applied to $(w_2, 1)$, we obtain
\begin{align*}
\mathcal W_{w_2,1}^N(\mathcal F)_T & \leq |w_2|_{L^1([0,T])}\mathcal W_{1}^N(\mathcal F)_0+2c_0^{-1}\int_0^T |w_2(t)|\mathcal W_{1,1}^N(\mathcal F)_tdt+\mathcal M_{w_2,1}^N(\mathcal F)_T \\
& \leq |w_2|_{L^1([0,T])}\big(\mathcal W_{1}^N(\mathcal F)_0+2c_0^{-1}\mathcal G^{(1),N}(\mathcal F)_T\big) +\mathcal M_{w_2,1}^N(\mathcal F)_T \\
& =  \mathcal G^{(2),N}_{w_2}(\mathcal F)_T, 
\end{align*}
say. Apply now Proposition \ref{prop stability} with $(1,w_2)$ so that
\begin{align*}
\mathcal W_{1,w_2}^N(\mathcal F)_T & \leq T\mathcal W_{w_2}^N(\mathcal F)_0+c_0^{-1}\int_0^T\big(\mathcal W_{w_2,1}^N(\mathcal F)_t+\mathcal W_{1,w_2}^N(\mathcal F)_t\big)dt+\mathcal M_{1,w_2}^N(\mathcal F)_T \\
& \leq T\mathcal W_{w_2}^N(\mathcal F)_0+c_0^{-1}T\mathcal G^{(2),N}_{w_2}(\mathcal F)_T+\int_0^T\mathcal W_{1,w_2}^N(\mathcal F)_tdt+\mathcal M_{1,w_2}^N(\mathcal F)_T \\
& \leq \big(T\mathcal W_{w_2}^N(\mathcal F)_0+CT\mathcal G^{(2),N}_{w_2}(\mathcal F)_T+\mathcal M_{1,w_2}^N(\mathcal F)_T\big)e^{c_0^{-1}T} \\
& =\mathcal G^{(3),N}_{w_2}(\mathcal F)_T\end{align*}
say, by the previous estimate and  Gr\"onwall lemma again. By Proposition \ref{prop stability} and the two previous bounds, we infer that $\mathcal W_{w_1,w_2}^N(\mathcal F)_T$ is less than
\begin{align*}
|w_1|_{L^1([0,T])}\mathcal W_{w_2}^N(\mathcal F)_0+c_0^{-1}|w_1|_{L^1([0,T])}\big(\mathcal G^{(2),N}_{w_2}(\mathcal F)_T+\mathcal G^{(3),N}_{w_2}(\mathcal F)_T\big)+\mathcal M_{\,w_1,w_2}^N(\mathcal F)_T.
\end{align*}
Expanding the estimates $\mathcal G^{(2),N}_{w_2}(\mathcal F)_T$ and $\mathcal G^{(3),N}_{w_2}(\mathcal F)_T$ in terms of their appropriate arguments concludes the proof.
\end{proof}

By Proposition \ref{global stability}, we see that the stability of the system is controlled by the initial approximation $\mathcal W_{w_2}^N(\mathcal F)_0$ (including $w_2=1$) and the propagation of the stochastic term $\mathcal M_{w_1, w_2}^N(\mathcal F)_{T}$. We now turn to that latter term.

\subsection{Stability of the stochastic term}

For $f \in \mathcal L^{\small{\mathrm{\,age}}}_{\mathcal D}$, let 
$$
 \widetilde{\mathcal M}_{\,w_1,w_2}^N(f)_t  = \int_0^t w_1(s)M_s^N\big(w_2(s-\cdot)f\big)ds$$
 and
$$
\widetilde{\mathcal M}_{\,w_2}^N(f)_t  = M_t^N\big(w_2(t-\cdot)f\big).
$$
In particular, since $\mathcal F$ is stable under $f \mapsto -f$, we have 
\begin{equation} \label{eq: controle triv}
\sup_{f \in \mathcal F}\widetilde{\mathcal M}_{\,w_1,w_2}^N(f)_T = \sup_{f \in \mathcal F}\big|\widetilde{\mathcal M}_{\,w_1,w_2}^N(f)_T\big| = \mathcal M_{\,w_2}^N(\mathcal F)_T
\end{equation}
and
$$
\sup_{f \in \mathcal F}\widetilde{\mathcal M}_{\,w_2}^N(f)_T = \sup_{f \in \mathcal F}\big|\widetilde{\mathcal M}_{\,w_2}^N(f)_T\big| = \mathcal M_{\,w_2}^N(\mathcal F)_T.
$$
For $\kappa \geq 0$, consider the event
\begin{equation} \label{eq: def A kappa}
\mathcal A^N_\kappa = \big\{\sup_{0 \leq t \leq T}\langle Z_t^N, {\bf 1} \rangle \leq \exp(|b|_\infty T)(1+\kappa)\big\},
\end{equation}
and for $\lambda \geq 0$, set
$$\vartheta_{w_1,w_2}^N(f)_\lambda = 2NT|w_1|_\infty^{-1}\exp(|b|_\infty T)(|b|_\infty+|\mu|_\infty\big)\rho\big(N^{-1}\lambda |w_1 w_2|_\infty|f|_\infty\big),$$
where $\rho(x) = e^x-x-1$.
\begin{prop} \label{prop concentration} Work under Assumptions \ref{H basic}. For large enough $N$, we have
\begin{equation} \label{eq: controle events}
\int_0^\infty \PP\big((\mathcal A^N_\kappa)^c\big)e^\kappa d\kappa \leq \tfrac{1}{2}
\end{equation}
and for $\lambda \geq 0$,
\begin{equation} \label{eq: first chernoff}
\E\big[\exp\big(\lambda \big|\widetilde{\mathcal M}_{\,w_1,w_2}^N(f)_T  -\widetilde{\mathcal M}_{\,w_1,w_2}^N(g)_T \big|\big){\bf 1}_{\mathcal A^N_\kappa}\big] 
\leq  2\exp \big(|w_1|_1(1+\kappa)\vartheta_{w_1,w_2}^N(f-g)_\lambda \big).
\end{equation}
Moreover, \eqref{eq: first chernoff} remains true with $\widetilde{\mathcal M}_{\,w_2}^N(f)_T-\widetilde{\mathcal M}_{\,w_2}^N(g)_T$, replacing formally $w_1$ by $1$ in the right-hand side of the inequality.
\end{prop}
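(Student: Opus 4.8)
The plan is to obtain \eqref{eq: controle events} from a stochastic domination of the total population by a Yule process, and \eqref{eq: first chernoff} from the classical exponential (super)martingale attached to a compensated‑Poisson stochastic integral, after truncating the size of the population on the event $\mathcal A^N_\kappa$.

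For \eqref{eq: controle events}, write $n^N_t=N\langle Z^N_t,\mathbf 1\rangle$. The SDE \eqref{eq micro} shows that $n^N_t$ increases only through births, whose total rate is at most $|b|_\infty n^N_{t^-}$, and decreases through deaths; hence $n^N_t$ is stochastically dominated by a pure‑birth (Yule) process $Y_t$ with per‑capita rate $|b|_\infty$ issued from $n^N_0$. Since $Y$ is nondecreasing, $\sup_{0\le t\le T}n^N_t\le Y_T$, and, given $n^N_0$, the variable $Y_T$ is a sum of $n^N_0$ i.i.d.\ geometric variables of mean $e^{|b|_\infty T}$. By Assumption \ref{H basic}(ii) one has $n^N_0\lesssim N$ (and $n^N_0/N\to\int g_0$), so a Cram\'er/Chernoff bound for sums of geometric variables yields
\[
\PP\big((\mathcal A^N_\kappa)^c\big)=\PP\Big(\sup_{0\le t\le T}n^N_t> N e^{|b|_\infty T}(1+\kappa)\Big)\le \exp\big(-N\,c(\kappa)\big),
\]
with $c(\kappa)\gtrsim 1+\kappa$ for $\kappa$ large; integrating against $e^\kappa\,d\kappa$ shows that the left‑hand side of \eqref{eq: controle events} tends to $0$, hence is $\le\tfrac12$ for $N$ large enough.

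For \eqref{eq: first chernoff}, set $h=f-g$, so that by linearity of $M^N$ in its test function $\widetilde{\mathcal M}^N_{w_1,w_2}(f)_T-\widetilde{\mathcal M}^N_{w_1,w_2}(g)_T=\widetilde{\mathcal M}^N_{w_1,w_2}(h)_T$. Using the probability measure $|w_1(s)|\,ds/|w_1|_1$ on $[0,T]$ and Jensen's inequality for $\exp$, one bounds $\exp\big(\pm\lambda\widetilde{\mathcal M}^N_{w_1,w_2}(h)_T\big)$ pathwise by $|w_1|_1^{-1}\int_0^T|w_1(s)|\exp\big(\pm\lambda|w_1|_1\,\mathrm{sign}(w_1(s))\,M^N_s(w_2(s-\cdot)h)\big)ds$, which reduces matters to the degenerate functionals $M^N_s(w_2(s-\cdot)h)$, $s\le T$ — precisely those appearing in $\widetilde{\mathcal M}^N_{w_2}$. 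From \eqref{def fausse mg}, $M^N_s(w_2(s-\cdot)h)=\int_{[0,s]\times\N\times\R_+}H^{(s)}_{r,i,\theta}\,\widetilde{\mathcal Q}(dr,di,d\theta)$ with predictable integrand satisfying $|H^{(s)}_{r,i,\theta}|\le N^{-1}|w_2|_\infty|h|_\infty\,\mathbf 1_{\{i\le n^N_{r^-}\}}\mathbf 1_{\{\theta\le b(r,a_i)+\mu(r,a_i)\}}$, with $a_i=a_i(Z^N_{r^-})$. On $\mathcal A^N_\kappa$ one has $n^N_{r^-}\le\overline n_\kappa:=Ne^{|b|_\infty T}(1+\kappa)$ for every $r\le T$, so capping $\mathbf 1_{\{i\le n^N_{r^-}\}}$ at $i\le\overline n_\kappa$ does not change the integral on $\mathcal A^N_\kappa$; then, applying to the capped integrand the exponential process $t\mapsto\exp\big(\pm\lambda|w_1|_1\int_{[0,t]}H^{(s),\kappa}d\widetilde{\mathcal Q}-\int_{[0,t]}\rho(\pm\lambda|w_1|_1 H^{(s),\kappa})\,d\nu\big)$, $\rho(x)=e^x-x-1$, which is a nonnegative supermartingale (indeed a genuine martingale, the truncation making everything bounded), and bounding its compensator deterministically using that $\rho$ is convex, nondecreasing on $\R_+$ with $\rho(-x)\le\rho(x)$, the cap $i\le\overline n_\kappa$, and $\theta\le|b|_\infty+|\mu|_\infty$, one gets
\[
\E\big[\exp\big(\pm\lambda|w_1|_1\,\mathrm{sign}(w_1(s))\,M^N_s(w_2(s-\cdot)h)\big)\mathbf 1_{\mathcal A^N_\kappa}\big]\le \exp\big(T\,\overline n_\kappa\,(|b|_\infty+|\mu|_\infty)\,\rho\big(\lambda N^{-1}|w_1|_1|w_2|_\infty|h|_\infty\big)\big).
\]
Feeding this back through the Jensen step and using $e^{\lambda|x|}\le e^{\lambda x}+e^{-\lambda x}$ for the factor $2$, then substituting $\overline n_\kappa$, produces a bound of the form \eqref{eq: first chernoff}; the precise identification with $|w_1|_1(1+\kappa)\vartheta^N_{w_1,w_2}(h)_\lambda$ follows by elementary rearrangement (using $|w_1|_1\le T|w_1|_\infty$ and convexity of $\rho$). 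The case of $\widetilde{\mathcal M}^N_{w_2}(h)_T=M^N_T(w_2(T-\cdot)h)$ is the $s=T$ instance of the argument run without the Jensen step, which amounts to formally replacing $w_1$ by $1$ and gives the announced variant.

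I expect the main obstacle to be the clean reduction to a compensated‑Poisson integral with a correct, uniform sup‑norm bound on its integrand — this is where the shifted‑test‑function structure of \eqref{action}–\eqref{def fausse mg} (the terms $h(s-r)$ and $h(a_i+s-r)$) must be propagated carefully — together with the justification that the relevant exponential process may be treated as a supermartingale with $\E\le1$, which is legitimate precisely because cutting the population at $\overline n_\kappa$ and the bounded $\theta$‑support of the jumps force the exponential compensator to be bounded by a deterministic constant. The remaining steps are routine bookkeeping of constants.
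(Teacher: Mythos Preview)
Your proposal is correct and follows the paper's overall strategy: Yule domination plus a Chernoff bound for \eqref{eq: controle events}, and an exponential martingale whose compensator is bounded deterministically on $\mathcal A^N_\kappa$ for \eqref{eq: first chernoff}. The one genuine methodological difference is in how the time integral $\int_0^T w_1(s)M^N_s\big(w_2(s-\cdot)h\big)\,ds$ is handled. The paper discretises it as a Riemann sum $Tn^{-1}\sum_i w_1(t_i)M^N_{t_i}(\cdots)$, writes the exponential of the sum as a product, bounds the product of the resulting $\Lambda_{t_i,t_i}^N$ factors by their arithmetic mean (AM--GM), uses $\E[\Lambda]=1$ termwise, and passes to the limit $n\to\infty$ via Fatou. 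Your Jensen step with the probability measure $|w_1(s)|\,ds/|w_1|_1$ is the continuous version of exactly this argument and is arguably cleaner, avoiding the discretisation/Fatou layer altogether.

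One caveat: your ``elementary rearrangement'' does not literally recover the stated quantity $|w_1|_1(1+\kappa)\vartheta^N_{w_1,w_2}(f-g)_\lambda$. Your route places $|w_1|_1$ \emph{inside} the argument of $\rho$, yielding a compensator bound $TNe^{|b|_\infty T}(1+\kappa)(|b|_\infty+|\mu|_\infty)\,\rho\big(\lambda N^{-1}|w_1|_1|w_2|_\infty|h|_\infty\big)$, whereas the paper's has $\tfrac{|w_1|_1}{|w_1|_\infty}\rho\big(\lambda N^{-1}|w_1|_\infty|w_2|_\infty|h|_\infty\big)$. The convexity inequality $\rho(cx)\le c\rho(x)$ only goes one way, so these are not interchangeable when $|w_1|_1>|w_1|_\infty$. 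This is harmless for the paper's purposes, however: both bounds are of the form $c_1(1+\kappa)\rho(c_2\lambda|f-g|_\infty)$ required by Proposition~\ref{prop concentration abstract}, and your version gives $\sqrt{c_1}\,c_2\propto N^{-1/2}|w_1|_1|w_2|_\infty\le N^{-1/2}\sqrt{T}\,|w_1|_{1,\infty}|w_2|_\infty$, which feeds into Theorem~\ref{prop concentration final} equally well.
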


\begin{proof}
We first prove \eqref{eq: controle events}, namely
$$
\int_0^\infty e^\kappa \PP\big(\sup_{0 \leq t \leq T}\langle Z^N_t, {\bf 1} \rangle > \exp(|b|_\infty T)(1+\kappa)\big)d\kappa \leq \tfrac{1}{2}.
$$
\noindent {\bf Step 1)} Consider the equation
$$\widetilde Z_t^N =  \tau_t Z_0^N 
+N^{-1}\int_0^t  \int_{\mathbb N \times \mathbb R_+}  \delta_{t-s}(da){\bf 1}_{\{0 \leq \vartheta \leq |b|_\infty, i \leq N\langle \widetilde Z_{s^-}^N, {\bf 1}\rangle\}}\mathcal Q_1(ds, di, d\vartheta)
$$
defined on the same probability space as $(Z_t(da))_{0 \leq t \leq T}$.
Applying \eqref{action} with $b=|b|_\infty$, $\mu=0$ and $f_t=1$, we obtain
$$\langle \widetilde Z^N_t, {\bf 1} \rangle  = \langle \widetilde Z^N_0, {\bf 1} \rangle +|b|_\infty \int_0^t \langle \widetilde Z^N_s, {\bf 1} \rangle ds+M_t^N(\bf{1}),$$
and for every $\lambda \geq 0$, by It\^o's formula:
$$\exp\big(\lambda \langle \widetilde Z^N_t, {\bf 1} \rangle \big) = \exp\big(\lambda \langle \widetilde Z^N_0, {\bf 1} \rangle\big)+N|b|_\infty\big(e^{\lambda/N}-1\big)\int_0^t \langle \widetilde Z^N_s, {\bf 1} \rangle\exp\big(\lambda \langle \widetilde Z^N_s, {\bf 1} \rangle \big)ds +\xi_t,$$
 where $(\xi_t)_{0 \leq t \leq T}$ is a local martingale. By localisation, one can prove that $\E[\xi_t]=0$. Writing $f(t,\lambda) = \E[\exp\big(\lambda \langle \widetilde Z^N_t, {\bf 1} \rangle \big)]$, it follows that
\begin{equation} \label{eq: faux transport}
f(t,\lambda) = f(0,\lambda)+N|b|_\infty\big(e^{\lambda/N}-1\big)\int_0^t\partial_\lambda f(s,\lambda)ds.
\end{equation}
The solution of the transport equation \eqref{eq: faux transport} at time $t=T$ with initial condition $ f(0,\lambda) = f_0(\lambda)$ is given by
\begin{align*}
f(T,\lambda) &  = f_0\Big(N \log\frac{e^{\lambda/N-|b|_\infty T}}{1-(1-e^{-|b|_\infty T})e^{\lambda/N}}\Big)  \leq  \exp\Big(qN \log\frac{e^{\lambda/N-|b|_\infty T}}{1-(1-e^{-|b|_\infty T})e^{\lambda/N}}\Big),
\end{align*}
where the last inequality stems from $f_0(\lambda) = \E[\exp(\lambda \langle\widetilde Z_0^N, {\bf 1}\rangle)] =  \E[\exp(\lambda \langle Z_0^N, {\bf 1} \rangle)] \leq e^{q\lambda }$ for some $q$ by Assumption \ref{H basic} (ii).\\

\noindent {\bf Step 2)} With the notation $r=\exp(-|b|_\infty T)$, the usual Chernoff bound argument yields
\begin{align*}
 \log \PP\big(\langle \widetilde Z^N_T, {\bf 1} \rangle > r^{-1}(1+\kappa)\big) 
 \leq & \, -\lambda r^{-1}(1+\kappa) + qN \log\tfrac{r e^{\lambda/N}}{1-(1-r)e^{\lambda/N}} \\
 \leq & \,-Nr^{-1}(1+\kappa)  \log \big(\big(1-\tfrac{rq}{\kappa+1}\big)\tfrac{1}{1-r}\big)+
 qN \log \tfrac{\kappa+1-rq}{1-r} \\
 \leq & \, \log C_1 -C_2N\kappa
\end{align*}
for the choice $\lambda = N\log \big((1-\frac{rq}{\kappa+1})\frac{1}{1-r}\big)$ and for two constants $C_i = C_i(q,r)>0$ that do not depend on $N$.
Noting that by construction, $\sup_{t \leq T} \langle Z^N_t, {\bf 1} \rangle \leq  \langle \widetilde Z^N_T, {\bf 1} \rangle$, we finally obtain
\begin{align*}
\int_0^\infty e^\kappa \PP\big(\sup_{0 \leq t \leq T}\langle Z^N_t, {\bf 1} \rangle > r^{-1}(1+\kappa)\big)d\kappa 
& \leq \int_0^\infty e^\kappa  \PP\big(\langle \widetilde Z^N_T, {\bf 1} \rangle > r^{-1}(1+\kappa)\big)d\kappa \\
& \leq C_1\int_0^\infty e^{(1-C_2N)\kappa}d\kappa = \frac{C_1}{C_2N-1}\leq \tfrac{1}{2}
\end{align*}
for $N \geq (1+2C_1)/C_2$, and \eqref{eq: controle events} is proved.\\

\noindent {\bf Step 3)} We now turn to \eqref{eq: first chernoff}.  For $t_0 \in [0,T]$ and $f \in \mathcal L_{\mathcal D}^{\small{\mathrm{\,age}}}$,
define
\begin{equation} \label{eq: def B}
B_{t, t_0}^N(f) = N\int_0^{t \wedge t_0} \int_{\R_+} \Big(b(s,a)\rho\big(N^{-1}f(t_0-s)\big)+\mu(s,a)\rho\big(N^{-1}f(a+t_0-s)\big)\Big)Z_s^N(da)ds.
\end{equation}
\begin{lem} \label{lem: martingale exp}
For every $t_0 \in [0,T]$ and $f,g \in \mathcal L_{\mathcal D}^{\small{\mathrm{\,age}}}$, there exists a nonnegative random variable $\Lambda_{t_0,t_0}^N(f-g)$ with $\E[\Lambda_{t_0,t_0}^N(f-g)]=1$ 
such that
$$ \E\big[\exp\big(M_{t_0}^N(f)-M_{t_0}^N(g)\big)\big]  
=   \E\big[\Lambda_{t_0, t_0}^N(f-g)\exp B_{t_0, t_0}^N(f-g)\big].$$
\end{lem}
\begin{proof}
Fix $t_0 \in [0,T]$ and for $f \in \mathcal L_{\mathcal D}^{\small{\mathrm{\,age}}}$, define the random process
$$
\widetilde M_{t, t_0}^N(f) = N^{-1}\int_0^{t \wedge t_0} \int_{\{i \leq n^N_{s^-}\} \times \R_+}\big(f(t_0-s){\bf 1}_{\{b \leq \theta\}}-f(a_i(Z^N_{s^-})+t_0-s){\bf 1}_{\{b \leq \theta \leq b+\mu\}}\big)\widetilde {\mathcal Q}(ds,di,d\theta),
$$
obtained by keeping $t=t_0$ fixed in the integrand of $M_{t \wedge t_0}^N(f)$ defined  in \eqref{def fausse mg}.
By construction,  $(\widetilde M_{t,t_0}(f))_{0 \leq t \leq T}$ is a martingale. In turn, a simple consequence of It\^o's formula, see {\it e.g.} Tran \cite{TRAN1}
shows that the random process
$$t \mapsto \Lambda_{t, t_0}^N(f) = \exp\big(\widetilde M_{t, t_0}^N(f)-B_{t, t_0}^N(f)\big)$$
is a martingale such that $\E[\Lambda_{t, t_0}^N(f)]=1$. Noting that $M_{t_0}^N(f) = \widetilde M_{t_0, t_0}^N(f)$ at $t=t_0$,
we also have
\begin{align*}
 \E\big[\exp\big(M_{t_0}^N(f)-M_{t_0}^N(g)\big)\big] & =  \E\big[\exp\big(M_{t_0}^N(f-g)\big)\big] \\
 & = \E\big[\exp\big(\widetilde M_{t_0,t_0}^N(f-g)\big)\big] \\
& =   \E\big[\Lambda_{t_0, t_0}^N(f-g)\exp B_{t_0, t_0}^N(f-g)\big].
\end{align*} 
\end{proof}
Let $\lambda \geq 0$. We substitute $f-g$ by $a \mapsto \lambda w_1(t_0)w_2(t_0-a)(f(a)-g(a))$ and look for an upper bound for
$$B_{t_0, t_0}^N\big(\lambda w_1(t_0)w_2(t_0-\cdot)(f-g)\big).$$\\
\noindent {\bf Step 4)}
Observe first that  $\rho(x) = e^x-x-1$ implies
that for any nonnegative function $\psi \in \mathcal L_{\mathcal D}^{\small{\mathrm{\,age}}}$, we have 
\begin{align*}
\rho\big(N^{-1}\lambda \psi(a')(f(a)-g(a))\big) & \leq N^{-1}\psi(a')|f-g|_\infty \int_0^\lambda \big(\exp(\kappa N^{-1}|\psi|_\infty|f-g|_\infty)-1\big)d\kappa \\
& = \frac{\psi(a')}{|\psi|_\infty}\rho\big(N^{-1}\lambda |\psi|_\infty|f-g|_\infty\big).
\end{align*}
Therefore, with $\psi(a')=w_1(t_0)w_2(t_0-a')$ and $a'=t_0-s$, we derive
$$\rho\big(N^{-1}\lambda w_1(t_0)w_2(s)(f(t_0-s)-g(t_0-s))\big) \leq \frac{w_1(t_0)w_2(s)}{|w_1w_2|_\infty}\rho\big(N^{-1}\lambda |w_1w_2|_\infty|f-g|_\infty\big)$$
and 
$$\rho\big(N^{-1}\lambda w_1(t_0)w_2(s-a)(f(a+t_0-s)-g(a+t_0-s))\big) \leq  \frac{w_1(t_0)w_2(s-a)}{|w_1w_2|_\infty}\rho\big(N^{-1}\lambda |w_1w_2|_\infty|f-g|_\infty\big)$$ 
with $a'=a+t_0-s$ follows likewise. Plugging these two estimates in the definition \eqref{eq: def B} of  $B_{t_0, t_0}^N$, we infer on $\mathcal A^N_\kappa = \{\sup_{0 \leq t \leq T}\langle Z^N_t, {\bf 1} \rangle \leq  \exp(|b|_\infty T)(1+\kappa)\}$ the chain of inequalities
\begin{align*}
&  B_{t_0, t_0}^N\big(\lambda w_1(t_0)w_2(t_0-\cdot)(f-g)\big) \\
\leq & \,N(|b|_\infty+|\mu|_\infty)\frac{w_1(t_0)}{|w_1w_2|_\infty} \rho\big(N^{-1}\lambda |w_1w_2|_\infty|f-g|_\infty\big) \int_0^{t_0} \int_{\R_+} \big(w_2(s)+w_2(s-a)\big)Z_s^N(da)ds \\
\leq & \,N(|b|_\infty+|\mu|_\infty)\frac{w_1(t_0)}{|w_1|_\infty} \rho\big(N^{-1}\lambda |w_1w_2|_\infty|f-g|_\infty\big) 2T\sup_{0 \leq t \leq T}\langle Z_t^N,{\bf 1}\rangle \\
\leq & \,N(|b|_\infty+|\mu|_\infty)\frac{w_1(t_0)}{|w_1|_\infty} \rho\big(N^{-1}\lambda |w_1w_2|_\infty|f-g|_\infty\big)\exp(|b|_\infty T)(1+\kappa)2T \\
= & \, w_1(t_0)(1+\kappa)\vartheta_{w_1,w_2}^N(f-g)_\lambda.
\end{align*}
We derive 
\begin{align}  \label{eq: controle B}
& \exp\big(\lambda w_1(t_0)M_{t_0}^N(w_2(t_0-\cdot)(f-g))\big){\bf 1}_{\mathcal A^N_\kappa}  \\
\leq & \exp\big(w_1(t_0)(1+\kappa)\vartheta_{w_1,w_2}^N(f-g)_\lambda\big)\Lambda_{t_0,t_0}^N(\lambda w_1(t_0)w_2(t_0-\cdot)(f-g)\big) {\bf 1}_{\mathcal A^N_\kappa}. \nonumber
\end{align}

\noindent {\bf Step 5)} For every integer $n \geq 1$ and $\lambda \geq 0$, $f \in \mathcal L_{\mathcal D}^{\small{\mathrm{\,age}}}$, define
$$\Delta^{N,n}_{w_1,w_2}(f-g)_\lambda = \exp\big(\lambda Tn^{-1}\sum_{i = 1}^n w_1(iTn^{-1})M_{iTn^{-1}}^N(w_2(iTn^{-1}-\cdot)(f-g))\big).$$
Applying  repeatedly \eqref{eq: controle B} with $t_0=iTn^{-1}$  and integrating  with respect to  $\E_{\mathcal A^N_\kappa}[\cdot] = \PP(\mathcal A^N_\kappa)^{-1}\E[\cdot {\bf 1}_{\mathcal A^N_\kappa}]$, we obtain
\begin{align*}
 \E_{\mathcal A^N_\kappa}\big[\Delta^{N,n}_{w_1,w_2}(f-g)_\lambda\big] 
\leq &\; \exp\big(Tn^{-1}\sum_{i = 1}^nw_1(iTn^{-1})(1+\kappa)\vartheta_{w_1,w_2}^N(f-g)_\lambda\big) \times \\
& \times \E_{\mathcal A^N_\kappa}\Big[\prod_{i = 1}^n\Lambda_{iTn^{-1},iTn^{-1}}^N(\lambda Tw_1(t_0)w_2(t_0-\cdot)(f-g)\big)^{1/n}\Big] \\
\leq &\; \exp\big(Tn^{-1}\sum_{i = 1}^nw_1(iTn^{-1})(1+\kappa)\vartheta_{w_1,w_2}^N(f-g)_\lambda\big) \PP\big(\mathcal A^N_\kappa\big)^{-1}, 
\end{align*}
where we used the fact that the geometric mean is controlled by the arithmetic mean:
$$\prod_{i = 1}^n\Lambda_{iTn^{-1},iTn^{-1}}^N(\lambda w_1(t_0)w_2(t_0-\cdot)(f-g)\big)^{1/n}\leq n^{-1}\sum_{i = 1}^{n}\Lambda_{iTn^{-1},iTn^{-1}}^N(\lambda w_1(t_0)w_2(t_0-\cdot)(f-g)\big)$$
and the fact that 
\begin{align*}
& \E_{\mathcal A^N_\kappa}\Big[\Lambda_{iTn^{-1},iTn^{-1}}^N(\lambda w_1(t_0)w_2(t_0-\cdot)(f-g)\big)\Big] \\
 \leq & \; \PP\big(\mathcal A^N_\kappa\big)^{-1} \E\Big[\Lambda_{iTn^{-1},iTn^{-1}}^N(\lambda w_1(t_0)w_2(t_0-\cdot)(f-g)\big)\Big] = \PP\big(\mathcal A^N_\kappa\big)^{-1}
\end{align*}
since $\Lambda_{iTn^{-1},iTn^{-1}}^N(\lambda w_1(t_0)w_2(t_0-\cdot)(f-g)\big)$ has expectation $1$ by Lemma \ref{lem: martingale exp}. Using
$$\liminf_{n \rightarrow \infty}\Delta^{N,n}_{w_1,w_2}(f-g)_\lambda = \exp\big(\lambda \int_0^Tw_1(s)M_{s}^N(w_2(s-\cdot)(f-g))ds\big)$$
by convergence of Riemann sums,
letting $n \rightarrow \infty$, we obtain by Fatou lemma
\begin{equation} \label{eq: controle pre-final}
\E_{\mathcal A^N_\kappa}\Big[\exp\big(\lambda \int_0^Tw_1(s)M_{s}^N(w_2(s-\cdot)(f-g))ds\big)\Big] \leq \PP\big(\mathcal A^N_\kappa\big)^{-1}\exp\big(|w_1|_1(1+\kappa)\vartheta_{w_1,w_2}^N(f-g)_\lambda\big).
\end{equation}
Noting that Lemma \ref{lem: martingale exp} also holds for $-M_{s}^N(f)$ and applying \eqref{eq: controle pre-final} to $-M_{s}^N(w_2(s-\cdot)(f-g))$, we infer
$$
\E_{\mathcal A^N_\kappa}\Big[\exp\big(\lambda \big|\int_0^Tw_1(s)M_{s}^N\big(w_2(s-\cdot)(f-g)\big)ds\big|\big)\Big] 
 \leq  2\PP\big(\mathcal A^N_\kappa\big)^{-1}\exp\big(|w_1|_1(1+\kappa)\vartheta_{w_1,w_2}^N(f-g)_\lambda\big),
$$
but since $\int_0^Tw_1(s)M_{s}^N\big(w_2(s-\cdot)(f-g)\big)ds = \widetilde{\mathcal M}_{w_1,w_2}^N(f)_T-\widetilde{\mathcal M}_{w_1,w_2}^N(g)_T$, the estimate \eqref{eq: first chernoff} is established.\\

\noindent {\bf Step 6)} It remains to prove \eqref{eq: first chernoff} for $\mathcal M_{w_2}^N(f-g)_T$. We first integrate \eqref{eq: controle B} for $w_1=1$ at $t_0=T$ so that $|w_1|_1=T$ 
and proceed exactly as in Step 5) to obtain
$$
\E_{\mathcal A^N_\kappa}\big[\exp\big(\lambda M_{T}^N(w_2(T-\cdot)(f-g))\big)\big] 
\leq  \PP(\mathcal A^N_\kappa)^{-1}\exp\big(T(1+\kappa)\vartheta_{1,w_2}^N(f-g)_\lambda\big).$$
Applying the same argument for $- M_{T}^N$, we also have
$$
\E_{\mathcal A^N_\kappa}\big[\exp\big(\lambda \big| M_{T}^N(w_2(T-\cdot)(f-g))\big|\big)\big] 
\leq  2\PP(\mathcal A^N_\kappa)^{-1}\exp\big(T(1+\kappa)\vartheta_{1,w_2}^N(f-g)_\lambda\big)$$
which is the desired result.

\end{proof}

Proposition \ref{prop concentration} is the main ingredient to obtain a concentration inequality for the processes $(\widetilde{\mathcal M}_{\,w_2}^N(f)_T)_{f \in \mathcal F}$ and $(\widetilde{\mathcal M}_{\,w_1,w_2}^N(f)_T)_{f \in \mathcal F}$, and in turn, a deviation bound for $\mathcal M_{w_1,w_2}(\mathcal F)_T$ and $\mathcal M_{w_2}(\mathcal F)_T$ thanks to \eqref{eq: controle triv}. The proof is given in Section \ref{sec remaining proofs} below.\\

More precisely, consider the apparently more general situation where we have a real-valued process $\xi(f)_{f \in \mathcal F}$ indexed by some metric set $(\mathcal F, d)$ and a family of events $\mathcal A(\kappa)_{\kappa>0}$ satisfying the following properties: 
\begin{equation} \label{controle 1}
\int_0^\infty \PP\big(\mathcal A(\kappa)^c\big)e^\kappa d\kappa \leq \tfrac{1}{2},
\end{equation}
and
\begin{equation}  \label{controle 2}
\E\big[\exp\big(\lambda|\xi(f)-\xi(g)|\big){\bf 1}_{\mathcal A(\kappa)}\big] \leq 2\exp\big(c_1(1+\kappa)\rho(c_2 d(f,g)\lambda)\big), 
\end{equation}
for every $\lambda \geq 0$ and some $c_1, c_2 >0$.
\begin{prop} \label{prop concentration abstract}
Assume that $\xi(f)_{f \in \mathcal F}$ and $\mathcal A(\kappa)_{\kappa>0}$ satisfy \eqref{controle 1} and \eqref{controle 2} and that $\xi(f_0)=0$ for some $f_0 \in \mathcal F$. Then there exists a choice $\varpi = \varpi(c_1,c_2)>0$ such that for every $u \geq 0$:
$$\PP\Big(\sup_{f \in \mathcal F}|\xi(f)| \geq 8\big(u+\int_0^{\mathrm{diam}_{\widetilde d}(\mathcal F)}\log\big(1+\mathcal N(\mathcal F, \widetilde d, \epsilon)\big)d\epsilon\big)\Big) \leq \big(e^{u/\mathrm{diam}_{\widetilde d}(\mathcal F)}-1\big)^{-1},$$
where $\widetilde d = \varpi \,d$, $\mathrm{diam}_{\widetilde d}(\mathcal F) = \sup_{f,g\in \mathcal F}\widetilde d(f,g)$ and $\mathcal N(\mathcal F, \widetilde d, \epsilon)$ is the
minimal number of balls of $\widetilde d$-size $\epsilon >0$ that are necessary to cover $\mathcal F$.
\end{prop}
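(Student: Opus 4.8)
The plan is to combine a chaining argument over the metric space $(\mathcal F,\widetilde d)$ with the random integration trick that averages out the dependence on the size-control events $\mathcal A(\kappa)$. First I would fix an exponential-tail Orlicz-type functional: for a random variable $X$ define $\Psi(X)$ to be the smallest $t>0$ such that $\E[\rho(|X|/t)]\le 1$, where $\rho(x)=e^x-x-1$ is exactly the function appearing in \eqref{controle 2}. The hypothesis \eqref{controle 2} says that, \emph{conditionally on} $\mathcal A(\kappa)$, increments $\xi(f)-\xi(g)$ have a mixed sub-exponential/sub-Gaussian tail with ``Bernstein-type'' scale proportional to $d(f,g)$ and a prefactor $(1+\kappa)$; this is precisely the setting where the generic-chaining/Dudley bound of Ledoux--Talagrand \cite{LEDOUXTALAGRAND} for processes with $\rho$-increments gives, for a suitable universal constant,
\begin{equation*}
\E\Big[\,\Psi\big(\sup_{f\in\mathcal F}|\xi(f)|\big)\,{\bf 1}_{\mathcal A(\kappa)}\Big]\ \lesssim\ \sqrt{1+\kappa}\;\Big(\int_0^{\mathrm{diam}_{\widetilde d}(\mathcal F)}\log\big(1+\mathcal N(\mathcal F,\widetilde d,\epsilon)\big)d\epsilon\Big),
\end{equation*}
after rescaling the metric by the right $\varpi=\varpi(c_1,c_2)$ to absorb $c_1,c_2$ (one pays a $\sqrt{\kappa}$-type factor because the sub-Gaussian part of the increment bound scales like $c_1(1+\kappa)(c_2 d)^2\lambda^2$). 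Here $\xi(f_0)=0$ lets me center the chaining at $f_0$ so the supremum is genuinely controlled by the entropy integral and no extra ``diameter'' term from a base point is needed beyond what is written.

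Next I would turn the conditional chaining bound into a deviation inequality. For fixed $u\ge0$, a Chernoff/Markov step at the level of $\Psi$ gives, on $\mathcal A(\kappa)$,
\begin{equation*}
\PP\Big(\sup_{f}|\xi(f)|\ge 8\big(u+E\big),\ \mathcal A(\kappa)\Big)\ \le\ C\exp\!\Big(-\tfrac{8(u+E)}{\text{(scale on }\mathcal A(\kappa))}\Big)\ \lesssim\ \exp\!\Big(-\tfrac{u+E}{(1+\kappa)\,\mathrm{diam}_{\widetilde d}(\mathcal F)}\Big),
\end{equation*}
writing $E=\int_0^{\mathrm{diam}_{\widetilde d}(\mathcal F)}\log(1+\mathcal N(\mathcal F,\widetilde d,\epsilon))d\epsilon$; the linear-in-$\kappa$ dependence in the exponent is what matters. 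Then I integrate over $\kappa$: splitting $\PP(\sup|\xi|\ge 8(u+E))$ as $\PP(\cdot\cap\mathcal A(\kappa))+\PP(\mathcal A(\kappa)^c)$ for the $\kappa$ that makes the two comparable, or more cleanly by the layer-cake identity together with \eqref{controle 1}. Concretely, using $e^{-\kappa}$ as a weight: since $\int_0^\infty\PP(\mathcal A(\kappa)^c)e^\kappa d\kappa\le\tfrac12$, for each threshold level the ``bad event'' contribution is absorbed, and the ``good'' contribution $\int_0^\infty e^{-\kappa}\exp(-(u+E)/((1+\kappa)\mathrm{diam}))d\kappa$ is dominated, after elementary estimates, by $(e^{u/\mathrm{diam}_{\widetilde d}(\mathcal F)}-1)^{-1}$ once $\varpi$ is chosen so that the numerical constant $8$ and the factor $1/(1+\kappa)$ line up. The bookkeeping of constants — choosing $\varpi(c_1,c_2)$ so that the Ledoux--Talagrand universal constant, the factor $8$, and the $(1+\kappa)$-weight all combine to give exactly the clean bound $(e^{u/\mathrm{diam}}-1)^{-1}$ — is routine but tedious, and I would defer most of it to the actual write-up.

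The main obstacle is the first step: producing a \emph{conditional} chaining bound of the stated form from \eqref{controle 2}. The subtlety is that the events $\mathcal A(\kappa)$ are \emph{not} in the filtration generated by the increments of $\xi$, so one cannot simply condition and apply an off-the-shelf maximal inequality; instead one must verify that \eqref{controle 2} alone (an increment bound on $\E[e^{\lambda|\xi(f)-\xi(g)|}{\bf 1}_{\mathcal A(\kappa)}]$, uniformly in $\lambda$) is exactly the hypothesis needed for the chaining union bound, where at each scale one sums over a net and uses the $\rho$-tail to control the maximum of the net-increments — and crucially $\mathcal A(\kappa)$ is fixed throughout the chain so the ${\bf 1}_{\mathcal A(\kappa)}$ factors are handled by a single application of the bound per pair. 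A secondary technical point is that $\rho$-tails interpolate between sub-Gaussian (small $\lambda d$) and sub-exponential (large $\lambda d$) behaviour, so the chaining must be run with the two-regime Bernstein bound rather than a pure sub-Gaussian one, which is why the final deviation bound has the slightly weaker $1/(e^u-1)$ (exponential) rather than a Gaussian $e^{-u^2}$ form — and this is the source of the ``mild concentration'' terminology and, downstream, of the $(\log N)^2$ versus $\log N$ in the statistical rates.
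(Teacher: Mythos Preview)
Your plan reverses the order of the two main operations compared with the paper, and this creates a genuine gap in the integration-over-$\kappa$ step.

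The paper does \emph{not} chain conditionally on $\mathcal A(\kappa)$ and then integrate in $\kappa$. Instead it first produces an \emph{unconditional} Orlicz bound on the increments,
\[
\|\xi(f)-\xi(g)\|_{\psi_1}\le \varpi\,d(f,g)=\widetilde d(f,g),
\]
and only then applies the Ledoux--Talagrand chaining theorem (their Theorem~11.2) as a black box, which directly yields the deviation bound in the stated $8(u+E)$ form with $(e^{u/\mathrm{diam}_{\widetilde d}}-1)^{-1}$ on the right. The $\kappa$-integration is carried out entirely at the level of a \emph{single increment}: one writes
\[
\E\big[\psi_1\big(c^{-1}|\xi(f)-\xi(g)|\big)\big]=\int_0^\infty \PP\big(|\xi(f)-\xi(g)|\ge c\kappa\big)\,e^\kappa\,d\kappa,
\]
and then bounds the integrand by splitting on the event $\mathcal A(\kappa)$ \emph{with the same $\kappa$} as the integration variable. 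The bad-event contribution is then exactly $\int_0^\infty \PP(\mathcal A(\kappa)^c)e^\kappa\,d\kappa\le\tfrac12$ by \eqref{controle 1}, and the good-event contribution is handled by a Chernoff bound (their Lemma based on $\widetilde\rho$) coming from \eqref{controle 2}. This coupling of the layer-cake level to the event index is the whole point of hypothesis \eqref{controle 1}.

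Your proposed route---chain first on $\mathcal A(\kappa)$, then integrate---loses this coupling. Concretely: if you fix the threshold at $8(u+E)$ with $E$ the entropy integral in $\widetilde d$ and then split $\PP(\sup_f|\xi(f)|\ge 8(u+E))$ on $\mathcal A(\kappa)$, the term $\PP(\mathcal A(\kappa)^c)$ is controlled by \eqref{controle 1} only as an $e^\kappa$-weighted integral, not pointwise; so neither ``pick the balancing $\kappa$'' nor your $e^{-\kappa}$-weighted average yields a bound that \emph{decays in $u$}. (With the $e^{-\kappa}$ weight you get $\int_0^\infty e^{-\kappa}\PP(\mathcal A(\kappa)^c)\,d\kappa$, which is at best a constant, not $o(1)$ as $u\to\infty$.) Moreover, the conditional chaining bound on $\mathcal A(\kappa)$ has its own $\kappa$-dependent entropy scale---the sub-Gaussian variance in \eqref{controle 2} is $c_1(1+\kappa)(c_2 d)^2$---so the effective entropy integral grows like $\sqrt{1+\kappa}\,E$, and the fixed threshold $8(u+E)$ is below it for large $\kappa$; your displayed inequality with $(1+\kappa)$ in the denominator of the exponent is therefore not what the conditional chaining actually delivers.

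In short: move the $\kappa$-integration \emph{before} the chaining, at the level of a single increment, using the layer-cake identity for $\psi_1$ so that the level variable and the index of $\mathcal A(\kappa)$ coincide. Once you have $\|\xi(f)-\xi(g)\|_{\psi_1}\le\widetilde d(f,g)$, the statement is a direct citation of Ledoux--Talagrand and no further $\kappa$ appears.
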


\begin{rk} \label{rk: avant le choix de varpi}
We show in Remark \ref{rk: le choix de varpi} at the end of the proof of Proposition \ref{prop concentration abstract} that if $c_1 \geq 308$, we may pick
$\varpi(c_1,c_2) = k \sqrt{c_1}c_2$, with $k=2\sqrt{77}$.
\end{rk}

The proof of Proposition \ref{prop concentration abstract} relies on standard concentration techniques and goes back to Dudley \cite{DUDLEY}. We use the classical textbook of Ledoux-Talagrand \cite{LEDOUXTALAGRAND} and detail the computations in the Appendix section \ref{sec appendix}. Combining Proposition \ref{prop concentration} and \ref{prop concentration abstract}, we obtain the following

\begin{thm} \label{prop concentration final} Work under Assumptions \ref{H basic}.
Assume $\mathrm{diam}_{|\cdot|_\infty}\mathcal F \leq 1$ and 
$$\mathrm{e}(\mathcal F) = \int_0^1 \log\big(1+\mathcal N(\mathcal F, |\cdot|_\infty, \epsilon)\big)d\epsilon <\infty.$$ For large enough $N$, there exists an explicit choice of $C = C(\mathrm e(\mathcal F),T,|b|_\infty, |\mu|_\infty) >0$, given in the proof below, such that for every $u \geq 0$:
\begin{equation} \label{eq: first dev finale}
\PP\big(\mathcal M_{\,w_1,w_2}^N(\mathcal F)_T \geq (1+u)CN^{-1/2}|w_1|_{1,\infty}|w_2|_\infty\big) \leq (e^{u \mathrm{e}(\mathcal F)}-1)^{-1}
\end{equation}
and
\begin{equation} \label{eq: sec dev finale}
\PP\big(\mathcal M_{\,w_2}^N(\mathcal F)_T \geq (1+u)CN^{-1/2}|w_2|_\infty\big) \leq(e^{u \mathrm{e}(\mathcal F)}-1)^{-1}.
\end{equation}
\end{thm}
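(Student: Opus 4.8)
The plan is to apply the abstract concentration device of Proposition~\ref{prop concentration abstract} to each of the two processes $\xi(f) = \widetilde{\mathcal M}_{\,w_1,w_2}^N(f)_T - \widetilde{\mathcal M}_{\,w_1,w_2}^N(f_0)_T$ and $\xi(f) = \widetilde{\mathcal M}_{\,w_2}^N(f)_T - \widetilde{\mathcal M}_{\,w_2}^N(f_0)_T$, with $f_0 = 0 \in \mathcal F$ (which is in $\mathcal F$ by Assumption~\ref{minimal F}), using the family of events $\mathcal A(\kappa) = \mathcal A^N_\kappa$ from \eqref{eq: def A kappa}. Proposition~\ref{prop concentration} supplies exactly the two hypotheses \eqref{controle 1} and \eqref{controle 2}: the first is \eqref{eq: controle events}, and the second follows from \eqref{eq: first chernoff} once we identify the right metric $d$ on $\mathcal F$ and the constants $c_1,c_2$. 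The key point is that $\vartheta_{w_1,w_2}^N(f-g)_\lambda = 2NT|w_1|_\infty^{-1}\exp(|b|_\infty T)(|b|_\infty+|\mu|_\infty)\,\rho(N^{-1}\lambda|w_1w_2|_\infty|f-g|_\infty)$ has precisely the form $c_1'\,\rho(c_2' d(f,g)\lambda)$ with $d(f,g) = |f-g|_\infty$, $c_2' = N^{-1}|w_1w_2|_\infty = N^{-1}|w_1|_\infty|w_2|_\infty$, and $c_1' = 2NT\exp(|b|_\infty T)(|b|_\infty+|\mu|_\infty)|w_1|_\infty^{-1}$; after multiplying by the prefactor $|w_1|_1(1+\kappa)$ in \eqref{eq: first chernoff} this reads $c_1(1+\kappa)\rho(c_2 d(f,g)\lambda)$ with $c_1 = 2NT|w_1|_1|w_1|_\infty^{-1}\exp(|b|_\infty T)(|b|_\infty+|\mu|_\infty)$ and $c_2 = N^{-1}|w_1|_\infty|w_2|_\infty$. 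Note $c_1c_2^2$ is what controls the Gaussian part of $\rho$ near $0$, and $c_1c_2^2 \asymp N^{-1}|w_1|_1|w_1|_\infty|w_2|_\infty^2 \gtrsim N^{-1}|w_1|_{1,\infty}^2|w_2|_\infty^2$; this is the scaling that will produce the announced rate.

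Next I would unwind Proposition~\ref{prop concentration abstract}. With $\widetilde d = \varpi\,d$ where (by Remark~\ref{rk: avant le choix de varpi}, enlarging $c_1$ if necessary so that $c_1\ge 308$) $\varpi = 2\sqrt{77}\sqrt{c_1}\,c_2$, we have $\mathcal N(\mathcal F,\widetilde d,\epsilon) = \mathcal N(\mathcal F,|\cdot|_\infty,\epsilon/\varpi)$ and $\mathrm{diam}_{\widetilde d}(\mathcal F) = \varpi\,\mathrm{diam}_{|\cdot|_\infty}(\mathcal F) \le \varpi$ by the hypothesis $\mathrm{diam}_{|\cdot|_\infty}\mathcal F\le 1$. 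A change of variables in the Dudley integral gives
$$\int_0^{\mathrm{diam}_{\widetilde d}(\mathcal F)}\log\big(1+\mathcal N(\mathcal F,\widetilde d,\epsilon)\big)d\epsilon = \varpi\int_0^{\mathrm{diam}_{|\cdot|_\infty}(\mathcal F)}\log\big(1+\mathcal N(\mathcal F,|\cdot|_\infty,\epsilon)\big)d\epsilon \le \varpi\,\mathrm e(\mathcal F).$$
Plugging this into Proposition~\ref{prop concentration abstract} and choosing $u = v\cdot\varpi\,\mathrm e(\mathcal F)$ for $v\ge 0$, the conclusion becomes
$$\PP\Big(\sup_{f\in\mathcal F}|\xi(f)| \ge 8\varpi\,\mathrm e(\mathcal F)(1+v)\Big) \le \big(e^{v\,\mathrm e(\mathcal F)\,\varpi/\mathrm{diam}_{\widetilde d}(\mathcal F)}-1\big)^{-1} \le (e^{v\,\mathrm e(\mathcal F)}-1)^{-1},$$
using $\mathrm{diam}_{\widetilde d}(\mathcal F)\le\varpi$. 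By \eqref{eq: controle triv}, $\sup_{f\in\mathcal F}|\xi(f)| = \mathcal M_{\,w_1,w_2}^N(\mathcal F)_T$ (and likewise $\mathcal M_{\,w_2}^N(\mathcal F)_T$ in the second case). It remains to check that $8\varpi\,\mathrm e(\mathcal F) \le CN^{-1/2}|w_1|_{1,\infty}|w_2|_\infty$ for a suitable $C$: since $\varpi \asymp \sqrt{c_1}\,c_2$ and $c_1c_2^2 = 4T\exp(|b|_\infty T)(|b|_\infty+|\mu|_\infty)\,N^{-1}|w_1|_1|w_1|_\infty|w_2|_\infty^2$, we get $\varpi \asymp N^{-1/2}\sqrt{|w_1|_1|w_1|_\infty}\,|w_2|_\infty\,\sqrt{T\exp(|b|_\infty T)(|b|_\infty+|\mu|_\infty)} = N^{-1/2}|w_1|_{1,\infty}|w_2|_\infty\cdot\sqrt{T\exp(|b|_\infty T)(|b|_\infty+|\mu|_\infty)}$ (up to the fixed numerical factor $2\sqrt{77}$), which is exactly the claimed form with $C = C(\mathrm e(\mathcal F),T,|b|_\infty,|\mu|_\infty)$ absorbing $\mathrm e(\mathcal F)$, the numerical constants, and the enlargement ensuring $c_1\ge 308$. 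The $w_2$-only case is identical, using the last sentence of Proposition~\ref{prop concentration}: one replaces $w_1$ by $1$, so $|w_1|_1 \rightsquigarrow T$ and $|w_1|_\infty\rightsquigarrow 1$, hence $|w_1|_{1,\infty}\rightsquigarrow \sqrt T$, and the bound folds into the constant.

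The main obstacle is bookkeeping rather than conceptual: one must carefully track how the constants $c_1,c_2$ from \eqref{controle 2} feed through $\varpi = \varpi(c_1,c_2)$ and the Dudley integral, and verify that the $N$-dependence lands on $N^{-1/2}$ and the $w$-dependence on the interpolation norm $|w_1|_{1,\infty}$ (not the worse $|w_1|_\infty$) — this is precisely the point where the particular algebraic form $c_1 \propto |w_1|_1/|w_1|_\infty$, $c_2\propto |w_1|_\infty$ conspires so that $\sqrt{c_1}c_2 \propto \sqrt{|w_1|_1|w_1|_\infty} = |w_1|_{1,\infty}$. One also has to make sure the "large enough $N$" clause from Proposition~\ref{prop concentration} (needed for \eqref{eq: controle events}) and the requirement $c_1\ge 308$ are both met simultaneously, which is harmless since $c_1\to\infty$ with $N$ when $|w_1|_1\asymp|w_1|_\infty$, but should be stated; and finally one records that $C$ is continuous (indeed explicit) in $(\mathrm e(\mathcal F),T,|b|_\infty,|\mu|_\infty)$, as claimed.
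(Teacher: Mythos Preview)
Your proposal is correct and follows essentially the same route as the paper's own proof: apply Proposition~\ref{prop concentration abstract} with $\xi(f)=\widetilde{\mathcal M}_{\,w_1,w_2}^N(f)_T$, $\mathcal A(\kappa)=\mathcal A^N_\kappa$, identify $c_1,c_2$ from \eqref{eq: first chernoff} exactly as you do, rescale the Dudley integral by $\varpi=2\sqrt{77}\,\sqrt{c_1}c_2$, and read off $8\varpi\,\mathrm e(\mathcal F)=CN^{-1/2}|w_1|_{1,\infty}|w_2|_\infty$. The paper's bookkeeping, constants, and the handling of the ``large enough $N$'' / $c_1\ge 308$ clause are the same as yours.
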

\begin{proof}
We plan to apply Proposition \ref{prop concentration abstract} 
with $\xi(f) = \widetilde{\mathcal M}_{\,w_1,w_2}^N(f)_T$, having $\xi(f_0)=0$ for $f_0=0$. We take $\mathcal A(\kappa) = \mathcal A_\kappa^N$ defined in \eqref{eq: def A kappa} and notice that \eqref{controle 1} is satisfied by \eqref{eq: controle events}. Also, we have \eqref{controle 2} by \eqref{eq: first chernoff} with
\begin{equation} \label{eq: def c_1}
c_1 = 2NT|w_1|_1|w_1|_\infty^{-1}\exp(|b|_\infty T)(|b|_\infty+|\mu|_\infty\big)\;\;\text{and}\;\;
c_2 = |w_1w_2|_\infty N^{-1},
\end{equation}
for the metric $d(f,g) = |f-g|_\infty$. Setting $\widetilde d = \varpi \, d$ with $\varpi$ taken from Proposition  \ref{prop concentration abstract}, we have
$\mathrm{diam}_{\widetilde d}(\mathcal F) = \varpi \,\mathrm{diam}_d(\mathcal F) \leq \varpi$ by assumption and also $\mathcal N(\mathcal F, \widetilde d, \varepsilon) \leq \mathcal N(\mathcal F, d, \varepsilon/\varpi)$. It follows that 
\begin{align*}
\int_0^{\mathrm{diam}_{\widetilde d}(\mathcal F)}\log\big(1+\mathcal N(\mathcal F, \widetilde d, \epsilon)\big)d\epsilon & \leq \int_0^\varpi \log\big(1+\mathcal N(\mathcal F, d, \epsilon/\varpi)\big)d\epsilon = \varpi \,\mathrm{e}(\mathcal F),
\end{align*}
which is finite by assumption. Since $\mathcal M_{\,w_1,w_2}^N(\mathcal F)_T =  \sup_{f \in \mathcal F}|\xi(f)| = \sup_{f \in \mathcal F}\xi(f)$, remember \eqref{eq: controle triv}, we may apply Proposition \ref{prop concentration abstract} and obtain, for every $u \geq 0$,
$$\PP\big(\mathcal M_{\,w_1,w_2}^N(\mathcal F)_T \geq 8(\varpi \, \mathrm{e}(\mathcal F)+u)\big) \leq (e^{u/\varpi}-1)^{-1},$$
or equivalently
$$
\PP\big(\mathcal M_{\,w_1,w_2}^N(\mathcal F)_T \geq 8\varpi \,\mathrm{e}(\mathcal F)(1+u)\big) \leq (e^{u \mathrm{e}(\mathcal F)}-1)^{-1}.
$$
By Remark \ref{rk: avant le choix de varpi} (see also Remark \ref{rk: le choix de varpi}), we pick $\varpi = \varpi(c_1,c_2) = k\sqrt{c_1}c_2$ with $k = 2 \sqrt{77}$, assuming $c_1 \geq 308$ which is satisfied for sufficiently large $N$ by \eqref{eq: def c_1}. Using \eqref{eq: def c_1} again,
 it follows that
\begin{align*}
8 k\sqrt{c_1}c_2\mathrm{e}(\mathcal F)(1+u) 
& =(1+u) 8k\, \mathrm{e}(\mathcal F)e^{\tfrac{1}{2}|b|_\infty T}\sqrt{2T}(|b|_\infty+|\mu|_\infty)^{1/2}N^{-1/2}(|w_1|_1|w_1|_\infty)^{1/2}|w_2|_\infty \\
& = (1+u) CN^{-1/2}|w_1|_{1,\infty}|w_2|_\infty,
\end{align*}
say, 
with
\begin{equation} \label{eq: constante cruciale}
C = C(\mathrm{e}(\mathcal F),T,|b|_\infty, |\mu|_\infty) = 8k\, \mathrm{e}(\mathcal F)e^{\tfrac{1}{2}|b|_\infty T}\sqrt{2T}(|b|_\infty+|\mu|_\infty)^{1/2}
\end{equation}
and \eqref{eq: first dev finale} follows. The proof of  \eqref{eq: sec dev finale} is obtained in the same way and is omitted. 
\end{proof}

\begin{rk} \label{remarque first concentration marting}

(i) Up to inflating the constant $C$ by a multiplicative factor $\max(1, \mathrm{e}(\mathcal F))$, we see that Theorem \ref{prop concentration final} implies a mild concentration property for 
$$(|w_1|_{1,\infty}|w_2|_\infty)^{-1}\mathcal M_{\,w_1,w_2}^N(\mathcal F)_T\;\;\text{and}\;\;|w_2|_\infty^{-1}\mathcal M_{\,w_2}^N(\mathcal F)_T$$ with rate $CN^{-1/2}$. 

(ii) 
The initial bound $|w_2|_{1,\infty}$ of Assumption \ref{concentration initiale} inflates to $|w_2|_\infty$ in 
\eqref{eq: sec dev finale}. This defect actually has dramatic consequences when applied to subsequent statistical estimation: $w_2$ becomes a kernel depending on $N$ that mimicks a Dirac mass which is not stable for the $|\cdot|_\infty$ metric. Improving on this estimates is actually the key difficulty in the proof of Theorem \ref{thm concentration optimal}.
\end{rk}

\subsection{Proof of Theorem \ref{thm concentration optimal}}

The weakness of Theorem \ref{prop concentration final} lies in the use of Proposition \ref{prop concentration}, where the control \eqref{eq: first chernoff} somehow needs to be improved. This improvement however uses the results of Theorem \ref{prop concentration final} that we are going to iterate.\\

\noindent {\bf Step 1)} By Proposition \ref{global stability}, we have
$\mathcal W_{w_1,w_2}^N(\mathcal F)_T \lesssim I+II$, with
$$I=|w_1|_1\max_{(k_1,k_2)}|k_1|_{L^1([0,T])}\mathcal W_{k_2}^N(\mathcal F)_0$$
and
$$II = \max_{(l_1,\ldots, l_4)}|l_1|_{L^1([0,T])}|l_2|_{L^1([0,T])}\mathcal M_{l_3,l_4}^N(\mathcal F)_{T}.$$
Since $|w_1|_1\max_{(k_1,k_2)}|k_1|_{L^1([0,T])}\mathcal W_{k_2}^N(\mathcal F)_0 \lesssim \max_{k=1,w_2}\mathcal W_{k}^N(\mathcal F)_0$ up to a constant that only depends on $T$, $|w_1|$ and $|w_2|_{L^1([0,T])}$, we have by Assumption \ref{concentration initiale} that 
$(|w_1|_{1,\infty}|w_2|_\infty)^{-1}I$
has a mild concentration property (actually, we can even replace $|w_2|_\infty$ by $|w_2|_{1,\infty}$). Next, by Theorem \ref{prop concentration final}, the mild concentration property also holds for
$$(|w_1|_{1,\infty}|w_2|_{\infty})^{-1}\mathcal W_{k,l}^N(\mathcal F)_T,\;\;\text{with}\;\;(k,l) \in \{(1,1), (w_2,1), (1,w_2)\}$$
up to an appropriate change in the constants, and therefore it carries over to
$(|w_1|_{1,\infty}|w_2|_{\infty})^{-1} II$ since $\max_{(l_1,\ldots, l_4)}|l_1|_{L^1([0,T])}|l_2|_{L^1([0,T])}\mathcal M_{l_3,l_4}^N(\mathcal F)_{T} \lesssim \sum_{(k,l)}\mathcal W_{k,l}^N(\mathcal F)_T$ where the summation holds over $\{(1,1), (w_2,1), (1,w_2)\}$. In turn,
$$(|w_1|_{1,\infty}|w_2|_{\infty})^{-1}\mathcal W_{\,w_1,w_2}^N(\mathcal F)_T$$
has a mild concentration property of order $C'\max(r_N,N^{-1/2})$, for some $C'>0$ that depends on $c_0$ of Assumption \ref{minimal F}, $T$, $|w_1|_1$, $|w_2|_{L^1([0,T])}$ and the constant $C(\mathrm{e}(\mathcal F),T,|b|_\infty, |\mu|_\infty)$ of Theorem \ref{prop concentration final} defined in \eqref{eq: constante cruciale}.\\

\noindent {\bf Step 2)} We next carefully revisit Step 4) of the proof of Proposition \ref{prop concentration}. We have

\begin{align*}
&  B_{t_0, t_0}^N\big(\lambda w_1(t_0)w_2(t_0-\cdot)(f-g)\big) \\
\leq & \,N(|b|_\infty+|\mu|_\infty)\frac{w_1(t_0)}{|w_1w_2|_\infty} \rho\big(N^{-1}\lambda |w_1w_2|_\infty|f-g|_\infty\big) \int_0^{T} \int_{\R_+} \big(w_2(s)+w_2(s-a)\big)Z_s^N(da)ds. 
\end{align*}
Adding and substracting the limit $g(t,a)da$, we also have
\begin{align*}
& \int_0^{T} \int_{\R_+} \big(w_2(s)+w_2(s-a)\big)Z_s^N(da)ds \\
\leq &\,  \int_0^{T} \int_0^\infty \big(w_2(s)+w_2(s-a)\big)g(s,a)dads + \mathcal W_{w_2,1}^N(\mathcal F)_{T}+\mathcal W_{1,w_2}^N(\mathcal F)_{T} \\
\leq &\, |w_2|_g 
+ \mathcal W_{w_2,1}^N(\mathcal F)_{T}+\mathcal W_{1,w_2}^N(\mathcal F)_{T},
\end{align*}
where, for $f \in \mathcal F^{\,\mathrm{age}}_b$ we set
\begin{equation} \label{def strange norm}
|f|_g = |f|_{L^1([0,T])}\sup_{0 \leq t \leq T}\int_0^\infty g(t,a)da+T|f|_1|g|_\infty\wedge |f|_\infty|g|_1.
\end{equation}
This bound is tighter than the  estimate $2|w_2|_\infty T\sup_{0 \leq t \leq T}\langle Z_t^N,{\bf 1}\rangle$ that we used in Step 4) of the proof of Proposition \ref{prop concentration}.
Introduce now the family of events
$$
\mathcal B^N_\kappa = \Big\{\mathcal W_{w_2,1}^N(\mathcal F)_{T} \leq 5C'\max(r_N,N^{-1/2})|w_2|_{1,\infty}(1+\kappa)\Big\},\;\;\kappa >0,
$$
and
$$\mathcal C^N_\kappa = \Big\{\mathcal W_{1,w_2}^N(\mathcal F)_{T} \leq 5C'\max(r_N,N^{-1/2})\sqrt{T}|w_2|_{\infty}(1+\kappa)\Big\},\;\;\kappa >0,$$
where $C'$ is the constant of Step 1). On  $\mathcal B^N_\kappa \cap \mathcal C^N_\kappa$, we now have
\begin{align*} 
& B_{t_0, t_0}^N\big(\lambda w_1(t_0)w_2(t_0-\cdot)(f-g)\big) \\
\leq & \, N(|b|_\infty+|\mu|_\infty)\frac{w_1(t_0)}{|w_1w_2|_\infty} \rho\big(N^{-1}\lambda |w_1w_2|_\infty|f-g|_\infty\big)\big(|w_2|_g+C_{w_2}^N\big)(1+\kappa) \\
= & \, w_1(t_0)(1+\kappa)\widetilde \vartheta_{w_1,w_2}^N(f-g)_\lambda,
\end{align*}
say, with
\begin{equation} \label{def grosse constante}
C_{w_2}^N = 5C'\max(r_N,N^{-1/2})\big(|w_2|_{1,\infty}+|w_2|_{\infty}\sqrt{T}\big)
\end{equation}
and
\begin{align*}
& \widetilde \vartheta_{w_1,w_2}^N(f-g)_\lambda 
 =   N(|b|_\infty+|\mu|_\infty)\frac{|w_2|_g+C_{w_2}^N}{|w_1w_2|_\infty} \rho\big(N^{-1}\lambda |w_1w_2|_\infty|f-g|_\infty\big).
 \end{align*}

We thus have established that \eqref{eq: first chernoff} of Proposition \ref{prop concentration} holds with $\widetilde \vartheta_{w_1,w_2}^N(f-g)_\lambda$ instead of $\vartheta_{w_1,w_2}^N(f-g)_\lambda$ and 
$\mathcal B_\kappa^N \cap \mathcal C_\kappa^N$ instead of  $\mathcal A_\kappa^N$.\\

\noindent {\bf Step 3)} We now prove an analogous bound as
\eqref{eq: controle events} replacing $\mathcal A_\kappa^N$ by $\mathcal B_\kappa^N \cap \mathcal C_\kappa^N$.
Applying Theorem \ref{prop concentration final} with $(w_1,w_2) = (w_2,1)$ up to an inflation of $C$ by $\max(\mathrm{e}(\mathcal F), 1)$ with the substitution $1+u = 5(1+\kappa)$, we obtain
\begin{align*}
\PP\big((\mathcal B_\kappa^N)^c\big) & = \PP\big( \mathcal W_{w_2,1}^N(\mathcal F)_{T}\geq 5C'\max\{r_N,N^{-1/2}\}|w_2|_{1,\infty}(1+\kappa)\big) \\
& = \PP\big( \mathcal W_{w_2,1}^N(\mathcal F)_{T}\geq (1+u)C'\max(r_N,N^{-1/2})|w_2|_{1,\infty}\big) \\
& \leq (\exp(u)-1)^{-1} = (\exp(4+\kappa+4\kappa)-1)^{-1} \leq e^{-5\kappa}.
\end{align*}
It follows that 
$$\int_0^\infty \PP\big((\mathcal B_\kappa^N)^c\big)e^\kappa d\kappa \leq \int_0^\infty e^{-4\kappa}d\kappa = \tfrac{1}{4}.$$
In the same way, applying Theorem \ref{prop concentration final} with $(w_1,w_2) = (1,w_2)$ and up to an inflating the constant $C$ again, we obtain
\begin{align*}
\PP\big((\mathcal C_\kappa^N)^c\big) & = \PP\big( \mathcal W_{1,w_2}^N(\mathcal F)_{T}\geq 5C'\max(r_N,N^{-1/2})|w_2|_{\infty}\sqrt{T}(1+\kappa)\big) \leq e^{-5\kappa}
\end{align*}
Hence 
$\int_0^\infty \PP\big((\mathcal C_\kappa^N)^c\big)e^\kappa d\kappa \leq \tfrac{1}{4}$ follows likewise
and \eqref{eq: controle events} is proved with $\mathcal B_\kappa^N \cap \mathcal C_\kappa^N$ in place of $\mathcal A_\kappa^N$.\\

\noindent {\bf Step 4)} We may now reproduce the proof of Theorem \ref{prop concentration final} with our new estimates from Step 2) : the estimate \eqref{eq: def c_1} now becomes
$$
c'_1 = N(|b|_\infty+|\mu|_\infty)\frac{|w_1|_1}{|w_1w_2|_\infty}\big(|w_2|_g+
C_{w_2}^N\big)\;\;\text{and} \;\;c'_2 = c_2 = N^{-1}|w_1w_2|_\infty,$$
and thanks to Step 3), we may apply in this new setting Proposition \ref{prop concentration abstract} to obtain 
$$
\PP\big(\mathcal M_{\,w_1,w_2}^N(\mathcal F)_T \geq 8\varpi(c_1', c_2') \,\mathrm{e}(\mathcal F)(1+u)\big) \leq (e^{u \mathrm{e}(\mathcal F)}-1)^{-1}.
$$
Again,
we may pick $\varpi = \varpi(c'_1,c'_2) = k\sqrt{c'_1}c'_2$ with $k = 2 \sqrt{77}$, assuming $c_1 \geq 308$ which is true for $N$ is large enough, and 
it follows that
\begin{align*}
& 8 k\sqrt{c_1'}c_2'\mathrm{e}(\mathcal F) \\
 =& \, 8k\, \mathrm{e}(\mathcal F)(|b|_\infty+|\mu|_\infty)^{1/2}N^{-1/2}|w_1|_{1,\infty}|w_2|_\infty^{1/2}\big(|w_2|_g+C^N_{w_2}\big)^{1/2}\\
 \leq & \, C''N^{-1/2}|w_1|_{1,\infty}|w_2|_\infty^{1/2}\big(|w_2|_g+C^N_{w_2}\big)^{1/2}
\end{align*}
say, 
with
$$
C'' = C''(\mathrm{e}(\mathcal F),T,|b|_\infty, |\mu|_\infty) = 8k\, \max(1,\mathrm{e}(\mathcal F))(|b|_\infty+|\mu|_\infty)^{1/2} \max(5C'\sqrt{T}, 1)^{1/2}.
$$
For $f \in \mathcal L_{\mathcal D}^{\,\mathrm{age}}$, define now
$$[f]_{1,\infty}^{\varepsilon_N} =  |f|_\infty^{1/2}\big(|f|_g+\varepsilon_N(|f|_{1,\infty}+|f|_\infty)\big)^{1/2}.$$ 
We have   
proved that for $\varepsilon_N = \max(r_N,N^{-1/2})$, the sequence
$$(|w_1|_{1,\infty}[w_2]_{1,\infty}^{\varepsilon_N})^{-1}\mathcal M_{\,w_1,w_2}^N(\mathcal F)_T$$
has a mild concentration property with rate $C''N^{-1/2}$. Applying the same argument as for Step 1) above, the mild concentration property carries over to 
$$(|w_1|_{1,\infty}[w_2]_{1,\infty}^{\varepsilon_N})^{-1}\mathcal W_{\,w_1,w_2}^N(\mathcal F)_T$$
with rate $C''\max(r_N, N^{-1/2})$, possibly up to inflating the constant $C'' >0$.\\

\noindent {\bf Step 5)} We finally show that 
$[w_2]_{1,\infty}^{\varepsilon_N} \lesssim |w|_{1,\infty}$
up to a constant that only depends on $|b|_\infty$, $|\mu|_\infty$, $|g_0|_\infty$ and $T$, under the additional assumption that $w_2$ has compact support and $|w_2|_\infty \lesssim \varepsilon_N^{-1}|w_2|_1$. By definition of $|w_2|_g$ in \eqref{def strange norm}, we have
$$|w_2|_g \lesssim |w_2|_1\big(\sup_{0 \leq t \leq T}\int_0^\infty g(t,a)da+|g|_\infty\big) \lesssim |w_2|_1$$
by the estimates of Lemma \ref{estimate g} in Appendix \ref{app analyse}. Moreover, the compact support of $w_2$ implies $|w_2|_{1,\infty} \leq |w_2|_\infty |\mathrm{supp}(w_2)|^{1/2} \lesssim |w_2|_\infty$. It follows that
$$[w_2]_{1,\infty}^{\varepsilon_N} \lesssim |w_2|^{1/2}\big(|w_2|_1+\varepsilon_N |w_2|_\infty \big)^{1/2} \lesssim |w_2|_{1,\infty}.$$
Let us note that the constant may possibly depend on $|\mathrm{supp}(w_2)|$ which is bounded above by $\mathfrak u$ by assumption.\\

\noindent {\bf Step 6)} It remains to prove a mild concentration property for $([w_2]_{1,\infty}^{\varepsilon_N})^{-1}\mathcal W_{\,w_2}^N(\mathcal F)_T$ with rate $C''\max(r_N, N^{-1/2})$. The property holds for  
$$([w_2]_{1,\infty}^{\varepsilon_N})^{-1}\mathcal M_{\,w_2}^N(\mathcal F)_T$$ 
with the same proof as for $(|w_1|_{1,\infty}[w_2]_{1,\infty}^{\varepsilon_N})^{-1}\mathcal M_{\,w_1,w_2}^N(\mathcal F)_T$. We omit the details. Next, reproducing the beginning of the proof of Proposition \ref{prop stability} and applying \eqref{action} to the test function $a \mapsto w_2(t-a)f_t(a)$ with $f \in \mathcal F$, we obtain 
$$\mathcal W_{w_2}^N(\mathcal F)_T \leq \mathcal W_{w_2}^N(\mathcal F)_0+c_0^{-1}\big( \mathcal W_{w_2,1}^N(\mathcal F)_T+ \mathcal W_{1,w_2}^N(\mathcal F)_T\big)+\mathcal M_{w_2}^N(\mathcal  F)_T.$$
By Proposition \ref{global stability}, we further have
$$\mathcal W_{w_2,1}^N(\mathcal F)_T \lesssim \mathcal W_{1}^N(\mathcal F)_0
+\max_{h,k=1,w_2}\mathcal M_{h,k}^N(\mathcal F)_{T}$$
and
$$\mathcal W_{1,w_2}^N(\mathcal F)_T \lesssim \max_{k=1,w_2}\mathcal W_{k}^N(\mathcal F)_0
+\max_{h,k=1,w_2}\mathcal M_{h,k}^N(\mathcal F)_{T},$$
up to a constant that only depends on $T$, $c_0$, $|w_1|_1$ and $|w_2|_{L^1([0,T])}$,
therefore $\mathcal W_{w_2}^N(\mathcal F)_T$ is of order
$$\max_{k=1,w_2}\mathcal W_{k}^N(\mathcal F)_0
+\max_{h,k=1,w_2}\mathcal M_{h,k}^N(\mathcal F)_{T}+\mathcal M_{w_2}^N(\mathcal  F)_T.
$$
The mild concentration property of $([w_2]_{1,\infty}^{\varepsilon_N})^{-1}\mathcal M_{\,w_2}^N(\mathcal F)_T$ and $(|w_1|_{1,\infty}[w_2]_{1,\infty}^{\varepsilon_N})^{-1}\mathcal W_{\,w_1,w_2}^N(\mathcal F)_T$ enables us to control the last two terms. The first term has the correct order by Assumption \ref{concentration initiale}. The proof of Theorem \ref{thm concentration optimal} is complete.

\subsection{Remaining proofs of Section \ref{sec stability}} \label{sec remaining proofs} 
\subsubsection*{Proof of Proposition  \ref{prop coherence 1}} 
We repeat the argument of Step 6) in the proof of Theorem \ref{thm concentration optimal} above.
By Proposition \ref{global stability}, we have
$$\mathcal W_{w_2}^N(\mathcal F)_T \lesssim \max_{k=1,w_2}\mathcal W_{k}^N(\mathcal F)_0
+\max_{h,k=1,w_2}\mathcal M_{h,k}^N(\mathcal F)_{T}
$$
and thus
$$\E\big[\mathcal W_{w_2}^N(\mathcal F)_T^p\big] \lesssim \E\big[\max_{k=1,w_2}\mathcal W_{k}^N(\mathcal F)_0^p\big]
+\E\big[\max_{h,k=1,w_2}\mathcal M_{h,k}^N(\mathcal F)_{T}^p\big]+\E\big[\mathcal M_{w_2}^N(\mathcal  F)_T^p\big],$$
up to a constant that depends on $p$, $T$, $c_0$, $|w_1|_1$ and $|w_2|_{L^1([0,T])}$. The first term is of order  $|w_2|_{1,\infty}^p r_N$ by Assumption. For the two other terms we use
the identity $\E\big[Z^p\big] = p\int_0^\infty x^{p-1}\PP(Z\geq x)dx$  for a nonnegative random variable $Z$ and conclude with the mild concentration property of $([w_2]_{1,\infty}^{\varepsilon_N})^{-1}\mathcal M_{\,w_2}^N(\mathcal F)_T$ and $(|w_1|_{1,\infty}[w_2]_{1,\infty}^{\varepsilon_N})^{-1}\mathcal W_{\,w_1,w_2}^N(\mathcal F)_T$. 

\subsubsection*{Proof of Proposition \ref{prop entropie minimale}}

Let $\mathcal F_0$ denote the minimal set that contains $0, c_0, c_0\mu, c_0b$ and that is stable under the operations defined in \eqref{eq def stabilite operateurs} except for the pointwise product $(f,g)\mapsto f\cdot g$. We also set, for $f \in \mathcal L_{\mathcal D}^\infty $:
$$\mathcal A(f)_{t_1,t_2}^{(k,l)} = \big((s,a) \mapsto f(t_1, t_2+ka-ls)\big)$$
with $t_1,t_2 \in [0,T]$ and $k,l=0,1$.\\

\noindent {\bf Step 1)} We claim that 
\begin{equation} \label{eq characterisation mathcal F_0}
\mathcal F_0 \subseteq \Big\{0, \pm c_0, \pm c_0\mu, \pm c_0b, \pm\, \mathcal L(c_0b)_{t_1,t_2}^{(k,l)}, \pm\, \mathcal A(c_0\mu)_{t_1,t_2}^{(k,l)}, \;\text{for every}\;t_1,t_2 \in [0,T], k,l=0,1\Big\}.
\end{equation} 
Indeed, one can check the following stability properties:
$$\mathsf s_t(\mathcal A(f)_{t_1,t_2}^{(k,l)})(s,a)  = \mathcal A_{t_1,t_2}^{(k,l)}(t,t+a) 
 = f(t_1,t_2+kt+ka-lt) 
 = \mathcal A_{t_1,t_2+kt-lt}^{(k,0)}(s,a),
$$
$$\mathsf t_t(\mathcal A(f)_{t_1,t_2}^{(k,l)})(s,a)  = \mathcal A_{t_1,t_2}^{(k,l)}(t,t-s) 
 = f(t_1,t_2-lt+ka-ks) 
 = \mathcal A_{t_1,t_2+kt-lt}^{(0,k)}(s,a),
$$
$$\mathsf u_t(\mathcal A(f)_{t_1,t_2}^{(k,l)})(s,a)  = \mathcal A_{t_1,t_2}^{(k,l)}(t,t-s+a) 
 = f(t_1,t_2+kt-ks+ka-lt) 
 = \mathcal A_{t_1,t_2+kt-lt}^{(k,k)}(s,a).
$$
This proves \eqref{eq characterisation mathcal F_0}.\\
 
\noindent {\bf Step 2)} We now prove that if $b,\mu \in \mathcal C^s$ for some $0 < s \leq 1$ with H\" older constant $L>0$, then
\begin{equation} \label{eq controle entropie petite classe}
\mathcal N(\mathcal F_0,|\cdot|_\infty,\epsilon) \lesssim \epsilon^{-2/s},
\end{equation} 
up to a constant that only depends on $s$, $T$ and $L$. Indeed, if $f \in \mathcal C^s$ with H\"older constant $L>0$,  we have 
\begin{align*}
\big|\mathcal L(f)_{t_1,t_2}^{(k,l)}-\mathcal L(f)_{t'_1,t'_2}^{(k,l)}\big|_\infty & =\sup_{s,a} |f(t_1, t_2+ka-ls)- f(t_1, t_2+ka-ls)| \\
& \leq L(|t_1-t_1'|^s+|t_2-t_2'|^s),
\end{align*}
therefore, for fixed $(k,l)$ and $f \in \mathcal C^s$, the $\epsilon$-covering number of $\{\mathcal L(f)_{t_1,t_2}^{(k,l)}, t_1,t_2 \in [0,T]\}$ in $|\cdot|_\infty$ is the same as that of $[0,T]^2$ equipped with the metric $d\big((t_1,t_2)-(t_1', t_2')\big) =  L(|t_1-t_1'|^s+|t_2-t_2'|^s)$. Since $\mathcal N([0,T],\epsilon,L|\cdot|^\gamma) = T\,\mathcal N([0,T],(\epsilon/L)^{1/s},|\cdot|) = TL^{1/s}\epsilon^{-1/s}$, we have that  $\mathcal N([0,T]^2, \epsilon, d) \lesssim \epsilon^{-2/s}$ and \eqref{eq controle entropie petite classe} is established.\\

\noindent {\bf Step 3)} We now consider the class $\mathcal F_0^{\mathrm{prod}}$ that contains $\mathcal F_0$ and that is stable under the operation $(f,g) \mapsto fg$. Since $\mathsf s_t(fg) = \mathsf s_t(f)\mathsf s_t(g)$, $\mathsf t_t(fg) = \mathsf t_t(f)\mathsf s_t(g)$, $\mathsf u_t(fg) = \mathsf u_t(f)\mathsf s_t(g)$, the class $\mathcal F_0^{\mathrm{prod}}$ contains the minimal class $\mathcal F$.\\ 

Let $f = \prod_{\ell=1}^m f_\ell \in \mathcal F_0^{\mathrm{prod}}$, with $f_\ell \in \mathcal F_0$. For every $\ell$, we have $|f_\ell |_\infty \leq c_1 < 1$, with $c_1 = c_0\max(b|_\infty,|\mu|_\infty) < 1$ by assumption. Therefore, if $m \geq \log \epsilon/\log c_1 = m(\epsilon)$, we have $|f|_\infty = |f-0|_\infty \leq \epsilon$. Now, let $g_i$ be $\mathcal N(\mathcal F_0, \epsilon m(\epsilon)^{-1}, |\cdot|_\infty)$ functions in $\mathcal F_0$ such that, for every $f\in \mathcal F_0$, there exists an index $i(f)$ such that $|f-g_{i(f)}|_\infty \leq \epsilon m(\epsilon)^{-1}$. If $m \leq \log \epsilon/\log c_0$, we have
$$\big|f-\prod_{\ell = 1}^mg_{i(f_\ell)}\big|_\infty = \big|\prod_{\ell = 1}^mf_\ell-\prod_{\ell = 1}^mg_{i(f_\ell)}\big|_\infty \leq c_1^{m-1}\,m\epsilon m(\epsilon)^{-1} \leq \epsilon.$$
As a result, the family $\big\{0, \prod_{\ell = 1}^kg_\ell, k=1, \ldots, m(\epsilon)\big\}$ is a family of centers of balls of radius at most $\epsilon$ that are sufficient to cover $\mathcal F_0^{\mathrm{prod}}$. It follows that
$$
\mathcal N(\mathcal F_0^{\mathrm{prod}}, \epsilon, |\cdot|_\infty) \leq \mathcal N(\mathcal F_0, m(\epsilon) \epsilon, |\cdot|_\infty)^{m(\epsilon)+1} \lesssim \big(\epsilon m(\epsilon)\big)^{-2m(\epsilon)/s}.
$$
\noindent {\bf Step 4)} We have established $\mathcal F \subseteq \mathcal F_0^{\mathrm{prod}}$ and therefore
\begin{align*}
e(\mathcal F)  = \int_0^1 \log\big(1+\mathcal N(\mathcal F, |\cdot|_\infty, \epsilon)\big)d\epsilon &  \leq \int_0^1\log\big(1+\mathcal N(\mathcal F_0^{\mathrm{prod}}, \epsilon, |\cdot|_\infty)\big) d\epsilon \\
& \lesssim \int_0^1 \log \big(\epsilon m(\epsilon)\big)^{-2m(\epsilon)/s}d\epsilon \lesssim \int_0^1 (\log \epsilon)^2 d\epsilon < \infty.
\end{align*}
The proof of Proposition \ref{prop entropie minimale} is complete.

\section{Proofs of Section \ref{sec stat oracle} and \ref{sec: minimax adaptive}} \label{sec proof stat}

\subsection{Proof of Theorem \ref{thm oracle g}}

Remember that the condition $r_N \leq N^{-1/2}$ is in force in this section.

\subsubsection*{Preliminaries} We first write a standard bias-variance decomposition in squared-error loss, based upon the stability result of Corollary \ref{prop coherence 1}.

\begin{lem} \label{lem decomp biais variance} Let $h \in \mathcal G_1^N$. If $\widehat g_h^N$ is specified with a bounded and compactly supported kernel $K$, we have
$$\E\big[\big(\widehat g_h^N(t,a)-g(t,a)\big)^2\big] \lesssim \mathcal B_h^N(g)(t,a)^2+\mathsf{V}_h^N,$$
where $B_h(g)(t,a)$ and $\mathsf{V}_h^N$ are defined in \eqref{def bias} and \eqref{def upper variance} respectively.
\end{lem}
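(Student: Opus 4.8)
The statement is a pointwise bias–variance decomposition for the linear estimator $\widehat g_h^N(t,a) = K_h\star Z_t^N(a)$, so the plan is to split $\widehat g_h^N(t,a)-g(t,a)$ into a deterministic bias term and a centred stochastic term, and to control the second moment of each separately. Writing $K_h\star g(t,\cdot)(a) = \int_0^\infty K_h(u-a)g(t,u)du$, I decompose
$$\widehat g_h^N(t,a)-g(t,a) = \big(K_h\star Z_t^N(a)-K_h\star g(t,\cdot)(a)\big) + \big(K_h\star g(t,\cdot)(a)-g(t,a)\big).$$
The second summand is bounded in absolute value by $\mathcal B_h^N(g)(t,a)$ by definition \eqref{def bias} (the supremum over $h'\le h$ only makes the bias larger, which is exactly what is needed for the Lepski comparison later), so its square contributes $\mathcal B_h^N(g)(t,a)^2$. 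It remains to show $\E\big[\big(K_h\star Z_t^N(a)-K_h\star g(t,\cdot)(a)\big)^2\big] \lesssim \mathsf V_h^N = \big(4(\log N)C^\star N^{-1/2}|K_h|_{1,\infty}\big)^2$.

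\textbf{Controlling the stochastic term.} The key observation is that $K_h\star Z_t^N(a)-K_h\star g(t,\cdot)(a) = \int_{\R_+}K_h(u-a)\big(Z_t^N(du)-g(t,u)du\big)$, which is exactly of the form controlled by $\mathcal W_{w_2}^N(\mathcal F)_t$ with the weight $w_2 = K_h$ (up to the change of variable matching $w_2(t-\cdot)$; one takes $w_2$ to be the appropriately reflected/translated version of $K_h$ so that $w_2(t-a) f_t(a)$ reproduces $K_h(u-a)$ for a constant test function $f \in \mathcal F$, using that $c_0\in\mathcal F$ by Assumption \ref{minimal F}, hence a constant is in the linear span and the single-function evaluation is bounded by $c_0^{-1}\mathcal W_{w_2}^N(\mathcal F)_t$). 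Since $K$ is bounded and compactly supported, $w_2=K_h$ is compactly supported with support length $\lesssim h \le (\log N)^{-1}$ bounded in $N$, and $|K_h|_\infty = h^{-1}|K|_\infty \lesssim h^{-1}|K|_1\cdot(h|K|_1)^{-1}$… more simply $|K_h|_\infty \approx h^{-1}$ while $|K_h|_1 = |K|_1$, so $|K_h|_\infty \lesssim h^{-1}|K_h|_1 \le N^{1/2}|K_h|_1$ since $h \ge N^{-1/2}$ on $\mathcal G_1^N$; thus the estimate \eqref{estimate LinftyL1} holds (with $r_N\le N^{-1/2}$). Hence Proposition \ref{prop coherence 1} (with $p=2$, using Assumption \ref{concentration initiale} which gives the initial control \eqref{eq: controle init propagation} with $r_N\le N^{-1/2}$ via \eqref{eq: moment estimate}) yields
$$\E\big[\mathcal W_{K_h}^N(\mathcal F)_t^2\big] \lesssim |K_h|_{1,\infty}^2\max(N^{-1},r_N^2) \lesssim |K_h|_{1,\infty}^2 N^{-1},$$
and therefore $\E\big[\big(K_h\star Z_t^N(a)-K_h\star g(t,\cdot)(a)\big)^2\big] \lesssim N^{-1}|K_h|_{1,\infty}^2 \lesssim \mathsf V_h^N$ up to the constant $(4(\log N)C^\star)^2$ which only improves the bound. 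Combining with the bias estimate gives the claim, with the implicit constant depending on $C^\star$ and $K$ (through $c_0$, $|K|_1$, $|K|_\infty$ and the support length).

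\textbf{Main obstacle.} The only delicate point is the bookkeeping that lets one view the univariate convolution increment $\int K_h(u-a)(Z_t^N(du)-g(t,u)du)$ as a legitimate instance of $\mathcal W_{w_2}^N(\mathcal F)_t$: one must choose $w_2$ and a test function $f\in\mathcal F$ (a constant, available since $c_0\in\mathcal F$) so that the kernel profile $K_h$ is reproduced after the $t$-shift in the definition of $\mathcal W_{w_2}^N$, and then verify that this particular $K_h$ satisfies the compact-support and $L^\infty/L^1$ requirements \eqref{estimate LinftyL1} of Proposition \ref{prop coherence 1}. Both verifications are routine given the definitions and the constraints imposed by the lattice $\mathcal G_1^N\subset[N^{-1/2},(\log N)^{-1}]$, but they are where the geometry of the weighted pseudo-distances must be matched to the kernel estimator; the probabilistic heavy lifting is entirely outsourced to Proposition \ref{prop coherence 1} (equivalently Theorem \ref{thm concentration optimal}).
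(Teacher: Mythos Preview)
Your proof is correct and follows essentially the same route as the paper: the same bias--variance split, the same identification of the stochastic term as (a constant multiple of) $\mathcal W_{w_2}^N(\mathcal F)_t$ with $w_2(\cdot)=K_h(t-a-\cdot)$ and a constant test function, the same verification of \eqref{estimate LinftyL1} from $h\in\mathcal G_1^N$, and the same appeal to Proposition~\ref{prop coherence 1}. The paper is slightly more terse about the shift matching $w_2(t-u)=K_h(u-a)$, but the content is identical.
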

\begin{proof} Write $\widehat g_{h}^N(t,a)-g(t,a) = I + II$,
with
$$I = \int_{0}^\infty K_h(u-a)g(t,u)du-g(t,a)$$
and
$$II = \int_{\R_+} K_h(u-a)\big(Z_t^N(du)-g(t,u))du.$$
We have $I^2 \leq \mathcal \mathcal B_h(g)(t,a)^2$. For the stochastic term, 
we have
\begin{equation} \label{eq: sup omega}
| II |\leq  \mathcal W_{K_h(t-a-\cdot)}^N(\mathcal F)_t
\end{equation}
Moreover
\begin{equation} \label{eq estimate L1-Linfty}
| K_h(t-a-\cdot)|_\infty \leq |K_h(t-a-\cdot)|_\infty = h^{-1}|K|_\infty \lesssim |K_h(t-a-\cdot)|_1 N^{1/2}
\end{equation}
as soon as $h^{-1} \lesssim N^{1/2}$ since $|K_h(t-a-\cdot)|_1 = |K|_1=1$. This condition is true for any $h \in \mathcal G_1^N$ using the fact that $K$ is bounded and compactly supported. We may then apply Corollary  \ref{prop coherence 1} and obtain 
$$
\E[II^2]   \lesssim |K_h(t-a-\cdot)|_{1,\infty}^2 N^{-1} \lesssim \big(C^\star N^{-1/2}|K_h|_{1,\infty}\big)^2 = \mathsf{V}_h^N.
$$
\end{proof}

\subsubsection*{Completion of proof of Theorem \ref{thm oracle g}}
We essentially repeat the main argument of the Goldenshluger-Lepski method (see {\it e.g.} \cite{GOLDENSHLUGERLEPSKI1, GOLDENSHLUGERLEPSKI2} for the pointwise risk) in a setting that we need to adapt to our context.\\
 
\noindent {\bf Step 1)} For any $h \in \mathcal G_1^N$, forcing $\widehat g_h^N(t,a)$ in the risk decomposition and by definition of $\mathsf A_h^N(t,a)$ and $\widehat h^N(t,a)$, we successively have
\begin{align*}
& \E\big[\big(\widehat g_\star^N(t,a)-g(t,a)\big)^2\big]\\
  \lesssim &\; \E\big[\big(\widehat g_\star^N(t,a)-\widehat g_h^N(t,a)\big)^2\big]+ \E\big[\big(\widehat g_h^N(t,a)-g(t,a)\big)^2\big]  \\
 \lesssim & \;\E\big[\big\{\big(\widehat g_{\widehat h^N(t,a)}^N(t,a)-\widehat g_h^N(t,a)\big)^2-\mathsf V_h^N-\mathsf V_{\widehat h^N(t,a)}^N\big\}_+ +\mathsf V_h^N+\mathsf V_{\widehat h(t,a)}^N \big] + \E\big[\big(\widehat g_h^N(t,a)-g(t,a)\big)^2\big]  \\
  \lesssim & \;\E\big[\mathsf A_{\max(\widehat h^N(t,a),h)}^N(t,a) +\mathsf V_h^N+\mathsf V_{\widehat h^N(t,a)}^N \big] + \E\big[\big(\widehat g_h^N(t,a)-g(t,a)\big)^2\big]  \\
  \lesssim & \;\E\big[\mathsf A_{h}^N(t,a)\big]+\mathsf V_h^N +\E\big[\mathsf A_{\widehat h^N(t,a)}^N+\mathsf V_{\widehat h^N(t,a)}^N \big] + \E\big[\big(\widehat g_h^N(t,a)-g(t,a)\big)^2\big] \\
  \lesssim &  \;\E\big[\mathsf A_{h}^N(t,a)\big]+\mathsf V_h^N + \mathcal B_h^N(g)(t,a)^2  
\end{align*}
where we applied Lemma \ref{lem decomp biais variance} to obtain the last line.\\

\noindent {\bf Step 2)} We first estimate $\mathsf A_{h}^N(t,a)$. Write $g_h(t,a)$ for $\int_{\R_+}K_h(u-a)g(t,u)du$. For $h,h'\in \mathcal G_1^N$ with $h' \leq h$, since
\begin{align*}
& \big(\widehat g_h^N(t,a)-\widehat g_{h'}(t,a)\big)^2 \\
\leq  & \;4\big(\widehat g_h^N(t,a)-g_{h}(t,a)\big)^2+4\big(g_h(t,a)-g(t,a)\big)^2+4\big(g_{h'}(t,a)-g(t,a)\big)^2+4\big(\widehat g_{h'}^N(t,a)-g_{h'}(t,a)\big)^2,  
\end{align*}
we have
\begin{align*}
& \big(\widehat g_h^N(t,a)-\widehat g_{h'}(t,a)\big)^2-\mathsf V_h^N-\mathsf V_{h'}^N \\
\leq &\;8\mathcal B_h^N(g)(t,a)^2+\big(4(\widehat g_h^N(t,a)-g_{h}(t,a))^2-\mathsf V_h^N\big)+\big(4(\widehat g_{h'}^N(t,a)-g_{h'}(t,a))^2-\mathsf V_{h'}^N\big).
\end{align*}
using $h' \leq h$ in order to bound $(\widehat g_{h'}^N(t,a)-g_{h'}(t,a))^2$ by the bias at scale $h$. It follows that
\begin{align*}
& \big(\widehat g_h^N(t,a)-\widehat g_{h'}(t,a)\big)^2-\mathsf V_h^N-\mathsf V_{h'}^N \\
\leq &\;8\mathcal B_h^N(g)(t,a)^2+ 4\big(\widehat g_h^N(t,a)-g_{h}(t,a)\big)^2-\mathsf V_h^N+4\big(\widehat g_{h'}^N(t,a)-g_{h'}(t,a)\big)^2-\mathsf V_{h'}^N,
\end{align*}
and taking maximum over $h'\leq h$, we obtain
\begin{align}
& \max_{h' \leq h}\big\{\big(\widehat g_h^N(t,a)-\widehat g_{h'}(t,a)\big)^2-\mathsf V_h^N-\mathsf V_{h'}^N\big\}_+  \label{eq end lepski}\\
\leq & \;8\mathcal B_h^N(g)(t,a)^2+  \big\{4\big(\widehat g_h^N(t,a)-g_{h}(t,a)\big)^2-\mathsf V_h^N\big\}_++\max_{h' \leq h}\big\{4\big(\widehat g_{h'}^N(t,a)-g_{h'}(t,a)\big)^2-\mathsf V_{h'}^N\big\}_+. \nonumber
\end{align}

\noindent {\bf Step 3)} We estimate the expectation of the first stochastic term in the right-hand side of \eqref{eq end lepski}. Since $|\widehat g_h^N(t,a)-g_h(t,a)| \leq \mathcal W_{K_h(t-a-\cdot)}^N $, we successively have
\begin{align*}
 \E\big[\big\{4\big(\widehat g_h^N(t,a)-g_{h}(t,a)\big)^2-\mathsf V_h^N\big\}_+\big] &  = \int_0^\infty \PP\big(4\big(\widehat g_h^N(t,a)-g_{h}(t,a)\big)^2-\mathsf V_h^N \geq \kappa\big)d\kappa \\  
 & = \int_0^\infty \PP\big(|\widehat g_h^N(t,a)-g_h(t,a)| \geq \tfrac{1}{2}(\mathsf V_h^N+\kappa)^{1/2}\big)d\kappa \\
  & \leq \int_0^\infty \PP\big(\mathcal W_{K_h(t-a-\cdot)}^N \geq \tfrac{1}{2}(\mathsf V_h^N+\kappa)^{1/2}\big)d\kappa.
\end{align*}
We may apply Theorem \ref{thm concentration optimal} with $w_2=K_h(t-a-\cdot)$ since $K$ is compactly supported 
and having \eqref{eq estimate L1-Linfty} of Lemma \ref{lem decomp biais variance} above. By the change of variable 
$$\tfrac{1}{2}(\mathsf V_h^N+\kappa)^{1/2}=(1+u)C''|K_{h}|_{1,\infty} N^{-1/2},$$ we then obtain
\begin{align*}
& \E\big[\big\{4\big(\widehat g_h^N(t,a)-g_{h}(t,a)\big)^2-\mathsf V_h^N\big\}_+\big] \\
\leq & \; 8C''|K_h|_{1,\infty}N^{-1/2}\int_{\tfrac{1}{2C''}(\mathsf V_h^N)^{1/2}|K_h|_{1,\infty}^{-1}N^{1/2}-1}^\infty 
 (1+u)\min\big((e^u-1)^{-1},1\big)du \\
 \lesssim & \;\exp\big(-\tfrac{1}{2C''}(\mathsf V_h^N)^{1/2}|K_h|_{1,\infty}^{-1}N^{1/2}\big) \leq  N^{-2}
\end{align*}
by definition of $\mathsf V_h^N$.\\

\noindent {\bf Step 4)} For the second stochastic term, we use the rough estimate
\begin{align*}
\E\big[\max_{h' \leq h}\big\{4\big(\widehat g_{h'}^N(t,a)-g_{h'}(t,a)\big)^2-\mathsf V_{h'}^N\big\}_+\big] & \leq \sum_{h'\leq h}\E\big[\big\{4\big(\widehat g_{h'}^N(t,a)-g_{h'}(t,a)\big)^2-\mathsf V_{h'}^N\big\}_+\big] \\
& \lesssim \mathrm{Card}(\mathcal G_1^N) N^{-2} \lesssim N^{-1} 
\end{align*}
where we used Step 3) to bound each term $\E\big[\big\{4\big(\widehat g_{h'}^N(t,a)-g_{h'}(t,a)\big)^2-\mathsf V_{h'}^N\big\}_+\big]$ independently of $h$ together with $\mathrm{Card}(\mathcal G_1^N) \lesssim N$. In conclusion, we have proved through Steps 2)-4) that $\E\big[\mathsf A_{h}^N(t,a)\big] \lesssim \delta_N$. Therefore, from Step 1), we conclude
$$\E\big[\big(\widehat g_\star^N(t,a)-g(t,a)\big)^2\big] \lesssim \mathcal B_h^N(g)(t,a)^2 + \mathsf V_h^N + \delta_N$$  
for any $h \in \mathcal G_1^N$. The proof of Theorem \ref{thm oracle g} is complete.

\subsection{Proof of Theorem \ref{thm oracle mu}}

\subsubsection*{Preliminaries} We first study the behaviour of the process $\Gamma^N(dt,da)$ of death occurences introduced in Section \ref{sec constrution estimators} and represented via \eqref{def death jumps}.

\begin{lem} \label{lem bracket}
With the notation of Section \ref{sec construction model}, we have
\begin{equation} \label{eq autre rep death process}
\Gamma^N(dt,da) = N^{-1}\int_{\mathbb N\setminus \{0\} \times \R_+} \delta_{a_i(Z_{s^-}^N)}(da){\bf 1}_{\{0 \leq \vartheta \leq \mu(s,a_i(Z_{s^-}^N)), i \leq \langle NZ_{s^-}^N,{\bf 1}\rangle\}}\mathcal Q_2(dt,di,d\vartheta),
\end{equation}
where $\mathcal Q_2$ is a Poisson random measure on $\R_+\times \mathbb N \setminus \{0\} \times \R_+$ with intensity $dt \big(\sum_{k \geq 1}\delta_k(di)\big) d\vartheta$.
Moreover, for nonnegative weights $w_1\in \mathcal L_{\mathcal D}^{\;\mathrm{time}}$ and $w_2 \in \mathcal L_{\mathcal D}^{\,\mathrm{age}}$, we have
\begin{equation} \label{eq ineq death mg}
\big|\int_{0}^T\int_{\R_+} w_1(s)w_2(s-u)\big(\Gamma^N(ds,du)-\mu(s,u)g(s,u)duds\big)\big| \leq \mathcal W_{w_1,w_2}^N(\mathcal F)_T+|(\Delta^N_{w_1,w_2})_T|,
\end{equation}
where $t \mapsto (\Delta_{w_1,w_2}^N)_t$ is a square integrable martingale with predictable compensator
\begin{equation} \label{def bracket mg saut}
\langle \Delta_{w_1,w_2}^N \rangle_t = N^{-1}\int_0^t \int_{\R_+} w_1(s)^2w_2(s-u)^2\mu(s,u)Z_{s^-}^N(du)ds.
\end{equation}
\end{lem}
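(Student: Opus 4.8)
The plan is to prove the representation \eqref{eq autre rep death process} first, and then read off both the bound \eqref{eq ineq death mg} and the compensator \eqref{def bracket mg saut} by compensating the Poisson integral that appears. For \eqref{eq autre rep death process}: the killing term of the microscopic equation \eqref{eq micro} removes, at each atom $(s,i,\vartheta)$ of $\mathcal Q_2$ with $i\leq\langle NZ_{s^-}^N,{\bf 1}\rangle$ and $\vartheta\leq\mu(s,a_i(Z_{s^-}^N))$, the mass $N^{-1}\delta_{a_i(Z_{s^-}^N)+t-s}$; this is exactly the death at time $s$ of the individual of rank $i$, whose age at death is $a_i(Z_{s^-}^N)$. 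I would then check that this matches the construction of $\Gamma^N$ via \eqref{def death jumps}: upon removal of the rank-$i$ individual, ranks $j\geq i$ are relabelled via $a_j(Z_s^N)=a_{j+1}(Z_{s^-}^N)>a_j(Z_{s^-}^N)$, so $i$ is the smallest rank with a strictly positive jump and $a_{i^\star}(Z_{s^-}^N)=a_i(Z_{s^-}^N)$ is precisely the recorded age; births produce only non-positive jumps and are not recorded, and simultaneous birth/death events have probability zero since $\mathcal Q_1,\mathcal Q_2$ share no atoms almost surely. This bookkeeping — tracking the relabelling and the signs of the jumps — is the only genuinely delicate point; everything else is mechanical.

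Next I would write $\mathcal Q_2=\widetilde{\mathcal Q}_2+ds\,\big(\sum_{k\geq1}\delta_k(di)\big)\,d\vartheta$ and plug \eqref{eq autre rep death process} into $\int_0^T\int_{\R_+}w_1(s)w_2(s-u)\Gamma^N(ds,du)$. The compensated part defines
\[
(\Delta_{w_1,w_2}^N)_t=N^{-1}\int_0^t\int_{\mathbb N\setminus\{0\}\times\R_+}w_1(s)w_2(s-a_i(Z_{s^-}^N)){\bf 1}_{\{0\leq\vartheta\leq\mu(s,a_i(Z_{s^-}^N)),\,i\leq\langle NZ_{s^-}^N,{\bf 1}\rangle\}}\widetilde{\mathcal Q}_2(ds,di,d\vartheta),
\]
a purely discontinuous local martingale. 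Its predictable quadratic variation is given by the standard formula for integrals against a compensated Poisson measure: squaring the integrand, integrating $\vartheta$ over $[0,\mu(s,a_i(Z_{s^-}^N))]$ and summing over $i\leq\langle NZ_{s^-}^N,{\bf 1}\rangle$ turns $N^{-2}\sum_i w_1(s)^2w_2(s-a_i(Z_{s^-}^N))^2\mu(s,a_i(Z_{s^-}^N))$ into $N^{-1}w_1(s)^2\int_{\R_+}w_2(s-u)^2\mu(s,u)Z_{s^-}^N(du)$, which is exactly \eqref{def bracket mg saut}. Square-integrability then follows since $|w_1|_\infty,|w_2|_\infty,|\mu|_\infty<\infty$ and $\E\big[\sup_{s\leq T}\langle Z_s^N,{\bf 1}\rangle\big]<\infty$ (from the pure-birth domination argument of Step 1) in the proof of Proposition \ref{prop concentration}, or from \eqref{action} with $f\equiv1$ and Gr\"onwall), giving $\E\big[\langle\Delta_{w_1,w_2}^N\rangle_T\big]\lesssim N^{-1}T|w_1|_\infty^2|w_2|_\infty^2|\mu|_\infty\,\E[\sup_{s\leq T}\langle Z_s^N,{\bf 1}\rangle]<\infty$; hence $\Delta_{w_1,w_2}^N$ is a genuine square-integrable martingale.

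Finally, the drift part of the Poisson integral equals $N^{-1}\int_0^T\sum_{i\leq\langle NZ_{s^-}^N,{\bf 1}\rangle}w_1(s)w_2(s-a_i(Z_{s^-}^N))\mu(s,a_i(Z_{s^-}^N))\,ds=\int_0^Tw_1(s)\int_{\R_+}w_2(s-u)\mu(s,u)Z_s^N(du)\,ds$, where $Z_{s^-}^N$ has been replaced by $Z_s^N$ at the cost of modifying the integrand on a Lebesgue-null set of times. Subtracting $\int_0^Tw_1(s)\int_{\R_+}w_2(s-u)\mu(s,u)g(s,u)\,du\,ds$ leaves $\int_0^Tw_1(s)\int_{\R_+}w_2(s-u)\mu(s,u)\big(Z_s^N(du)-g(s,u)du\big)\,ds$, and since $c_0\mu\in\mathcal F$ by Assumption \ref{minimal F} and $\mathcal W_{w_1,w_2}^N(\mathcal F)_T$ is a supremum over $f\in\mathcal F$, this is bounded in absolute value by $c_0^{-1}\mathcal W_{w_1,w_2}^N(\mathcal F)_T$ (one absorbs the constant $c_0$, or notes that $\mu\in\mathcal F$ for the admissible normalisation). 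Combining the deterministic remainder with the martingale term via the triangle inequality yields \eqref{eq ineq death mg}, and the proof is complete.
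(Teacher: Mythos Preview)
Your proof is correct and follows the same approach as the paper: compensate $\mathcal Q_2$ to split the integral against $\Gamma^N$ into a martingale piece $(\Delta^N_{w_1,w_2})_t$ and a drift $\int_0^T w_1(s)\int_{\R_+}w_2(s-u)\mu(s,u)Z_s^N(du)ds$, then bound the latter minus the limiting term by $\mathcal W_{w_1,w_2}^N(\mathcal F)_T$. The paper simply declares the representation \eqref{eq autre rep death process} ``straightforward'' and does not spell out the square-integrability check or the $c_0$ normalisation in the bound by $\mathcal W_{w_1,w_2}^N(\mathcal F)_T$, so your version is in fact more complete on those points; the $c_0^{-1}$ factor you flag is harmless since the inequality is only used later up to multiplicative constants.
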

\begin{proof} The representation \eqref{eq autre rep death process} is straightforward. We add and substract in the left-hand side of  \eqref{eq ineq death mg} the term $\int_{0}^T\int_{0}^\infty w_1(s)w_2(s-u)\mu(s,u)Z_s^N(du)ds$
and obtain the desired inequality with
\begin{align*}
 (\Delta_{w_1,w_2}^N&)_t  
 =  \; \int_0^t\int_{\R_+} w_1(s)w_2(s-u)\big(\Gamma^N(ds,du)-\mu(s,u)Z_s^N(du)\big) \\
 = & \;N^{-1}\int_0^t\int_{\mathbb N\setminus \{0\} \times \R_+} w_1(s)w_2(s-u)\delta_{a_i(Z_{s^-}^N)}(da){\bf 1}_{\{0 \leq \vartheta \leq \mu(s,a_i(Z_{s^-}^N)), i \leq \langle NZ_{s^-}^N,{\bf 1}\rangle\}}\widetilde{\mathcal Q}_2(ds,di,d\vartheta),
\end{align*}
where $\widetilde{\mathcal Q}_2(ds,di,d\vartheta) = {\mathcal Q}_2(ds,di,d\vartheta)-ds \big(\sum_{k \geq 1}\delta_k(di)\big) d\vartheta$ is the associated compensated measure. Thus $(\Delta_{w_1,w_2}^N)_t  $ is a martingale and \eqref{def bracket mg saut} follows. 
\end{proof}

We next study the deviation of $(\Delta_{w_1,w_2}^N)_T$. Define
\begin{equation}
V_{w_1,w_2}^N = \big(4C^\star (\log N) N^{-1/2}|w_1|_{1,\infty}|w_2|_{1,\infty}\big)^2.
\end{equation}
where $C^\star$ is the constant defined in \eqref{def upper bivariance} in Section \ref{sec oracle inequalities}. Let also
$$\chi_{w_1,w_2}^N=N^{-1}|w_1|_\infty |w_2|_\infty|\mu|_\infty$$ and
\begin{equation} \label{eq def xi}
\xi^N_{w_1,w_2} = 16 N^{-1}|\mu|_\infty|g|_\infty |w_1|^2_2|w_2|_2^2 (V_{w_1,w_2}^N)^{-1/2}(\log N) 
\end{equation}

\begin{lem} \label{lem dev mg saut}
For 
$u > 2^{-6}V_{w_1,w_2}^N (\log N)^{-2}$,
we have
$$\PP\big(\big|(\Delta_{w_1,w_2}^N)_T \big| \geq u^{1/2}\big) \leq 2\exp\Big(-\frac{u^{1/2}}{2(\chi_{w_1,w_2}^N+\xi^N_{w_1,w_2})}\Big)+2\PP\big( N^{-1}|\mu|_\infty\mathcal W_{w_1^2,w_2^2}(\mathcal F)_T \geq \tfrac{1}{2}\xi_{w_1,w_2}^Nu^{1/2}\big).$$
\end{lem}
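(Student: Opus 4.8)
The plan is to control the deviation of the martingale $(\Delta_{w_1,w_2}^N)_T$ via a Bernstein-type inequality for purely discontinuous martingales, after first truncating the predictable quadratic variation $\langle \Delta_{w_1,w_2}^N\rangle_T$ given in \eqref{def bracket mg saut}. The martingale $(\Delta_{w_1,w_2}^N)_t$ has jumps bounded in absolute value by $N^{-1}|w_1|_\infty|w_2|_\infty|\mu|_\infty = \chi_{w_1,w_2}^N$ (each jump corresponds to one death and contributes $N^{-1}w_1(s)w_2(s-u)$ weighted by the indicator on $\mu$), so the first ingredient is a standard exponential martingale inequality of the form: for a purely discontinuous martingale with jumps bounded by $b$ and predictable variation bounded on an event by $v$, one has $\PP(|\text{mart}|_T \geq x, \langle\cdot\rangle_T \leq v) \leq 2\exp(-x^2/(2(v+bx)))$ (Freedman / van de Geer). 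The point $x = u^{1/2}$ with $v$ of the right order will produce the $\exp(-u^{1/2}/(2(\chi^N+\xi^N)))$ term.

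The key step is to bound $\langle \Delta_{w_1,w_2}^N\rangle_T = N^{-1}\int_0^T\int_{\R_+} w_1(s)^2w_2(s-u)^2\mu(s,u)Z_{s^-}^N(du)ds$. Here I would add and subtract the limit $g(s,u)du$: the deterministic part is $N^{-1}\int_0^T\int_0^\infty w_1(s)^2 w_2(s-u)^2\mu(s,u)g(s,u)du\,ds \leq N^{-1}|\mu|_\infty |g|_\infty |w_1^2|_1|w_2^2|_1 = N^{-1}|\mu|_\infty|g|_\infty|w_1|_2^2|w_2|_2^2$ (using $|w_i^2|_1 = |w_i|_2^2$), and the fluctuation part is at most $N^{-1}|\mu|_\infty \mathcal W_{w_1^2,w_2^2}^N(\mathcal F)_T$, because $w_1^2 \geq 0$, $w_2^2 \geq 0$, and $c_0\mu \in \mathcal F$ with $\mathcal F$ stable under products, so that $w_1(s)^2 w_2(s-u)^2 \mu(s,u)$ fits the template controlled by $\mathcal W_{w_1^2,w_2^2}^N(\mathcal F)_T$ up to the constant $c_0^{-1}$ (absorbed). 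Thus $\langle\Delta_{w_1,w_2}^N\rangle_T \leq N^{-1}|\mu|_\infty|g|_\infty|w_1|_2^2|w_2|_2^2 + N^{-1}|\mu|_\infty\mathcal W_{w_1^2,w_2^2}^N(\mathcal F)_T$. On the event where the second term is $\leq \tfrac12\xi_{w_1,w_2}^N u^{1/2}$, the whole predictable variation is bounded by $v := N^{-1}|\mu|_\infty|g|_\infty|w_1|_2^2|w_2|_2^2 + \tfrac12\xi_{w_1,w_2}^N u^{1/2}$. The definition of $\xi^N_{w_1,w_2}$ in \eqref{eq def xi} is reverse-engineered so that, under the hypothesis $u^{1/2} > 2^{-3}(V_{w_1,w_2}^N)^{1/2}(\log N)^{-1}$ (equivalent to $u > 2^{-6}V_{w_1,w_2}^N(\log N)^{-2}$), the deterministic term $N^{-1}|\mu|_\infty|g|_\infty|w_1|_2^2|w_2|_2^2$ is itself $\lesssim \xi^N_{w_1,w_2}u^{1/2}$ (indeed $\xi^N = 16 N^{-1}|\mu|_\infty|g|_\infty|w_1|_2^2|w_2|_2^2(V^N)^{-1/2}\log N$, so $\xi^N u^{1/2} \geq 16 N^{-1}|\mu|_\infty|g|_\infty|w_1|_2^2|w_2|_2^2 \cdot 2^{-3}(\log N)^{-1}\cdot\log N \cdot (V^N)^{-1/2}(V^N)^{1/2}/1 = 2 N^{-1}|\mu|_\infty|g|_\infty|w_1|_2^2|w_2|_2^2$). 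Hence $v \leq \xi^N_{w_1,w_2}u^{1/2} + bx$ with $b=\chi^N$, $x=u^{1/2}$, giving $x^2/(2(v+bx)) \geq u/(2(\xi^N u^{1/2}+\chi^N u^{1/2}+\chi^N u^{1/2})) \asymp u^{1/2}/(2(\chi^N+\xi^N))$, up to the constant bookkeeping which I will absorb.

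Putting it together: write $\PP(|(\Delta^N)_T| \geq u^{1/2}) \leq \PP(|(\Delta^N)_T| \geq u^{1/2}, \langle\Delta^N\rangle_T \leq v) + \PP(\langle\Delta^N\rangle_T > v)$, bound the first term by the Freedman inequality to get $2\exp(-u^{1/2}/(2(\chi^N+\xi^N)))$, and bound the second by $\PP(N^{-1}|\mu|_\infty\mathcal W_{w_1^2,w_2^2}^N(\mathcal F)_T \geq \tfrac12\xi^N u^{1/2})$ via the variation estimate above; the factor $2$ in front of this probability in the statement leaves room for the nonnegativity reduction (replacing $\mathcal W$ by its absolute value costs a factor $2$ since $\mathcal F$ is stable under $f\mapsto -f$, or simply from splitting $w_i = w_i^+ - w_i^-$ if $w_i$ were signed — but here the weights are nonnegative). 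The main obstacle is the careful constant-chasing in the threshold condition on $u$: one must verify that the lower bound $u > 2^{-6}V_{w_1,w_2}^N(\log N)^{-2}$ is exactly what makes the deterministic part of the bracket negligible against $\xi^N u^{1/2}$, so that the two explicit terms in the claimed bound genuinely dominate; everything else is a routine application of a Bernstein/Freedman martingale inequality combined with the stability machinery of Theorem \ref{thm concentration optimal}.
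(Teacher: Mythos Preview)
Your proposal is correct and follows essentially the same route as the paper: a Bernstein/Freedman inequality for the martingale $(\Delta_{w_1,w_2}^N)_t$ with jump bound $\chi_{w_1,w_2}^N$, combined with the decomposition of the bracket $\langle\Delta_{w_1,w_2}^N\rangle_T$ into the deterministic part $N^{-1}|\mu|_\infty|g|_\infty|w_1|_2^2|w_2|_2^2$ and the fluctuation $N^{-1}|\mu|_\infty\mathcal W_{w_1^2,w_2^2}^N(\mathcal F)_T$, and the observation that the threshold on $u$ is exactly what makes the deterministic part bounded by $\tfrac12\xi_{w_1,w_2}^N u^{1/2}$. The paper streamlines the bookkeeping slightly by setting $w=\xi_{w_1,w_2}^N u^{1/2}$ directly in the inequality $\PP((\Delta^N)_T\geq v,\langle\Delta^N\rangle_T\leq w)\leq\exp(-v^2/(2(v\chi^N+w)))$, which with $v=u^{1/2}$ yields the exponent $u^{1/2}/(2(\chi^N+\xi^N))$ without any absorption of constants; your intermediate line ``$v\leq\xi^N u^{1/2}+bx$'' is slightly muddled (the $bx$ should not appear there), but since you correctly flag the constant-chasing, the argument stands. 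The factor $2$ in front of both terms in the statement comes simply from applying the one-sided Bernstein bound to both $(\Delta^N)_T$ and $-(\Delta^N)_T$.
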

\begin{proof} We plan to apply a classical deviation inequality for martingales (see {\it e.g.} Lemma 2.1 in van de Geer \cite{VANDEGEER} or the classical textbook by Shorak and Wellner \cite{SHORACKWELLNER}), namely:
\begin{equation} \label{eq shorak wellner}
\PP\big((\Delta_{w_1,w_2}^N)_T \geq v, \langle \Delta_{w_1,w_2}^N \rangle_T \leq w\big) \leq \exp\Big(-\frac{v^2}{2(v\chi_{w_1,w_2}^N+w)}\Big)
\end{equation}
for every $v,w \geq 0$, 
where $\chi_{w_1,w_2}^N=N^{-1}|w_1|_\infty |w_2|_\infty|\mu|_\infty$ is an almost-sure bound of the size of the jumps of  $(\Delta_{w_1,w_2}^N)_T$. With $v=u^{1/2}$ and $w = \xi_{w_1,w_2}^Nu^{1/2}$, inequality \eqref{eq shorak wellner} gives
$$\PP\big(\big|(\Delta_{w_1,w_2}^N)_T \big| \geq u^{1/2}\big) \leq 2\exp\Big(-\frac{u^{1/2}}{2(\chi_{w_1,w_2}^N+\xi^N_{w_1,w_2})}\Big)+2\PP\big(\langle \Delta_{w_1,w_2}^N\rangle_T \geq \xi_{w_1,w_2}^Nu^{1/2}\big).$$ Inserting the term $N^{-1}\int_0^t \int_{\R_+}w_1(s)^2w_2(s-u)^2g(s,u)duds$ in \eqref{def bracket mg saut}, we obtain
$$
\langle \Delta_{w_1,w_2}^N \rangle_T  \leq N^{-1}|\mu|_\infty\big(|w_1|^2_2|w_2|_2^2|g|_\infty+\mathcal W_{w_1^2,w_2^2}(\mathcal F)_T\big),
$$
 therefore
 \begin{align*}
 \PP\big(\langle \Delta_{w_1,w_2}^N\rangle_T \geq \xi_{w_1,w_2}^Nu^{1/2}\big) \leq
 \PP\big( N^{-1}|\mu|_\infty\mathcal W_{w_1^2,w_2^2}(\mathcal F)_T \geq \tfrac{1}{2}\xi_{w_1,w_2}^Nu^{1/2}\big)
 \end{align*}
as soon as
\begin{equation} \label{eq cond determinisite}
N^{-1}|\mu|_\infty |g|_\infty |w_1|^2_2|w_2|_2^2 < \tfrac{1}{2}\xi_{w_1,w_2}^Nu^{1/2},
\end{equation} 
but by definition of $\xi_{w_1,w_2}^N$ in \eqref{eq def xi}, this condition is equivalent to $u > 2^{-6}V_{w_1,w_2}^N (\log N)^{-2}$.
\end{proof}



Under Assumption \ref{assumption minoration g}, we have a uniform lower bound on $g(t,a)$. 

\begin{lem} \label{lem min g}
Work under Work under Assumptions  \ref{H basic} and \ref{assumption minoration g}. Then, there exists $\epsilon >0$ depending on $\delta(t,a)$ defined in \eqref{eq condit mino g L} and \eqref{eq condit mino g U} and $|\mu|_\infty$ and $T$ such that  $g(t,a) \geq \epsilon$.
\end{lem}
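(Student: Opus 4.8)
The plan is to use the explicit representation of the solution $g$ of the McKendrick Von Foerster equation \eqref{McKendrick} along the characteristics, exactly as in the proof of Proposition \ref{P first smoothness}. Recall that along a characteristic line of slope one, the transport equation $\partial_t g + \partial_a g + \mu g = 0$ integrates to $g(t,a) = g(t-a,0)\exp\big(-\int_0^a \mu(t-a+\sigma,\sigma)d\sigma\big)$ whenever $a \leq t$, that is on $\mathcal D_L$, and $g(t,a) = g_0(a-t)\exp\big(-\int_0^t \mu(\sigma, a-t+\sigma)d\sigma\big)$ whenever $a > t$, that is on $\mathcal D_U$. In both cases the exponential factor is bounded below by $\exp(-T|\mu|_\infty)$, since the integration interval has length at most $T$ and $\mu \in \mathcal L_{\mathcal D}^\infty$ under Assumption \ref{H basic}. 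Thus the proof reduces to bounding the ``boundary value'' from below in each region.

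On $\mathcal D_U$ the boundary value is simply $g_0(a-t)$, and since $(t,a) \in \mathcal D_U$ means $a-t = -(t-a) > 0$ with $(t,a)\in\mathcal D^-$, condition \eqref{eq condit mino g U} of Assumption \ref{assumption minoration g} gives $g_0(t-a) \geq \delta$ directly --- here one must be slightly careful with the sign convention, but $g_0$ is evaluated at the positive quantity $a-t$ and the hypothesis is stated as $g_0(t-a)\geq\delta$, so I would simply match notation. On $\mathcal D_L$ the boundary value is $g(t-a,0) = \int_0^\infty b(t-a,u)g(t-a,u)du$ by the renewal boundary condition in \eqref{McKendrick}. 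I would lower-bound this integral by restricting to the open set $\mathcal U_{(t,a)}$ furnished by Assumption \ref{assumption minoration g} and using again the representation of $g(t-a,u)$ along characteristics: for $u$ in a bounded set, $g(t-a,u) \geq g_0(\text{something})\exp(-T|\mu|_\infty)$ or $g(t-a,u)$ itself reduces to an earlier boundary term, so that iterating the characteristic formula once more expresses $b(t-a,u)g(t-a,u)$ in terms of $b(t-a, \cdot)g_0(\cdot)$ times a bounded-below exponential; the quantity $\inf_{u\in\mathcal U_{(t,a)}} b(t-a,t-a+u)g_0(u) \geq \delta$ appearing in \eqref{eq condit mino g L} is precisely what controls this after the change of variables $u \mapsto t-a+u$ along the characteristic emanating from time $0$. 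Multiplying by $|\mathcal U_{(t,a)}|>0$ (positive since $\mathcal U_{(t,a)}$ is open and nonempty) yields a strictly positive lower bound.

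Putting these together, one sets $\epsilon = \delta\, e^{-2T|\mu|_\infty} \min(1, |\mathcal U_{(t,a)}|)$ (or a similar explicit expression; the two factors of $e^{-T|\mu|_\infty}$ account for the possible double use of the characteristic formula on $\mathcal D_L$), and concludes $g(t,a)\geq\epsilon$ for every $(t,a)\in\mathcal D^-$. The main obstacle I anticipate is purely bookkeeping: tracking the precise change of variables in the characteristic representation on $\mathcal D_L$ so that the integrand $b(t-a,u)g(t-a,u)$ is correctly rewritten in the form involving $b(t-a,t-a+u)g_0(u)$ matching \eqref{eq condit mino g L}, and making sure the neighbourhood $\mathcal U_{(t,a)}$ on which this lower bound holds is the same one over which we integrate. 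No deep estimate is needed --- everything follows from the boundedness of $\mu$ and the explicit formula --- but the geometry of the characteristics through $(t,a)$ and back to the boundary $\{a=0\}$ and then to $\{t=0\}$ must be handled with care.
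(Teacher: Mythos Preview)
Your approach is correct and essentially identical to the paper's. The paper works with the representation \eqref{eq def weak}, writing $g(t,a) = B_{b,\mu,g_0}(t-a)\exp(-\int_{t-a}^t\mu\,ds)$ on $\mathcal D_L$ and then using $B_{b,\mu,g_0}(t-a) \geq M_{b,\mu,g_0}(t-a)$ (immediate from \eqref{eq integral equation} since $L_{b,\mu}\geq 0$ and $B_{b,\mu,g_0}\geq 0$), which is exactly your lower bound $g(t-a,0)\geq \int_0^\infty b(t-a,t-a+u)g_0(u)e^{-\int_0^{t-a}\mu(s,u+s)ds}du$ after restricting the renewal integral to ages $u'>t-a$ and substituting $u'=t-a+u$; your ``iterating the characteristic formula'' worry disappears once you note that this restriction lands the point $(t-a,t-a+u)$ in $\mathcal D_U$, so no recursion is needed. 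The sign issue you flag in \eqref{eq condit mino g U} is indeed a typo in the paper (it should read $g_0(a-t)\geq\delta$), and the paper's own proof uses it that way.
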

The proof uses an explicit representation of $g(t,a)$ established in Proposition \ref{P first smoothness} and is delayed until Appendix \ref{proof lemma assumption minoration g}.
\subsubsection{Completion of proof of Theorem \ref{thm oracle mu}}
Let $(h,\boldsymbol h) \in \mathcal G_1^N \times \mathcal G_2^N$ and set $\pi(t,a) = \mu(t,a)g(t,a)$.\\

\noindent {\bf Step 1)} We plan to use the following decomposition
\begin{align*}
\widehat \mu_{h,\boldsymbol h}^N(t,a)_\varpi - \mu(t,a) = I+II,
\end{align*}
with
$$I = \frac{\pi(t,a)\big(g(t,a)-\widehat g_h^N(t,a) \vee \varpi\big)}{g(t,a)\widehat g_h^N(t,a)\vee \varpi}  
$$
and
$$II = \frac{\big(\widehat \pi_{\boldsymbol h}^N(t,a)-\pi(t,a)\big)g(t,a)}{g(t,a)\widehat g_h^N(t,a)\vee \varpi}.
$$
First, we have
\begin{align*}
|I| & \leq (\epsilon \varpi)^{-1}|\mu|_\infty|g|_\infty|g(t,a)-\widehat g_h^N(t,a) \vee \varpi | \leq (\epsilon \varpi)^{-1}|\mu|_\infty|g|_\infty|g(t,a)-\widehat g_h^N(t,a)|
\end{align*}
thanks to Lemma \ref{lem min g} as soon as $\varpi \leq \epsilon \leq g(t,a)$.
In the same way, 
$$|II| \leq  (\epsilon \varpi)^{-1}|g|_\infty |\widehat \pi_{\boldsymbol h}^N(t,a)-\pi(t,a)|$$
follows. Picking $h = \widehat h^N(t,a)$, $\boldsymbol h = \widehat {\boldsymbol h}^N(t,a)$ and taking square and expectation, we have thus established
\begin{equation} \label{eq first maj mu}
\E\big[\big(\widehat \mu_\star^N(t,a)_{\varpi}- \mu(t,a)\big)^2\big] \lesssim \E\big[\big(\widehat g_{\widehat h^N(t,a)}^N(t,a)-g(t,a)\big)^2\big]+ \E\big[\big(\widehat \pi_{\widehat {\boldsymbol h}^N(t,a)}^N(t,a)-\pi(t,a)\big)^2\big]
\end{equation}
as soon as $\varpi \leq \epsilon$. By Theorem \ref{thm oracle g}, we already have the desired bound for the first term in the right-hand side of \eqref{eq first maj mu}.\\

\noindent {\bf Step 2)} We study the second term in the right-hand side of \eqref{eq first maj mu}. For any $\boldsymbol h \in \mathcal G_2^N$, repeating Step 1) of the proof of Theorem \ref{thm oracle g}, we have
$$\E\big[\big(\widehat \pi_{\boldsymbol h}^N(t,a)-\pi(t,a)\big)^2\big] \lesssim \E\big[\mathsf A_{\boldsymbol h}^N(t,a)\big]+\mathsf V_{\boldsymbol h}^N + \mathcal B_{\boldsymbol h}^N(\pi)(t,a)^2.$$
In order to estimate $\E\big[\mathsf A_{\boldsymbol h}^N(t,a)\big]$, we repeat Step 2) of the proof of Theorem \ref{thm oracle g} and obtain
\begin{align}
& \max_{\boldsymbol {h}' \leq \boldsymbol h}\big\{\big(\widehat \pi_{\boldsymbol h}^N(t,a)-\widehat \pi_{{\boldsymbol h}'}(t,a)\big)^2-\mathsf V_{\boldsymbol h}^N-\mathsf V_{{\boldsymbol h}'}^N\big\}_+ \label{eq controle sup idem} \\
\lesssim & \;\mathcal B_{\boldsymbol h}^N(\pi)(t,a)^2+  \big\{4\big(\widehat \pi_{\boldsymbol h}^N(t,a)-\pi_{h}(t,a)\big)^2-\mathsf V_{\boldsymbol h}^N\big\}_++\max_{{\boldsymbol h}' \leq {\boldsymbol h}}\big\{4\big(\widehat \pi_{{\boldsymbol h}'}^N(t,a)-\pi_{{\boldsymbol h}'}(t,a)\big)^2-\mathsf V_{{\boldsymbol h}'}^N\big\}_+. \nonumber 
\end{align}

\noindent {\bf Step 3)} We estimate the expectation of the first stochastic term in the right-hand side of the last inequality. 
Using the same trick as in \eqref{eq: sup omega}, we have by \eqref{eq ineq death mg} that
$$\big\{4\big(\widehat \pi_{\boldsymbol h}^N(t,a)-\pi_{h}(t,a)\big)^2-\mathsf V_{\boldsymbol h}^N\big\}_+ \lesssim I + II,$$
with
$$I = \big\{8\mathcal W_{H_{h_1}(\cdot-t), K_{h_2}(\cdot - (t-a))}^N(\mathcal F)_T^2-\tfrac{1}{2}\mathsf V_{\boldsymbol h}^N\big\}_+$$
and
$$II = \big\{ 8(\Delta^N_{H_{h_1}(\cdot-t), K_{h_2}(\cdot - (t-a))})_T^2-\tfrac{1}{2}\mathsf V_{\boldsymbol h}^N\big\}_+.$$
We bound each term separately. First, we have
\begin{align*}
 \E\big[I\big] &  = \int_0^\infty \PP\big(8\mathcal W_{H_{h_1}(\cdot-t), K_{h_2}(\cdot - (t-a))}^N(\mathcal F)_T^2-\tfrac{1}{2}\mathsf V_{\boldsymbol h}^N\geq \kappa\big)d\kappa \\  
  & = \int_0^\infty \PP\big(\mathcal W_{H_{h_1}(\cdot-t), K_{h_2}(\cdot - (t-a))}^N(\mathcal F)_T \geq \tfrac{1}{2\sqrt{2}}(\tfrac{1}{2}\mathsf V_{\boldsymbol h}^N+\kappa)^{1/2}\big)d\kappa  \lesssim N^{-3}
\end{align*}
applying Theorem \ref{thm concentration optimal} with $w_1 = H_{h_1}(\cdot-t)$ and $w_2 =  K_{h_2}(\cdot - (t-a))$ in the same way as Step 3) in the proof of Theorem \ref{thm oracle g}. As for $II$, we have
\begin{equation} \label{eq dec II}
 \E\big[II\big]    = \int_{\tfrac{1}{2}\mathsf V_{\boldsymbol h}^N}^\infty \PP\big(\big|\Delta^N_{H_{h_1}(\cdot-t), K_{h_2}(\cdot - (t-a))})_T\big| \geq  \tfrac{1}{2\sqrt{2}}\kappa^{1/2} \big)d\kappa 
 \end{equation}
and we plan to apply Lemma \ref{lem dev mg saut} with  $w_1 = H_{h_1}(\cdot-t)$ and $w_2 =  K_{h_2}(\cdot - (t-a))$. Setting $u = \tfrac{1}{8}\kappa$, the condition of Lemma 
\ref{lem dev mg saut} is fulfilled as soon as $\kappa > 8 \cdot 2^{-6} V_{H_{h_1}, K_{h_2}}^N (\log N)^{-2} =  \tfrac{1}{8} V_{H_{h_1}, K_{h_2}}^N (\log N)^{-2}$ which is the case here since the integral in \eqref{eq dec II} above is taken for $\kappa \geq \tfrac{1}{2}\mathsf V_{\boldsymbol h}^N = \tfrac{1}{2} V_{H_{h_1}, K_{h_2}}^N$. It follows that 
$$\E[II] \leq III + IV,$$
with
$$III = 2\int_{\tfrac{1}{2}\mathsf V_{\boldsymbol h}^N}^\infty \exp\Big(-\frac{\kappa^{1/2}}{4\sqrt{2}(\chi_{H_{h_1}, K_{h_2}}^N+\xi^N_{H_{h_1}, K_{h_2}})}\Big) d\kappa$$
and
$$IV =  2\int_{\tfrac{1}{2}\mathsf V_{\boldsymbol h}^N}^\infty \PP\big( N^{-1}|\mu|_\infty\mathcal W_{(H_{h_1})^2,(K_{h_2})^2}(\mathcal F)_T \geq \tfrac{1}{4\sqrt{2}}\xi_{H_{h_1},K_{h_2}}^N\kappa^{1/2}\big)d\kappa.$$
First, we write
$$III = 4(\chi_{H_{h_1}, K_{h_2}}^N+\xi^N_{H_{h_1}, K_{h_2}})^2\int_{v_N}^\infty \kappa e^{-\tfrac{\kappa}{4\sqrt{2}}}d\kappa$$
with
$$v_N = \tfrac{\sqrt{2}}{2} (\mathsf V_{\boldsymbol h}^N)^{1/2}(\chi_{H_{h_1}, K_{h_2}}^N+\xi^N_{H_{h_1}, K_{h_2}})^{-1}.$$
Note that 
$$(V_{H_{h_1},K_{h_2}}^N)^{1/2} =  h_1^{-1/2}h_2^{-1/2}N^{-1/2} (\log N) 4C^\star |H|_{1,\infty}|K|_{1,\infty}.$$

It follows that
\begin{align*}
& \chi_{H_{h_1}, K_{h_2}}^N+\xi^N_{H_{h_1}, K_{h_2}} \\
 = & \; N^{-1}|H_{h_1}|_\infty |K_{h_2}|_\infty|\mu|_\infty + 16 N^{-1}|\mu|_\infty|g|_\infty |H_{h_1}|^2_2|K_{h_2}|_2^2 (V_{|H_{h_1}|,|K_{h_2}|}^N)^{-1/2}(\log N)\\
 = & \; N^{-1}h_1^{-1}h_2^{-1}|\mu|_\infty |H|_\infty |K|_\infty +N^{-1/2}h_1^{-1/2}h_2^{-1/2}
 4C^\star |\mu|_\infty|g|_\infty \tfrac{|H|^2_2|K|_2^2}{|H|_{1,\infty}|K|_{1,\infty}}.
\end{align*}
By definition of $\mathcal G_2^N$ we have $h_i \geq N^{-1/2}$ hence 
$$(\chi_{H_{h_1}, K_{h_2}}^N+\xi^N_{H_{h_1}, K_{h_2}} )^2 \leq  \big(|\mu|_\infty |H|_\infty |K|_\infty +4C^\star |\mu|_\infty \tfrac{|H|^2_2|K|_2^2}{|H|_{1,\infty}|K|_{1,\infty}}\big)^2$$
follows and the term in front of the integral in $III$ is bounded. Moreover, 
\begin{align*}
v_N & = \tfrac{\sqrt{2}}{2} \frac{ (\log N) 4C^\star |H|_{1,\infty}|K|_{1,\infty}}{N^{-1/2}h_1^{-1/2}h_2^{-1/2}|\mu|_\infty |H|_\infty |K|_\infty +4C^\star|\mu|_\infty|g|_\infty \tfrac{|H|^2_2|K|_2^2}{|H|_{1,\infty}|K|_{1,\infty}}} \\
& \geq \tfrac{\sqrt{2}}{2} \frac{C^\star |H|_{1,\infty}|K|_{1,\infty}}{|\mu|_\infty ( |H|_\infty |K|_\infty+ 4C^\star |g|_\infty \tfrac{|H|^2_2|K|_2^2}{|H|_{1,\infty}|K|_{1,\infty}})} (\log N)  = C^{(3)}\log N
\end{align*}
say.
Since 
$$\int_{v_N}^\infty \kappa e^{-\tfrac{\kappa}{4\sqrt{2}}}d\kappa \lesssim v_Ne^{-\tfrac{1}{4\sqrt{2}}v_N} \lesssim (\log N) N^{-C^{(3)}}$$
it suffices to check that $C^{(3)} > 2$ in order to have that $III$ is smaller in order than $N^{-2}$ and thus asymptotically negligible. 
We finally bound the term $IV$. Applying Theorem \ref{thm concentration optimal} with $(w_1,w_2) = (H_{h_1}^2, K_{h_2}^2)$, by the change of variable
$$\tfrac{N}{4\sqrt{2}|\mu|_\infty}\xi_{H_{h_1}, K_{h_2}}^N\kappa^{1/2}=(1+u)C''|H_{h_1}^2|_{1,\infty}|K_{h_2}^2|_{1,\infty}N^{-1/2}$$ we obtain that $IV$ is of order
$$y_N\int_{z_N}^\infty (1+u)\min\big((\exp(u)-1)^{-1}, 1\big)du,$$
with
\begin{align*}
y_N   = \big((\xi_{H_{h_1},K_{h_2}}^N)^{-1}|H_{h_1}^2|_{1,\infty}|K_{h_2}^2|_{1,\infty}N^{-3/2}\big)^2\;\;\text{and}\;\;
z_N  = \tfrac{N^{3/2}\xi_{H_{h_1},K_{h_2}}^N (\mathsf V_{\boldsymbol h}^N)^{1/2}}{8C''|H_{h_1}^2|_{1,\infty}|K_{h_2}^2|_{1,\infty}}-1.
\end{align*}
Straightforward computations show that $y_N \lesssim h_1^{-2}h_2^{-2}N^{-2} \lesssim 1$ by construction of $\mathcal G_2^N$. Finally
$$z_N =  \frac{8|g|_\infty|H|_2|K|_2}{C''|H|_\infty|K|_\infty}\frac{\log N}{h_1^{1/2}h_2^{1/2}N^{-1/2}}-1 \geq C^{(4)}\log N-1.$$
say. One can check that $C^{(4)} > 2$ and we can therefore conclude that $IV$ also has a negligible order.\\ 

\noindent {\bf Step 4)} The control of the second term in the right-hand side of \eqref{eq controle sup idem} is done in the same way as in Step 4) 
of the proof of Theorem \ref{thm oracle g} and only inflates the previous bound by a factor or order $\mathrm{Card}(\mathcal G_2^N) \lesssim N^2$.
In turn $\E\big[\mathsf A_{\boldsymbol h}^N(t,a)\big] \lesssim N^{-1}$ and we have established by Step 2) that for any $\boldsymbol h \in \mathcal G_2^N,$
\begin{equation} \label{eq oracle gamma}
\E\big[\big(\widehat \pi_{\boldsymbol h}^N(t,a)-\pi(t,a)\big)^2\big] \lesssim \mathcal B_{\boldsymbol h}^N(\pi)(t,a)^2+\mathsf V_{\boldsymbol h}^N + \delta_N
\end{equation}
holds true with $\delta_N \lesssim N^{-1}$. Putting together Step 1) and Theorem \ref{thm oracle g} completes the proof.

\subsection{Proof of Theorem \ref{thm LB}}

\subsubsection*{Preliminaries}

We let $(Z_t)_{0 \leq t \leq T}$ denote the canonical process on\footnote{remember that ${\mathcal M_F}_+$ denotes the set of positive finite measures on $\R_+$} $\mathbb D([0,T], {\mathcal M_F}_+)$  endowed with the weak topology and equipped with its Borel sigma-field. If $\Upsilon$ is a probability measure on ${\mathcal M_F}_+$ and if $b,\mu \in \mathcal L_{\mathcal D}^\infty $, we write  $\PP_{b,\mu,\Upsilon}^N$ for the (necessarily unique) probability measure on $\mathbb D([0,T], {\mathcal M_F}_+)$ under which $(Z_t)_{0 \leq t \leq T}$ is a weak solution to \eqref{eq micro} with $\mathcal L(Z_0) = \Upsilon$.
 
 \begin{prop} \label{prop girsanov}
 For $i=1,2$, let $b_i,\mu_i \in \mathcal L_{\mathcal D}^\infty $ such that $\mathrm{supp}(b_2) \subset \mathrm{supp}(b_1)$ and $\mathrm{supp}(\mu_2) \subset \mathrm{supp}(\mu_1)$. 
 For any initial condition $\mathcal L(Z_0) = \Upsilon$, we have
$$
 \|\PP_{b_1,\mu_1,\Upsilon}^N-\PP_{b_2,\mu_2,\Upsilon}^N\|_{TV} \lesssim N^{1/2}\big(\big|b_1^{-1}b_2-1\big|_2+\big|\mu_1^{-1}\mu_2-1\big|_2\big),$$
where $\|\cdot\|_{TV}$ denotes total variation distance,  up to an explicitly computable constant that only depends on $\mu_1$ and $b_1$.
 \end{prop}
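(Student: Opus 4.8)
The plan is to control the total variation distance between the laws $\PP_{b_1,\mu_1,\Upsilon}^N$ and $\PP_{b_2,\mu_2,\Upsilon}^N$ by the $\mathbb L^2$-distance (relative entropy or Hellinger) using a Girsanov change of measure for the driving Poisson random measures $\mathcal Q_1,\mathcal Q_2$. Since under $\PP_{b_i,\mu_i,\Upsilon}^N$ the process $(Z_t)$ solves \eqref{eq micro} with the same initial law $\Upsilon$ but intensities built from $b_i,\mu_i$, the two laws are mutually absolutely continuous on $\mathbb D([0,T],{\mathcal M_F}_+)$ provided the support conditions $\mathrm{supp}(b_2)\subset\mathrm{supp}(b_1)$, $\mathrm{supp}(\mu_2)\subset\mathrm{supp}(\mu_1)$ hold (so we never need to create mass where the reference intensity vanishes). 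I would first write down the explicit Radon--Nikodym derivative: it is the exponential martingale associated with the thinned Poisson measures, of the form
\begin{align*}
\frac{d\PP_{b_2,\mu_2,\Upsilon}^N}{d\PP_{b_1,\mu_1,\Upsilon}^N}\Big|_{\mathcal F_T}
= \exp\Big(&\int_0^T\!\!\int \log\tfrac{b_2}{b_1}(s,a_i(Z_{s^-}))\,\mathbf 1_{\{\cdots\}}\,\mathcal Q_1(ds,di,d\vartheta)
 + \text{(same with }\mu,\mathcal Q_2)\\
& - N\int_0^T\!\!\int_{\R_+}\big((b_2-b_1)(s,a)+(\mu_2-\mu_1)(s,a)\big)Z_s^N(da)\,ds\Big),
\end{align*}
where the indicator ranges over $0\le\vartheta\le b_1(s,a_i(Z_{s^-}))$, $i\le\langle NZ_{s^-}^N,\mathbf 1\rangle$ in the first term and analogously for the second; this is the standard Girsanov formula for Poisson-driven SDEs, see Tran \cite{TRAN1}.

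Next I would bound $\|\PP_{b_1,\mu_1,\Upsilon}^N-\PP_{b_2,\mu_2,\Upsilon}^N\|_{TV}$ via Pinsker's inequality, $\|\PP-\QQ\|_{TV}^2\le \tfrac12 \mathrm{KL}(\PP\,\|\,\QQ)$, or more conveniently via the squared Hellinger distance, which is better adapted to Poisson intensities: for two point processes with predictable intensities $\nu_1,\nu_2$ one has
$$\mathrm H^2(\PP_{b_1,\mu_1}^N,\PP_{b_2,\mu_2}^N)\le \tfrac12\,\E_{?}\Big[\int_0^T\!\!\int \big(\sqrt{\nu_2}-\sqrt{\nu_1}\big)^2\Big].$$
Computing the compensator of the thinned measure $\mathcal Q_1$ restricted to $\{0\le\vartheta\le b_1(s,a_i(Z_{s^-}))\}$ gives intensity $N b_1(s,a_i(Z_{s^-}^N))\,ds$ per particle, so after summing over the (at most $\langle NZ_{s^-}^N,\mathbf 1\rangle$) particles the Hellinger integrand is of order $N\int_0^T\int_{\R_+}\big(\sqrt{b_2}-\sqrt{b_1}\big)^2(s,a)\,Z_s^N(da)\,ds$ plus the analogous $\mu$-term. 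Using $(\sqrt{b_2}-\sqrt{b_1})^2 \le b_1\,(b_1^{-1/2}b_2^{1/2}-1)^2 \lesssim b_1\,(b_1^{-1}b_2-1)^2$ (valid since $b_1$ is bounded below on its support, the constant depending on $b_1$ only), and the uniform mass bound $\sup_{0\le t\le T}\langle Z_t^N,\mathbf 1\rangle \lesssim 1$ coming from Assumption \ref{H basic}(ii) together with the estimate \eqref{eq: def A kappa}–\eqref{eq: controle events} on the event $\mathcal A_\kappa^N$ (or simply Lemma \ref{estimate g} for the limiting mass and an $\mathbb L^1$ bound in expectation), one gets
$$\mathrm H^2 \lesssim N\big(|b_1^{-1}b_2-1|_2^2+|\mu_1^{-1}\mu_2-1|_2^2\big),$$
and then $\|\PP_{b_1,\mu_1,\Upsilon}^N-\PP_{b_2,\mu_2,\Upsilon}^N\|_{TV}\le \sqrt{2}\,\mathrm H \lesssim N^{1/2}\big(|b_1^{-1}b_2-1|_2+|\mu_1^{-1}b_2-1|_2\big)$, which is the claim (the cross terms in $\mathrm H^2$ are absorbed since $|x|_2^2+|y|_2^2\le(|x|_2+|y|_2)^2$).

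The main obstacle I anticipate is making the Girsanov/Hellinger computation rigorous despite the fact that the total mass $\langle NZ_t^N,\mathbf 1\rangle$ is itself random and unbounded a priori: one must localize (e.g. on the events $\mathcal A_\kappa^N$ of \eqref{eq: def A kappa}), verify that the exponential local martingale defining the density is a true martingale — this is precisely the type of localization argument already invoked for Lemma \ref{lem: martingale exp} and in Step 1 of Proposition \ref{prop concentration}, citing Tran \cite{TRAN1} — and then control the contribution of the complement of $\mathcal A_\kappa^N$, which is exponentially small by \eqref{eq: controle events}. A secondary technical point is the bound $(\sqrt{b_2}-\sqrt{b_1})^2\lesssim b_1(b_1^{-1}b_2-1)^2$ and its $\mu$-analogue: these require $b_1,\mu_1$ bounded away from $0$ on their supports, which is why the constant in the statement is allowed to depend on $b_1$ and $\mu_1$; one should also note the support conditions guarantee $b_1^{-1}b_2$ and $\mu_1^{-1}\mu_2$ are well defined wherever they appear. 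Once these localization issues are dispatched, the remaining steps are the routine Hellinger/Pinsker estimates sketched above.
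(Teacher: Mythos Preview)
Your approach is correct and structurally the same as the paper's: write the Radon--Nikodym density via Girsanov for Poisson-driven SDEs, then bound total variation by an information distance. The paper takes the Pinsker/KL route rather than Hellinger: it computes
\[
-\E_{\PP_{b_1,\mu_1,\Upsilon}^N}\Big[\log\frac{d\PP_{b_2,\mu_2,\Upsilon}^N}{d\PP_{b_1,\mu_1,\Upsilon}^N}\Big]
= \tfrac{N}{2}\,\E\Big[\int_0^T\!\!\int_{\R_+}\big(b_1\varphi(b_1^{-1}b_2-1)+\mu_1\varphi(\mu_1^{-1}\mu_2-1)\big)(s,a)\,Z_s^N(da)\,ds\Big],
\]
with $\varphi(x)=x-\log(1+x)\le x^2$, and concludes directly. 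Your Hellinger variant reaches the same endpoint with essentially the same arithmetic.

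One small correction: you do \emph{not} need $b_1$ bounded below on its support for the step $(\sqrt{b_2}-\sqrt{b_1})^2 \le b_1(b_1^{-1}b_2-1)^2$. Writing $x=b_1^{-1}b_2-1$, this is just $(\sqrt{1+x}-1)^2\le x^2$, which holds for all $x>-1$ since $|\sqrt{1+x}-1|=|x|/(\sqrt{1+x}+1)\le |x|$. So that worry (and the associated dependence of the constant on a lower bound for $b_1,\mu_1$) can be dropped; the constant comes only from $|b_1|_\infty,|\mu_1|_\infty$ and the mass bound on $Z^N$. As for localization and the true-martingale property of the exponential density, the paper simply cites the standard references (Jacod--Shiryaev, L\"ocherbach) and does not elaborate; your instinct to flag it is sound but no new idea is required there.
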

\begin{proof}
The proof is classical, and we only sketch it.  Thanks to the Dol\'eans-Dade exponential for semimartingales (see {\it e.g.} \cite{JACODSHIRYAEV}
or L\" ocherbach \cite{LOCHERBACH1,LOCHERBACH2} in the context of birth and death processes) and abbreviating $f\big(s,a_i(Z_s^-)\big)$ by $f^{i}(s)$, we have
\begin{align*} 
&\frac{d\PP_{b_2,\mu_2,\Upsilon}^N}{d\PP_{b_1,\mu_1,\Upsilon}^N} =  
N^{-1}\int_0^T  \int_{\R_+} \Big( b_2(s,a)-b_1(s_a)+\mu_2(s,a)-\mu_1(s,a)\Big) Z_s^N(da)ds \nonumber \\
&+ \int_0^T  \int_{\mathbb N\setminus \{0\} \times \mathbb R_+}  {\bf 1}_{\{i \leq \langle NZ_s^N,{\bf 1}\rangle\}}\Big({\bf 1}_{\{0 \leq \vartheta \leq b_1^i(s)\}}\log \frac{b_2^i(s)}{b_1^i(s)}+{\bf 1}_{\{b_1^i(s) \leq \vartheta \leq \mu_1^i(s)\}}\log \frac{\mu_2^i(s)}{\mu_1^i(s)} \Big)\mathcal Q_1(ds,di,d\vartheta),
\end{align*}
where $\mathcal Q_1$ is a Poisson random measures on $\R_+\times \mathbb N \setminus \{0\} \times \R_+$ with intensity $ds \big(\sum_{k \geq 1}\delta_k(di)\big) d\vartheta$ under $\PP_{b_1,\mu_1,\Upsilon}^N$. By Pinsker's inequality, it follows that 
\begin{align*}
& \|\PP_{b_1,\mu_1,\Upsilon}^N-\PP_{b_2,\mu_2,\Upsilon}^N\|_{TV}^2 \\
 \leq &\; \tfrac{1}{2}\E_{\PP_{b_1,\mu_1,\Upsilon}^N}\Big[\log \frac{d\PP_{b_1,\mu_1,\Upsilon}^N}{d\PP_{b_2,\mu_2,\Upsilon}^N}\Big] \\
 = &\; -\tfrac{1}{2}\E_{\PP_{b_1,\mu_1,\Upsilon}^N}\Big[\log \frac{d\PP_{b_2,\mu_2,\Upsilon}^N}{d\PP_{b_1,\mu_1,\Upsilon}^N}\Big] \\
 = &\;   \tfrac{N}{2} \E_{\PP_{b_1,\mu_1,\Upsilon}^N}\Big[ \int_0^T \int_{\R_+} \big(b_2-b_1+\mu_2-\mu_1-b_1\log \frac{b_2}{b_1}-\mu_1\log \frac{\mu_2}{\mu_1}\big)(s,a)Z_s^N(da)\Big] \\
  = & \;  \tfrac{N}{2} \E_{\PP_{b_1,\mu_1,\Upsilon}^N}\Big[ \int_0^T \int_{\R_+} \big(b_1\varphi(b_1^{-1}b_2-1)+\mu_1\varphi(\mu_1^{-1}\mu_2-1)(s,a)Z_s^N(da)\Big],
\end{align*}
with $\varphi(x) = x- \log(1+x) \leq x^2$ for $x \geq 0$. Therefore
\begin{align*}
\|\PP_{b_1,\mu_1,\Upsilon}^N-\PP_{b_2,\mu_2,\Upsilon}^N\|_{TV}^2 & \lesssim  \tfrac{N}{2}  \E_{\PP_{b_1,\mu_1,\Upsilon}^N}\Big[ \int_0^T \int_{\R_+} \big((b_1^{-1}b_2-1)^2+(\mu_1^{-1}\mu_2-1)^2\big)(s,a)Z_s^N(da)\Big] \\
& \lesssim N(|b_1^{-1}b_2-1|_2^2+|\mu_1^{-1}\mu_2-1|_2^2)
\end{align*}
and Proposition \ref{prop girsanov} is proved. 
\end{proof}

\subsubsection*{Representation of $g$ in terms of $(g_0,b,\mu)$}
We need some notation. Let
$$L_{b,\mu}(t,a) = b(t,a)\exp\big(-\int_{t-a}^t \mu(s,s-t+a)ds\big)\;\;\text{for}\;\;(t,a) \in \mathcal D_L,$$
$$M_{b,\mu,g_0}(t) = \int_0^\infty b(t,t+u)g_0(u)\exp\big(-\int_0^t \mu(s,u+s)ds\big)du,\;\;\text{for}\;\;t \in [0,T],$$
and define $B_{b,\mu,g_0}:[0,T] \rightarrow \R_+$ as the solution to the integral equation
\begin{equation} \label{eq integral equation}
B_{b,\mu,g_0}(t)  = M_{b,\mu,g_0}(t)+ \int_0^t B_{b,\mu,g_0}(a)L_{b,\mu}(t,t-a)da\;\;\text{for every}\;\;t\in [0,T].
\end{equation}
Note that Assumptions \ref{H basic} and \ref{H smoothness basic} ensure the existence and uniqueness of \eqref{eq integral equation}.
Define next 
\begin{equation} 
\label{eq def weak}
g(t,a) = 
\left\{
\begin{array}{lll}
g_0(a-t)\exp\big(-\int_0^t \mu(s,a-t+s)ds\big) & \text{on} & \mathcal D_U \\ \\
B_{b,\mu,g_0}(t-a)\exp\big(-\int_{t-a}^t \mu(s,a+s-t)ds\big) & \text{on} & \mathcal D_L 
\end{array}
\right.
\end{equation}
and set for instance $g(t,a) = 0$ on $\{a=t\}$. One can check that $g$ defined in \eqref{eq def weak} is a weak solution to the McKendricks Von Voester equation \eqref{McKendrick}.

\subsubsection*{Completion of proof of Theorem \ref{thm LB}}

We follow a classical two-point lower bound argument using Le Cam's lemma: if $\mathbb P_i$, $i=1,2$ are two probability measures defined on the same probability space and $\Psi(\PP_i) \in \R$ is a functional of $\mathbb P_i$, we have
\begin{equation} \label{eq LeCam}
\inf_{F}\max_{i = 1,2}\E_{{\mathbb P}_i}\big[|F-\Psi(\PP_i)|\big] \geq \tfrac{1}{2}|\Psi(\PP_1)-\Psi(\PP_2)|(1-\|\mathbb P_1-\mathbb P_2\|_{TV}),
\end{equation}
where the infimum is taken over all estimators of $\Psi(\PP_i)$, see {\it e.g.} \cite{LECAM} among many other references.\\

\noindent {\bf Step 1)} To prove \eqref{eq LB g}, we pick
$$g_0 \in \mathcal H_L^\nu(a),\;\;b_0 \in \mathcal H^{\alpha,\beta}_L(t,a),\;\;\mu_1 \in \mathcal H^{\gamma,\delta}_L(t,a) \cap \mathcal L_{\mathcal D, \epsilon}^\infty$$ 
arbitrarily, together with a sequence $\Upsilon^N$ such that $N\langle Z_0^N, {\bf 1}\rangle \lesssim 1$ almost-surely under $\Upsilon^N$ and $\Upsilon^N(da) \rightarrow g_0(da)$ weakly as $N \rightarrow \infty$. Next, define 
$$\mu_2^N(s,u) = \mu_1(s,u)\big(1+\psi_{t-a}^N(s,u)\big),$$ where
$$\psi_{t-a}^N(s,u) = cN^{-1/2}\tau_N^{1/2}\psi\big(\tau_N(s-u-(t-a))\big),$$
with
$\tau_N = N^{1/(2s_{\mathrm{death}}^-+1)} = N^{1/(2\max(\gamma,\delta)+1)}$
and an infinitely many times differentiable nonnegative function $\psi$ with compact support that satisfies $\psi(0)=1$, $|\psi|_2^2=1$. Finally, pick $c>0$ small enough 
so that the property
$$\mu_2^N\in \mathcal H^{\gamma,\delta}_L(t,a) \cap \mathcal L_{\mathcal D, \epsilon}^\infty$$
holds, uniformly in $N$. This is possible since 
$$|\psi_{t-a}^N(\cdot,t-a)|_{{\mathcal H}^{\gamma}(t)} = cN^{-1/2}\tau_N^{1/2+\gamma}|\psi|_{\mathcal H^\gamma(t)} \leq c|\psi|_{\mathcal H^\gamma(t)} \leq c|\psi|_{\mathcal H^\gamma(t,a)}$$
and 
$$|\psi_{t-a}^N(t,\cdot)|_{{\mathcal H}^{\delta}(t-a)} = cN^{-1/2}\tau_N^{1/2+\delta}|\psi|_{\mathcal H^\delta(t-a)}\leq c|\psi|_{\mathcal H^\delta(t-a)} \leq c|\psi|_{\mathcal H^\delta(a)}.$$
By Proposition \ref{prop girsanov}, we have
\begin{equation} \label{eq OK girsanov mu first}
 \|\PP_{b_0,\mu_1,\Upsilon^N}-\PP_{b_0,\mu_2^N,\Upsilon^N}\|_{TV} \lesssim N^{1/2}\big|\mu_1^{-1}\mu_2^N-1\big|_2 
  = N^{1/2}|\psi_{t-a}^N|_2 = c^{1/2} |\psi|_2^2 \leq \tfrac{1}{2}
\end{equation}
say, for large enough $N$ and sufficiently small $c$.\\

\noindent {\bf Step 2)} Let $(t,a) \in \mathcal D_U$. We let 
$$\Psi(\PP_{b,\mu,\Upsilon}^N) = g(t,a) = \widetilde g(t',a') = g_0(-a')\exp\big(-\int_0^{t'} \mu(s,s-a')ds\big)$$
by \eqref{eq def weak} above, with $(t',a')=(t,t-a)=\varphi(t,a)$.
It follows that
\begin{align}
& \big|\Psi(\PP_{b_0,\mu_2^N,\Upsilon}^N)-\Psi(\PP_{b_0,\mu_1,\Upsilon}^N)\big| \nonumber \\
= &\; g_0(-a')\exp\big(-\int_0^{t'}\mu_1(s,s-a')ds\big)\big|\exp\big(-\int_0^{t'}\psi_{a'}^N(s,s-a')ds\big)-1\big|  \nonumber\\
\geq & \; g_0(-a')\exp\big(-(|\mu_1|_\infty+|\psi_{a'}^N|_\infty) t'\big)\int_0^{t'}\psi_{a'}^N(s,s-a')ds  \nonumber \\
\geq & \; \tfrac{1}{2}g_0(-a')\exp(-|\mu_1|_\infty t')cN^{-1/2}\tau_N^{1/2}\psi(0)t' \nonumber\\
\gtrsim & \;  N^{-s_{\mathrm{dens}}^-/(2s_{\mathrm{dens}}^-+1)}  \label{eq minoration risque diff}
\end{align}
using $|e^{-x}-1| \geq xe^{-x}$ for $x \geq 0$ and the fact that $e^{-t'|\psi_{a'}^N|_\infty} \geq \tfrac{1}{2}$ say, for sufficiently large $N$.\\

\noindent {\bf Step 3)} Let $(t,a) \in \mathcal D_L$. We now have
$$\Psi(\PP_{b,\mu,\Upsilon}^N) = g(t,a) = \widetilde g(t',a') = B_{b,\mu,g_0}(a')\exp\big(-\int_{a'}^{t'} \mu(s,s-a')ds\big),$$
by \eqref{eq def weak} and where $B_{b,\mu,g_0}$ is defined in \eqref{eq integral equation}. It follows that
\begin{align*}
&\; \big|\Psi(\PP_{b_0,\mu_2^N,\Upsilon}^N)-\Psi(\PP_{b_0,\mu_1,\Upsilon}^N)\big| \nonumber \\
= &\; \big|B_{b_0,\mu_2^N,g_0}(a')\exp\big(-\int_{a'}^{t'} \mu_2^N(s,s-a')ds\big)-B_{b_0,\mu_1,g_0}(a')\exp\big(-\int_{a'}^{t'} \mu_1(s,s-a')ds\big)\big| \\
\geq &\; \big|I\big|-\big|II\big|,
\end{align*}
with
$$I = B_{b_0,\mu_1,g_0}(a')\big(\exp\big(-\int_{a'}^{t'} \mu_2^N(s,s-a')ds\big)-\exp\big(-\int_{a'}^{t'} \mu_1(s,s-a')ds\big)$$
and
$$II = \big(B_{b_0,\mu_2^N,g_0}(a')-B_{b_0,\mu_1,g_0}(a')\big)\exp\big(-\int_{a'}^{t'} \mu_2^N(s,s-a')ds\big).$$
To bound $I$ from below, we proceed as in Step 2). For simplicity, we assume moreover here that $b_0(t,a) = b_0$ is constant. We have $B_{b_0,\mu_1,g_0}(a') \geq M_{b_0,\mu_1,g_0}(t) \geq b_0|g_0|_1e^{-|\mu_1|_\infty a'}$ and in the same way as for \eqref{eq minoration risque diff} one can check that
\begin{align*}
& \; \big|\exp\big(-\int_{a'}^{t'} \mu_2^N(s,s-a')ds\big)-\exp\big(-\int_{a'}^{t'} \mu_1(s,s-a')ds\big| \\
 \geq & \;\tfrac{1}{2}e^{-|\mu_1|_\infty(t'-a')}(t'-a')N^{-s_{\mathrm{dens}}^-/(2s_{\mathrm{dens}}^-+1)} 
 \end{align*}
for large enough $N$ hence
\begin{equation} \label{eq mino I}
\big|I\big| \geq \tfrac{1}{2}b_0|g_0|_1e^{-|\mu_1|_\infty t'}(t'-a')cN^{-s_{\mathrm{dens}}^-/(2s_{\mathrm{dens}}^-+1)}. 
\end{equation}
In order to bound $II$ from above, we use the following technical facts that are checked in the same way as before: for every $(t,a) \in \mathcal D_L$, we have

\begin{align} 
\big|M_{b_0,\mu_2^N,g_0}(a')-M_{b_0,\mu_1,g_0}(a')\big| & \leq b_0|\mu_1|_\infty T\int_0^\infty g_0(u)\psi_{(t,a)}^N(-u)du \nonumber\\
& \leq   b_0 |\mu_1|_\infty T|g_0|_\infty |\psi_{t-a}^N|_1\nonumber \\
& =   b_0|\mu_1|_\infty T|g_0|_\infty cN^{-1/2}\tau_N^{-1/2}|\psi|_1 \ll N^{-s_{\mathrm{dens}}^-/(2s_{\mathrm{dens}}^-+1)}
\label{eq controle M}
\end{align}
and
\begin{equation} \label{eq controle L}
\big|L_{b_0,\mu_2^N,g_0}(a',a'-a)-L_{b_0,\mu_1,g_0}(a',a'-a)\big| \leq b_0 |\mu_1|_\infty cN^{-1/2}\tau_N^{1/2}\psi\big(\tau_N(u-a')\big)(a'-u)
\end{equation}
and since 
$B_{b_0,\mu,g_0}(t) \leq b_0 |g_0|_1+b_0 \int_0^t B_{b,\mu,g_0}(s)ds$
for every $t \in [0,T]$, we infer
\begin{equation} \label{eq control B basic}
B_{b_0\mu,g_0}(t) \leq b_0 |g_0|_1e^{b_0 T}
\end{equation}
by Gr\"onwall lemma. It follows that
\begin{align*}
& B_{b_0,\mu_2^N,g_0}(a')-B_{b_0,\mu_1,g_0}(a')  = M_{b_0,\mu_2^N,g_0}(a')-M_{b_0,\mu_1,g_0}(a') \\
&+\int_0^{a'}B_{b_0,\mu_1,g_0}(a)  \big(L_{b_0,\mu_2^N,g_0}(a', a'-a)-L_{b_0,\mu_1,g_0}(a', a'-a)\big)da\\
& + \int_0^{a'}L_{b_0,\mu_2^N,g_0}(a',a'-a) \big(B_{b_0,\mu_2^N,g_0}(a)-B_{b_0,\mu_1,g_0}(a)\big)da. 
\end{align*}
Taking absolute values and using \eqref{eq controle M}, \eqref{eq controle L} and \eqref{eq control B basic}, we derive
\begin{align*}
\big|B_{b_0,\mu_2^N,g_0}(a')-B_{b_0,\mu_1,g_0}(a') \big| 
& \leq   b_0 |\mu_1|_\infty T|g_0|_\infty cN^{-1/2}\tau_N^{-1/2}|\psi|_1 \\
&+b_0^2 |g_0|_1e^{b_0 T}|\mu_1|_\infty  cN^{-1/2}\tau_N^{1/2}\int_0^{a'}\psi\big(\tau_N(u-a')\big)(a'-u)du \\
& + b_0\int_0^{a'}\big|B_{b_0,\mu_2^N,g_0}(s)-B_{b_0,\mu_1,g_0}(s)\big|ds. 
\end{align*}
Using $\tau_N^{1/2}\int_0^{a'}\psi\big(\tau_N(u-a')\big)(a'-u)du \leq \tau_N^{-1/2}T|\psi|_1$,  we derive
$$
\big|B_{b_0,\mu_2^N,g_0}(a')-B_{b_0,\mu_1,g_0}(a') \big|  
\leq   b_0 |\mu_1|_\infty T|g_0|_\infty cN^{-1/2}\tau_N^{-1/2}|\psi|_1(1+b_0e^{b_0 T})e^{b_0a'} 
$$
 by Gr\"onwall lemma again. We conclude 
\begin{equation} \label{eq controle mino II}
\big|II\big| 
\ll N^{-s_{\mathrm{dens}}^-/(2s_{\mathrm{dens}}^-+1)}. 
\end{equation}
Comparing \eqref{eq mino I} and \eqref{eq controle mino II}, we see that 
\begin{equation} \label{eq controle inf D lower}
\big|\Psi(\PP_{b_0,\mu_2^N,\Upsilon}^N)-\Psi(\PP_{b_0,\mu_1,\Upsilon}^N)\big|  \gtrsim N^{-s_{\mathrm{dens}}^-/(2s_{\mathrm{dens}}^-+1)}. 
\end{equation}

\noindent {\bf Step 4)}. Combining  \eqref{eq minoration risque diff} or \eqref{eq controle inf D lower} with \eqref{eq LeCam} and  \eqref{eq OK girsanov mu first}, we successively obtain
\begin{align*}
\sup_{b,\mu,g_0}\E_{\PP_{b,\mu,\Upsilon^N}^N}\big[|F-g(t,a)|\big] & \geq \tfrac{1}{2} \max_{i = 1,2}\E_{\PP_{b_0,\mu_i,\Upsilon^N}^N}\big[|F-\Psi(\PP_{b_0,\mu_i,\Upsilon^N}^N)| \\
& \geq \tfrac{1}{4}|\Psi(\PP_{b_0,\mu_1,\Upsilon^N}^N)-\Psi(\PP_{b_0,\mu_2^N,\Upsilon^N}^N)|(1-\|\PP_{b_0,\mu_1,\Upsilon^N}^N-\PP_{b_0,\mu_2^N,\Upsilon^N}^N\|_{TV}) \\
&  \gtrsim N^{-s_{\mathrm{death}}^-/(2s_{\mathrm{death}}^-+1)}
\end{align*}
and \eqref{eq LB g} follows.\\

\noindent {\bf Step 5)} To prove \eqref{eq LB mu}, we proceed as in Step 1), considering now the perturbation
$$\mu_2^N(s,u) = \mu_1(s,u)\big(1+\psi_{t,a}^N(s,u)\big),$$
with
$$\psi_{t,a}^N(u) = cN^{-1/2}\tau_N^{1/2}\psi\big(\tau_N(s-t)\big)(\widetilde \tau_N)^{1/2}\psi\big(\widetilde \tau_N(u-a)\big)$$
and $\tau_N^\delta = (\widetilde \tau_N)^\gamma = N^{s(\gamma,\delta)/(2s(\gamma,\delta)+1)}$
and an infinitely many times differentiable function $\psi$ with compact support that satisfies $\psi(0)=1$, $|\psi|_2^2=1$. Finally, we pick $c>0$ small enough 
so that the property
$$\mu_2^N\in \mathcal H^{\gamma,\delta}_L(t,a) \cap \mathcal L_{\mathcal D, \epsilon}^\infty$$
holds, uniformly in $N$. This is possible since
$$|\psi_{(t,a)}^N|_{{\mathcal H}^{\gamma}(t)} \leq c N^{-1/2}\tau_N^{1/2+\gamma}(\widetilde \tau_N)^{1/2}|\psi|_{\mathcal H^\gamma(t)}| \lesssim c$$
and
$$|\psi_{(t,a)}^N|_{{\mathcal H}^{\delta}(a)} \leq cN^{-1/2}\tau_N^{1/2}(\widetilde \tau_N)^{1/2+\delta}|\psi|_{\mathcal H^\delta(a)}| \lesssim c$$
likewise. Finally, we note that 
\begin{equation} \label{eq calcul LB}
\big|\mu_2^N(t,a)-\mu_1(t,a)\big|  \geq  |\mu_1(t,a)\psi_{t,a}^N(t,a)| 
 \geq  \epsilon cN^{-1/2}\tau_N^{1/2}(\widetilde \tau_N)^{1/2}  
 \gtrsim N^{-s_{\mathrm{death}}^-/(2s_{\mathrm{death}}^-+1)}
\end{equation}
and
\begin{equation} \label{eq OK girsanov mu}
 \|\PP_{b_0,\mu_1,\Upsilon^N}^N-\PP_{b_0,\mu_2^N,\Upsilon^N}^N\|_{TV} \lesssim N^{1/2}\big|\mu_1^{-1}\mu_2^N-1\big|_2 
  = N^{1/2}|\psi_{(t,a)}^N|_2 = c^{1/2} \leq \tfrac{1}{2}
\end{equation}
say, for sufficiently small $c>0$, by Proposition \ref{prop girsanov}, which conditions are satisfied since $\mu_1$ and $\mu_2^N$ are bounded below. The end of the proof is similar to that of Step 4) with  $\Psi(\PP_{b,\mu,\Upsilon^N}^N) = \mu(t,a)$ together with the bounds \eqref{eq calcul LB} and \eqref{eq OK girsanov mu}. Therefore \eqref{eq LB mu} is proved and Theorem \ref{thm LB} folllows.
 

\subsection{Proof of Theorem \ref{adapt esti g}} By (i) of Proposition \ref{P first smoothness} the smoothness assumptions on $(b,\mu,g)$ imply
\begin{equation} \label{eq loc smoothness g lower}
u \mapsto g(t,u) \in \mathcal H^{\min(\alpha,\beta,\gamma+1,\delta)}_{L'}(a)\;\;\text{for}\;\;(t,a)\in \mathcal D_L,
\end{equation}
and
\begin{equation} \label{eq loc smoothness g upper}
u \mapsto g(t,u) \in \mathcal H^{\max(\gamma\wedge(\delta+1),\delta)}_{L'}(a)\;\;\text{for}\;\;(t,a)\in \mathcal D_U
\end{equation}
for some $L'$ that depends on $L$ and the smoothness parameters only.
For any $h \in \mathcal G_1^N$, by standard kernel approximation, see {\it e.g.} \cite{TSYBAKOV}  the smoothness properties \eqref{eq loc smoothness g lower} and \eqref{eq loc smoothness g upper} together with the definition \eqref{eq def dens} of $s_{\mathrm{dens}}^+$ imply 
$$\big|\int_0^\infty K_h(u-a)g(t,u)du-g(t,a)\big| \lesssim h^{s_{\mathrm{dens}}^+\wedge \ell_0},$$
up to a constant that depends on $K$, $s_\mathrm{dens}^+$ and $L'$ only. It follows that
$$\mathcal B_h^N(g)(t,a)^2 \lesssim h^{2s_{\mathrm{dens}}^+\wedge \ell_0}.$$
We also have
$$\mathsf{V}_h^N \lesssim (\log N)^2N^{-1}h^{-1}$$
up to a constant that depends on $C''$ of Theorem \ref{thm concentration optimal} and $K$. By Theorem \ref{thm oracle g}, we conclude
\begin{align*}
\E\big[\big(\widehat g_\star^N(t,a)-g(t,a)\big)^2\big] &  \lesssim \min_{h \in \mathcal G_1^N}\big( h^{2s_{\mathrm{dens}}^+\wedge \ell_0}+ (\log N)^2 N^{-1} h^{-1}\big)+\delta_N \\
& \lesssim \Big(\frac{(\log N)^2}{N}\Big)^{2s_{\mathrm{dens}}^+\wedge \ell_0/(2s_{\mathrm{dens}}\wedge \ell_0+1)} 
\end{align*}
using the definition of $\mathcal G_1^N$. Moreover, this estimate is uniform in $(b,\mu,g_0)$. The proof of Theorem \ref{adapt esti g} is complete.

\subsection{Proof of Theorem \ref{th adapt minimax mu}} Define $\widetilde \mu$ via $\mu= \widetilde \mu \circ \varphi$ and set $\widetilde \pi = \widetilde \mu \,\widetilde g$.\\ 

\noindent {\bf Step 1)} Write $\mu(t,a) = \widetilde \mu(t',a') = \mu(t', t'-a')$ with $(t',a') = \varphi(t,a) = (t,t-a)$.
The property $\mu \in \mathcal H^{\gamma,\delta}_L(t,a)$ for every $(t,a) \in \mathcal D$ implies $\widetilde \mu \in \mathcal H^{\min(\gamma,\delta), \delta}_{L'}(t',a')$ for every $(t',a') \in \varphi(\mathcal D) = \mathcal D$, for some other constant $L'$ that depends on $L$.  By (ii) of Proposition \ref{P first smoothness} it follows that
$$\widetilde \pi \in \mathcal H^{\min(\gamma,\delta), \min(\alpha,\beta,\gamma+1,\delta)}_{L'}(t,a)\;\;\text{for}\;\;(t,a) \in \widetilde{\mathcal D}_L$$
and
$$
\widetilde \pi \in \mathcal H^{\min(\gamma,\delta), \delta}_{L'}(t,a)\;\;\text{for}\;\;(t,a) \in \widetilde{\mathcal D}_U.
$$
Let $(t,a) \in \mathcal D_L$ so that $\varphi(t,a) \in \widetilde{\mathcal D}_L$. By standard kernel approximation again, 
we infer
\begin{align*}
& \; \big|\big((H \otimes K)_{\boldsymbol h} \circ \varphi\big)\star \pi(t,a)-\pi(t,a)\big| \\
 = &\; \big|(H \otimes K)_{\boldsymbol h} \big)\star \widetilde \pi\big(\varphi(t,a)\big)-\widetilde \pi\big(\varphi(t,a)\big)\big| \\
 = &\; \Big|\int_0^T \int_{0}^{\infty}H_{h_1}(\varphi_1(t,a)-s)K_{h_2}(\varphi_2(t,a)-u)\widetilde \pi(s,u)dsdu-\widetilde \pi\big(\varphi(t,a)\big)\Big| \\
 \lesssim & \; h_1^{\min(\gamma,\delta) \wedge \ell_0}+h_2^{\min(\alpha,\beta,\gamma+1,\delta) \wedge \ell_0}
\end{align*}
up to a constant that depends on $H,K$, $L'$ and the smoothness parameters only and where we have set $\varphi(t,a) = \big(\varphi_1(t,a),\varphi_2(t,a)\big)$. Similarly, if $(t,a) \in \mathcal D_U$, we have
$$
\big|\big((H \otimes K)_{\boldsymbol h} \circ \varphi\big)\star \pi(t,a)-\pi(t,a)\big| \lesssim h_1^{\min(\gamma,\delta) \wedge \ell_0}+h_2^{\delta \wedge \ell_0}.
$$
It follows that
\begin{equation} \label{eq biais gamma}
\mathcal B_{\boldsymbol h}^N(\pi)(t,a)^2 \lesssim 
\left\{
\begin{array}{lll}
h_1^{2\min(\gamma,\delta) \wedge \ell_0}+h_2^{2\min(\alpha,\beta,\gamma+1,\delta) \wedge \ell_0} & \text{if} & (t,a) \in {\mathcal D_L} \\
h_1^{2\min(\gamma,\delta) \wedge \ell_0}+h_2^{2\delta \wedge \ell_0} & \text{if} & (t,a) \in {\mathcal D_U}.
\end{array}
\right.
\end{equation}
We also have
\begin{equation} \label{eq variance gamma}
\mathsf{V}_{\boldsymbol h}^N \lesssim (\log N)^2N^{-1}h_1^{-1}h_2^{-1}
\end{equation}
up to a constant that depends on $C''$ of Theorem \ref{thm concentration optimal} and $H, K$.\\

\noindent {\bf Step 2)} By Theorems \ref{thm oracle mu} and \ref{adapt esti g}, we have
\begin{equation} \label{eq recap}
\E\big[\big(\mu_{\star}^N(t,a)_{\varpi}-\mu(t,a)\big)^2\big] \lesssim   \Big(\frac{(\log N)^2}{N}\Big)^{2s_{\mathrm{dens}}^+\wedge \ell_0/(2s_{\mathrm{dens}}^+\wedge \ell_0+1)} + \min_{\boldsymbol h \in \mathcal G_2^N}\big(\mathcal B_{\boldsymbol h}^N(\gamma)(t,a)^2+\mathsf V_{\boldsymbol h}^N\big) +\delta_N.
\end{equation}
Moreover, by definition of $s_L$ involved in \eqref{eq def sdeath}, we have 
$$\min_{\boldsymbol h \in \mathcal G_2^N}\big(h_1^{2\min(\gamma,\delta) \wedge \ell_0}+h_2^{2\min(\alpha,\beta,\gamma+1,\delta) \wedge \ell_0} + (\log N)^2N^{-1}h_1^{-1}h_2^{-1}\big) \lesssim \Big(\frac{(\log N)^2}{N}\Big)^{2s_L \wedge \ell_0/(2s_L \wedge \ell_0+1)}$$
and likewise, by definition of $s_U$ involved in \eqref{eq def sdeath}, we have
$$\min_{\boldsymbol h \in \mathcal G_2^N}\big( h_1^{2\min(\gamma,\delta) \wedge \ell_0}+h_2^{2\min(\gamma,\delta) \wedge \ell_0} + (\log N)^2N^{-1}h_1^{-1}h_2^{-1}\big) \lesssim  \Big(\frac{(\log N)^2}{N}\Big)^{2s_U \wedge \ell_0/(2s_U \wedge \ell_0+1)}.$$
Therefore, putting together \eqref{eq biais gamma} and \eqref{eq variance gamma} and using the definition of $s_{\mathrm{death}}$ in \eqref{eq def sdeath} we obtain
$$\min_{\boldsymbol h \in \mathcal G_2^N}\big(\mathcal B_{\boldsymbol h}^N(\gamma)(t,a)^2+\mathsf V_{\boldsymbol h}^N\big) \lesssim \Big(\frac{(\log N)^2}{N}\Big)^{s_{\mathrm{death}}^+(t,a)\wedge \ell_0/(2s_{\mathrm{death}}^+(t,a)\wedge \ell_0+1)}.$$
Since $s_{\mathrm{dens}}^+ \geq s_{\mathrm{death}}$, inequality \eqref{eq recap} becomes
$$\E\big[\big(\mu_{\star}^N(t,a)_{\varpi}-\mu(t,a)\big)^2\big] \lesssim \Big(\frac{(\log N)^2}{N}\Big)^{s_{\mathrm{death}}(t,a)\wedge \ell_0/(2s_{\mathrm{death}}(t,a)\wedge \ell_0+1)} + \delta_N.$$
Since the estimate is uniform in $(b,\mu,g_0)$ and $\delta_N \lesssim N^{-1}$, this completes the proof of Theorem \ref{th adapt minimax mu}.

\section{Appendix} \label{sec appendix}
\subsection{Proof of Proposition \ref{prop concentration abstract}}
\subsubsection*{Preliminaries}
For $x\geq 0$ and $q \geq 1$, define $\psi_q(x)  =\exp(x^q)-1$. Let also
$$ \|\xi(f)\|_{\psi_q}  =\inf\big\{c>0,\;\E\big[\psi_q(c^{-1}\xi(f))\big] \leq 1\big\}$$
and 
$$D  =\text{diam}_d(\mathcal F) = \sup_{f,g\in \mathcal F}d(f,g).$$
\begin{prop}[Theorem 11.2, Eq. $(11.4)$ p. 302 in \cite{LEDOUXTALAGRAND}] \label{prop: Ledoux-Talagrand} \label{prop LT}
In the setting of Proposition \ref{prop concentration abstract}, 
if $\|\xi(f)-\xi(g)\|_{\psi_q} \leq d(f,g)$
and
$E = \int_0^D\psi_q^{-1}\big(\mathcal N(\mathcal F,d,\epsilon)\big)d\epsilon < \infty,$
then
$$\PP\big(\sup_{f \in \mathcal F}|\xi(f)| \geq 8(E+u)\big) \leq \psi_q(u/D)^{-1},$$
provided $\xi(f_0)=0$ for some $f_0 \in \mathcal F$.
\end{prop}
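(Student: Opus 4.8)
This is the generic chaining bound in Orlicz norm, and I would prove it by Dudley's successive-approximation method, following \cite{DUDLEY, LEDOUXTALAGRAND}. After first checking that the paths $f \mapsto \xi(f)$ are $d$-continuous (this follows from the hypothesis $\|\xi(f)-\xi(g)\|_{\psi_q}\le d(f,g)$ together with separability of $(\mathcal F, d)$, which may be assumed or reduced to by restricting to a countable dense subset), set $D = \mathrm{diam}_d(\mathcal F)$ and $\epsilon_k = D2^{-k}$ for $k \ge 0$. For each $k$ pick a minimal $\epsilon_k$-net $\mathcal F_k \subset \mathcal F$ with $\mathrm{Card}(\mathcal F_k) = \mathcal N(\mathcal F, d, \epsilon_k)$, taking $\mathcal F_0 = \{f_0\}$, which is legitimate since $\epsilon_0 = D$ and one may choose this single point to be the prescribed $f_0$ with $\xi(f_0)=0$. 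Choose retractions $\pi_k : \mathcal F \to \mathcal F_k$ with $d(f, \pi_k f) \le \epsilon_k$, arranged so that each $\pi_{k-1}$ factors as $\rho_{k-1}\circ \pi_k$ with $\rho_{k-1}:\mathcal F_k \to \mathcal F_{k-1}$ and $d(g, \rho_{k-1}g) \le \epsilon_{k-1}$. The purpose of this factorisation is that the chaining link $(\pi_k f, \pi_{k-1}f)$ then takes at most $\mathrm{Card}(\mathcal F_k) = \mathcal N(\mathcal F, d, \epsilon_k)$ distinct values as $f$ ranges over $\mathcal F$, so only the entropy and not its square enters the final estimate. Since $\pi_k f \to f$ and $\xi$ has continuous paths, $\xi(f) = \sum_{k\ge 1}\big(\xi(\pi_k f) - \xi(\pi_{k-1}f)\big)$, whence $\sup_{f \in \mathcal F}|\xi(f)| \le \sum_{k \ge 1} M_k$ with $M_k := \max_{g \in \mathcal F_k}|\xi(g) - \xi(\rho_{k-1}g)|$.

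\textbf{Level-by-level tail bounds.} For each $k$, $M_k$ is a maximum of at most $\mathcal N(\mathcal F, d, \epsilon_k)$ increments $\xi(g) - \xi(\rho_{k-1}g)$, each with $\psi_q$-norm at most $d(g, \rho_{k-1}g) \le \epsilon_{k-1} = 2\epsilon_k$. By the definition of $\|\cdot\|_{\psi_q}$ and Markov's inequality each such increment $Y$ satisfies $\PP(|Y|\ge s) \le \psi_q(s/(2\epsilon_k))^{-1}$, so a union bound gives
$$\PP(M_k \ge s) \le \mathcal N(\mathcal F, d, \epsilon_k)\,\psi_q\big(s/(2\epsilon_k)\big)^{-1}, \qquad s \ge 0.$$

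\textbf{Aggregation.} I would then distribute the deviation budget over the levels by selecting thresholds $s_k$ for which $\PP(M_k \ge s_k) \le 2^{-k}\psi_q(u/D)^{-1}$; inverting the displayed bound this forces $s_k \asymp \epsilon_k\,\psi_q^{-1}\big(2^k\,\mathcal N(\mathcal F, d, \epsilon_k)\,\psi_q(u/D)\big)$. Two elementary facts then control $\sum_k s_k$: the subadditivity $\psi_q^{-1}(xyz) \le \psi_q^{-1}(x)+\psi_q^{-1}(y)+\psi_q^{-1}(z)$ (from $\log(1+xyz) \le \log(1+x)+\log(1+y)+\log(1+z)$ and $(a+b+c)^{1/q}\le a^{1/q}+b^{1/q}+c^{1/q}$ for $q\ge 1$) together with the identity $\psi_q^{-1}(\psi_q(u/D)) = u/D$; and the comparison of $\sum_{k\ge 1}\epsilon_k\,\psi_q^{-1}(\mathcal N(\mathcal F, d, \epsilon_k))$ with the entropy integral $E = \int_0^D \psi_q^{-1}(\mathcal N(\mathcal F, d, \epsilon))\,d\epsilon$, valid because $\epsilon \mapsto \mathcal N(\mathcal F, d, \epsilon)$ is non-increasing. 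Combining these yields $\PP\big(\sup_f|\xi(f)| \ge \sum_k s_k\big) \le \sum_k \PP(M_k \ge s_k) \le \psi_q(u/D)^{-1}$ with $\sum_k s_k \le 8(E+u)$, once the numerical constants hidden in the net radii and in the allocation weights $2^{-k}$ are optimised; this last optimisation is precisely the content of \cite[Thm 11.2, Eq. (11.4)]{LEDOUXTALAGRAND}, which I would quote.

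\textbf{Main obstacle.} The conceptual skeleton above is routine; the genuine work is twofold. First, one must justify path-regularity and separability so that the telescoping series truly converges to $\xi(f)$ — without both $\xi(f_0)=0$ and $d$-continuity the argument collapses. Second, and more delicately, producing the \emph{sharp} constant $8$ and the clean tail $\psi_q(u/D)^{-1}$ (rather than a tail of the weaker form $\psi_q(\,\cdot\,/(\mathrm{const}\cdot E))^{-1}$ that a crude chaining would give) requires the factorisation-through-the-coarser-net device to avoid an $\mathcal N^2$ factor, a careful maximal inequality for the convex Young function $\psi_q$, and the precise choice of the thresholds $s_k$. This bookkeeping is where I would need to be most careful, and is exactly the place where the estimates of Ledoux and Talagrand are invoked.
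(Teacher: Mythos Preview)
The paper does not give its own proof of this proposition: it is stated purely as a quotation of Theorem~11.2, Eq.~(11.4), p.~302 of \cite{LEDOUXTALAGRAND} and then used as a black box in the proof of Proposition~\ref{prop concentration abstract}. Your sketch is a correct outline of the standard Dudley chaining argument in Orlicz norm and is essentially the proof that appears in \cite{LEDOUXTALAGRAND}, so there is nothing to compare against.
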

We also recall the following bound based on a classical Chernoff bound argument, proof of which we omit. For $x \geq 0$, let $\widetilde \rho(x) = (1+x)\log (1+x)-x$.
\begin{lem} \label{lem: big Chernoff}
Let $X$ be a non-negative random variable on some probability space equipped with a probability measure $\mathbb Q$. If, for some $k_1,k_2, k_3>0$, we have
$$\E_{\mathbb Q}\big[e^{\lambda X}\big] \leq k_1\exp\big(k_2\rho(k_3 \lambda)\big)\;\;\text{for every}\;\;\lambda \geq 0,$$
then, for every $u\geq 0$,
$$\mathbb Q\big(X \geq u\big) \leq k_1\exp\big(-k_2\widetilde \rho(u/k_2k_3)\big).$$
\end{lem}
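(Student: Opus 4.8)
The plan is to use the exponential Markov inequality (a Chernoff bound) and then optimise over the free parameter $\lambda\ge 0$, recognising that $\widetilde\rho$ is precisely the Fenchel--Legendre conjugate of $\rho$ on the half-line, so that the optimisation can be carried out in closed form.

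First I would fix $u\ge 0$ and $\lambda\ge 0$. Since $X\ge 0$, we have $e^{\lambda X}\ge e^{\lambda u}\mathbf 1_{\{X\ge u\}}$, hence by the hypothesis on the Laplace transform of $X$,
$$\mathbb Q\big(X\ge u\big)\le e^{-\lambda u}\,\E_{\mathbb Q}\big[e^{\lambda X}\big]\le k_1\exp\big(k_2\rho(k_3\lambda)-\lambda u\big).$$
As this holds for every $\lambda\ge 0$, we may take the infimum and obtain
$$\mathbb Q\big(X\ge u\big)\le k_1\exp\Big(-\sup_{\lambda\ge 0}\big(\lambda u-k_2\rho(k_3\lambda)\big)\Big).$$

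Next I would evaluate the supremum explicitly. Substituting $t=k_3\lambda$ and factoring out $k_2$ gives $\sup_{\lambda\ge 0}\big(\lambda u-k_2\rho(k_3\lambda)\big)=k_2\sup_{t\ge 0}\big(\tfrac{u}{k_2k_3}t-\rho(t)\big)$. Differentiating $t\mapsto \tfrac{u}{k_2k_3}t-\rho(t)$ and using $\rho'(t)=e^{t}-1$, the unconstrained maximiser is $t^\star=\log\!\big(1+\tfrac{u}{k_2k_3}\big)\ge 0$, which therefore lies in the feasible half-line, so the restriction $\lambda\ge 0$ costs nothing. Plugging $t^\star$ back in and simplifying with $e^{t^\star}=1+\tfrac{u}{k_2k_3}$ yields
$$\sup_{t\ge 0}\Big(\tfrac{u}{k_2k_3}t-\rho(t)\Big)=\Big(1+\tfrac{u}{k_2k_3}\Big)\log\!\Big(1+\tfrac{u}{k_2k_3}\Big)-\tfrac{u}{k_2k_3}=\widetilde\rho\!\Big(\tfrac{u}{k_2k_3}\Big),$$
i.e. $\widetilde\rho=\rho^{\,\ast}$ on $[0,\infty)$. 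Combining this with the displayed bound gives exactly $\mathbb Q(X\ge u)\le k_1\exp\!\big(-k_2\widetilde\rho(u/k_2k_3)\big)$, which is the claim.

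There is essentially no serious obstacle here: the argument is a routine Cram\'er--Chernoff computation. The only two points deserving a line of care are checking that the unconstrained maximiser $t^\star$ is non-negative (so that optimising over $\lambda\ge 0$ rather than $\lambda\in\R$ is lossless) and the elementary algebra identifying the Legendre transform of $\rho$ as $\widetilde\rho$; both can be disposed of in one or two lines, which is why the paper states the result without proof.
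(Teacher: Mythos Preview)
Your proof is correct and is exactly the standard Cram\'er--Chernoff argument the paper has in mind; the paper itself omits the proof, calling it ``a classical Chernoff bound argument,'' so there is nothing further to compare.
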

\subsubsection*{Proof of Proposition \ref{prop concentration abstract}} Thanks to Proposition \ref{prop LT}, all we need is an upper bound for $\|\xi(f)-\xi(g)\|_{\psi_1}$. Let $\kappa > 0$. We plan to apply Lemma \ref{lem: big Chernoff} with $\mathbb Q = \PP\big(\cdot\,|\,\mathcal A(\kappa)\big)$, $X=|\xi(f)-\xi(g)|$, $k_1=2\PP\big(\mathcal A(\kappa)\big)$, $k_2 = c_1(1+\kappa)$, $k_3=c_2d(f,g)$ and using \eqref{controle 2}. It follows that for every $u \geq 0$
\begin{equation} \label{eq: maj proba chern cond}
\PP\big(|\xi(f)-\xi(g)| \geq u\big) \leq 2\exp\big(-c_1(1+\kappa)\widetilde \rho\big(u/c_1(1+\kappa)c_2d(f,g)\big)\big)+\PP(\mathcal A(\kappa)^c).
\end{equation}
Now, let $c>0$. We have
\begin{align*}
\E\big[\psi_1(c^{-1}|\xi(f)-\xi(g)|)\big] & = \E\big[\exp (c^{-1}|\xi(f)-\xi(g)|)\big] -1 \\
& = \int_1^\infty \PP\big(\exp(c^{-1}|\xi(f)-\xi(g)|) \geq \kappa\big)d\kappa \\
& = \int_0^\infty \PP\big(|\xi(f)-\xi(g)| \geq c \kappa \big)e^\kappa d\kappa \\
& \leq 2\int_0^\infty \exp\Big(-c_1(1+\kappa)\widetilde \rho\big(c\kappa/c_1(1+\kappa)c_2d(f,g)\big)\Big)e^\kappa d\kappa + \tfrac{1}{2},
\end{align*}
where we applied \eqref{eq: maj proba chern cond} with $u=c\kappa$ and used \eqref{controle 1} for bounding the second term. 
It suffices then to pick $\varpi = \varpi(c_1,c_2) >0$ such that
\begin{equation} \label{eq: controle integrale}
2\int_0^\infty \exp\big(-c_1(1+\kappa)\widetilde \rho\big(\varpi(c_1,c_2) \kappa/c_1(1+\kappa)c_2 \big)\big)e^\kappa d\kappa \leq \tfrac{1}{2}.
\end{equation}
Using \eqref{eq: controle integrale} in the previous estimate with $c= \varpi d(f,g)$, we obtain 
$$\E\big[\psi_1(\varpi d(f,g)^{-1}|\xi(f)-\xi(g)|)\big] \leq 1$$
and therefore 
$$\|\xi(f)-\xi(g)\|_{\psi_1} \leq \varpi d(f,g) = \widetilde d(f,g),$$
say. We may then apply Proposition \ref{prop: Ledoux-Talagrand} with $\widetilde d$ instead of $d$ and Proposition \ref{prop concentration abstract} follows. 

\begin{rk} \label{rk: le choix de varpi}
In \eqref{eq: controle integrale}, we may choose $\varpi(c_1,c_2) = k\sqrt{c_1}c_2$  for some $k>0$ that does not depend on $c_1$ nor $c_2$. Indeed, since $\widetilde \rho(x) \geq \tfrac{1}{4}x^2$ for $x \in [0,1]$, given the ansatz $\varpi(c_1,c_2) = k\sqrt{c_1}c_2$ in \eqref{eq: controle integrale}, it suffices to show the existence of $k$ satisfying $k \leq \sqrt{c_1}$
and 
\begin{equation} \label{eq: controle integrale bis}
\int_0^\infty \exp\big(-\frac{k^2}{4}\frac{\kappa^2}{1+\kappa}+\kappa\big) d\kappa \leq \frac{1}{4}.
\end{equation}
One can check that \eqref{eq: controle integrale bis} holds for large enough $k$. A rough bound is $k = 2\sqrt{77}$, and therefore $c_1 \geq 308$ ensures the requirement $k \leq \sqrt{c_1}$.
\end{rk}

\subsection{Proof of Proposition \ref{prop: KleinRio}} \label{proofKleinRio}
We have $Z_0^N = N^{-1}\sum_{i = 1}^N\delta_{A_i}$, where the $A_i$ are independent with common distribution $g_0(a)da$. Define 
$\mathcal F_{w_2} = \{f=w_2(-\cdot)g,g\in \mathcal F\}$.
We claim that
\begin{equation} \label{eq: cond Comte 1}
\sup_{f \in \mathcal F_{w_2}}|f|_\infty \lesssim |w_2|_{\infty}, 
\end{equation}
\begin{equation} \label{eq: cond Comte 2}
\E\big[\sup_{f \in \mathcal F_{w_2}}\big|\langle Z_0^N,f \rangle - \E[\langle Z_0^N,f\rangle]\big|\big] \lesssim N^{-1/2}|w_2|_{2}, 
\end{equation}
and
\begin{equation} \label{eq: cond Comte 3}
N^{-1}\sup_{f \in \mathcal F_{w_2}}\sum_{i = 1}^N \mathrm{Var}\big(f(A_i)\big) \lesssim |w_2|_2^2.
\end{equation}
The estimates \eqref{eq: cond Comte 1} and \eqref{eq: cond Comte 3} are straightforward. We turn to \eqref{eq: cond Comte 2}. Write $f=w_2(-\cdot)g$ for $f \in \mathcal F_{w_2}$, with $g \in \mathcal F$. Adding and substracting $\int_0^\infty w_2(-a)g(A_i)g_0(a)da$, we have
\begin{equation} \label{eq: decomp bias}
\langle Z_0^N,f \rangle - \E[\langle Z_0^N,f\rangle] 
  =  N^{-1}\sum_{i = 1}^N(w_2(-A_i)-\E[w_2(-A_i)])g(A_i)+\big(\int_0^\infty w_2(-a)g_0(a)da\big)\nu_{w_2}^N(g),
\end{equation}
with $\nu_{w_2}^N(g) = N^{-1}\sum_{i = 1}^N\big( g(A_i)-\E_{w_2}[g(A_i)]\big)$ and where $\E_{w_2}$ denotes expectation under a bias sampling proportional to $w_2(-\cdot)$. Since $\mathcal F$ is stable under $g \mapsto -g$ and uniformly bounded, we have
$$\sup_{g \in \mathcal F}\big|N^{-1}\sum_{i = 1}^N(w_2(-A_i)-\E[w_2(-A_i)])g(A_i)\big| \lesssim N^{-1}\sum_{i = 1}^N(w_2(-A_i)-\E[w_2(-A_i)]).$$
By Cauchy-Schwarz inequality, it follows that 
$$\E\big[\sup_{g \in \mathcal F}\big|N^{-1}\sum_{i = 1}^N(w_2(-A_i)-\E[w_2(-A_i)])g(A_i)\big|\big] \leq N^{-1/2}\mathrm{Var}\big(w_2(-A_i)\big)^{1/2} \lesssim N^{-1/2}|w_2|_2.$$
In the same way as in the proof of Proposition \ref{prop concentration abstract}, with $\psi_2(x)=e^{x^2}-1$, we show using the tools in \cite{LEDOUXTALAGRAND}], p.322, that for $g_1,g_2 \in \mathcal F$, we have
$\|\nu_{w_2}^N(g_1)-\nu_{w_2}^N(g_2)\|_{\psi_2} \lesssim N^{-1/2}|g_1-g_2|_\infty$ by Hoeffding inequality. It follows that $\E[\sup_{g \in \mathcal F}\nu_{w_2}^N(g)] \lesssim N^{-1/2}$. Noticing that the term $\int_0^\infty w_2(-a)g_0(a)da$ in front of $\nu_{w_2}^N(g)$ in \eqref{eq: decomp bias} is of order $|w_2|_2$ enables us to conclude the proof of \eqref{eq: cond Comte 2}. Noting that $\mathcal W_{w_2}(\mathcal F)_0 = \sup_{f \in \mathcal F_{w_2}}\langle Z_0^N,f \rangle$, the proof of Proposition \ref{prop: KleinRio} is now a consequence of Lemma 6.1. in Comte {\it et al.} \cite{COMTEDEDECKERTAUPIN} on the concentration properties of $\langle Z_0^N,f \rangle$, based on the bounds \eqref{eq: cond Comte 1}, \eqref{eq: cond Comte 2} and
\eqref{eq: cond Comte 3}. We omit the details.

\subsection{Proof of Proposition \ref{P first smoothness}}
\label{proof of prop P first smoothness}
The behaviour of the solution $\xi_t(da) = g(t,a)da$ of the McKendricks Von Voester transport equation is studied in numerous textbooks, see {\it e.g.} \cite{PERTHAME}. The proof goes along a classical representation of $g$ in terms of an auxiliary function solution to a certain renewal equation that enables one to study the pointwise smoothness of $(t,a) \mapsto g(t,a)$.

\subsubsection*{Preliminaries} 
We start with the following technical result, which is merely an observation: 
\begin{lem} \label{lem reg}
If for some $\sigma,\tau>0$ and for every $(t,a) \in {\mathcal D}$ we have $f \in \mathcal H^{\sigma,\tau}(t,a)$, then, for every $(t',a') \in \mathcal D$, 
\begin{itemize}
\item[(i)]
$u \mapsto \int_0^u f(s,u)ds \in \mathcal H^{\min(\sigma+1, \tau)}(a')$,
\item[(ii)] $u \mapsto \int_0^{t'} f(s, u+s)ds \in \mathcal H^{\max(\sigma\wedge(\tau+1), \tau)}(a')$.
\end{itemize}
\end{lem}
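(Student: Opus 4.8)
Both assertions are local, so the plan is to work on a fixed neighbourhood of $a'$ (and, for (ii), at the fixed time $t'$), all constants being controlled by $\sup_{(t,a)\in\mathcal D}|f|_{\mathcal H^{\sigma,\tau}(t,a)}$ and $T$; write $\sigma=m+\{\sigma\}$ and $\tau=n+\{\tau\}$ as in \eqref{def holder}. The whole argument rests on three elementary principles that I would isolate first. \textbf{(P1)} \emph{Differentiation under the integral.} For a fixed $c\le T$, the maps $u\mapsto\int_0^c f(s,u)\,ds$ and $u\mapsto\int_0^c f(s,u+s)\,ds$ lie in $\mathcal H^{\tau}(a')$ with semi-norm $\lesssim c$: one differentiates $n$ times under the integral (the shift $u\mapsto u+s$ only activates the second slot of $f$) and passes the $\{\tau\}$-Hölder modulus of $\partial_2^n f$ through the integral. \textbf{(P2)} \emph{A moving endpoint gains one order in the first slot.} When $\tau\ge1$, $\tfrac{d}{du}\int_0^u f(s,u)\,ds=f(u,u)+\int_0^u(\partial_2 f)(s,u)\,ds$; and after the substitution $r=u+s$, when $\sigma\ge1$,
$$\frac{d}{du}\int_0^{c} f(s,u+s)\,ds=f(c,u+c)-f(0,u)-\int_0^{c}(\partial_1 f)(s,u+s)\,ds,$$
the boundary terms lying in $\mathcal H^{\tau}$ by (P1). \textbf{(P3)} \emph{Restriction to the diagonal:} $u\mapsto f(u,u)\in\mathcal H^{\min(\sigma,\tau)}(a')$, which I would prove by induction on $\lfloor\min(\sigma,\tau)\rfloor$, the base case $\min(\sigma,\tau)\le1$ being a two-term triangle inequality and the step differentiating $\tfrac{d}{du}f(u,u)=\partial_1 f(u,u)+\partial_2 f(u,u)$ and applying the induction to $\partial_1 f\in\mathcal H^{\sigma-1,\tau}$ and $\partial_2 f\in\mathcal H^{\sigma,\tau-1}$, using $\min(\sigma-1,\tau)\wedge\min(\sigma,\tau-1)=\min(\sigma,\tau)-1$. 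Crucially, every differentiation above is routed either entirely through first-slot or entirely through second-slot derivatives of $f$, so no mixed partial (which the anisotropic hypothesis does not provide) is ever needed.

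For (i), I would set $F(u)=\int_0^u f(s,u)\,ds$ and argue by induction on $\lfloor\tau\rfloor$. If $\tau\le1$, I split $F(u+h)-F(u)=\int_u^{u+h}f(s,u+h)\,ds+\int_0^u\big(f(s,u+h)-f(s,u)\big)\,ds$, bound the first integral by $|h|\,|f|_\infty$ and the second by $u\,|f|_{\mathcal H^{\sigma,\tau}}|h|^{\tau}$, and observe that $\sigma>0$ forces $\min(\sigma+1,\tau)=\tau$. If $\tau>1$, the first identity in (P2) gives $F'(u)=f(u,u)+\int_0^u(\partial_2 f)(s,u)\,ds$; the first term is in $\mathcal H^{\min(\sigma,\tau)}$ by (P3), the second is an object of the same shape as $F$ with $f$ replaced by $\partial_2 f\in\mathcal H^{\sigma,\tau-1}$, hence in $\mathcal H^{\min(\sigma+1,\tau-1)}$ by induction, so $F'\in\mathcal H^{\min(\sigma,\tau)\wedge\min(\sigma+1,\tau-1)}=\mathcal H^{\min(\sigma,\tau-1)}$ and therefore $F\in\mathcal H^{1+\min(\sigma,\tau-1)}=\mathcal H^{\min(\sigma+1,\tau)}(a')$.

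For (ii), set $\widetilde F(u)=\int_0^{t'}f(s,u+s)\,ds$. By (P1) one gets $\widetilde F\in\mathcal H^{\tau}(a')$ for free, which already settles the case $\sigma\le\tau$; it remains to produce the improvement $\widetilde F\in\mathcal H^{\sigma\wedge(\tau+1)}(a')$, for which I would induct on $\lceil\sigma\rceil$. If $\sigma\le1$, the change of variable $r=u+s$ and splitting of $\int_{u+h}^{u+h+t'}$ and $\int_u^{u+t'}$ at their common endpoints give
$$\widetilde F(u+h)-\widetilde F(u)=\int_{u+h}^{u+t'}\!\!\big(f(r-u-h,r)-f(r-u,r)\big)dr+\int_{u+t'}^{u+h+t'}\!\!\!f(r-u-h,r)\,dr-\int_u^{u+h}\!\!f(r-u,r)\,dr,$$
whose three terms are $\lesssim|h|^{\sigma}$, $\lesssim|h|$, $\lesssim|h|$, yielding $\widetilde F\in\mathcal H^{\sigma}(a')=\mathcal H^{\sigma\wedge(\tau+1)}(a')$. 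If $\sigma>1$, the second identity in (P2) gives $\widetilde F'(u)=f(t',u+t')-f(0,u)-\int_0^{t'}(\partial_1 f)(s,u+s)\,ds$; the first two terms lie in $\mathcal H^{\tau}$, and the integral is of the same shape as $\widetilde F$ with $f$ replaced by $\partial_1 f\in\mathcal H^{\sigma-1,\tau}$, hence in $\mathcal H^{\max((\sigma-1)\wedge(\tau+1),\tau)}\subseteq\mathcal H^{\tau}$ by the induction hypothesis and the $\mathcal H^\tau$ bound; thus $\widetilde F'\in\mathcal H^{\tau}$ and $\widetilde F\in\mathcal H^{\tau+1}(a')\subseteq\mathcal H^{\sigma\wedge(\tau+1)}(a')$. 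Combining the regimes with the baseline $\mathcal H^{\tau}$ bound gives $\widetilde F\in\mathcal H^{\max(\sigma\wedge(\tau+1),\tau)}(a')$.

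The only genuine obstacle I anticipate is the exponent bookkeeping in the two inductions: verifying identities such as $\min(\sigma,\tau-1)+1=\min(\sigma+1,\tau)$ and inclusions such as $\mathcal H^{\max((\sigma-1)\wedge(\tau+1),\tau)}\subseteq\mathcal H^{\tau}$, case by case according to whether $\sigma,\tau$ exceed $1$, while maintaining the discipline of never invoking a mixed partial of $f$. Once these purely arithmetic checks are done, each analytic step is a one-line estimate, which is why the statement is rightly described as an observation.
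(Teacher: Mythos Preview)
Your proof is correct and more detailed than the paper's, which dismisses (i) as ``straightforward'' and gives a short argument for (ii). The approaches diverge on (ii): you run a direct induction on $\lceil\sigma\rceil$, differentiating via your identity (P2) and tracking the boundary terms; the paper instead performs the substitution $\widetilde f(s,u)=f(s-u,s)$, observes $\widetilde f\in\mathcal H^{\min(\sigma,\tau),\sigma}$ (which is precisely your (P3) applied in the first slot), writes $G_{t'}(u)=\int_0^{u+t'}\widetilde f(s,u)\,ds-\int_0^{u}\widetilde f(s,u)\,ds$, and applies (i) to each piece to get $\mathcal H^{\min(\min(\sigma,\tau)+1,\sigma)}=\mathcal H^{\min(\sigma,\tau+1)}$, then intersects with the trivial $\mathcal H^\tau$ bound. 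The paper's reduction is slicker and avoids a second induction, but your route has the virtue of making the three underlying mechanisms (P1)--(P3) explicit and of never leaning on (i) as a black box; in particular your (P3) is exactly the diagonal-restriction fact the paper uses implicitly when asserting the regularity of $\widetilde f$. One cosmetic point: in (P2) you justify the boundary terms $f(c,u+c)$, $f(0,u)$ ``by (P1)'', but (P1) concerns integrals; these are in $\mathcal H^\tau$ simply because they are translates of $f(c,\cdot)$ and $f(0,\cdot)$.
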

\begin{proof} Property (i) is straightforward. To obtain (ii),  we first write 
$$G_{t'}(u) = \int_0^{t'} f(s, u+s)ds = \int_u^{u+t'} f(s-u,s)ds= \int_u^{u+t'} \widetilde f(s,u)ds$$
with $\widetilde f(s,u) = f(s-u,s)$, so that $\widetilde f \in \mathcal H^{\min(\sigma,\tau),\sigma}(t'+a',a')$ for every $(t',a') \in \mathcal D$. Writing
$$\int_u^{u+t'} \widetilde f(s,u)ds = \int_0^{u+t'} \widetilde f(s,u)ds - \int_0^{u} \widetilde f(s,u)ds$$
an applying (i), we obtain $u \mapsto \int_0^{u} \widetilde f(s,u)ds \in \mathcal H^{\min(\min(\sigma, \tau) +1, \sigma)}(a') =  \mathcal H^{\min(\sigma, \tau+1)}(a')$ for every $a' \in \R_+$. Similarly,  $u \mapsto \int_u^{u+t} \widetilde f(s,u)ds \in \mathcal H^{\min(\sigma, \tau+1)}(a')$ therefore $G_{t'} \in \mathcal H^{\tau}(a') \in \mathcal H^{\min(\sigma, \tau+1)}$. But since $G_{t'} \in \mathcal H^{\tau}(a')$ trivially holds, we have in fact $G_{t'} \in \mathcal H^{\tau}(a')\cap \;\mathcal H^{\min(\sigma, \tau+1)}(a') =  \mathcal H^{\max(\sigma\wedge(\tau+1), \tau)}(a')$.
\end{proof}


\subsubsection*{Completion of proof of Proposition \ref{P first smoothness}} 

For $\sigma,\tau>0$, we write $f \in \mathcal H^{\sigma,\tau}$ if $f \in \mathcal H^{\sigma,\tau}(t,a)$ for every $(t,a) \in {\mathcal D}$.\\

\noindent {\bf Step 1)} For fixed $a$, we have $(s,t) \mapsto \mu(s,a-t+s) \in \mathcal H^{\gamma\wedge\delta,\delta}$ hence by (i) of Lemma \ref{lem reg} we have $t\mapsto \int_0^t \mu(s,a-t+s)ds \in \mathcal H^{\min((\gamma\wedge\delta)+1, \delta)} = \mathcal H^{\min(\gamma+1,\delta)}$. For fixed $t$, $(s,a) \mapsto \mu(s,a-t) \in \mathcal H^{\gamma,\delta}$ holds true, hence $a \mapsto \int_0^t \mu(s,a-t+s)ds \in \mathcal H^{\min(\gamma,\delta+1)\vee \delta}$ by (ii) of Lemma \ref{lem reg}. It follows that
$$(t,a) \mapsto \exp\big(-\int_0^t \mu(s,a-t+s)ds\big) \in \mathcal H^{\min(\gamma+1,\delta), \max(\gamma \wedge (\delta+1),\delta)}.$$
Also $(t,a) \mapsto g_0(a-t) \in \mathcal H^{\nu,\nu} \subset \mathcal H^{\min(\gamma+1,\delta), \max(\gamma \wedge (\delta+1),\delta)}$ since $\nu \geq \max(\gamma,\delta)+1$ hence the result on $\mathcal D_U$. In the same way, on $\mathcal D_L$, we have $t\mapsto \int_0^t \mu(s,a-t+s)ds \in \mathcal H^{\min(\gamma+1,\delta)}$ and $t\mapsto \int_0^{t-a} \mu(s,a-t+s)ds \in \mathcal H^{\min(\gamma+1,\delta)}$ by (i) of Lemma \ref{lem reg} hence 
\begin{equation} \label{eq both reg}
t\mapsto \int_{t-a}^t \mu(s,a-t+s)ds \in \mathcal H^{\min(\gamma+1,\delta)}.
\end{equation}
Moreover, $\int_{t-a}^t\mu(s,a+s-t)ds = -\int_0^a\mu(s+t,a+s)ds$ and $(s,a) \mapsto \mu(s,a+s-t) \in \mathcal H^{\gamma\wedge\delta, \delta}$ for fixed $t$, therefore
\begin{equation} \label{eq both reg bis}
a\mapsto \int_{t-a}^t \mu(s,a-t+s)ds \in \mathcal H^{\min(\gamma+1,\delta)}
\end{equation}
by (i) of Lemma \ref{lem reg} likewise. Putting together \eqref{eq both reg} and \eqref{eq both reg bis}, we conclude
\begin{equation} \label{eq difficult reg}
(t,a) \mapsto \exp\big(-\int_{t-a}^t \mu(s,a-t+s)ds\big) \in  \mathcal H^{\min(\gamma+1,\delta),\min(\gamma+1,\delta)}.
\end{equation}
The property $b \in \mathcal H^{\alpha,\beta}$ together with \eqref{eq difficult reg}
entail $L_{b,\mu} \in \mathcal H^{\min(\alpha, \gamma+1,\delta), \min(\beta, \gamma+1,\delta)}$ hence
$$(t,a) \mapsto L_{b,\mu}(t,t-a) \in \mathcal H^{\min(\alpha, \gamma+1,\delta), \min(\alpha, \beta, \gamma+1,\delta)}$$
and
\begin{equation} \label{eq reg univ 1}
t \mapsto \int_0^t B_{b,\mu,g_0}(a)L_{b,\mu}(t,t-a)da \in \mathcal H^{\min(\alpha, \beta, \gamma+1,\delta)},
\end{equation}
follows by (i) of Lemma \ref{lem reg}. Plainly,
\begin{equation} \label{eq reg univ 2}
t \mapsto M_{b,\mu,g_0}(t) \in \mathcal H^{\min(\alpha,\beta, \gamma+1,\delta+1)}
\end{equation}
and putting together \eqref{eq reg univ 1} and \eqref{eq reg univ 2}, we conclude  
\begin{equation} \label{eq reg B}
t\mapsto B_{b,\mu,g_0}(t) \in \mathcal H^{\min(\alpha, \beta, \gamma+1,\delta)}.
\end{equation}
hence $(t,a) \mapsto B_{b,\mu,g_0}(t-a) \in \mathcal H^{\min(\alpha, \beta, \gamma+1,\delta), \min(\alpha, \beta, \gamma+1,\delta)}$.
The result of Proposition \ref{P first smoothness} (i) follows.\\

\noindent {\bf Step 2)}
Writing $(t',a') = \varphi(t,a) = (t,t-a)$, the representation \eqref{eq def weak} now becomes
\begin{equation} 
\label{eq def weak bis}
g(t,a) = 
\widetilde g(t',a') = 
\left\{
\begin{array}{lll}
g_0(-a')\exp\big(-\int_0^{t'} \mu(s,s-a')ds\big) & \text{on} & \widetilde{\mathcal D}_U \\ \\
B_{b,\mu,g_0}(a')\exp\big(-\int_{a'}^{t'} \mu(s,s-a')ds\big) & \text{on} & \widetilde{\mathcal D}_L. 
\end{array}
\right.
\end{equation}
On $\widetilde{\mathcal D}_U$, we have 
$t' \mapsto \int_0^{t'} \mu(s,s-a')ds \in \mathcal H^{\min(\gamma,\delta) +1}$ and $a'  \mapsto \int_0^{t'} \mu(s,s-a')ds \in \mathcal H^{\max(\gamma \wedge (\delta+1),\delta)}$
by (ii) of Lemma \ref{lem reg} for the second case, hence
$$(t',a') \mapsto \exp\big(-\int_0^{t'} \mu(s,s-a')ds\big) \in \mathcal H^{\min(\gamma+1,\delta+1), \min(\gamma,\delta +1)}.$$
Since $(t',a') \mapsto g_0(-a') \in \mathcal H^{\infty, \nu}$ 
hence the result since $\nu \geq \delta$. Similarly, on $\widetilde{\mathcal D}_L$, by \eqref{eq reg B}, we have 
$(t',a') \mapsto B_{b,\mu,g_0}(a') \in \mathcal H^{\infty, \min(\alpha, \beta, \gamma+1,\delta)}$ and the same arguments as before yield
$$(t',a') \mapsto \int_{a'}^t \mu(s,s-a')ds \in \mathcal H^{\min(\gamma+1,\delta+1),\max(\gamma \wedge (\delta+1),\delta)}.$$
Combining these two properties gives the result on $\widetilde{\mathcal D}_L$ and completes (ii) of Proposition \ref{P first smoothness}.

\subsection{Further estimates on the McKendricks Von Foerster equation}  
\label{app analyse}

The following result is a classical estimate of the renewal equation, see for instance \cite{PERTHAME}. 
\begin{lem}[\cite{PERTHAME}, Theorem 2.2. in Chapter 2] \label{estimate g}
Work under Assumptions \ref{H basic}. We have
$$\sup_{0 \leq t \leq T}\int_0^\infty g(t,a)da \leq \int_0^\infty g_0(a)da\, e^{|b-\mu|_\infty T}$$
and
$$|g|_\infty \leq \max\big(|g_0|_\infty, |b|_\infty \sup_{0 \leq t \leq T}\int_0^\infty g(t,a)da\big)$$
\end{lem}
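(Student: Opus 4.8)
The plan is to read the two bounds off the explicit representation \eqref{eq def weak} of the weak solution $g$, combined with a Grönwall argument for the total mass. Throughout one uses that the rates are nonnegative, $b,\mu\ge 0$ (implicit in the construction \eqref{eq micro}, where the indicator sets $\{0\le \vartheta\le b(s,\cdot)\}$ and $\{b\le\vartheta\le b+\mu\}$ presuppose positive intensities) and that $g_0\ge 0$ by Assumption \ref{H basic}(iii). Since $M_{b,\mu,g_0}$ and $L_{b,\mu}$ are then nonnegative, the renewal equation \eqref{eq integral equation} forces $B_{b,\mu,g_0}\ge 0$ (for instance as the limit of its monotone Picard iteration), so \eqref{eq def weak} gives $g\ge 0$ on $\mathcal D$; alternatively, $g(t,\cdot)\,da=\xi_t$ is a narrow limit of the positive measures $Z_t^N$, hence positive.

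First I would establish the $L^1$ estimate. Write $G(t)=\int_0^\infty g(t,a)\,da$, which is finite by Assumption \ref{H basic}(iii) and \eqref{eq def weak}. Testing the weak form of \eqref{McKendrick} against the constant function ${\bf 1}$ — rigorously, against smooth cutoffs $\chi_R$ and letting $R\to\infty$, the boundary contribution at age $R$ vanishing because $\int_0^\infty g(t,a)\,da<\infty$ — one finds that $G$ is absolutely continuous with
\[
G'(t)=g(t,0)-\int_0^\infty \mu(t,a)g(t,a)\,da=\int_0^\infty\big(b(t,a)-\mu(t,a)\big)g(t,a)\,da,
\]
using $\int_0^\infty \partial_a g(t,a)\,da=-g(t,0)$ together with the boundary condition $g(t,0)=\int_0^\infty b(t,a)g(t,a)\,da$. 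Since $g\ge 0$, this yields $G'(t)\le |b-\mu|_\infty G(t)$, so $t\mapsto e^{-|b-\mu|_\infty t}G(t)$ is nonincreasing and $G(t)\le G(0)\,e^{|b-\mu|_\infty t}\le \big(\int_0^\infty g_0(a)\,da\big)e^{|b-\mu|_\infty T}$ for $t\le T$, which is the first assertion.

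For the $L^\infty$ bound I would invoke \eqref{eq def weak} directly. On $\mathcal D_U$ one has $0\le g(t,a)=g_0(a-t)\exp\big(-\int_0^t\mu(s,a-t+s)\,ds\big)\le g_0(a-t)\le |g_0|_\infty$ because $\mu\ge 0$. On $\mathcal D_L$ one has $0\le g(t,a)=B_{b,\mu,g_0}(t-a)\exp\big(-\int_{t-a}^t\mu(s,a+s-t)\,ds\big)\le B_{b,\mu,g_0}(t-a)$; and setting $a=0$ in the $\mathcal D_L$ line of \eqref{eq def weak} identifies $B_{b,\mu,g_0}(s)=g(s,0)=\int_0^\infty b(s,a)g(s,a)\,da\le |b|_\infty G(s)\le |b|_\infty\sup_{0\le r\le T}G(r)$. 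Combining the two regimes gives precisely $|g|_\infty\le\max\big(|g_0|_\infty,\;|b|_\infty\sup_{0\le t\le T}\int_0^\infty g(t,a)\,da\big)$.

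The argument is routine — the statement is in fact quoted verbatim from Theorem 2.2, Chapter 2 of \cite{PERTHAME} — and the only mild subtlety is legitimising the constant test function ${\bf 1}$ in the conservation identity, which is why I would phrase the $L^1$ step via cutoffs (or, equivalently, by integrating the representation \eqref{eq def weak} in $a$ and differentiating in $t$). No genuinely hard obstacle arises here.
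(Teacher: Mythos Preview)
Your proposal is correct and follows exactly the classical route; note that the paper does not actually supply a proof of this lemma but simply quotes it from \cite{PERTHAME}, so there is nothing to compare against beyond remarking that your argument is precisely the standard one found there.
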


\subsubsection*{Proof of Lemma \ref{lem min g}} \label{proof lemma assumption minoration g}

On $\mathcal D_U$, by \eqref{eq def weak} in the proof of Proposition \ref{P first smoothness}, we have 
$$g(t,a) = 
g_0(a-t)\exp\big(-\int_0^t \mu(s,a-t+s)ds\big) \geq \delta(t,a)e^{-|\mu|_\infty T}
$$
by \eqref{eq condit mino g U} of Assumption \ref{assumption minoration g}. On $\mathcal D_L$, \eqref{eq def weak} yields the representation
$$g(t,a) = B_{b,\mu,g_0}(t-a)\exp\big(-\int_{t-a}^a \mu(s,a+s-t)ds\big) \geq  B_{b,\mu,g_0}(t-a)e^{-|\mu|_\infty t}$$
and  by \eqref{eq integral equation}, we further have
\begin{align*}
B_{b,\mu,g_0}(t-a)   & \geq M_{b,\mu,g_0}(t-a) \\
& = \int_0^\infty b(t-a,t-a+u)g_0(u)\exp\big(-\int_0^{t-a} \mu(s,u+s)ds\big)du \\
& \geq \delta |\mathcal U_{(t,a)}| e^{-|\mu|_\infty(t-a)}
\end{align*}
by \eqref{eq condit mino g L} of Assumption \ref{assumption minoration g}. The proof of Lemma \ref{lem min g} is complete.\\

\noindent {\bf Acknowledgements} We gratefully acknowledge insightful comments and discussions with N. Champagnat, N. El Karoui, O. Lepski and V. C. Tran.

\bibliographystyle{plain}       
\bibliography{biblioBHJ}           

\end{document}